\documentclass[11pt]{amsart}
\setlength{\topmargin}{10pt}
\setlength{\oddsidemargin}{-.5cm}
\setlength{\evensidemargin}{-.5cm}
\setlength{\textwidth}{17cm}
\setlength{\textheight}{23cm}
\parskip=2.5pt

\usepackage[latin1]{inputenc}
\usepackage[usenames]{color}
\usepackage{amsfonts}
\usepackage{amsthm}
\usepackage{amsmath}
\usepackage{amssymb}
\usepackage{amscd}
\usepackage{multicol}
\usepackage{tabularx}

\theoremstyle{definition}
\newtheorem{thm}{Theorem}[subsection]
\newtheorem{prop}[thm]{Proposition}
\newtheorem{cor}[thm]{Corollary}

\newtheorem{lem}[thm]{Lemma}
\newtheorem{defn}[thm]{Definition}
\newtheorem{ex}[thm]{Example}
\newtheorem*{rem}{Remark}

\numberwithin{equation}{subsection}

\def\bb#1{\text{$\mathbb{#1}$}}
\def\blb#1{\text{$\mathbb{#1}$}} 
\def\cal#1{\text{${\mathcal{#1}}$}}

\def\lie#1{\mathfrak{#1}}
\def\tlie#1{\tilde{\mathfrak{#1}}}
\def\hlie#1{\hat{\mathfrak{#1}}} 


\def\biq#1^#2{\text{$\left[\begin{matrix} #2\\#1\end{matrix}\right]_q$}}
\def\bis#1^#2{\text{$\left[\begin{matrix} #2\\#1\end{matrix}\right]_{\zeta_i}$}}

\def\bik#1^#2{\text{$\left[\begin{matrix} k_i; #2\\#1\end{matrix}\right]$}}

\def\uq#1{\text{$U_q(\lie #1)$}}
\def\uqr#1^#2{\text{$U_q^{#2}(\lie #1)$}}
\def\uqres#1{\text{$U_\mathbb A(\lie #1)$}}
\def\us_#1#2{\text{$U_{#1}(\lie #2)$}}
\def\dus#1^#2{\text{$\dot{U}_{\zeta}^{#2}(\lie #1)$}}
\def\usres#1{\text{$U_{\zeta}(\lie #1)$}}

\def\uqt#1{\text{$U_q(\tlie #1)$}}
\def\uqtr#1^#2{\text{$U_q^{#2}(\tlie #1)$}}
\def\uqtres#1{\text{$U_\mathbb A(\tlie #1)$}}
\def\ust_#1#2{\text{$U_{#1}(\tlie #2)$}}
\def\ustres#1{\text{$U_{\zeta}(\tlie #1)$}}
\def\usfin#1{\text{$U_{\zeta}^{\rm fin}(\lie #1)$}}
\def\ustfin#1{\text{$U_{\zeta}^{\rm fin}(\tlie #1)$}}
\def\ustf_#1#2{\text{$U_{#1}^{\rm fin}(\tlie #2)$}}
\def\usf_#1#2{\text{$U_{#1}^{\rm fin}(\lie #2)$}}

\def\dust#1^#2{\text{$\dot{U}_{\zeta}^{#2}(\tlie #1)$}}

\def\uqh#1{\text{$U_q(\tlie #1)$ }}
\def\uqhr#1^#2{\text{$U_q^{#2}(\tlie #1)$ }}
\def\ush#1^#2{\text{$U_{\zeta}^{#2}(\tlie #1)$ }}
\def\dush#1^#2{\text{$\dot{U}_{\zeta}^{#2}(\tlie #1)$ }}

\def\gb#1{{\mbox{\boldmath $#1$}}}
\def\gbr#1{{\mbox{\boldmath ${\rm #1}$}}}
\def\bgb#1{{\mbox{$\overline{\gb #1}$}}}
\def\wtl{{\rm wt}_\ell}
\def\wt{{\rm wt}}
\def\ch{{\rm ch}}
\def\chl{{\rm qch}}

\def\opl_#1^#2{\text{\scriptsize$\bigoplus\limits_{\text{\footnotesize$#1$}}^{\text{\footnotesize$#2$}}$}}
\def\otm_#1^#2{\text{\scriptsize$\bigotimes\limits_{\text{\footnotesize$#1$}}^{\text{\footnotesize$#2$}}$}}
\def\wcal#1{{\mbox{$\widetilde{\cal #1}$}}}

\def\z{\bb Z}
\def\zp{\text{$\bb Z_{>0}$}}
\def\zpz{\text{$\bb Z_{\ge 0}$}}
\def\znn{\text{$\bb Z_{\ge 0}$}}

\def\zs#1{\text{$\bb Z_{#1}$}}

\def\tqbinom#1#2{\text{$\left[\begin{smallmatrix} #1\\#2\end{smallmatrix}\right]$}}

\begin{document}

\flushbottom

\title[]{Tensor Products, Characters, and Blocks of\\  Finite-Dimensional Representations of\\ Quantum Affine Algebras at Roots of Unity}

\author{Dijana Jakeli\'c and Adriano Moura}
\address{Department of Mathematics and Statistics, University of North Carolina Wilmington, \vspace{-12pt}}
\address{601 S. College Road, Wilmington, NC, 28401-5970}
\email{jakelicd@uncw.edu}

\address{UNICAMP - IMECC, Campinas - SP, 13083-970, Brazil} \email{aamoura@ime.unicamp.br}

\maketitle
\centerline{\small{
\begin{minipage}{350pt}
{\bf Abstract:} We establish several results concerning tensor products, q-characters, and the block decomposition of the category of finite-dimensional representations of quantum affine algebras  in the root of unity setting. In the generic case, a Weyl module is isomorphic to a tensor product of fundamental representations and this isomorphism was essential for establishing the block decomposition theorem. This is no longer true in the root of unity setting. We overcome the lack of such a tool by utilizing results on specialization of modules. Furthermore, we establish a sufficient condition for a Weyl module to be a tensor product of fundamental representations and prove that this condition is also necessary when the underlying simple Lie algebra is $\lie{sl}_2$. We also study the braid group invariance of q-characters of fundamental representations.
\end{minipage}}}

\section*{Introduction}

Let $\lie g$ be a finite-dimensional simple Lie algebra over the complex numbers and $\tlie g=\lie g\otimes\mathbb C[t,t^{-1}]$ its associated loop algebra. Following Drinfeld and Jimbo, one can consider the quantum groups $U_q(\lie g)$ and $U_q(\tlie g)$ which are Hopf algebras over the field $\mathbb C(q)$ of rational functions in the indeterminate $q$. The latter is most often called a quantum affine algebra.  Using $\mathbb C[q,q^{-1}]$-forms of \uq g and \uqt g, one defines specializations of the quantum groups at $q=\xi\in\mathbb C\backslash\{0\}$. If $\xi$ is generic, i.e., not a root of unity, the ``general behavior'' of the resulting algebras does not depend on $\xi$ or on the chosen form. However, if $\xi$ is a root of unity, the resulting algebras depend drastically on the chosen form, as well as on $\xi$.

In this paper, we focus on the algebras obtained from Lusztig's form which is generated by the divided powers of the quantum Chevalley generators of \uq g or \uqt g.  The resulting algebras will be denoted by \us_\xi g and \ust_\xi g, respectively.
We study the category $\wcal C_\xi$ of (type 1) finite-dimensional representations of \ust_\xi g with special attention to the root of unity setting, where the order of the root of unity is assumed to be odd and relatively prime to the lacing number of $\lie g$. The first results regarding this category in the root of unity context were obtained by Chari and Pressley in \cite{cproot} (for results concerning De Concini-Kac's specialization see \cite{beka,dchere} and references therein). Our goal is to establish several results concerning tensor products, q-characters,
and the block decomposition of $\wcal C_\xi$ for the case that $\xi$ is a root of unity.

The first of the main results of the present paper (Theorem \ref{t:hlwtensor}) is a generalization of the  main result of \cite{vcbraid} which gives a sufficient condition for a tensor product of simple objects of $\wcal C_\xi$ to be a quotient of the appropriate Weyl module. In the generic case, this result combined with results of Beck and Nakajima \cite{becnak:2sc} is used in \cite{cmq} to prove that the Weyl modules are isomorphic to certain tensor products of fundamental modules. It is known that in the root of unity setting this is not the case \cite[Remark 7.18]{vava:standard}. However, Theorem \ref{t:hlwtensor} and the results of \cite{becnak:2sc} (see also \cite{jm:weyl}) can be used to obtain a  sufficient combinatorial condition for this to be true (Corollary \ref{c:wtfundrgen}). Moreover, if $\lie g=\lie{sl}_2$, this condition is also necessary (Corollary \ref{c:wtfundr}). The combinatorial conditions appearing in all of these results are defined using certain factorizations of polynomials and the braid group action on the $\ell$-weight lattice (see Section \ref{s:lattices}). These factorizations of polynomials are motivated by the classification of the simple objects of $\wcal C_\xi$ when $\lie g=\lie{sl}_2$ in terms of tensor products of evaluation modules \cite{cp:qaa,cproot}. The braid group action is that of \cite{vcbraid} and the combinatorial condition we use in the statement of Theorem \ref{t:hlwtensor} is an extension to the root of unity setting of the one used in the main result of \cite{vcbraid}. The combinatorial condition used in the statements of Corollaries \ref{c:wtfundr} and \ref{c:wtfundrgen} appears to not have been defined before.

Our next goal is to describe the block decomposition of $\wcal C_\xi$. This was first studied in \cite{em} where it was proved that the blocks are parameterized by the so-called elliptic characters which are elements of the quotient of the $\ell$-weight lattice of \ust_\xi g by the $\ell$-root lattice. However, in that paper $\xi$ was assumed to be a nonzero complex number satisfying $|\xi|\ne 1$. The reason behind this assumption was that the original approach of \cite{em} used analytic properties of the action of the R-matrix of \ust_\xi g which could be guaranteed only for such $\xi$. A new approach, in connection with the theory of q-characters, was developed in \cite{cmq} where it was used to describe the blocks of $\wcal C_\xi$ for all generic $\xi$. We will mostly follow this later approach to describe the blocks of $\wcal C_\xi$ in the case that $\xi$ is a root of unity. However, our results concerning tensor products show that one of the main techniques used in the generic context is not available in the root of unity one.
Namely, an essential tool for proving that two objects having the same elliptic character must be in the same block was a corollary of \cite[Theorem 4.4]{vcbraid} stating that a tensor product of fundamental representations can always be reordered in such a way that it is a quotient of the corresponding Weyl module. In fact, as mentioned above, it was shown in \cite{cmq} that such reordered tensor products are isomorphic to the corresponding Weyl module thus proving that every Weyl module is isomorphic to a tensor product of fundamental representations. As discussed above, the latter statement, as well as the aforementioned corollary, is false in the root of unity setting.
To overcome this issue, we consider specialization of modules. Namely, we prove that the irreducible quotient of the specialization of an irreducible constituent of a Weyl module is an irreducible constituent of the specialization of that Weyl module. Using this fact and combinatorial arguments, we ``lift'' simple objects having the same elliptic character from the root of unity to the formal $q$ context in such a way that the modules thus obtained have the same elliptic character again. We then use the results of \cite{cmq} showing that these lifted modules are linked by a certain sequence of Weyl modules. By specializing this sequence back to the root of unity context we are  able to show that the starting two modules must have been in the same block. We remark that this proof also works for $\xi=1$, in which case $\wcal C_\xi$ is the category of finite-dimensional $\tlie g$-modules. The blocks for $\xi=1$ were described in \cite{cms} using a different approach (for showing that two objects with the same elliptic character must be in the same block). Hence, our proof is also an alternate one to the proof of the block decomposition theorem in the classical context. In fact, the proof works for any generic value of $\xi$ and, therefore, it can be thought of as a uniform proof.

Our last goal is to show that the main result of \cite{cmq} regarding the braid group invariance of the q-characters of fundamental representations holds in the root of unity context as well. The theory of q-characters, initiated by Frenkel and Reshetikhin in \cite{fr}, is one of the most interesting topics of the finite-dimensional representation theory of quantum affine algebras. In the root of unity context they were first considered by Frenkel and Mukhin in \cite{fmroot}. Frenkel and Mukhin also developed the first algorithm for computing the q-character of certain simple modules which is now known as the Frenkel-Mukhin algorithm \cite{fmq}. There are several other papers dedicated to the study of q-characters of the finite-dimensional irreducible $U_\xi(\tlie g)$-modules and other important subclasses of representations such as standard or Weyl modules (see \cite{her:atqchar,her:tqcroot,nak:qvtqchar,nak:tab,nak:tabe} and references therein). The q-characters can be encoded in a ring homomorphism from the Grothendieck ring $R_\xi$ of $\wcal C_\xi$ to the integral group ring $\mathbb Z[\cal P_\xi]$ of the $\ell$-weight lattice. In \cite{nak:qvtqchar}, under the assumption that $\lie g$ is simply laced, Nakajima defined certain polynomials similar to Kazhdan-Lusztig polynomials by studying cohomology of quiver varieties. This lead him to define a function $\chi_{\xi,t}:R_\xi\otimes_\mathbb Z\mathbb Z[t,t^{-1}]\to \mathbb Z[\cal P_\xi]\otimes_\mathbb Z\mathbb Z[t,t^{-1}]$ called the $t$-analogue of the q-character ring homomorphism. It turns out that at $t=1$ this function specializes to the q-character ring homomorphism. Moreover, the definition of $\chi_{\xi,t}$ was axiomatized in a purely combinatorial manner leading to an algorithm for computing the q-characters of the irreducible and standard modules. This algorithm was used in \cite{nak:tab} to give explicit formulas for the $t$-analogues of the q-characters of the standard modules when $\lie g$ is of type $A$ or $D$ and $\xi$ is not a root of unity. The formulas were presented in terms of tableaux and a connection with the theory of crystals was discovered. The algorithm was also used with the help of a supercomputer to compute the $t$-analogues of the q-characters of the fundamental modules when $\lie g$ is of type $E$ in \cite{nak:tabe} (also for generic $\xi$). In \cite{her:atqchar,her:tqcroot}, Hernandez proved a conjecture of Nakajima saying that the existence of the function $\chi_{\xi,t}$ could be established using only its axiomatic description (without the use of geometry). This allowed him to extend the concept of $t$-analogues of q-characters to general $\lie g$. However, due to the lack of a definition of the quiver varieties in general, a proof that the algorithm indeed gives the q-characters of the irreducible modules when $\lie g$ is not simply laced is still missing.

In \cite[Theorem 6.1]{cmq}, Chari and the second author proved that the q-characters of the fundamental representations satisfy a certain invariance property with respect to the braid group action on the $\ell$-weight lattice provided $\lie g$ is of classical type and $\xi$ is not a root of unity.  This result was used in \cite{cm:fund} to obtain closed formulas for the q-characters of fundamental representations of $U_\xi(\tlie g)$ in terms of the braid group action on the $\ell$-weight lattice (the expressions obtained in the earlier literature on $q$-characters did not involve the braid group action). In fact, the theorem gives a recursive method for computing a lower bound for the q-characters of the fundamental representations. In order to show that this is also an upper bound, the authors of \cite{cm:fund} used that the dimensions of the fundamental representations were known. As mentioned above, we prove in this paper that \cite[Theorem 6.1]{cmq} remains valid in the root of unity setting (see Theorem \ref{t:bginv}). Furthermore, by looking at the simplest nontrivial example, we show that the above described procedure may be applied to obtain formulas for the q-characters of fundamental representations in the root of unity setting as well. More precisely, we use Theorem \ref{t:bginv} and the theory of specialization of modules to obtain an explicit formula for the q-character of the fundamental representation corresponding to the adjoint representation when $\lie g$ is of type $D_n$ and $\xi$ is any root of unity (of odd order) in terms of the braid group action on the $\ell$-weight lattice. In particular, we show that this fundamental module is irreducible as \us_\xi g-module iff the order of $\xi$ divides $n-2$ (Nakajima has explained to us that this result can also be deduced from the algorithm of \cite{nak:qvtqchar}). We believe that the same line of reasoning can be performed in general. Since the combinatorics would be rather lengthy, we find it more appropriate to leave this task for a forthcoming publication. We note, however, that there are more $\ell$-weight spaces with multiplicity higher than one in the root of unity setting than in the generic one as it can be seen from the example we considered here. We explicitly describe the multiplicities for this example (see \eqref{e:multdn2}).

The paper is organized as follows. In Section \ref{s:algs}, we fix the basic notation and review the construction and some structural results for the algebra \ust_\xi g. In Section \ref{s:lattices}, we review the definitions of the $\ell$-weight and the $\ell$-root lattices and describe their quotient. Also in Section \ref{s:lattices}, we give the definitions of $\xi$-resonant ordering of dominant $\ell$-weights and of $\xi$-regular dominant $\ell$-weights which are the main combinatorial conditions used in the statements of the main results of Section \ref{s:hlwtensor}. Section \ref{s:fdrs} brings the basic facts of the finite-dimensional representation theory of \ust_\xi g such as the classification of the irreducible modules in terms of dominant $\ell$-weights, the existence of universal finite-dimensional highest-$\ell$-weight modules called the Weyl modules, and a few results related to the q-characters. Section \ref{s:stb} is mostly dedicated to reviewing a few crucial facts used in the proof of our main results such as the basics of the theory of specialization of modules, evaluation representations, and the existence of the Frobenius homomorphism. The main results of the paper are in Sections \ref{s:hlwtensor} (tensor products) and \ref{s:block} (block decomposition and q-characters). Out of the several auxiliary results proved throughout the paper we find worth mentioning Propositions \ref{p:lcharspec} and \ref{p:constspec} as well as Theorem \ref{t:lgenerator}.

\section{The Quantum Loop Algebras at Roots of Unity}\label{s:algs}

\subsection{Notation and Basics on Simple Lie Algebras}\label{ss:basicnot}

Throughout,  $\mathbb C, \mathbb Z,\zpz$ denote the sets of complex  numbers, integers, and non-negative integers. For any integer $m$, $\zs {>m}$ denotes the set of all integers greater than $m$. Given a ring $\mathbb A$, the underlying multiplicative group of units is denoted by $\mathbb A^\times$. The dual of a vector space $V$ is $V^*$. The symbol ``$\cong$'' denotes isomorphisms. Tensor products of vector spaces are always taken over the underlying field unless otherwise indicated.

Let $\lie g$ be a simple Lie algebra of rank $n$ over the complex numbers with a fixed triangular decomposition
$\lie g = \lie n^+\oplus \lie h\oplus \lie n^-$, where $\lie h$ is a Cartan subalgebra and $\lie n^\pm$ are the span of positive (respectively, negative) root vectors. Let $I=\{1,\dots,n\}$ be an indexing set of the vertices of the Dynkin diagram of $\lie g$ and $R^+$ the set of positive roots. The simple roots are denoted by $\alpha_i$, the fundamental weights by $\omega_i$, while $Q,P,Q^+,P^+$ denote the root and weight lattices with corresponding positive cones, respectively.
Let also $h_i$ be the coroot associated to $\alpha_i$ and denote by $x_i^\pm$ a basis element of the root space corresponding to $\pm\alpha_i, i\in I$.  Equip $\lie h^*$ with the partial order $\lambda\le \mu$ iff $\mu-\lambda\in Q^+$.
Denote by $\cal W$ the Weyl group of $\lie g$, by $s_i$ the simple reflections, by $\ell(w)$ the length of $w\in\cal W$,
and let $w_0$ be the longest element of $\cal W$.
Let $C = (c_{ij})_{i,j\in I}$ be the Cartan matrix of $\lie g$, i.e., $c_{ij}=\alpha_j(h_i)$, and let $D = {\rm diag}(d_i:i\in I)$ where the numbers $d_i$ are coprime positive integers such that $DC$ is symmetric.  Recall that the lacing number is $r^\vee=\max\{c_{ij}c_{ji}:i,j\in I, i\ne j\}$. We suppose the nodes of the Dynkin diagram of $\lie g$ are labeled as in Table 1 below and let $I_\bullet$ be the indexing set of the black nodes.

\vspace{16pt}

{\centerline {\bf Table 1}}
\vspace{6pt}
\begin{multicols}{2}
\begin{itemize}
\item[$A_n$]
\setlength{\unitlength}{.2cm}
\begin{picture}(0,0)(-5,-.5)
\linethickness{1pt}
\put(0,0){\circle*{1}}
\put(-0.2,-1.5){$\scriptscriptstyle{1}$}
\put(0.5,0){\line(1,0){3}}
\put(4,0){\circle{1}}
\put(3.8,-1.5){$\scriptscriptstyle{2}$}
\put(4.5,0){\line(1,0){3}}
\put(8,0){\circle{1}}
\put(7.8,-1.5){$\scriptscriptstyle{3}$}
\put(10,0){\ldots}
\put(14.5,0){\circle{1}}
\put(13.5,-1.5){$\scriptscriptstyle{n-2}$}
\put(15,0){\line(1,0){3}}
\put(18.5,0){\circle{1}}
\put(17.5,-1.5){$\scriptscriptstyle{n-1}$}
\put(19,0){\line(1,0){3}}
\put(22.5,0){\circle{1}}
\put(22.2,-1.5){$\scriptscriptstyle{n}$}
\end{picture}
\vspace{15pt}

\item[$B_n$]
\setlength{\unitlength}{.2cm}
\begin{picture}(0,0)(-5,-.5)
\linethickness{0.5pt}
\put(0,0){\circle{1}}
\put(-0.2,-1.5){$\scriptscriptstyle{1}$}
\put(0.5,0){\line(1,0){3}}
\put(4,0){\circle{1}}
\put(3.8,-1.5){$\scriptscriptstyle{2}$}
\put(4.5,0){\line(1,0){3}}
\put(8,0){\circle{1}}
\put(7.8,-1.5){$\scriptscriptstyle{3}$}
\put(10,0){\ldots}
\put(14.5,0){\circle{1}}
\put(13.5,-1.5){$\scriptscriptstyle{n-2}$}
\put(15,0){\line(1,0){3}}
\put(18.5,0){\circle{1}}
\put(17.5,-1.5){$\scriptscriptstyle{n-1}$}
\put(18.5,0.5){\line(1,0){4}}
\put(18.5,-0.5){\line(1,0){4}}
\put(20.3,-0.5){$\big>$}
\put(22.5,0){\circle*{1}}
\put(22.2,-1.5){$\scriptscriptstyle{n}$}
\end{picture}
\vspace{15pt}

\item[$C_n$]
\setlength{\unitlength}{.2cm}
\begin{picture}(0,0)(-5,-.5)
\linethickness{0.5pt}
\put(0,0){\circle*{1}}
\put(-0.2,-1.5){$\scriptscriptstyle{1}$}
\put(0.5,0){\line(1,0){3}}
\put(4,0){\circle{1}}
\put(3.8,-1.5){$\scriptscriptstyle{2}$}
\put(4.5,0){\line(1,0){3}}
\put(8,0){\circle{1}}
\put(7.8,-1.5){$\scriptscriptstyle{3}$}
\put(10,0){\ldots}
\put(14.5,0){\circle{1}}
\put(13.5,-1.5){$\scriptscriptstyle{n-2}$}
\put(15,0){\line(1,0){3}}
\put(18.5,0){\circle{1}}
\put(17.5,-1.5){$\scriptscriptstyle{n-1}$}
\put(18.5,0.5){\line(1,0){4}}
\put(18.5,-0.5){\line(1,0){4}}
\put(20.3,-0.5){$\big<$}
\put(22.5,0){\circle{1}}
\put(22.2,-1.5){$\scriptscriptstyle{n}$}
\end{picture}
\vspace{30pt}

\item[$D_n$] ($n$ even)
\setlength{\unitlength}{.2cm}
\begin{picture}(0,0)(-1,-.5)
\linethickness{1pt}
\put(0,0){\circle{1}}
\put(-0.2,-1.5){$\scriptscriptstyle{1}$}
\put(0.5,0){\line(1,0){3}}
\put(4,0){\circle{1}}
\put(3.8,-1.5){$\scriptscriptstyle{2}$}
\put(4.5,0){\line(1,0){3}}
\put(8,0){\circle{1}}
\put(7.8,-1.5){$\scriptscriptstyle{3}$}
\put(10,0){\ldots}
\put(14.5,0){\circle{1}}
\put(13.5,-1.5){$\scriptscriptstyle{n-3}$}
\put(15,0){\line(1,0){3}}
\put(18.5,0){\circle{1}}
\put(17.7,-1.5){$\scriptscriptstyle{n-2}$}
\put(19,0){\line(3,2){3}}
\put(19,0){\line(3,-2){3}}
\put(22.5,2){\circle*{1}}
\put(22.5,-2){\circle*{1}}
\put(23.5,1.8){$\scriptscriptstyle{n-1}$}
\put(23.5,-2.4){$\scriptscriptstyle{n}$}
\end{picture}
\vspace{40pt}

\item[$D_n$] ($n$ odd)
\setlength{\unitlength}{.2cm}
\begin{picture}(0,0)(-1.5,-.5)
\linethickness{1pt}
\put(0,0){\circle{1}}
\put(-0.2,-1.5){$\scriptscriptstyle{1}$}
\put(0.5,0){\line(1,0){3}}
\put(4,0){\circle{1}}
\put(3.8,-1.5){$\scriptscriptstyle{2}$}
\put(4.5,0){\line(1,0){3}}
\put(8,0){\circle{1}}
\put(7.8,-1.5){$\scriptscriptstyle{3}$}
\put(10,0){\ldots}
\put(14.5,0){\circle{1}}
\put(13.5,-1.5){$\scriptscriptstyle{n-3}$}
\put(15,0){\line(1,0){3}}
\put(18.5,0){\circle{1}}
\put(17.7,-1.5){$\scriptscriptstyle{n-2}$}
\put(19,0){\line(3,2){3}}
\put(19,0){\line(3,-2){3}}
\put(22.5,2){\circle{1}}
\put(22.5,-2){\circle*{1}}
\put(23.5,1.8){$\scriptscriptstyle{n-1}$}
\put(23.5,-2.4){$\scriptscriptstyle{n}$}
\end{picture}

\columnbreak

\vspace*{-35pt}
\item[$E_6$]
\setlength{\unitlength}{.2cm}
\begin{picture}(0,7)(-5,-.5)
\linethickness{1pt}
\put(0,0){\circle*{1}}
\put(-0.2,-1.5){$\scriptscriptstyle{1}$}
\put(0.5,0){\line(1,0){3}}
\put(4,0){\circle{1}}
\put(3.7,-1.5){$\scriptscriptstyle{2}$}
\put(4.5,0){\line(1,0){3}}
\put(8,0){\circle{1}}
\put(7.8,-1.5){$\scriptscriptstyle{3}$}
\put(8.5,0){\line(1,0){3}}
\put(12,0){\circle{1}}
\put(11.8,-1.5){$\scriptscriptstyle{4}$}
\put(12.5,0){\line(1,0){3}}
\put(16,0){\circle{1}}
\put(15.8,-1.5){$\scriptscriptstyle{5}$}
\put(8,0.5){\line(0,1){3}}
\put(8,4){\circle{1}}
\put(9.2,3.5){$\scriptscriptstyle{6}$}
\end{picture}
\vspace{5pt}

\item[$E_7$]
\setlength{\unitlength}{.2cm}
\begin{picture}(0,7)(-5,-.5)
\linethickness{1pt}
\put(0,0){\circle*{1}}
\put(-0.2,-1.5){$\scriptscriptstyle{1}$}
\put(0.5,0){\line(1,0){3}}
\put(4,0){\circle{1}}
\put(3.7,-1.5){$\scriptscriptstyle{2}$}
\put(4.5,0){\line(1,0){3}}
\put(8,0){\circle{1}}
\put(7.7,-1.5){$\scriptscriptstyle{3}$}
\put(8.5,0){\line(1,0){3}}
\put(12,0){\circle{1}}
\put(11.7,-1.5){$\scriptscriptstyle{4}$}
\put(12.5,0){\line(1,0){3}}
\put(16,0){\circle{1}}
\put(15.7,-1.5){$\scriptscriptstyle{5}$}
\put(16.5,0){\line(1,0){3}}
\put(20,0){\circle{1}}
\put(19.7,-1.5){$\scriptscriptstyle{6}$}
\put(12,0.5){\line(0,1){3}}
\put(12,4){\circle{1}}
\put(13.2,3.5){$\scriptscriptstyle{7}$}
\end{picture}
\vspace{5pt}

\item[$E_8$]
\setlength{\unitlength}{.2cm}
\begin{picture}(0,7)(-5,-.5)
\linethickness{1pt}
\put(0,0){\circle*{1}}
\put(-0.2,-1.5){$\scriptscriptstyle{1}$}
\put(0.5,0){\line(1,0){3}}
\put(4,0){\circle{1}}
\put(3.7,-1.5){$\scriptscriptstyle{2}$}
\put(4.5,0){\line(1,0){3}}
\put(8,0){\circle{1}}
\put(7.7,-1.5){$\scriptscriptstyle{3}$}
\put(8.5,0){\line(1,0){3}}
\put(12,0){\circle{1}}
\put(11.7,-1.5){$\scriptscriptstyle{4}$}
\put(12.5,0){\line(1,0){3}}
\put(16,0){\circle{1}}
\put(15.7,-1.5){$\scriptscriptstyle{5}$}
\put(16.5,0){\line(1,0){3}}
\put(20,0){\circle{1}}
\put(19.7,-1.5){$\scriptscriptstyle{6}$}
\put(20.5,0){\line(1,0){3}}
\put(24,0){\circle{1}}
\put(23.7,-1.5){$\scriptscriptstyle{7}$}
\put(16,0.5){\line(0,1){3}}
\put(16,4){\circle{1}}
\put(17.2,3.5){$\scriptscriptstyle{8}$}
\end{picture}
\vspace{15pt}

\item[$F_4$]
\setlength{\unitlength}{.2cm}
\begin{picture}(0,2)(-5,-.5)
\linethickness{0.5pt}
\put(0,0){\circle*{1}}
\put(-0.2,-1.5){$\scriptscriptstyle{1}$}
\put(0.5,0){\line(1,0){3}}
\put(4,0){\circle{1}}
\put(3.7,-1.5){$\scriptscriptstyle{2}$}
\put(4,0.5){\line(1,0){4}}
\put(4,-0.5){\line(1,0){4}}
\put(5.8,-0.5){$\big<$}
\put(8,0){\circle{1}}
\put(7.7,-1.5){$\scriptscriptstyle{3}$}
\put(8.5,0){\line(1,0){3}}
\put(12,0){\circle{1}}
\put(11.7,-1.5){$\scriptscriptstyle{4}$}
\end{picture}
\vspace{15pt}

\item[$G_2$]
\setlength{\unitlength}{.2cm}
\begin{picture}(0,2)(-5,-.5)
\linethickness{0.5pt}
\put(0,0){\circle*{1}}
\put(-0.2,-1.5){$\scriptscriptstyle{1}$}
\put(0,0.5){\line(1,0){4}}
\put(0.5,0){\line(1,0){3}}
\put(0,-0.5){\line(1,0){4}}
\put(1.8,-0.5){$\big<$}
\put(4,0){\circle{1}}
\put(3.7,-1.5){$\scriptscriptstyle{2}$}
\end{picture}
\end{itemize}
\end{multicols}

We shall need the following well-known lemma which can be deduced from the results of \cite{hum:book}.

\begin{lem}\label{l:wgroupstuff}
Let $\lambda\in P^+$.
\begin{enumerate}
\item If $\mu\in P^+$ is such that $\mu\le\lambda$ and $w\in\cal W$, then $w\mu\le\lambda$. Moreover, $w_0\lambda$ is the unique minimal element of the set $P(\lambda):=\{w\mu:w\in\cal W, \mu\in P^+,\mu\le\lambda\}$.
\item If $i\in I$ and $w\in\cal W$ are such that $\ell(s_iw)=\ell(w)+1$, then $w^{-1}\alpha_i\in R^+$. In particular, $w\lambda+\alpha_i\notin P(\lambda)$.\hfill\qedsymbol
\end{enumerate}
\end{lem}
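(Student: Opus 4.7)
The two statements are standard facts about the Weyl group action on dominant weights. Part (a) reduces to the classical monotonicity $w\mu\le\mu$ for $\mu\in P^+$, and part (b) reduces to (a) via the algebraic identity $w\lambda+\alpha_i=w(\lambda+w^{-1}\alpha_i)$.

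\textbf{Plan for (a).} The key input is that for every $\mu\in P^+$ and $w\in\cal W$ one has $w\mu\le\mu$. I would establish this by induction on $\ell(w)$: if $w=s_iw'$ with $\ell(w)=\ell(w')+1$, then
\[
\mu-w\mu \;=\; (\mu-w'\mu)+\langle w'\mu,\alpha_i^\vee\rangle\,\alpha_i,
\]
and the second summand is a non-negative multiple of $\alpha_i$ because $\langle w'\mu,\alpha_i^\vee\rangle=\langle\mu,(w')^{-1}\alpha_i^\vee\rangle\ge 0$, using that $(w')^{-1}\alpha_i^\vee$ is a positive coroot (which is the $w'$-analogue of the Coxeter fact invoked in (b)). Combining with $\mu\le\lambda$ immediately gives $w\mu\le\lambda$. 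For the minimality assertion, I would show the chain $w_0\lambda\le w_0\mu\le w\mu$: the right inequality is just that $w_0\mu$, being antidominant, is the lowest element of the orbit $\cal W\mu$; for the left, I would apply $w_0$ to $\lambda-\mu\in Q^+$ and use that $w_0$ sends $R^+$ to $-R^+$, so $w_0(Q^+)=-Q^+$ and hence $w_0\lambda-w_0\mu\in -Q^+$. Uniqueness of the minimum is automatic because $w_0\lambda\in P(\lambda)$ (take $\mu=\lambda$, $w=w_0$).

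\textbf{Plan for (b).} The first assertion is the classical length-positivity criterion for Coxeter groups, directly from Humphreys' book. For the ``in particular'' part, set $\beta:=w^{-1}\alpha_i$, which lies in $R^+$ by what was just recalled, and note the identity
\[
w\lambda+\alpha_i \;=\; w(\lambda+\beta).
\]
Let $\nu\in P^+$ denote the unique dominant representative of the $\cal W$-orbit of $\lambda+\beta$. The monotonicity proved in part (a) forces $\lambda+\beta\le\nu$, whence $\nu\ge\lambda+\beta>\lambda$, strictly because $\beta\ne 0$. If $w\lambda+\alpha_i$ were in $P(\lambda)$, writing it as $w'\mu$ with $\mu\in P^+$ and $\mu\le\lambda$ would make $\mu$ the dominant representative of the $\cal W$-orbit of $w(\lambda+\beta)$, which coincides with the orbit of $\lambda+\beta$; by uniqueness of that representative, $\mu=\nu>\lambda$, contradicting $\mu\le\lambda$.

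No serious obstacle is anticipated. The only conceptual point is spotting the identity $w\lambda+\alpha_i=w(\lambda+\beta)$, which reinterprets an additive shift at the $w$-twisted point $w\lambda$ as a translation of $\lambda$ by a positive root inside $\lambda$'s own $\cal W$-orbit; this is precisely what pushes the dominant representative strictly above $\lambda$ and thereby out of $P(\lambda)$.
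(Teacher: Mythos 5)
Your proof is correct: the paper offers no argument for this lemma, merely citing Humphreys, and your derivation — the induction giving $w\mu\le\mu$ for dominant $\mu$, the length criterion $\ell(s_iw)=\ell(w)+1$ iff $w^{-1}\alpha_i\in R^+$, and the identity $w\lambda+\alpha_i=w(\lambda+w^{-1}\alpha_i)$ forcing the dominant orbit representative strictly above $\lambda$ — is precisely the standard route those cited results supply. Nothing is missing.
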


If  $\lie a$ is a Lie algebra over $\mathbb C$, define its loop algebra to be $\tlie a=\lie a\otimes_{\mathbb C}  \mathbb C[t,t^{-1}]$ with bracket given by $[x \otimes t^r,y \otimes t^s]=[x,y] \otimes t^{r+s}$. Clearly $\lie a\otimes 1$ is a subalgebra of $\tlie a$ isomorphic to $\lie a$ and, by abuse of notation, we will continue denoting its elements by $x$ instead of $x\otimes 1$.  We have $\tlie g = \tlie n^-\oplus \tlie h\oplus \tlie n^+$ and $\tlie h$ is an abelian subalgebra. The elements $x_{i}^\pm\otimes t^r$ and $h_i\otimes t^r$ will be denoted by $x_{i,r}^\pm$ and $h_{i,r}$, respectively. Let $\hat C=(c_{ij})_{i,j\in\hat I}, \hat I=I\sqcup\{0\}$, be the extended Cartan matrix of $\lie g$. Then $\tlie g$ is the quotient of the affine Kac-Moody algebra $\hlie g$ associated to $\hat C$ (without derivation) by its one-dimensional center. There exists a unique positive integer $d_0$ such that, if $\hat D = {\rm diag}(d_i,i\in \hat I)$, then $\hat D\hat C$ is symmetric.

Let $U(\lie a)$ denote the universal enveloping algebra of a Lie algebra $\lie a$. Then $U(\lie a)$ is a subalgebra of $U(\tlie a)$. Moreover, if $\lie a$ is a direct sum of two of its subalgebras, say $\lie a=\lie b\oplus\lie c$, then multiplication establishes an isomorphism of vector spaces $U(\lie b)\otimes U(\lie c)\to U(\lie a).$
The assignments $\triangle: \lie a\to U(\lie a)\otimes U(\lie a), x\mapsto x\otimes 1+1\otimes x$, $S:\lie a\to \lie a,
x\mapsto -x$,  and $\epsilon: \lie a\to \mathbb C, x\mapsto 0$, can
be uniquely extended so that $U(\lie a)$ becomes a Hopf algebra with
comultiplication $\triangle$, antipode $S$, and  counit $\epsilon$.
Given a Hopf algebra $H$, we shall denote by $H^0$ the augmentation ideal of  $H$, i.e., the kernel of its counit.

\subsection{Quantum Loop Algebras}
Let $\blb C(q)$ be the ring of rational functions in an indeterminate $q$ and define
\begin{equation*}
[m]_q=\frac{q^m -q^{-m}}{q -q^{-1}},\ \ \ \ [r]_q!
=\prod_{j=1}^r[j]_q,\ \ \ \ \tqbinom{m}{r}_q  = \frac{1}{[r]_q!}\prod_{j=0}^{r-1}[m-j]_q,
\end{equation*}
for $r,m\in\mathbb Z, r\ge 0$. If $m\ge r$, then $\tqbinom{m}{r}_q = \displaystyle{\frac{[m]_q!}{[r]_q![m-r]_q!}}$.
Set $\mathbb A= \bb C [q, q^{-1}]$ and recall  that $[m]_q, [m]_q!,\tqbinom{m}{r}_q \in \mathbb A$. Thus, when $q$ specializes to a non-zero complex number $\zeta$, then $[m]_q, [m]_q!, \tqbinom{m}{r}_q $ specialize to complex numbers which will be denoted by $[m]_\zeta, [m]_\zeta!,\tqbinom{m}{r}_\zeta$, respectively.

Set $q_i=q^{d_i}$ and $[m]_i=[m]_{q_i}$. The quantum affine algebra $U_q(\hlie g)$ is  the algebra over $\mathbb C(q)$ with generators $x_i^\pm, k_i^{\pm 1}, i\in\hat I$, satisfying the following defining relations:
\begin{gather*}
k_ik_i^{-1} = k_i^{-1}k_i =1, \quad  k_ik_j =k_jk_i, \quad k_ix_j^\pm k_i^{-1} = q_i^{\pm c_{ij}}x_j^{\pm},\quad [x_i^+ ,  x_j^-]=\delta_{i,j}  \frac{ k_i-k_i^{-1}}{q_i - q_i^{-1}},\ \ i,j\in\hat I,\\
\sum_{m=0}^{1-c_{ij}} (x_i^{\pm})^{(m)}x_j^{\pm}(x_i^{\pm})^{(1-c_{ij}-m)}=0,\ \ \text{if $i\ne j$}
\end{gather*}
where $(x_i^\pm)^{(m)} = \frac{(x_i^\pm)^m}{[m]_i!}$ for all $m\in\mathbb Z_{\ge 0}, i\in\hat I$. The last set of relations is known under the name of  quantum Serre relations. Let $\theta$ be the highest root of $\lie g$ and write $\theta=\sum_{i\in I} \theta_i\alpha_i$,
$\theta_i\in \bb Z_{\geq 0}$. The quantum loop algebra  $\uqh g$ of $\lie g$ is  the quotient of $U_q(\hlie g)$ by the two-sided ideal generated by $1-k_0\prod_{i\in I}k_i^{\theta_i}$.
It was proved in \cite{be} that \uqt g is isomorphic to the $\mathbb C(q)$-algebra with generators $x_{i,r}^{\pm}, k_i^{\pm 1}, h_{i,s}, i\in I, r,s\in\blb Z, s\ne 0$ satisfying the following defining relations:
\begin{gather*}
k_ik_i^{-1} = k_i^{-1}k_i =1, \ \  k_ik_j =k_jk_i,\ \  k_ih_{j,r}=h_{j,r}k_i ,\; \ \; h_{i,r}h_{j,s}=h_{j,s}h_{i,r},\\
k_ix_{j,r}^\pm k_i^{-1} = q_i^{{}\pm c_{ij}}x_{j,r}^{{}\pm{}},\qquad  [h_{i,r} , x_{j,s}^{{}\pm{}}] =  \pm\frac1r[rc_{ij}]_i x_{j,r+s}^{{}\pm{}},\\
x_{i,r}^{\pm}x_{j,s}^{\pm} -q_i^{\pm c_{ij}}x_{j,s}^{\pm}x_{i,r}^{\pm} =q_i^{\pm c_{ij}}x_{i,r-1}^{\pm}x_{j,s+1}^{\pm} -x_{j,s+1}^{\pm}x_{i,r-1}^{\pm},\\
[x_{i,r}^+ ,  x_{j,s}^-]=\delta_{i,j}  \frac{ \psi_{i,r+s}^+ - \psi_{i,r+s}^-}{q_i - q_i^{-1}},\\
\sum_{\sigma\in S_m}\sum_{k=0}^m(-1)^k
\left[\begin{matrix}m\\k\end{matrix}\right]_{i}
  x_{i, r_{\sigma(1)}}^{{}\pm{}}\ldots x_{i,r_{\sigma(k)}}^{{}\pm{}}
  x_{j,s}^{{}\pm{}} x_{i, r_{\sigma(k+1)}}^{{}\pm{}}\ldots
  x_{i,r_{\sigma(m)}}^{{}\pm{}} =0,\ \ \text{if $i\ne j$},
\end{gather*}

for all sequences of integers $r_1,\ldots, r_m$, where $m
=1-c_{ij}$, $S_m$ is the symmetric group on $m$ letters, $\psi_{i,\mp r}^{{}\pm{}}=0$ if $r>0$, and $\psi_{i,\pm r}^{{}\pm{}}$ for $r\ge 0$ are determined by equating powers of
$u$ in the following equality of formal power series:
$$\Psi_i^\pm(u) := \sum_{r=0}^{\infty}\psi_{i,\pm
r}^{{}\pm{}}u^{r} = k_i^{{}\pm 1}
{\text{exp}}\left(\pm(q_i-q_i^{-1})\sum_{s=1}^{\infty}h_{i,\pm s}
u^{s}\right).$$

This realization is known in the literature as Drinfeld's loop-like realization of  $U_q(\tlie g)$. The above isomorphism sends $x_i^\pm$ to $x_{i,0}^\pm$ for all $i\in I$. Thus, we may denote $x_{i,0}^\pm$ by $x_{i}^\pm$ when convenient and this should not cause confusion. We will mostly use Drinfeld's realization as it is more convenient for studying finite-dimensional representations than the original presentation.

Denote by $\uqh{n^\pm}, \uqh h$  the subalgebras of \uqh g generated by $\{x_{i,r}^\pm\}, \{k_i^{\pm1}, h_{i,s}\}$, respectively. Let  \uq g be  the subalgebra generated by $x_{i,0}^\pm, k_i^{\pm 1}$ and define \uq{n^\pm}, \uq h in the obvious way.  Note that $\psi_{i,0}^{{}\pm{}}=k_i^{{}\pm 1}$ and that \uq g is the quantum group as defined in \cite {lroot}.  Multiplication establishes isomorphisms of $\blb C(q)$-vectors spaces:
$$\uq g \cong \uq{n^-} \otimes \uq h \otimes \uq{n^+} \qquad\text{and}\qquad \uqh g \cong \uqh{n^-} \otimes \uqh h \otimes \uqh{n^+}.$$
We will also consider the subalgebra \uqh{g_i} generated by $k_i^{\pm 1}, h_{i,r}, x^\pm_{i,s}$ for all $r,s\in \blb Z, r\ne 0$. The algebra \uqh{g_i} is isomorphic to $U_{q_i}(\tlie{sl}_2)$. The subalgebra \uq{g_i} is defined similarly.

We shall make use of the following proposition whose proof is straightforward.

\begin{prop}\label{p:ssp}
Let $a\in\mathbb C(q)^\times$. Then, there exists a unique $\mathbb C(q)$-algebra automorphism $\varrho_a$ of $U_q(\tlie g)$ such that $\varrho_a$ is the identity on $U_q(\lie g)$ and $\varrho_a(x_{i,r}^\pm) = a^r x_{i,r}^\pm$  for all $i\in I, r\in \mathbb Z$. Moreover, $\varrho_a(h_{i,r}^\pm) = a^r h_{i,r}^\pm$ for all $i\in I, r\in \mathbb Z$.\hfill\qedsymbol
\end{prop}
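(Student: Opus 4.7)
The plan is to exploit the Drinfeld loop-like presentation of $U_q(\tlie g)$ displayed above: I will define $\varrho_a$ on the Drinfeld generators by
$$\varrho_a(k_i^{\pm 1})=k_i^{\pm 1},\qquad \varrho_a(x_{i,r}^\pm)=a^r x_{i,r}^\pm,\qquad \varrho_a(h_{i,s})=a^s h_{i,s},$$
and then verify that these formulas respect every defining relation. Uniqueness is then immediate, since the Drinfeld generators (the fixing of $U_q(\lie g)$ being already built into the formulas for $k_i^{\pm 1}$ and $x_{i,0}^\pm$) form a generating set of $U_q(\tlie g)$. Once existence of the resulting algebra endomorphism is established, the analogous endomorphism $\varrho_{a^{-1}}$ supplies a two-sided inverse, so $\varrho_a$ is automatically an automorphism.

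The core of the argument is the existence step, and the organizing principle is that the Drinfeld presentation is homogeneous for the $\bb Z$-grading $\deg(x_{i,r}^\pm)=r$, $\deg(h_{i,s})=s$, $\deg(k_i^{\pm 1})=0$. I plan to run through each defining relation and check that every monomial appearing on either side has the same total loop-degree, so that both sides are multiplied by the same power of $a$ when $\varrho_a$ is applied. The Cartan-type relations are of degree zero or degree $r$; the relation $[h_{i,r},x_{j,s}^\pm]=\pm\tfrac{1}{r}[rc_{ij}]_i\,x_{j,r+s}^\pm$ has degree $r+s$ on both sides; the mixed $x$-commutation rescales uniformly by $a^{r+s}$ because $(r-1)+(s+1)=r+s$; and the quantum Serre relations are sums of monomials, each of total degree $r_1+\cdots+r_m+s$ independently of $\sigma$ and $k$. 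Hence every relation is preserved by the prescribed scaling.

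The single line that requires a small calculation rather than bare degree counting is the relation $[x_{i,r}^+,x_{j,s}^-]=\delta_{ij}(\psi_{i,r+s}^+-\psi_{i,r+s}^-)/(q_i-q_i^{-1})$, since $\varrho_a$ has not been prescribed directly on the $\psi_{i,t}^\pm$. To handle this, and to obtain the moreover clause of the statement, I will apply $\varrho_a$ to the generating series
$$\Psi_i^\pm(u)=k_i^{\pm 1}\exp\Bigl(\pm(q_i-q_i^{-1})\sum_{s\ge 1}h_{i,\pm s}u^s\Bigr)$$
and use $\varrho_a(h_{i,\pm s})=a^{\pm s}h_{i,\pm s}$ to read off $\varrho_a(\Psi_i^\pm(u))=\Psi_i^\pm(a^{\pm 1}u)$; equating coefficients of $u^r$ yields $\varrho_a(\psi_{i,\pm r}^\pm)=a^{\pm r}\psi_{i,\pm r}^\pm$, which matches the $a^{r+s}$ scaling of the left-hand side of the above bracket relation. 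I do not anticipate any substantive obstacle, since this is a standard grading-by-loop-degree automorphism construction; the only mild point requiring care is the $\psi$-computation just described, which is both the subtle relation to verify and the source of the ``moreover'' formula.
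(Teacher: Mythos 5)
Your proof is correct and is exactly the straightforward verification the paper has in mind (it states the proposition without proof, calling it straightforward): define $\varrho_a$ on the Drinfeld generators, check each defining relation is homogeneous for the loop-degree grading, handle the $[x^+,x^-]$ relation and the ``moreover'' clause via $\varrho_a(\Psi_i^\pm(u))=\Psi_i^\pm(a^{\pm1}u)$, and get invertibility from $\varrho_{a^{-1}}$. No gaps; the only point worth a half-sentence is that uniqueness (and the ``moreover'' for an arbitrary such automorphism) uses that the $h_{i,s}$, via the $\psi_{i,t}^\pm$, already lie in the subalgebra generated by $k_i^{\pm1}$ and the $x_{i,r}^\pm$.
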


\subsection{Restricted Specialization} We now recall the definition of the restricted specializations of \uq g and \uqt g (cf. \cite{cproot, fmroot,lroot}).
For $ r, c \in \blb Z, s,m\in \zp, i\in I$, let $(x_{i,r}^\pm)^{(m)} = \frac{(x_{i,r}^\pm)^m}{[m]_i!}\in\uqt g$ and
\begin{equation}
\left[\begin{matrix} k_i; c\\
s\end{matrix}\right]=\prod_{m=1}^s\frac{k_iq_i^{c+1-m}-k_i^{-1}q_i^{-c-1+m}}
{q_i^m-q_i^{-m}}.
\end{equation}
Denote \tqbinom{k_i;0}{s} by \tqbinom{k_i}{s}. Define elements $\Lambda_{i,r}, i\in I, r\in\blb Z$, of \uqh g by
\begin{equation}
\Lambda_i^\pm(u)=\sum_{r=0}^\infty \Lambda_{i,\pm r} u^{r}=
\exp\left(-\sum_{s=1}^\infty\frac{h_{i,\pm s}}{[s]_{i}}u^{s}\right).
\end{equation}
Note that \begin{equation}
\Psi_i^\pm(u)=k_i^{\pm
1}\frac{\Lambda_i^\pm(uq_i^{\mp 1})}{\Lambda_i^\pm(uq_i^{\pm 1})}
\end{equation}
where the division is that of formal power series in $u$ with coefficients in \uqt g. Notice also that  \uqt h is generated by $k_i^{\pm 1}$ and $\Lambda_{i,r}$, $i\in I, r\in \blb Z$, as a $\mathbb C(q)$-algebra.
Let \uqtres g be the $\mathbb A$-subalgebra of $\uqt g$ generated by the elements $(x_{i,r}^\pm)^{(m)}$ and $k_i$ for all $i\in I, r\in\blb Z, m\in \mathbb Z_{\ge 0}$. Set $\uqtres{n^{\pm}}=\uqtres g\cap \uqt n^\pm$ and $\uqtres h=\uqtres g\cap \uqt h$. It was proved in \cite[Proposition 6.1]{cproot} that multiplication establishes an isomorphism of $\mathbb A$-modules
\begin{equation}
\uqtres g\cong \uqtres{n^-}\otimes\uqtres h \otimes\uqtres{n^+}.
\end{equation}
It also follows that
 \uqtres{n^{\pm}} is the $\mathbb A$-subalgebra of \uqt g generated by $\{(x_{i,r}^\pm)^{(m)}:i\in I, r\in\mathbb Z\}$ and  \uqtres h is the $\mathbb A$-subalgebra of \uqt g generated by  $\{k_i,\tqbinom{k_i}{s}, \Lambda_{i,r}: i\in I, r\in\mathbb Z, s\in\mathbb Z_{\ge 0}\}$.
The subalgebras \uqres g, \uqres{n^\pm}, and \uqres h are defined  similarly and the corresponding statements to the ones above follow as consequences. We note that $\tqbinom{k_i;c}{m}\in \uqres g$ and record the following relations (cf. \cite{l-adv}):
\begin{gather} \label{e:xhcrel}
\bik m^c (x_{i,r}^\pm)^{(p)}= (x_{i,r}^\pm)^{(p)} \bik m^{c\pm pc_{ij}}, \qquad p\in\znn.
\end{gather}

Given $\xi\in\mathbb C^\times$, denote by $\epsilon_\xi$ the evaluation map $\mathbb A \to \blb C$ sending $q$ to $\xi$ and by $\blb C_\xi$ the $\mathbb A$-module obtained by pulling-back $\epsilon_\xi$.
Set
\begin{equation}
\us_\xi a= \blb C_\xi\otimes_{\mathbb A}\uqres a ,
\end{equation}
for $\lie a = \lie g, \lie n^\pm, \lie h, \tlie g, \tlie n^\pm, \tlie h$. The algebra \us_\xi a is called the restricted specialization of $\uq a$ at $q=\xi$. We shall denote an element of the form $1\otimes x\in\us_\xi a$ with $x\in \uqres a$  simply by $x$. If $\xi$ is not a root of unity, the algebra \ust_\xi g is isomorphic to the algebra given by generators and relations analogous to those of \uqt g with $\xi$ in place of $q$ and its representation theory is parallel to that of \uqt g. We shall be particularly interested in the cases $\xi=\zeta$ where $\zeta$ is a nontrivial root of unity and $\xi=1$. In the former case, the algebras \usres a are also known as Lusztig's quantum groups.

Henceforth, assume $l$ is an odd integer $\ge3$ relatively prime to the lacing number of $\lie g$. Let $\zeta\in\blb C$ be a primitive $l$-th root of unity and set $\zeta_i=\zeta^{d_i}$ for $i\in I$. The hypotheses on $l$ imply that $\zeta_i^2$ is a root of $1$ of order $l$ as well. In particular, $l$ is minimal such that $[l]_{\zeta_i}=0$. Let $\mathbb C'$ be the set consisting of $q$ and all nonzero complex numbers whose multiplicative order is either infinity or odd and relatively prime to the lacing number of $\lie g$. Henceforth, for simplicity of notation, the expression ``root of unity'' will pertain only to nontrivial roots of unity and not to $1$. Unless stated otherwise, $\xi$ will always denote an arbitrary element of $\mathbb C'$. We set $l=1$ when $\xi$ is not a root of unity for notational convenience. An element of $\mathbb C'$ which is not a root of unity will be referred to as a generic element (so $1$ and $q$ are generic). The expression ``the order of $\xi$'' will mean the order of the multiplicative subgroup generated by $\xi$. Notice $1$ is the only generic element of finite order.

\begin{rem}
Although several of the results that follow can be proved for roots of unity of even order as well, some of the proofs (and even the precise statements) require many more technicalities. For simplicity and uniformity, we shall keep the above made assumptions on $\xi$.
\end{rem}

The algebra \ust_\xi g is a Hopf algebra with comultiplication given by
\begin{equation}
\Delta (x_i^+) = x_i^+\otimes 1+ k_i\otimes x_i^+, \qquad \Delta (x_i^-) = x_i^-\otimes k_i^{-1}  + 1\otimes x_i^-, \qquad \Delta(k_i) = k_i\otimes k_i
\end{equation}
for all $i\in\hat I$. If $\xi\ne 1$, an expression for the comultiplication $\Delta$ of \ust_\xi g in terms of the generators $x^\pm_{i,r}, h_{i,r}, k_i^{\pm1}$ is not known. The following partial information obtained in \cite{be,bcp,cpmin,dam} suffices for our purposes (see also Lemma 7.5 of \cite{cproot}). Given $m\in\mathbb Z_\ge 0$, let $(X^\pm)^{(m)}$  be the $\mathbb C(\xi)$-span of the elements $(x_{i_1,r_1}^\pm)^{(m_1)}\cdots (x_{i_s,r_s}^\pm)^{(m_s)}$ for all $s\in\mathbb Z_{\ge 0}, i_j\in I, r_j, m_j\in\blb Z, m_j\ge 0, j=1,\dots,s, \sum_{j=1}^sm_j=m$.

\begin{prop}\label{p:comultip}  The restriction of  $\Delta$ to \ust_\xi{g_i} satisfies:
\begin{enumerate}
\item $\Delta(\tqbinom{k_i}{r}) =\sum_{j=0}^r \tqbinom{k_i}{r-j}k_i^{-j}\otimes \tqbinom{k_i}{j}k_i^{r-j}$, for every $r\in\mathbb Z_{\ge 0}$.

\item Modulo $\ust_\xi g X^-\otimes \ust_\xi g(X^+)^{(2)} + \ust_\xi g X^-\otimes \ust_\xi g X^+$,
we have
\begin{align*}
\Delta (x_{i,r}^+)& = x_{i,r}^+\otimes 1+ k_i\otimes x_{i,r}^+ +\sum_{j=1}^r \psi^+_{i,j}\otimes x_{i,r-j}^+ \ \ (r\ge 0),\\
\Delta (x_{i,-r}^+)& =k_i^{-1}\otimes x_{i,-r}^+ + x_{i,-r}^+\otimes 1 +\sum_{j=1}^{r-1} \psi^-_{i, -j}\otimes  x_{i,-r+j}^+ \ \ (r> 0).
\end{align*}

\item  Modulo $\ust_\xi g(X^-)^{(2)}\otimes\ust_\xi gX^+ + \ust_\xi gX^-\otimes \ust_\xi g X^+$,
we have
\begin{align*}
\Delta (x_{i,r}^-)& =x_{i,r}^-\otimes k_i+ 1\otimes x_{i,r}^-+\sum_{j=1}^{r-1} x_{i,r-j}^-\otimes \psi^+_{i, j} \ \ (r>  0),\\
\Delta (x_{i,-r}^-)& = x_{i,-r}^-\otimes k_i^{-1}  + 1\otimes x_{i,-r}^- +\sum_{j=1}^r  x_{i,-r+j}^-\otimes \psi^-_{i, -j} \ \ (r\ge  0).
\end{align*}

\item Modulo $\ust_\xi gX^-\otimes \ust_\xi g X^+$, the following hold for every $r\in\mathbb Z_{>0}$
\begin{gather*}
\Delta(h_{i,r}) =h_{i,r}\otimes 1+1\otimes h_{i,r},\\
\Delta(\Lambda_{i,\pm r}) =\sum_{j=0}^r \Lambda_{i,\pm(r-j)}\otimes \Lambda_{i,\pm j},\qquad\text{and}\qquad
\Delta(\psi_{i,\pm r}^\pm) =\sum_{j=0}^r \psi_{i,\pm(r-j)}^\pm\otimes \psi_{i,\pm j}^\pm
\end{gather*}
\end{enumerate}
\end{prop}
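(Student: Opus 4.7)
The plan is to lift the entire statement to the $\mathbb A$-form \uqtres g and then specialize via $\epsilon_\xi$: this is legitimate because the divided powers $(x_{i,r}^\pm)^{(m)}$, the elements \tqbinom{k_i}{r}, the $\Lambda_{i,\pm r}$, and the $\psi_{i,\pm r}^\pm$ all lie in \uqtres g, and the ideals appearing on the right-hand side of each congruence are specializations of their $\mathbb A$-counterparts. It therefore suffices to establish the four congruences in $U_q(\tlie g)$ over $\mathbb C(q)$.

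For part (a), the computation is entirely finite-type and takes place in $U_q(\lie g_i)\cong U_{q_i}(\lie{sl}_2)$. Starting from the defining product expansion of \tqbinom{k_i}{r} together with $\Delta(k_i)=k_i\otimes k_i$, the asserted coproduct formula follows directly from the $q$-Vandermonde identity. This is standard material going back to Lusztig.

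For parts (b) and (c), I would use Beck's realization of the Drinfeld generators $x_{i,\pm r}^\pm$ as iterated braid-operator images of the Chevalley generators, or equivalently as iterated brackets with $h_{i,\pm 1}$. The argument is by induction on $r$, with base case $r=0$ given by the defining coproducts $\Delta(x_i^\pm)$. For the inductive step one applies $\Delta$ to the relation $[h_{i,\pm 1}, x_{i,r}^\pm]=\pm[2]_i\,x_{i,r\pm 1}^\pm$, substitutes a preliminary form of (d) for $\Delta(h_{i,\pm 1})$, and collects the result. The main obstacle is precisely to verify that the cross terms produced at each step land inside $\ust_\xi g X^-\otimes\ust_\xi g(X^+)^{(2)}+\ust_\xi gX^-\otimes\ust_\xi gX^+$ (or its $(X^-)^{(2)}$ counterpart), since applying $\Delta$ to a commutator of Drinfeld generators repeatedly generates new ``creation-on-left $\otimes$ annihilation-on-right'' contributions that must be carefully bookkept. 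This bookkeeping is exactly the content of \cite{be,bcp,cpmin,dam} and Lemma 7.5 of \cite{cproot}, and in a clean write-up I would simply invoke those references.

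Part (d) is then derived from (b) and (c) together with the Drinfeld commutator relation $[x_{i,r}^+, x_{i,s}^-]=(\psi_{i,r+s}^+-\psi_{i,r+s}^-)/(q_i-q_i^{-1})$: applying $\Delta$, substituting the formulas from (b) and (c), and discarding all terms in $\ust_\xi gX^-\otimes\ust_\xi gX^+$, one recovers the stated formula for $\Delta(\psi_{i,\pm r}^\pm)$ by induction on $r$. The formula for $\Delta(\Lambda_{i,\pm r})$ is then extracted from the generating-series identity $\Psi_i^\pm(u)=k_i^{\pm 1}\Lambda_i^\pm(uq_i^{\mp 1})/\Lambda_i^\pm(uq_i^{\pm 1})$, using that $\Lambda_i^\pm(u)$ has constant term $1$ and is therefore invertible as a power series, together with $\Delta(k_i)=k_i\otimes k_i$; finally $\Delta(h_{i,\pm r})$ is read off from $\Lambda_i^\pm(u)=\exp(-\sum_{s\ge 1}(h_{i,\pm s}/[s]_i)u^s)$ by taking the formal logarithm modulo $\ust_\xi gX^-\otimes\ust_\xi gX^+$.
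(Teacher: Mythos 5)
Your proposal takes essentially the same route as the paper: the paper gives no proof of this proposition, presenting it as a compilation of the coproduct computations of Beck, Beck--Chari--Pressley, Chari--Pressley, and Damiani (see also Lemma 7.5 of the Chari--Pressley root-of-unity paper), which is exactly where you also place the substantive bookkeeping after your (correct) outline of the standard reduction --- Lusztig's formula for the Cartan part, induction on $r$ via $[h_{i,\pm1},x^\pm_{i,r}]=\pm[2]_i x^\pm_{i,r\pm1}$, and the generating-series manipulations for part (d) --- together with passage through the $\mathbb A$-form and specialization. Nothing further is needed.
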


If $V$ is a representation of a Hopf algebra $H$, its dual space $V^*$ is equipped with the structure of a representation by using the antipode. We shall not need the precise expression for the antipode of \uqt g.

\begin{lem}\label{l:hinres}
The elements $\tilde h_{i,s}:=\frac{h_{i,s}}{[s]_i}$ belong to \uqtres g. Moreover, the image of $\tilde h_{i,s}$ in \ustres g is nonzero. In particular, \ustres h is generated by $\{k_i,\tqbinom{k_i}{l}, \tilde h_{i,s}: i\in I, s\in\mathbb Z_{>0}\}$ as a $\mathbb C$-algebra (with identity).
\end{lem}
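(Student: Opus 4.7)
The plan is to handle three tasks in order: establishing that $\tilde h_{i,s}$ lies in \uqtres g, verifying its image in \ustres g is nonzero, and deriving the generation statement. The backbone of the argument is the exponential identity $\Lambda_i^\pm(u)=\exp(-\sum_{s\ge 1}\tilde h_{i,\pm s}u^s)$, which ties the two generating families $\{\Lambda_{i,\pm r}\}$ and $\{\tilde h_{i,\pm s}\}$ together.

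For integrality, I would induct on $s$. The case $s=1$ is immediate, since $\tilde h_{i,1}=h_{i,1}$ and $h_{i,1}=k_i^{-1}[x_{i,1}^+,x_{i,0}^-]$ lies in \uqtres g. For larger $s$, extracting the coefficient of $u^r$ from the logarithmic derivative of the exponential identity yields the recursion
\[
r\,\Lambda_{i,\pm r}+\sum_{s=1}^{r-1} s\,\tilde h_{i,\pm s}\,\Lambda_{i,\pm(r-s)}+r\,\tilde h_{i,\pm r}=0,
\]
from which one concludes only that $r\,\tilde h_{i,\pm r}\in\uqtres g$. To cancel the spurious factor of $r$ one must appeal to Garland-type identities expressing $\tilde h_{i,s}$ as an $\mathbb A$-linear combination of iterated commutators of the divided powers $(x_{i,r}^\pm)^{(m)}$; this is the content of \cite[Proposition 6.1]{cproot}, which I would invoke rather than reprove. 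This is the main obstacle.

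For nonvanishing after specialization, I would split into cases on whether $l$ divides $s$. If $l\nmid s$, then $[s]_{\xi_i}\ne 0$ in $\mathbb C$, and the honest quotient $h_{i,s}/[s]_{\xi_i}$ is visibly nonzero: for example $h_{i,s}$ acts nontrivially on a generic evaluation module, whence so does the scalar multiple $\tilde h_{i,s}$. If $l\mid s$, one invokes the Frobenius homomorphism $\ust_\xi g\to U(\tlie g)$ developed in Section \ref{s:stb}: under this map $\tilde h_{i,s}$ corresponds, up to a nonzero scalar, to $h_{i,s/l}\in U(\tlie g)$, which is nonzero, forcing $\tilde h_{i,s}$ itself to be nonzero in \ustres g.

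Finally, for the generation statement, I would use the description of \uqtres h recalled just before the lemma as being generated over $\mathbb A$ by $\{k_i,\tqbinom{k_i}{s},\Lambda_{i,r}\}$, and specialize. Over $\mathbb C$ the exponential identity inverts freely, since the denominators introduced by the logarithm are integer factorials, invertible in $\mathbb C$. Consequently each $\Lambda_{i,\pm r}$ is a polynomial in the $\tilde h_{i,\pm s}$ and vice versa, so swapping one generating family for the other is legitimate. The binomials $\tqbinom{k_i}{s}$ with $s<l$ reduce to polynomials in $k_i^{\pm 1}$ over $\mathbb C$, and Lusztig's divided-power identities at the root of unity rewrite every $\tqbinom{k_i}{s}$ with $s\ge l$ as a polynomial in $k_i^{\pm 1}$ and $\tqbinom{k_i}{l}$. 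Combining these reductions yields the claimed generating set.
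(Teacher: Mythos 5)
Your overall strategy is the paper's: both arguments hinge on the identity $\Lambda_i^\pm(u)=\exp\left(-\sum_{s\ge1}\tilde h_{i,\pm s}u^s\right)$ relating the two families. However, the step you single out as "the main obstacle" is not an obstacle, and your proposed fix does not work as cited. The ground ring here is $\mathbb A=\mathbb C[q,q^{-1}]$, which contains $\mathbb Q$; so every nonzero integer is a unit in $\mathbb A$, and your own recursion $r\Lambda_{i,\pm r}+\sum_{s=1}^{r-1}s\,\tilde h_{i,\pm s}\Lambda_{i,\pm(r-s)}+r\,\tilde h_{i,\pm r}=0$ already closes the induction: dividing by $r$ is legitimate, since $\tfrac{s}{r}\in\mathbb Q\subset\mathbb A$ and the $\Lambda_{i,\pm r}$ lie in \uqtres h. This is exactly what the paper does, only more directly: it expands $\ln\bigl(\sum_r\Lambda_{i,\pm r}u^r\bigr)$ and writes $\tilde h_{i,\pm s}$ as a $\mathbb Q$-linear combination of products of $\Lambda_{i,\pm r}$'s, with the term $\Lambda_{i,\pm s}$ occurring with coefficient $\pm1$. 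By contrast, \cite[Proposition~6.1]{cproot} is the triangular decomposition of \uqtres g, not a supply of Garland-type identities expressing $\tilde h_{i,s}$ in terms of iterated commutators of divided powers; no such identities are needed (and over $\mathbb Z[q,q^{-1}]$ the element $\tilde h_{i,s}$ would in general not be integral, e.g.\ $\tilde h_{i,2}=-\Lambda_{i,2}+\tfrac12\Lambda_{i,1}^2$, which is precisely why the rationals must be in the coefficient ring). So as written, your integrality step rests on a miscitation, but it is repaired simply by deleting the appeal and noting $\mathbb Q\subset\mathbb A$.

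The remaining two parts are correct but heavier than necessary, and the paper extracts them from the same expansion. Nonvanishing in \ustres g follows at once from the unitriangular shape $\tilde h_{i,\pm s}=\pm\Lambda_{i,\pm s}+(\text{products of }\Lambda_{i,\pm r}\text{ with }r<s)$ together with the $\mathbb A$-basis of \uqtres h coming from the triangular decomposition; no case split on $l\mid s$, no Frobenius map, and no evaluation modules are needed. (Your Frobenius computation for $l\mid s$ is in fact correct, since $\widetilde{\rm Fr}_\zeta(\Lambda_i^\pm(u))=\Lambda_i^\pm(u^l)$ forces $\widetilde{\rm Fr}_\zeta(\tilde h_{i,s})=\tfrac{l}{s}h_{i,s/l}$; but for $l\nmid s$ note that evaluation modules exist only in type $A$, so you would have to pass to the subalgebras $U_\xi(\tlie g_i)\cong U_{\xi_i}(\tlie{sl}_2)$ and use Lemma~\ref{l:sl2actev}, or simply read off the eigenvalue of $\tilde h_{i,s}$ on a highest-$\ell$-weight vector of $V_\xi(\gb\omega_{i,a})$, which is $a^s/s\ne0$.) Your generation argument — invert the exponential over $\mathbb C$ to recover the $\Lambda_{i,\pm r}$ from the $\tilde h_{i,\pm s}$, and reduce the Cartan binomials to $k_i$ and $\tqbinom{k_i}{l}$ via Lusztig's identities at a root of unity — is the intended one and matches what the paper leaves implicit.
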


\begin{proof}
We have
\begin{equation}
-\sum_{s=1}^\infty\tilde h_{i,\pm s}u^{s} = \ln\left(\sum_{r=0}^\infty \Lambda_{i,\pm r} u^{r}\right) = \sum_{t=0}^\infty \frac{(-1)^t}{t+1} \left(\sum_{r=1}^\infty \Lambda_{i,\pm r} u^{r}\right)^{t+1}.
\end{equation}
It follows that
$$\tilde h_{i,\pm s} = \sum_{0\le t\le s} b_{i,s,t} \tilde{\Lambda_{i,t}^{\pm}}$$
with  $b_{i,s,s}=1, b_{i,s,t}\in\mathbb Q$, and $\tilde{\Lambda_{i,t}^{\pm}}=\sum \Lambda_{i,\pm r_0} \cdots \Lambda_{i,\pm r_t}$  where $r_0+\cdots+r_t=s,  1\le r_j\le s$. Clearly the lemma now easily follows.
\end{proof}

Let \ustf_\xi g be the subalgebra of \ust_\xi g generated by \ust_\xi h and $x_{i,r}^\pm, i\in I, r\in\mathbb Z$. Define \usf_\xi g similarly. If $\xi$ is not a root of unity, $\ustf_\xi g=\ust_\xi g$ and $\usf_\xi g=\us_\xi g$, but for $\xi=\zeta$ they are proper subalgebras.

\begin{rem}
The algebras \usfin g and \ustfin g are enlargements (in the Cartan part only) of the so-called small quantum group and small quantum affine algebra (see \cite[\S2.4]{fmroot} for instance).
\end{rem}

\section{Braid Group and $\ell$-Lattices}\label{s:lattices}

\subsection{Quantum factorizations}

Given $a\in\mathbb C(\xi)^\times, r\in\mathbb Z_{\ge 0}$, define
\begin{equation}\label{e:qsf}
f_{a,r}(u) = \prod_{j=0}^{r-1} (1-a\xi^{r-1-2j}u)
\end{equation}
Even though the dependence on $\xi$ is missing in the notation above, this will not create a confusion.
Suppose $\xi$ has infinite order and observe that if $f(u)\in\mathbb C(\xi)[u]$ splits in $\mathbb C(\xi)[u]$ and $f(0)=1$, then there exist unique $m\in\mathbb Z_{\ge 0}$, $a_1,\dots,a_m\in\mathbb C(\xi)^\times,$ and $r_1,\dots,r_m\in\mathbb Z_{\ge 1}$  such that
\begin{equation}\label{e:qfactorp}
f(u) = \prod_{k=1}^m f_{a_k,r_k}(u)\qquad\text{with}\qquad \frac{a_k}{a_j}\ne \xi^{\pm(r_k+r_j-2p)}\quad\text{for}\quad k\ne j,\ 0\le p<\min\{r_k,r_j\}.
\end{equation}

\begin{defn}\label{d:qfactor1}
Assume $\xi$ is of infinite order. The factorization \eqref{e:qfactorp} is called the $\xi$-factorization of $f$ and the factors $f_{a_k,r_k}$ are called the $\xi$-factors of $f$.
\end{defn}

The above factorizations first appeared in \cite{cp:qaa} but the terminology $\xi$-factorization appears to have been first used in \cite{chhe:beyond}. In order to define the concept of $\xi$-factorization when $\xi$ has finite order $l$, consider $\phi_l:\mathbb C(q)[u]\to\mathbb C(q)[u]$, the $\mathbb C(q)$-algebra homomorphism such that $\phi_l(u) = u^l$. Observe that, if $a,b\in\mathbb C(q)$ are such that $b^l=a$, then
\begin{equation}
\phi_l(1-au)=1-au^l = \prod_{j=0}^{l-1} (1-b\xi^ju)
\end{equation}
and for every polynomial $f\in\mathbb C[u]$ with constant term 1 there exist unique polynomials $f',f''$ also with constant term 1 such that $f''$ is not divisible by $1-au^l$ for any $a\in\mathbb C^\times$ and
\begin{equation}\label{e:frobfactorp}
f  =  f'\phi_l(f'').
\end{equation}

Given $f\in\mathbb A(u)$, let $\bar f$ be the element of $\mathbb C(u)$ obtained from $f$ by evaluating $q$ at a given $\xi\in\mathbb C^\times$ (again, the fact that the notation does not make the dependence on $\xi$ explicit will not cause confusion). Then:
\begin{equation}\label{e:skrpoly}
\overline{f_{a,l}(u)}= 1-a^lu^l \qquad\text{for all}\qquad  a\in\mathbb C^\times.
\end{equation}
It is now easy to see that for every $f\in\mathbb C[u]$ satisfying $f(0)=1$, there exist unique $m_0,m_1\in\mathbb Z_{\ge 0}, a_1,\dots,a_{m_0}, b_1,\dots, b_{m_1}\in\mathbb C^\times$, and  $r_1,\dots,r_{m_0}\in\mathbb Z_{>0}$,  such that
\begin{gather}\notag
f(u) = \left(\prod_{k=1}^{m_0} f_{a_k,r_k}(u)\right)\left(\prod_{k=1}^{m_1} (1-b_ku^l)\right)\qquad \text{with}\qquad 0<r_k<l,\\\label{e:sfactorp}
\\ \qquad\text{and}\qquad  \notag \frac{a_k}{a_j}\ne \xi^{\pm(r_k+r_j-2p)} \qquad\text{for}\qquad 0\le p<\min\{r_k,r_j\}.
\end{gather}

\begin{defn}\label{d:qfactor2}
Assume $\xi$ has finite order. The factorization \eqref{e:sfactorp} is called the $\xi$-factorization of $f$. The factors $f_{a_k,r_k}$ are called the quantum $\xi$-factors of $f$ while the factors $1-b_ku^l$ are called the Frobenius $\xi$-factors. We will refer to both kinds of factors simply by the $\xi$-factors of $f$. The notion of multiplicity of a $\xi$-factor is defined in the obvious way.
\end{defn}

\begin{rem}
Notice that if $\xi=1$ the set of quantum factors of the $\xi$-factorization \eqref{e:sfactorp} is empty and \eqref{e:sfactorp} reduces to the usual factorization into a product of polynomials of degree 1. To unify the cases when $\xi$ is of finite and of infinite order, we say that the factors appearing in \eqref{e:qfactorp} are quantum $\xi$-factors and, hence, the set of Frobenius $\xi$-factors is always empty if $\xi$ has infinite order. The terminology ``Frobenius $\xi$-factor'' appears not to have been used before. The motivation for the terminology comes from the second statement of Theorem \ref{t:frobtensor}(b) below.
\end{rem}

\begin{defn}
Let $f\in\mathbb C(\xi)[u]$ be a $\xi$-factorizable polynomial. If $\xi\ne 1$, $f$ is said to be $\xi$-regular if its set of Frobenius $\xi$-factors is empty while $f$ is said to be $1$-regular if its factorization is multiplicity free.
Let $f,g\in\mathbb C[u]$ be $\xi$-factorizable with $\xi$-factors $\{f_{a_k,r_k}, 1-b_ju^l:k=1,\dots,m_0, j=1,\dots,m_1\}$ and $\{f_{a_k',r_k'}, 1-b_j'u^l:k=1,\dots,m_0', j=1,\dots,m_1'\}$, respectively. The ordered pair $(f,g)$ is said to be in $\xi$-resonant order (respectively  weak $\xi$-resonant) if $b_k\ne b_j'$ and
\begin{equation}\label{e:qresonantp}
\frac{a_k}{a_j'}\ne \xi^{-(r_k+r_j'-2p)} \quad\text{for all}\quad k,j\quad\text{and all}\quad 0\le p<r_k \text{ (respectively } 0\le p<\min\{r_k,r_j'\}).
\end{equation}
The polynomials $f$ and $g$ are said to be in general position if both $(f,g)$ and $(g,f)$ are in weak $\xi$-resonant ordering.
An $m$-tuple $(f_1,f_2,\cdots,f_m)$ of polynomials is said to be in (weak) $\xi$-resonant order if $(f_j,f_k)$ is in (weak) $\xi$-resonant order for every $j<k$. The concept of general position for a finite collection of polynomials is defined by requiring that they are pairwise in general position.
\end{defn}

\begin{rem}
Clearly, if $(f,g)$ is in $\xi$-resonant order it is in weak $\xi$-resonant order as well. In general, both $(f,g)$ and $(g,f)$ may not be in weak $\xi$-resonant order. The concept of resonant order was introduced in \cite{vcbraid}, but with different terminology. Namely, the polynomial $g$ was said to be in general position with respect to $f$ if the first condition of \eqref{e:qresonantp} was satisfied. The terminology ``resonant'' was used in \cite{em}. The above given definition of general position coincides with that introduced in \cite{cp:qaa}. Notice that $f$ and $g$ are in general position iff their Frobenius factors are distinct and the set with multiplicities of quantum factors of their product is the union of their sets of quantum factors. Also, if $\xi=1$, the concepts of (weak) resonant ordering and general position are equivalent. Notice that an $m$-tuple of polynomials of the form $(f_{a_1,1},\dots,f_{a_m,1})$ is in $\xi$-resonant order iff $a_j/a_k\ne \xi^{-2}$ for $j<k$.
\end{rem}

The next lemma is easily established.

\begin{lem}\label{l:regular}
Let $f\in\mathbb C(\xi)[u]$ be such that $f = \prod_{j=1}^m f_{a_j,1}$ for some $a_j\in\mathbb C(\xi)^\times, j=1,\dots,m$. Then, there exists $\sigma\in S_m$ such that $(f_{a_{\sigma(1)},1},\cdots, f_{a_{\sigma(m)},1})$ is in $\xi$-resonant order iff $f$ is $\xi$-regular.\hfill\qedsymbol
\end{lem}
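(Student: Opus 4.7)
The plan is to translate the combinatorial condition into a directed graph on the index set $\{1,\ldots,m\}$ and then handle the various orders of $\xi$ uniformly.

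By the remark immediately preceding the lemma, the tuple $(f_{a_{\sigma(1)},1},\ldots,f_{a_{\sigma(m)},1})$ is in $\xi$-resonant order iff $a_{\sigma(j)}/a_{\sigma(k)}\neq\xi^{-2}$ for all $j<k$. Accordingly, I would define a directed graph $G$ on the vertex set $\{1,\ldots,m\}$ with an edge $j\to k$ whenever $a_k=\xi^2 a_j$ (and $j\ne k$). Then a permutation $\sigma$ produces a $\xi$-resonant ordering iff $\sigma^{-1}$ is a reverse topological sort of $G$, and such a $\sigma$ exists iff $G$ is acyclic. The task therefore reduces to showing that $G$ is acyclic iff $f$ is $\xi$-regular.

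Next I would observe that any directed cycle $j_1\to j_2\to\cdots\to j_s\to j_1$ in $G$ forces $\xi^{2s}=1$. If $\xi$ has infinite order, no such $s$ exists, so $G$ is automatically acyclic; on the other hand, $f$ is automatically $\xi$-regular by convention (no Frobenius $\xi$-factors are allowed), and the equivalence is vacuous. If $\xi=1$, then $\xi^2=1$, so the edges of $G$ correspond precisely to pairs of indices with equal values; hence $G$ has a cycle (a 2-cycle) iff some value $a_j$ is repeated, which is exactly the negation of $1$-regularity. If $\xi$ has finite order $l>1$, then $\xi^{2s}=1$ combined with $l$ odd forces $l\mid s$, so the shortest cycle has length $l$ and visits $l$ vertices whose values form a set $\{c,c\xi^2,\ldots,c\xi^{2(l-1)}\}=\{c\omega:\omega^l=1\}$ for some $c\in\mathbb{C}^\times$; the corresponding linear factors multiply to $\prod_{\omega^l=1}(1-c\omega u)=1-c^l u^l$, a Frobenius $\xi$-factor. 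Conversely, if the multiset $\{a_1,\ldots,a_m\}$ contains such a complete $\xi^2$-orbit, then choosing one index per orbit value exhibits a cycle in $G$, and moreover dividing $f$ by $1-c^l u^l$ and then appending this Frobenius factor to the $\xi$-factorization of the quotient yields a decomposition of $f$ satisfying all conditions of \eqref{e:sfactorp} (the resonance conditions involve only the quantum factors and are preserved). By the uniqueness clause in \eqref{e:sfactorp}, this must be the $\xi$-factorization of $f$, so $f$ has a Frobenius $\xi$-factor. Combining these observations establishes the equivalence in all cases.

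The main obstacle is the converse in the finite-order case: I must rule out the possibility that a complete $\xi^2$-orbit appearing among the $a_j$'s could be repackaged as quantum $\xi$-factors of length strictly less than $l$, thereby producing a $\xi$-factorization with no Frobenius factor. This is handled precisely by the uniqueness of the $\xi$-factorization, since any such alternative repackaging would yield quantum factors whose parameters violate the resonance condition of \eqref{e:sfactorp}. Everything else is routine bookkeeping with the graph $G$.
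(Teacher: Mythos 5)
Your proposal is correct. There is nothing in the paper to compare it against: the lemma is stated as ``easily established'' and given no proof, so any complete argument is an addition rather than an alternative. Your route --- using the remark preceding the lemma to reduce the existence of a $\xi$-resonant ordering to acyclicity of the graph with an edge $j\to k$ whenever $a_k=\xi^2a_j$, observing that for $\xi$ of odd finite order $l$ any cycle forces a complete $\xi^2$-orbit $\{c,c\xi^2,\dots,c\xi^{2(l-1)}\}$ among the $a_j$, and then invoking the uniqueness clause of \eqref{e:sfactorp} to conclude that $1-c^lu^l$ must occur as a Frobenius $\xi$-factor of $f$ (so that the orbit cannot be repackaged into quantum factors) --- is sound, and the uniqueness step is indeed the one point genuinely worth making explicit. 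The only direction you leave implicit, namely that a Frobenius $\xi$-factor of $f$ conversely places a full $\xi^2$-orbit among the inverse roots and hence a cycle in the graph, is immediate from the identity $1-c^lu^l=\prod_{\omega^l=1}(1-c\omega u)$ that you already recorded, and your treatment of the degenerate cases $\xi=1$ (where regularity means multiplicity-free) and $\xi$ of infinite order (where both sides of the equivalence hold automatically) is consistent with the paper's conventions.
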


\subsection{The $\ell$-weight lattice}\label{ss:lwl}
Given a field $\mathbb F$, consider the multiplicative group $\cal P_\mathbb F$ of $n$-tuples of rational functions $\gb\mu = (\gb\mu_1(u),\cdots, \gb\mu_n(u))$ with values in $\mathbb F$  such that $\gb\mu_i(0)=1$ for all $i\in I$. We shall often think of $\gb\mu_i(u)$ as a formal power series in $u$ with coefficients in $\mathbb F$. Given $a\in\mathbb F^\times$ and $i\in I$, let $\gb\omega_{i,a}$ be defined by
$$(\gb\omega_{i,a})_j(u) = 1-\delta_{i,j}au.$$
Clearly, if $\mathbb F$ is algebraically closed, $\cal P_\mathbb F$ is the free abelian group generated by these elements which are called fundamental $\ell$-weights. It is also convenient to introduce the elements $\gb\omega_{\lambda,a}, \lambda\in P,a\in\mathbb F^\times$, defined by
\begin{equation}
\gb\omega_{\lambda,a} = \prod_{i\in I}(\gb\omega_{i,a})^{\lambda(h_i)}
\end{equation}
and the elements
\begin{equation}\label{e:qsfw}
\gb\omega_{i,a,r} = \prod_{j=0}^{r-1} \gb\omega_{i,a\xi_i^{r-1-2j}},
\end{equation}
$i\in I,a\in\mathbb C(\xi)^\times, r\in\mathbb Z_{\ge 0}$. Even though the dependence on $\xi$ is not made explicit in the notation $\gb\omega_{i,a,r}$, this will not create a confusion. Notice $\gb\omega_{i,a,1}=\gb\omega_{i,a}$.

If $\mathbb F$ is algebraically closed, introduce the group homomorphism (weight map) $\wt:\cal P_\mathbb F \to P$ by setting $\wt(\gb\omega_{i,a})=\omega_i$, where $\omega_i$ is the $i$-th fundamental weight of $\lie g$. Otherwise, let $\mathbb K$ be an algebraically closed extension of $\mathbb F$ and regard $\cal P_\mathbb F$ as a subgroup of $\cal P_\mathbb K$. Then, define the weight map on $\cal P_\mathbb K$ as above and on $\cal P_\mathbb F$ by restriction (this clearly does not depend on the choice of $\mathbb K$).
Define the $\ell$-weight lattice of $U_q(\tlie g)$ to be $\cal P_q:=\cal P_{\mathbb C(q)}$. The submonoid $\cal P_q^+$ of $\cal P_q$ consisting of $n$-tuples of polynomials is called the set of dominant $\ell$-weights of $U_q(\tlie g)$.

Given $\gb\lambda\in\cal P_q^+$ with $\gb\lambda_i(u) = \prod_j (1-a_{i,j}u)$, where $a_{i,j}$ belong to some algebraic closure of $\mathbb C(q)$, let $\gb\lambda^-\in\cal P_q^+$ be defined by $\gb\lambda^-_i(u) = \prod_j (1-a_{i,j}^{-1}u)$. We will also use the notation $\gb\lambda^+ = \gb\lambda$. Two elements $\gb\lambda,\gb\mu$ of $\cal P_q^+$  are said to be relatively prime if $\gb\lambda_i(u)$ is relatively prime to $\gb\mu_j(u)$ in $\mathbb C(q)[u]$ for all $i,j\in I$. Every  $\gb\nu\in\cal P_q$ can be uniquely written in the form
\begin{equation}\label{e:frac}
\gb\nu = \gb\lambda\gb\mu^{-1} \quad\text{with}\quad \gb\lambda,\gb\mu\in\cal P_q^+ \quad\text{relatively prime}.
\end{equation}
Given $\gb\nu = \gb\lambda\gb\mu^{-1}$ as above, define a $\mathbb C(q)$-algebra homomorphism $\gb\Psi_{\gb\nu}:\uqt h\to \mathbb C(q)$ by  setting
\begin{equation}
\gb\Psi_{\gb\nu}(k_i^{\pm 1}) = q_i^{\pm \wt(\gb\nu)(h_i)}, \qquad \sum_{r\ge 0} \gb\Psi_{\gb\nu}(\Lambda_{i,\pm r}) u^r = \frac{(\gb\lambda^{\pm})_i(u)}{(\gb\mu^{\pm})_i(u)}
\end{equation}
where the division is that of formal power series in $u$.

The next proposition is easily checked.
\begin{prop}\label{p:lwl*}
The map $\gb\Psi:\cal P_q\to (U_q(\tlie h))^*$ given by $\gb\nu\mapsto \gb\Psi_{\gb\nu}$ is injective.\hfill\qedsymbol
\end{prop}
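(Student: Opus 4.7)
The strategy is to recover $\gb\nu$ directly from the values of $\gb\Psi_{\gb\nu}$ on the Cartan generators $\Lambda_{i,\pm r}$. Since $\uqt h$ is generated, as a $\mathbb C(q)$-algebra, by the elements $k_i^{\pm 1}$ together with $\Lambda_{i,r}$ for $i\in I$, $r\in\mathbb Z$, any algebra homomorphism $\uqt h\to\mathbb C(q)$ is determined by its values on these generators. It therefore suffices to show that the sequences $\{\gb\Psi_{\gb\nu}(\Lambda_{i,\pm r})\}_{r\ge 0}$, $i\in I$, already determine $\gb\nu$; the values on $k_i^{\pm 1}$ carry no additional information, as they merely record $\wt(\gb\nu)$, which is recoverable from $\gb\nu$ itself.

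Write $\gb\nu=\gb\lambda\gb\mu^{-1}$ with $\gb\lambda,\gb\mu\in\cal P_q^+$ relatively prime, as in \eqref{e:frac}. The defining property of $\gb\Psi_{\gb\nu}$ then reads, for each $i\in I$, as the identity of formal power series
\[
\sum_{r\ge 0}\gb\Psi_{\gb\nu}(\Lambda_{i,r})\,u^r \;=\; \frac{\gb\lambda_i(u)}{\gb\mu_i(u)} \;=\; \gb\nu_i(u),
\]
where the right-hand side is interpreted as its Taylor expansion at $u=0$ in $\mathbb C(q)[[u]]$. Since the natural embedding $\mathbb C(q)(u)\hookrightarrow\mathbb C(q)((u))$ is injective, a rational function with value $1$ at $u=0$ is uniquely determined by its expansion there. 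Consequently, the collection $\{\gb\Psi_{\gb\nu}(\Lambda_{i,r})\}_{r\ge 0}$ determines $\gb\nu_i(u)\in\mathbb C(q)(u)$ for each $i\in I$, and hence determines $\gb\nu$.

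The conclusion is then immediate: if $\gb\Psi_{\gb\nu}=\gb\Psi_{\gb\nu'}$ for some $\gb\nu,\gb\nu'\in\cal P_q$, then their values on each $\Lambda_{i,r}$ agree, whence $\gb\nu_i=\gb\nu_i'$ for every $i\in I$, i.e., $\gb\nu=\gb\nu'$. I do not anticipate any serious obstacle here; the argument reduces to the componentwise uniqueness of the Taylor expansion of a rational function at a regular point, combined with the fact that the $\Lambda_{i,r}$ suffice to generate $\uqt h$.
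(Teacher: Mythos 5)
Your argument is correct: the values $\gb\Psi_{\gb\nu}(\Lambda_{i,r})$, $r\ge 0$, are exactly the Taylor coefficients of $\gb\nu_i(u)$ at $u=0$, and a rational function regular at the origin with constant term $1$ is determined by that expansion, so $\gb\Psi_{\gb\nu}$ determines every component $\gb\nu_i$ and hence $\gb\nu$. The paper omits the proof (the proposition is stated as easily checked), and yours is precisely the intended verification; the only superfluous step is the appeal to the fact that $k_i^{\pm1}$ and the $\Lambda_{i,r}$ generate $U_q(\tlie h)$, which is not needed for injectivity since agreement of the functionals on the elements $\Lambda_{i,r}$ alone already forces $\gb\nu=\gb\nu'$.
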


Let $\cal P$  be the subgroup of $\cal P_q$ generated by $\gb\omega_{i,a}$ for all $i\in I$ and all $a\in\mathbb C^\times$ (equivalently, $\cal P=\cal P_\mathbb C$). Set also $\cal P^+=\cal P\cap\cal P_q^+$. From now on we will identify $\cal P_q$  with its image in $(U_q(\tlie h))^*$  under $\gb\Psi$. Similarly, given $\xi\in\mathbb C^\times$, $\cal P$ can be identified with a subset of $U_\xi(\tlie h)^*$ by using the same expression for $\gb\Psi_{\gb\nu}(\Lambda_{i,\pm r})$ and
$$\qquad \gb\Psi_{\gb\nu}(k_i^{\pm 1}) = \xi_i^{\pm \wt(\gb\nu)(h_i)} \qquad\text{and}\qquad \gb\Psi_\nu\left(\tqbinom{k_i}{l}\right) = \tqbinom{\wt(\gb\nu)(h_i)}{l}_{\xi_i}.$$
Observe also that every element $\gb\lambda\in \cal P_q$ such that $\gb\lambda_i(u)$ splits in $\mathbb C(q)[u]$ for all $i\in I$ can be uniquely decomposed as
\begin{equation}\label{e:rootfactor}
\gb\lambda = \prod_j \gb\omega_{\lambda_j,a_j} \quad\text{for some}\quad \lambda_j\in P \quad\text{and}\quad a_i\ne a_j.
\end{equation}
For notational convenience, henceforth we set $\cal P_\xi=\cal P$ when $\xi\ne q$.

Recall that we denote by $\phi_l$ the $\mathbb C(q)$-algebra homomorphism $\mathbb C(q)[u]\to\mathbb C(q)[u]$ determined by $u \mapsto u^l$. We also denote by $\phi_l$ the induced group homomorphism $\cal P_q\to\cal P_q$.
Similarly to \eqref{e:frobfactorp}, every $\gb\lambda\in \cal P^+$ admits a unique decomposition of the form
\begin{equation}\label{e:frobfactor}
\gb\lambda(u)  =  \gb\lambda'\phi_l(\gb\lambda'')  \qquad\text{with}\qquad \gb\lambda',\gb\lambda'' \in\cal P^+,
\end{equation}
where $\gb\lambda'_i(u)$ is $\xi_i$-regular for all $i\in I$.  Set $\cal P_l^+ = \{\gb\lambda\in\cal P^+: \gb\lambda=\gb\lambda'\}$.

Denote by $\cal P_\mathbb A^+$ be the subset of $\cal P_q$ consisting of $n$-tuples of polynomials with coefficients in $\mathbb A$ and $\cal P_\mathbb A$ the corresponding subgroup. Let $\cal P_\mathbb A^\times $ be the submonoid of $\cal P_\mathbb A^+$ consisting of $n$-tuples of polynomials with invertible leading coefficients and $\cal P_\mathbb A^s=\{\gb\lambda\in\cal P_q^+: \text{ the roots of }\gb\lambda_i(u) \text{ lie in }\mathbb A^\times\text{ for all }i\in I\}\subseteq \cal P_\mathbb A^\times $. We may also use the terminology $\gb\lambda$ splits in $\mathbb A^\times$ to mean $\gb\lambda\in\cal P_\mathbb A^s$.  Notice that, if $\gb\lambda\in\cal P_\mathbb A^\times $, then $\gb\lambda_i(u)$ splits in $\mathbb A$ for all $i\in I$ iff $\gb\lambda$ splits in $\mathbb A^\times$.
Given $\gb\lambda\in\cal P_\mathbb A$, let \bgb\lambda\ be the element of $\cal P$ obtained from $\gb\lambda$ by evaluating $q$ at $\xi$ (where we again omit the dependence on $\xi$ in the notation).

\subsection{Braid group action on the $\ell$-weight lattice}

The following proposition was proved in \cite{bp,vcbraid}.

\begin{prop}
The following formulas define an action of $\cal B$ on $\cal P_q$:
\begin{gather*}
(T_i\gb\mu)_i(u) = (\gb\mu_i(\xi_i^2u))^{-1} \qquad\text{and}\qquad (T_i\gb\mu)_j(u)  =\gb\mu_j(u)\prod\limits_{k=1}^{|c_{ji}|} \gb\mu_i(q^{d_i+|c_{ji}|+1-2k}u).
\end{gather*}
In particular, $\wt(T_w\gb\mu)=w\wt(\gb\mu)$ for all $w\in \cal W$. Moreover, $T_i\gb\mu\in\cal P_\mathbb A$ for $\gb\mu\in\cal P_\mathbb A$.\hfill\qedsymbol
\end{prop}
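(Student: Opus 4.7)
The plan is to verify the three assertions of the proposition in turn. First, each $T_i$ is well-defined and a group homomorphism of $\cal P_q$ because both defining formulas are multiplicative in $\gb\mu$ component by component: the $i$-th component is $\gb\mu\mapsto\gb\mu_i(\xi_i^2u)^{-1}$, which sends products to products, and each factor appearing in the $j$-th component ($j\ne i$) is likewise multiplicative in $\gb\mu$. To see that $T_i$ is invertible, I would write down an explicit candidate for $T_i^{-1}$ by reversing the substitution in the $i$-component (replacing $u\mapsto\xi_i^{-2}u$ and inverting) and correcting the shifts in the remaining components, then verify $T_i\circ T_i^{-1}=\mathrm{id}$ by inspection.

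Second, for the weight statement, by multiplicativity of both $T_i$ and $\wt$, and by freeness of $\cal P_q$ over the $\gb\omega_{k,a}$ after passage to the algebraic closure, it suffices to check $\wt(T_i\gb\omega_{k,a})=s_i\omega_k$. A direct degree count on each component of $T_i\gb\omega_{k,a}$, combined with the identity $\alpha_i=\sum_{j\in I}c_{ji}\omega_j$, gives the answer: the $i$-component contributes $-\delta_{ik}\omega_i$ and the $j$-component contributes $\delta_{ik}|c_{ji}|\omega_j=-\delta_{ik}c_{ji}\omega_j$ for $j\ne i$, summing to $\omega_k-\omega_k(h_i)\alpha_i=s_i\omega_k$. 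The general case $\wt(T_w\gb\mu)=w\wt(\gb\mu)$ then follows by induction on $\ell(w)$ once the braid relations are established.

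Third, and this is the main technical step, one must check that the $T_i$ satisfy the braid relations of $\cal B$, i.e., $(T_iT_j)^{m_{ij}}=\mathrm{id}$ with $m_{ij}\in\{2,3,4,6\}$ according to $c_{ij}c_{ji}\in\{0,1,2,3\}$. By multiplicativity, this reduces to verifying the identity on each fundamental $\ell$-weight $\gb\omega_{k,a}$, and by restriction to the rank-two subsystem spanned by $\{i,j\}$, to a finite bookkeeping of the shifted factors $q^{d_i+|c_{ji}|+1-2k}$ together with their interaction with $\xi_j^{\pm 2}$. The case $c_{ij}=0$ is immediate since the shifts then act in disjoint components. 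I expect the higher rank-two cases ($B_2$ and $G_2$ combinatorics) to be the main obstacle, as the exponents of $q$ must cancel precisely after iterated substitutions. A cleaner conceptual route, which I would prefer, is to invoke the embedding $\cal P_q\hookrightarrow(U_q(\tlie h))^*$ of Proposition \ref{p:lwl*} and identify the $T_i$ as the transposes of the restrictions to $U_q(\tlie h)$ of Lusztig's braid operators on $U_q(\tlie g)$; under this identification the formulas above match the known action of these operators on the Drinfeld generators (via the series $\Psi_i^\pm(u)$), and the braid relations on $\cal P_q$ then follow at once from the braid relations on $U_q(\tlie g)$.

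Finally, preservation of $\cal P_\mathbb A$ is immediate from the formulas, since all substitutions $u\mapsto cu$ involved use scalars $c\in\mathbb A^\times$ (namely $\xi_i^{\pm 2}$ and $q^{d_i+|c_{ji}|+1-2k}$), so if $\gb\mu_i(u)\in\mathbb A[u]$ with $\gb\mu_i(0)=1$ then the same holds for every component of $T_i\gb\mu$.
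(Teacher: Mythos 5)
The parts of your plan that you do carry out are fine (multiplicativity, invertibility of each $T_i$, the weight computation via $\alpha_i=\sum_j c_{ji}\omega_j$, and preservation of $\cal P_\mathbb A$ — note only that the $i$-th component of $T_i\gb\mu$ is a ratio of polynomials over $\mathbb A$ rather than a polynomial, which is all that membership in the group $\cal P_\mathbb A$ requires). But the substantive content of the proposition, namely that the $T_i$ satisfy the braid relations, is never established in your proposal. Your ``Plan A'' explicitly defers the rank-two verifications ($B_2$, $G_2$), and those verifications are essentially the whole proof: the paper itself offers no argument and simply cites \cite{bp,vcbraid}, where the check is done by direct computation (equivalently, after taking logarithmic coordinates on the power series $\gb\mu_i(u)$, the action becomes, for each power of $u$, a linear action through matrices built from the quantum Cartan matrix, and the braid relations reduce to rank-two matrix identities in one variable $q^r$ — a tractable but genuine computation that must be done).

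Your preferred ``Plan B'' does not close this gap either, because its cornerstone is itself an unproved and delicate claim: that Lusztig's braid automorphisms of $U_q(\tlie g)$ preserve the loop-Cartan subalgebra $U_q(\tlie h)$ and act on the series $\Psi_i^\pm(u)$ (equivalently on the $\Lambda_{i,r}$) by formulas dual to the ones in the statement, so that the $T_i$ on $\cal P_q$ are their transposes under the embedding of Proposition \ref{p:lwl*}. This is not a ``known action'' one can simply invoke: already for $\tlie{sl}_2$ the image of $h_{1,1}$ under a braid symmetry is a quantum commutator of affine Chevalley generators whose membership in $U_q(\tlie h)$ is sensitive to which of Lusztig's four symmetries one uses, and for general $i\ne j$ and higher imaginary degree the required formulas (in the spirit of \cite{be,dam}) would have to be proved, not assumed. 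Tellingly, the compatibility the paper does use between this action and extremal vectors (Proposition \ref{p:Twv}) is proved in \cite{vcbraid} by a representation-theoretic $\lie{sl}_2$-reduction, not by twisting with an algebra automorphism — which is a strong hint that the clean automorphism statement you want is not available off the shelf. So as written, the proposal is a plan with the key step missing: either the rank-two computations of Plan A must be carried out, or the preservation of $U_q(\tlie h)$ and the explicit action on $\Psi_i^\pm(u)$ in Plan B must be proved.
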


By composing the above action with the evaluation map $\epsilon_\xi, \xi\in\mathbb C^\times$, one gets an action of $\cal B$ on $\cal P$.  Hence, if $\gb\mu\in\cal P$, there is an ambiguity in the notation $T_i\gb\mu$. It will always be clear from the context if we mean the ``$q$-action'' or the ``$\xi$-action'' for some $\xi\in\mathbb C^\times$.

\begin{lem}\label{l:Twdominant}
Suppose $i\in I$ and $w\in\cal W$ are such that $\ell(s_iw)=\ell(w)+1$.
Then,  $(T_w\gb\lambda)_i(u)\in \mathbb C(\xi)[u]$ for every $\gb\lambda\in \cal P_\xi^+$.
\end{lem}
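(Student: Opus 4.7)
I would prove a sharper statement which immediately implies the lemma: for every $\gb\lambda\in\cal P_\xi^+$ and every $w\in\cal W$, when $T_w\gb\lambda$ is expanded over an algebraic closure of $\mathbb C(\xi)$ as a product $\prod_{j,a}\gb\omega_{j,a}^{m_{j,a}(w,\gb\lambda)}$ of fundamental $\ell$-weights, the exponents satisfy $m_{j,a}(w,\gb\lambda)\ge 0$ whenever $j\in S(w):=\{l\in I:w^{-1}\alpha_l\in R^+\}$. Granting this, Lemma \ref{l:wgroupstuff}(b) yields $i\in S(w)$ under the hypothesis $\ell(s_iw)=\ell(w)+1$, so $(T_w\gb\lambda)_i(u)=\prod_a(1-au)^{m_{i,a}}$ is a polynomial over the algebraic closure; since $T_w$ preserves $\cal P_{\mathbb C(\xi)}$, its coefficients must then lie in $\mathbb C(\xi)$.

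I would prove the sharper statement by induction on $\ell(w)$, the base case $w=e$ being immediate from $\gb\lambda\in\cal P^+$. For the inductive step, pick $k$ in the left descent set of $w$, write $w=s_kw'$ with $\ell(w')=\ell(w)-1$, and apply the inductive hypothesis to $T_{w'}\gb\lambda=\prod\gb\omega_{j,a}^{n_{j,a}}$. A direct verification from the defining formulas of the $T_i$-action gives
\[
T_k\gb\omega_{j,a}=\gb\omega_{j,a}\ \ (j\ne k),\qquad T_k\gb\omega_{k,a}=\gb\omega_{k,a\xi_k^2}^{-1}\prod_{l\ne k}\prod_{m=1}^{|c_{lk}|}\gb\omega_{l,aq^{d_k+|c_{lk}|+1-2m}},
\]
from which one reads off the exponent of $\gb\omega_{j,b}$ in $T_w\gb\lambda=T_kT_{w'}\gb\lambda$: it equals $-n_{k,b\xi_k^{-2}}$ for $j=k$ (nonpositive, but harmless since $\ell(s_kw)<\ell(w)$ means $k\notin S(w)$), and
\[
m_{j,b}\;=\;n_{j,b}+\sum_{m=1}^{|c_{jk}|}n_{k,bq^{-(d_k+|c_{jk}|+1-2m)}}\qquad\text{for }j\ne k.
\]
Since $\ell(s_kw')=\ell(w)$ forces $k\in S(w')$, all the $n_{k,\cdot}$'s are nonnegative by induction, so the only possibly negative contribution to $m_{j,b}$ is $n_{j,b}$, and only when $j\notin S(w')$.

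The subtle sub-case is therefore $j\in S(w)$ with $j\notin S(w')$ (and $j\ne k$), which is characterized by the root identity $(w')^{-1}\alpha_j=w^{-1}\alpha_j+|c_{kj}|w^{-1}\alpha_k\in -R^+$; this can genuinely occur (for instance $\lie g=\lie{sl}_3$, $w=s_1s_2$, $k=1$, $j=2$, where $|c_{kj}|=1$ and the positive root $w^{-1}\alpha_j$ is outweighed by $(-w^{-1}\alpha_k)$). In this regime one must verify that the potentially negative $n_{j,b}$ is cancelled exactly by the positive contributions $\sum_m n_{k,\cdot}$. \textbf{Main obstacle.} This exact cancellation is the heart of the proof: a naive induction tracking only polynomiality of individual components does not close, so one must carry a more refined inductive invariant. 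I expect the cleanest implementation to track an explicit description of $T_{w'}\gb\lambda$ in terms of the sequence of positive roots $\beta_p=s_{j_1}\cdots s_{j_{p-1}}\alpha_{j_p}$ associated to a reduced expression $w'=s_{j_1}\cdots s_{j_{r-1}}$, so that each negative-exponent factor in $T_{w'}\gb\lambda$ is indexed by some $\beta_p$ and is automatically paired with a matching positive factor produced by $T_k$; alternatively, by multiplicativity of $T_w$ one may reduce to the fundamental case $\gb\lambda=\gb\omega_{i_0,a}$ and establish the refined statement there by a direct recursive computation on reduced expressions.
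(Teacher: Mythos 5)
Your route is genuinely different from the paper's (a purely combinatorial induction on $\ell(w)$ tracking the exponents of $T_w\gb\lambda$), but as written it has a gap at exactly the point that carries the whole content of the lemma. The invariant ``$m_{j,a}(w,\gb\lambda)\ge 0$ for all $j$ with $w^{-1}\alpha_j\in R^+$'' does not propagate through the step $w=s_kw'$ in the sub-case $j\in S(w)\setminus S(w')$: the inductive hypothesis only asserts nonnegativity of the exponents indexed by $S(w')$ and gives no quantitative control relating the possibly negative $n_{j,b}$ to the positive contributions $\sum_m n_{k,\cdot}$, so the required exact cancellation cannot be extracted from it. You say this yourself, and the strengthened invariant you then propose (indexing the negative factors of $T_{w'}\gb\lambda$ by the positive roots attached to a reduced expression and pairing them with the factors created by $T_k$) is precisely the statement that would have to be proved; in the proposal it is only announced (``I expect the cleanest implementation\dots''), not carried out, even after the legitimate reduction to $\gb\lambda=\gb\omega_{i_0,a}$ (that reduction is fine, since $T_w$ acts by group automorphisms and exponents add). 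In effect you have reduced the lemma to a refinement of \cite[Proposition 3.2]{vcbraid}, which is exactly the reference the paper's proof invokes, so no new argument has been supplied. A minor further point: for the $\xi$-action the spectral shifts in the braid formulas should be powers of $\xi$ (obtained by applying $\epsilon_\xi$), whereas your displayed formula mixes $q$ and $\xi_k$.

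If you want a self-contained proof, the paper's alternative (the proof of Corollary \ref{c:Twdominant}, to which the lemma points) sidesteps the combinatorics entirely by representation theory, and you could adopt it: take a finite-dimensional highest-$\ell$-weight module $V$ of highest $\ell$-weight $\gb\lambda$ and the extremal vector $v_w$ with $V_{T_w\gb\lambda}=V_{w\wt(\gb\lambda)}=\mathbb Cv_w$ (Proposition \ref{p:Twv} and Corollary \ref{c:Twv}). Since $\ell(s_iw)=\ell(w)+1$, Lemma \ref{l:wgroupstuff}(b) gives $w\wt(\gb\lambda)+\alpha_i\notin P(\wt(\gb\lambda))$, hence $x^+_{i,s}v_w=0$ for all $s\in\mathbb Z$, so $U_\xi(\tlie g_i)v_w$ is a finite-dimensional highest-$\ell$-weight module over the $i$-th quantum loop $\lie{sl}_2$ with highest $\ell$-weight $(T_w\gb\lambda)_i(u)$, and Proposition \ref{p:drinpoly} (or Theorem \ref{t:cone}) forces this $\ell$-weight to be a polynomial. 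Either complete your combinatorial cancellation (i.e.\ in substance reprove \cite[Proposition 3.2]{vcbraid}) or argue representation-theoretically as above; as it stands, the key step is missing.
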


\begin{proof}
It follows from \cite[Proposition 3.2]{vcbraid}. Alternatively, see the the proof of Corollary \ref{c:Twdominant} below.
\end{proof}

Recall that $w_0$ defines a Dynkin diagram automorphism such that $w_0\cdot i =j$ iff $w_0\omega_i=-\omega_j$ for  $i,j\in I$. Given $\gb\lambda\in\cal P_\xi^+$, set
\begin{equation}
\gb\lambda^* = (T_{w_0}\gb\lambda)^{-1}.
\end{equation}
A straightforward but tedious computation working with a reduced expression for $w_0$ shows that
\begin{equation}
(\gb\lambda^*)_i(u) = \gb\lambda_{w_0\cdot i}(\xi^{r^\vee h^\vee}u)
\end{equation}
where $h^\vee$ is the dual Coxeter number of $\lie g$ and  $r^\vee$ is the lacing number of $\lie g$.

The next definition is crucial to the results of \S\ref{s:hlwtensor}.

\begin{defn}\label{d:resonant}
Let $\xi\ne 1$. A pair $(\gb\lambda,\gb\mu)$ of dominant $\ell$-weights is said to be in (weak) $\xi$-resonant order if there exists a reduced expression for $w_0$, say $s_{i_N}\cdots s_{i_1}$, such that
\begin{enumerate}
\item $(\gb\lambda_i(u),\gb\mu_i(u))$ is in (weak) $\xi_{i}$-resonant order for every $i\in I$,
\item $((T_{i_{j-1}}\cdots T_{i_1}\gb\lambda')_{i_j}(u),\gb\mu'_{i_j}(u))$ is in (weak) $\xi_{i_j}$-resonant order for every $j=1,\dots, N$.
\item $(T_{i_{j-1}}\cdots T_{i_1}\gb\lambda')_{i_j}(u)$ is $\xi_{i_j}$-regular for all $j=1,\dots, N$.
\end{enumerate}
For $\xi=1$, the pair $(\gb\lambda,\gb\mu)$ is said to be in (weak) $\xi$-resonant order if the pair of polynomials $(\prod_{i\in I}\gb\lambda_i(u)),\prod_{i\in I}\gb\mu_i(u))$ is in (weak) $\xi$-resonant order.
The pair $(\gb\lambda,\gb\mu)$ is said to be in general position if both $(\gb\lambda,\gb\mu)$ and $(\gb\mu,\gb\lambda)$ are in weak $\xi$-resonant ordering. An $m$-tuple of elements of $\cal P_\xi^+$ is said to be in (weak) $\xi$-resonant order if any  ordered pair extracted from the $m$-tuple is in (weak) $\xi$-resonant order. We say that $\gb\lambda=\prod_{j=1}^m\gb\omega_{i_j,a_j}\in\cal P_\xi^+$ is $\xi$-regular  if there exists $\sigma\in S_m$ such that $(\gb\omega_{i_{\sigma(1)},a_{\sigma(1)}}, \dots, \gb\omega_{i_{\sigma(m)},a_{\sigma(m)}})$ is in $\xi$-resonant order.
\end{defn}

\begin{rem}
If $\xi$ is generic, condition (a) above follows from condition (b) (see the proof of \cite[Theorem 4.4]{vcbraid}) while condition (c) is vacuous. Therefore, if $\xi$ has infinite order, the above definition of $\xi$-resonant ordering coincides with the condition used in the main theorem of \cite{vcbraid}. If $\lie g=\lie{sl}_2$, regardless of the value of $\xi$, condition (b) is equivalent to (a) while (c) is vacuous. For $\xi=1$, the concepts of general position and (weak) resonant ordering are equivalent. The definition of $\xi$-regularity given above is motivated by Lemma \ref{l:regular}.
\end{rem}

\begin{lem}\label{l:cyclic}
Let $k\in\mathbb Z_{\ge 1}, i_j\in I, m_j\in\mathbb Z_{\ge 1}, a_j\in\mathbb C(q)^\times$ for $j=1,\dots, k$. Then:
\begin{enumerate}
\item If $\frac{a_{j}}{a_{s}}\notin q^{\mathbb Z_{> 0}}$ for $j>s$, then $(\gb\omega_{i_1,a_1,m_1},\cdots, \gb\omega_{i_k,a_k,m_k})$ is in $q$-resonant order.
\item There exists $\sigma\in S_k$ (independent of $i_1,\cdots, i_k$) such that $(\gb\omega_{i_1,a_{\sigma(1)},m_1},\cdots, \gb\omega_{i_k,a_{\sigma(k)},m_k})$ is in $q$-resonant order.
\end{enumerate}
\end{lem}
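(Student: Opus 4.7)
The plan is to establish (a) by directly verifying the conditions of Definition \ref{d:resonant}, and then to deduce (b) from (a) via a topological-sort argument.

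For (a), fix $s<j$ in $\{1,\ldots,k\}$; we must show the pair $(\gb\omega_{i_s,a_s,m_s},\gb\omega_{i_j,a_j,m_j})$ is in $q$-resonant order. Since $\xi=q$ has infinite order, the remark following Definition \ref{d:resonant} renders condition (c) vacuous and shows that condition (a) of the definition is implied by condition (b). So it suffices to exhibit a reduced expression $w_0=s_{t_N}\cdots s_{t_1}$ (with the letters $t$ chosen to avoid clashing with the indices in the lemma statement) such that, for every $\ell=1,\ldots,N$, the pair of polynomials
$$\bigl((T_{t_{\ell-1}}\cdots T_{t_1}\gb\omega_{i_s,a_s,m_s})_{t_\ell}(u),\ (\gb\omega_{i_j,a_j,m_j})_{t_\ell}(u)\bigr)$$
is in $q_{t_\ell}$-resonant order. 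The second polynomial equals $1$ (and the condition is vacuous) whenever $t_\ell\ne i_j$; otherwise it equals $\prod_{e=0}^{m_j-1}(1-a_j q_{i_j}^{m_j-1-2e}u)$, whose unique $q_{i_j}$-factor is $f_{a_j,m_j}$.

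The technical heart of the argument is the following: the polynomial $(T_{t_{\ell-1}}\cdots T_{t_1}\gb\omega_{i_s,a_s,m_s})_{t_\ell}(u)$, factored as $\prod(1-bu)$, has all of its roots of the form $b=a_sq^n$ with $n$ ranging over a finite set $E_\ell$ that depends only on the word $(t_1,\ldots,t_{\ell-1})$ and on $(i_s,m_s)$, and not on $a_s$. This is proved by induction on $\ell$ using the explicit formulas
$$(T_i\gb\mu)_i(u)=(\gb\mu_i(q_i^2u))^{-1},\qquad (T_i\gb\mu)_j(u)=\gb\mu_j(u)\prod_{k=1}^{|c_{ji}|}\gb\mu_i(q^{d_i+|c_{ji}|+1-2k}u),$$
together with Lemma \ref{l:Twdominant} to ensure that all intermediate expressions are polynomials. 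Once this structure is in hand, the $q_{t_\ell}$-resonance constraints reduce to finitely many inequalities of the form $a_j/a_s\ne q^{M}$. A careful tracking of the exponents through the induction shows that all such $M$ are positive integers, so the hypothesis $a_j/a_s\notin q^{\mathbb Z_{>0}}$ closes the argument. The main obstacle is precisely the positivity of the exponents $M$; this is essentially the content of the analysis in \cite{vcbraid} (its Theorem 4.4 proves the generic version), and I would adapt those techniques to the present situation.

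For (b), put a strict partial order $\prec$ on $\{1,\ldots,k\}$ by declaring $s\prec j$ if and only if $a_j/a_s\in q^{\mathbb Z_{>0}}$. Transitivity is immediate, and antisymmetry follows from the fact that $q$ is an indeterminate, so $q^n=q^m$ forces $n=m$. Let $\sigma\in S_k$ be any permutation whose inverse is a linear extension of the opposite partial order $\succ$; equivalently, $s<j$ in $\{1,\ldots,k\}$ implies $a_{\sigma(j)}/a_{\sigma(s)}\notin q^{\mathbb Z_{>0}}$. Such a $\sigma$ exists by topological sorting and depends only on the $a_t$, not on the $i_t$ or $m_t$. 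Applying (a) to the re-indexed tuple $(\gb\omega_{i_1,a_{\sigma(1)},m_1},\ldots,\gb\omega_{i_k,a_{\sigma(k)},m_k})$ now yields the conclusion.
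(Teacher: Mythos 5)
Your proposal follows essentially the same route as the paper: for (a) you reduce, via the remark after Definition \ref{d:resonant}, to checking condition (b) of that definition and then defer the only nontrivial point — that the roots of $(T_{t_{\ell-1}}\cdots T_{t_1}\gb\omega_{i_s,a_s,m_s})_{t_\ell}$ are of the form $a_sq^n$ with the resulting exponents positive — to the explicit computation of \cite{vcbraid}, which is precisely what the paper does by citing the direct computation in \S6 of \cite{vcbraid} (note it is \S6, carried out for the weights $\gb\omega_{i,a,m}$ at generic $q$, rather than Theorem 4.4, so no adaptation is needed). Your topological-sort construction of $\sigma$ for (b) is exactly the argument the paper summarizes as ``easily follows from (a).''
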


\begin{proof}
Part (a) is proved by a direct computation (see \cite[\S6]{vcbraid}). Part (b) easily follows from (a).
\end{proof}

We now recall the definition of certain elements of $\cal P_\xi$ which are the $\ell$-analogue of simple roots. These element were defined originally in \cite{fr,fmroot} using the quantum Cartan matrix (in a similar fashion that the simple roots are expressed in terms of fundamental weights using the coefficients of the Cartan matrix). Later, an alternate definition was given in \cite{cmq} making use of the braid group action on $\cal P_\xi$ instead of the quantum Cartan matrix. We follow the point of view of \cite{cmq}. Given $i\in I, a\in\blb C(\xi)^\times$, define the simple $\ell$-root $\gb\alpha_{i,a}$ by
\begin{equation}
\gb\alpha_{i,a} := \gb\omega_{i,a}(T_i\gb\omega_{i,a})^{-1}  = (\gb\omega_{i,a\xi_i,2})^{-1}\prod_{j\ne i} \gb\omega_{j,a\xi_i,-c_{j,i}}.
\end{equation}
The subgroup of $\cal P_\xi$ generated by the simple $\ell$-roots $\gb\alpha_{i,a}, i\in I, a\in\mathbb C(\xi)^\times$, is called the $\ell$-root lattice of $U_\xi(\tlie g)$ and will be denoted by $\cal Q_\xi$. Let also $\cal Q_\xi^+$ be the submonoid generated by the simple $\ell$-roots. Quite clearly $\wt(\gb\alpha_{i,a})=\alpha_i$. Define a partial order on $\cal P_\xi$ by
$$\gb\mu\le\gb\lambda \qquad{if}\qquad \gb\lambda\gb\mu^{-1}\in\cal Q_\xi^+.$$
One easily checks the following proposition (cf. \cite[\S3.3]{cmq}).
\begin{prop}
The action of the braid group on $\cal P_\xi$ preserves  $\cal Q_\xi$.
\hfill\qedsymbol
\end{prop}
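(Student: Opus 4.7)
The plan is to prove the stronger statement that each generator $T_i$ of $\cal B$ descends to the identity on the quotient group $\cal P_\xi/\cal Q_\xi$. Once this is shown, the projection $\pi\colon \cal P_\xi\to \cal P_\xi/\cal Q_\xi$ satisfies $\pi\circ T_i=\pi$, so $\cal Q_\xi=\ker\pi$ is preserved by $T_i$; applying the same reasoning to $T_i^{-1}$ (which also descends to the identity on the quotient) and using that $T_i^{\pm 1}$ generate $\cal B$, one concludes that $T_w\cal Q_\xi=\cal Q_\xi$ for every $w\in\cal B$.

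Since $\cal P_\xi$ is freely generated as an abelian group by the fundamental $\ell$-weights $\gb\omega_{j,a}$, and since each $T_i$ is a group automorphism of $\cal P_\xi$, it suffices to verify the congruence
\[
T_i\,\gb\omega_{j,a}\equiv \gb\omega_{j,a}\pmod{\cal Q_\xi}
\]
for every triple $(i,j,a)$. I would split this check into two cases according to whether $j=i$ or not.

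For the case $j\ne i$, I would just expand the explicit formulas for the $T_i$-action recalled in the proposition preceding Lemma \ref{l:Twdominant}. Because $(\gb\omega_{j,a})_i(u)\equiv 1$, the $i$-th component of $T_i\gb\omega_{j,a}$ equals $\bigl((\gb\omega_{j,a})_i(\xi_i^2 u)\bigr)^{-1}=1$, and for each $k\ne i$ the product $\prod_{m=1}^{|c_{ki}|}(\gb\omega_{j,a})_i(\cdots)$ collapses to $1$, so $(T_i\gb\omega_{j,a})_k(u)=(\gb\omega_{j,a})_k(u)$. Thus $T_i\gb\omega_{j,a}=\gb\omega_{j,a}$ already in $\cal P_\xi$, which is stronger than the required congruence. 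For the case $j=i$, the definition $\gb\alpha_{i,a}:=\gb\omega_{i,a}(T_i\gb\omega_{i,a})^{-1}$ given in the text rearranges to $T_i\gb\omega_{i,a}\cdot\gb\omega_{i,a}^{-1}=\gb\alpha_{i,a}^{-1}$, and the right-hand side lies in $\cal Q_\xi$ by the very definition of the $\ell$-root lattice.

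I do not foresee any genuine obstacle. The only conceptual point to highlight is the reduction to the quotient: rather than computing $T_i\gb\alpha_{j,a}$ directly (which would force a case analysis governed by the Cartan integers $c_{ij}$), one observes that invariance of $\cal Q_\xi$ is equivalent to the triviality of the induced action on $\cal P_\xi/\cal Q_\xi$, a statement that only needs to be tested on generators of $\cal P_\xi$. This is the multiplicative analogue of the classical observation that the Weyl group acts trivially on $P/Q$.
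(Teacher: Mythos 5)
Your core computation is correct and is essentially the verification the paper leaves to the reader (the paper gives no argument, merely asserting that ``one easily checks'' the statement and citing \cite[\S 3.3]{cmq}): for $j\ne i$ one has $T_i\gb\omega_{j,a}=\gb\omega_{j,a}$ on the nose, and for $j=i$ the defining relation $\gb\alpha_{i,a}=\gb\omega_{i,a}(T_i\gb\omega_{i,a})^{-1}$ gives $T_i\gb\omega_{i,a}=\gb\omega_{i,a}\gb\alpha_{i,a}^{-1}$. Since each simple $\ell$-root $\gb\alpha_{j,a}$ is a Laurent monomial in fundamental $\ell$-weights with parameters in $\mathbb C(\xi)^\times$ and each $T_i$ is a group automorphism of $\cal P_\xi$, these two congruences already yield $T_i^{\pm1}\gb\alpha_{j,a}\in\cal Q_\xi$ for all $j,a$, hence $T_i\cal Q_\xi=\cal Q_\xi$ and the proposition.

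The one inaccuracy is in your packaging. The claim that $\cal P_\xi$ is freely generated by the fundamental $\ell$-weights is false when $\xi=q$: there $\cal P_\xi=\cal P_q=\cal P_{\mathbb C(q)}$ consists of \emph{all} $n$-tuples of rational functions over the non-algebraically closed field $\mathbb C(q)$ with constant term $1$ (the paper asserts freeness of $\cal P_{\mathbb F}$ on the $\gb\omega_{i,a}$ only for algebraically closed $\mathbb F$), and a component such as $1+qu^2$ is not a product of factors $1-au$ with $a\in\mathbb C(q)^\times$. Consequently your stronger assertion that $T_i$ induces the identity on all of $\cal P_q/\cal Q_q$ is not established by checking on the $\gb\omega_{j,a}$ -- and in fact it fails on such non-split elements, since $(T_i\gb\mu)\gb\mu^{-1}$ then has a non-split component while every element of $\cal Q_q$ has components that split over $\mathbb C(q)$. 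This does not affect the proposition: the congruence $T_i\gb\omega_{j,a}\equiv\gb\omega_{j,a}\pmod{\cal Q_\xi}$ gives $\pi\circ T_i=\pi$ on the subgroup of $\cal P_\xi$ generated by the fundamental $\ell$-weights, and that subgroup contains $\cal Q_\xi$, which is all you need. So either phrase the reduction as checking $T_i\gb\alpha_{j,a}\in\cal Q_\xi$ on the generators of $\cal Q_\xi$, or restrict the quotient argument to that subgroup; for $\xi\ne q$, where $\cal P_\xi=\cal P_{\mathbb C}$ is genuinely free on the $\gb\omega_{i,a}$, your argument is complete as written.
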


\begin{rem}
The elements $\gb\alpha_{i,a}$ are denoted by $A_{i,aq_i}$ in \cite{fmroot,fr} and in several other papers that follow the notation originally developed in \cite{fr}. Also, the elements $\gb\omega_{i,a}$ are denoted there by $Y_{i,a}$. We find the notation $\gb\omega_{i,a}$ and $\gb\alpha_{i,a}$ introduced in \cite{cmq} more suggestive. For instance, $\wt(\gb\omega_{i,a})=\omega_i$ and $\wt(\gb\alpha_{i,a})=\alpha_i$.
\end{rem}

\subsection{The group of elliptic characters}\label{ss:ec} This subsection will be used only in \S\ref{ss:blocks}. Set
\begin{equation}
\widetilde\Gamma_\xi = \cal P_\xi/\cal Q_\xi.
\end{equation}
The elements of the group $\widetilde\Gamma_\xi$ are called elliptic characters. The motivation for this name will be explained in the remark after Proposition \ref{p:Gammarel} below. We denote by $\gamma_{_\xi}$ the canonical projection $\cal P_\xi\to \widetilde\Gamma_\xi$.

Define subsets $\cal T_k$ of $\mathbb Z_{\ge 0}, k=1,2,3$, as follows.
\begin{alignat*}{3}
&\cal T_1=\{0,2,\cdots,2n\},\quad \cal T_2=\cal T_3=\emptyset, &\quad&\text{ if }\lie g\text{ is of type } A_n,\\
&\cal T_1=\{0,4n-2\},\quad \cal T_2=\cal T_3=\emptyset, &\quad&\text{ if }\lie g\text{ is of type } B_n,\\
&\cal T_1=\{0,2n+2\},\quad \cal T_2=\cal T_3=\emptyset, &\quad&\text{ if }\lie g\text{ is of type } C_n,\\
&\cal T_1=\{0,2,2n-2,2n\},\quad \cal T_2=\cal T_3=\emptyset, &\quad&\text{ if }\lie g\text{ is of type } D_n,\\
&\cal T_1=\{0,8,16\},\quad \cal T_2=\{0,2,4,12,14,16\}, \quad \cal T_3=\emptyset, &\quad&\text{ if }\lie g\text{ is of type } E_6,\\
&\cal T_1=\{0,18\},\quad \cal T_2=\{0,2,12,14,24,26\}, \quad \cal T_3=\emptyset, &\quad&\text{ if }\lie g\text{ is of type } E_7,\\
&\cal T_1=\{0,30\},\quad \cal T_2=\{0,20,40\}, \quad \cal T_3=\{0,12,24,36,48\}, &\quad&\text{ if }\lie g\text{ is of type } E_8,\\
&\cal T_1=\{0,18\},\quad \cal T_2=\{0,12,24\}, \quad \cal T_3=\emptyset, &\quad&\text{ if }\lie g\text{ is of type } F_4,\\
&\cal T_1=\{0,12\},\quad \cal T_2=\{0,8,16\}, \quad \cal T_3=\emptyset, &\quad&\text{ if }\lie g\text{ is of type } G_2.\\
\end{alignat*}
Given $k\in\{1,2,3\}$ and $a\in\mathbb C(\xi)^\times$, define
\begin{equation}
\gb\tau_{\xi,k,a} = \prod_{r\in\cal T_k} \gb\omega_{i,a\xi^r}
\end{equation}
if $\lie g$ is not of type $D_n$ with $n$ even and where $\{i\}=I_\bullet$ (see Table 1). For $\lie g$ of type $D_n$ with $n$ even define
\begin{gather}\notag
\gb\tau_{\xi,1,a} = \gb\omega_{n-1,a}\gb\omega_{n-1,a\xi^{2n-2}}, \qquad \gb\tau_{\xi,2,a} = \gb\omega_{n,a}\gb\omega_{n,a\xi^{2n-2}},\\\text{and}\\\notag
\gb\tau_{\xi,3,a} = \gb\omega_{n-1,a}\gb\omega_{n-1,a\xi^2}\gb\omega_{n,a\xi^{2n-2}}\gb\omega_{n,a\xi^{2n}}.
\end{gather}

\begin{rem}
We warn the reader that there were a couple of typos in \cite[\S A.2]{cmq} in the definition of the sets corresponding to our $\cal T_k$. They have been corrected above.
\end{rem}

\begin{prop}\label{p:Gammarel}
Assume $\lie g$ is not of type $D_n$ with $n$ even. Then, the group $\widetilde\Gamma_\xi$ is isomorphic to the (additive) abelian group with generators $\{\chi_a:a\in\mathbb C(\xi)^\times\}$ and relations
$$\sum_{j\in \cal T_k} \chi_{a\xi^j}=0,$$
for all $a\in\mathbb C(\xi)^\times$ and $k=1,2,3$.  If $\lie g$ is of type $D_n$ with $n$ even, then $\widetilde\Gamma_\xi$ is isomorphic to the (additive) abelian group with generators $\{\chi^\pm_a:a\in\mathbb C(\xi)^\times\}$ and relations
\begin{equation*}
\chi^\pm_a+\chi^\pm_{a\xi^{2n-2}}=0,\ \ \chi^-_a+\chi^-_{a\xi^2}+\chi^+_{a\xi^{2n-2}}+\chi^+_{a\xi^{2n}}=0,
\end{equation*}
for all $a\in\mathbb C(\xi)^\times$.
\end{prop}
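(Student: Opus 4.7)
The plan is to work directly with the tautological presentation of $\widetilde\Gamma_\xi=\cal P_\xi/\cal Q_\xi$: generators $[\gb\omega_{i,a}]$ for $i\in I$ and $a\in\mathbb C(\xi)^\times$, with one relation per pair $(i,a)$ coming from the formula
\[
\gb\alpha_{i,a}=(\gb\omega_{i,a\xi_i,2})^{-1}\prod_{j\ne i}\gb\omega_{j,a\xi_i,-c_{ji}},
\]
which, after expanding $\gb\omega_{i,a\xi_i,2}=\gb\omega_{i,a}\gb\omega_{i,a\xi_i^2}$, yields the additive rewrite rule
\[
[\gb\omega_{i,a}]+[\gb\omega_{i,a\xi_i^2}]=\sum_{j\ne i}\sum_{k=0}^{-c_{ji}-1}[\gb\omega_{j,\,a\xi_i\xi_j^{-c_{ji}-1-2k}}].
\]
I will compare this presentation with the one claimed in the statement.

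The first step is to construct a surjective homomorphism $\phi$ from the presented group $\Gamma$ of the statement into $\widetilde\Gamma_\xi$ by sending $\chi_a$ (respectively $\chi_a^\pm$ in type $D_n$ with $n$ even) to $[\gb\omega_{i_\bullet,a}]$ at the appropriate node of $I_\bullet$. Surjectivity follows by iterating the rewrite rule along a path in the Dynkin diagram from any node $i$ to $I_\bullet$, thereby expressing every class $[\gb\omega_{i,a}]$ as an explicit integer combination of classes $[\gb\omega_{i_\bullet,a\xi^\nu}]$, with the shifts $\nu$ tracked through the factors $\xi_i$ picked up along the way. Well-definedness of $\phi$ reduces to checking that each claimed relation $\sum_{j\in\cal T_k}\chi_{a\xi^j}=0$ lifts to an element of $\cal Q_\xi$: I would exhibit $\prod_{j\in\cal T_k}\gb\omega_{i_\bullet,a\xi^j}$ as an explicit product of simple $\ell$-roots $\gb\alpha_{i',a'}$ by a telescoping argument starting from $\gb\alpha_{i_\bullet,a}$ and folding in neighboring $\gb\alpha_{j,a'}$'s until every class at a non-distinguished node cancels. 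The set $\cal T_k$ is designed exactly so that the telescoping closes after a full Coxeter-type cycle, and the mixed relation appearing in type $D_n$ with $n$ even reflects that the two minuscule end-nodes lie in the same such cycle.

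The second step is to show $\phi$ is injective by constructing its inverse. Using the explicit rewriting from the first step, define a group homomorphism $\psi:\cal P_\xi\to\Gamma$ by prescribing $\psi([\gb\omega_{i,a}])$ to be the combination of $\chi_{a'}$'s produced there. It then suffices to verify $\psi(\gb\alpha_{i,a})=0$ in $\Gamma$ for every $i$ and $a$, which reduces to a finite list of identities among the $\chi_{a'}$, each of which follows from the listed relations of $\Gamma$. Hence $\psi$ descends to $\widetilde\Gamma_\xi\to\Gamma$ and inverts $\phi$.

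The main obstacle is the concrete type-by-type verification of the telescoping identities in the first step: the exceptional types $E_{6,7,8}$, which require three distinct relation families, and type $D_n$ with $n$ even, which produces a mixed relation involving both $\chi^-$ and $\chi^+$, are the most delicate, since the exponents appearing in $\cal T_k$ encode a full orbit of a Coxeter-type element and must be tracked carefully through the iterated rewriting. Once this bookkeeping is completed uniformly, the same argument handles all $\xi\in\mathbb C'$, generic or root of unity, in parallel with the generic case treated in \cite{cmq}.
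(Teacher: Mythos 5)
Your two-step scheme (a homomorphism $\phi:\Gamma\to\widetilde\Gamma_\xi$, surjective because the rewrite rule coming from $\gb\alpha_{i,a}\equiv 0$ lets you express every class $[\gb\omega_{i,a}]$ through classes at $I_\bullet$, and an inverse $\psi$ obtained by checking that the rewriting kills every simple $\ell$-root) is the standard way to establish such a presentation, and it is in substance the computation behind the result in \cite{cmq}; the paper itself does not redo it, but simply records that the case $\xi=q$ is proved in \cite{cmq}, that the same computation works for every $\xi\in\mathbb C'$, and that the isomorphism sends the class of $\gb\omega_{i,a}$, $i\in I_\bullet$, to $\chi_a$ (resp. $\chi^\mp_a$ in type $D_{2m}$). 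The difficulty is that your proposal stops exactly where the content of the proposition begins. Neither of the two computational pillars is carried out: (i) that for each $k$ and each $a$ the element $\prod_{j\in\cal T_k}\gb\omega_{i_\bullet,a\xi^j}$ (and, in type $D_n$ with $n$ even, the two displayed mixed products) actually lies in $\cal Q_\xi$, and (ii) that the map $\psi$ built from your rewriting annihilates every $\gb\alpha_{i,a}$, i.e. that the listed relations are complete. These type-by-type verifications are precisely what produce the specific exponent sets $\cal T_1,\cal T_2,\cal T_3$ (for instance $\{0,8,16\}$ and $\{0,2,4,12,14,16\}$ in $E_6$) and the mixed $D_{2m}$ relation; you explicitly defer them as ``the main obstacle,'' so what you have is a plan for a proof rather than a proof. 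The quickest complete route is the one the paper takes: all identities to be checked live in the subgroup generated by the symbols $\gb\omega_{i,a\xi^m}$ with integer exponents $m$, so the verification done in \cite{cmq} for $\xi=q$ transfers verbatim to any $\xi\in\mathbb C'$ (e.g. by applying $\epsilon_\xi$); your closing remark about uniformity gestures at this but is never used to discharge the computations.

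Two secondary points. First, the reduction to $I_\bullet$ should be organized as an induction moving outward from $I_\bullet$ (the relation at an already-reduced node is solved for the classes at the adjacent new node); ``iterating along a path from $i$ to $I_\bullet$'' as you state it does not obviously close, since the relation at a node also involves the neighbor farther from $I_\bullet$ — this is repairable but needs saying. Second, your definition of $\psi$ on the generators $\gb\omega_{i,a}$ only determines a homomorphism on the subgroup of $\cal P_\xi$ they generate; this is harmless for $\xi\ne q$, where $\cal P_\xi=\cal P_{\mathbb C}$ is free on the fundamental $\ell$-weights, but for $\xi=q$ the group $\cal P_q=\cal P_{\mathbb C(q)}$ contains non-split tuples, so either the statement must be read for the split subgroup (as in \cite{cmq}) or this point must be addressed before your second step applies.
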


\begin{proof}
The proposition was proved in \cite{cmq} for $\xi=q$ and the proof is analogous for the other cases. The isomorphism is given by sending the image of $\gb\omega_{i,a}$ in $\widetilde\Gamma_\xi$ to the generator $\chi_a$ for $i\in I_\bullet$ (in the case $I_\bullet$ has two elements, the image of $\gb\omega_{n-1,a}$ is sent to $\chi^-_a$ while the image of $\gb\omega_{n,a}$ is sent to $\chi^+_a$).
\end{proof}

\begin{rem}
Observe that the relations above imply $\chi_a=\chi_{a\xi^{2r^\vee h^\vee}}$ for every $a\in\mathbb C(\xi)^\times$ (and similarly for $\chi^\pm_a$). This is the motivation for the terminology elliptic characters. Namely, if $\xi\in\mathbb C^\times$ has infinite order, the parameterizing set for the distinct generators of the form $\chi_a$ is the elliptic curve $\mathbb C^\times/ \xi^{2r^\vee h^\vee\mathbb Z}$. For $\xi=1$, the elliptic behavior degenerates and the elements of $\widetilde\Gamma_1$ are usually called spectral characters. Notice that $\widetilde\Gamma_1$ is isomorphic to the group of functions with finite support from $\mathbb C^\times$ to $P/Q$ (cf. \cite{cms}). If $\xi=\zeta$, the curve parameterizing the distinct generators $\chi_a$ depends on the greatest common divisor of $2r^\vee h^\vee$ and the order $l$ of $\zeta$. For instance, let $\lie g=\lie{sl}_2$ so that $l$ is relatively prime to $2r^\vee h^\vee=4$. One can check that we have $\chi_a=\chi_{a\zeta^m}$ for any $m\in\mathbb Z$ and, hence, the curve is $\mathbb C^\times/\zeta^{\mathbb Z}$. Moreover, we also have $2\chi_a=0$ for every $a\in\mathbb C^\times$. In particular, $\widetilde\Gamma_\zeta$ is a quotient of $\widetilde\Gamma_1$ in this case. On the other hand, if $\lie g=\lie{sl}_3$ and $l=3=r^\vee h^\vee$, the curve becomes $\mathbb C^\times$ as in the case $\xi=1$.
\end{rem}

We record the following lemma proved ``in between the lines'' in \cite{cmq,em} for $\xi$ of infinite order. For $\xi\in\{\zeta,1\}$ the proof is analogous. Given $i\in I$, let $\cal P_{\xi,i}$ be the subgroup of $\cal P_\xi$ generated by $\gb\omega_{i,a}$ with $a\in\mathbb C(\xi)^\times$ and $\cal P_{\xi,i}^+=\cal P_\xi^+\cap\cal P_{\xi,i}$. Similarly one defines $\cal P_{\xi,J}$ and $\cal P_{\xi,J}^+$ for any $J\subseteq I$.

\begin{lem}\label{l:samebseq}
If $\gb\lambda,\gb\mu\in\cal P_{\xi,I_\bullet}^+$ are such that $\gamma_{_\xi}(\gb\lambda)=\gamma_{_\xi}(\gb\mu)$, then there exists a sequence $\gb\omega_1,\dots\gb\omega_m\in\cal P_{\xi,I_\bullet}^+$ such that $\gb\omega_1=\gb\lambda, \gb\omega_m=\gb\mu$ and, for every $j=1,\dots,m-1$,
$$\frac{\gb\omega_j}{\gb\omega_{j+1}} = (\gb\tau_{\xi,k_j,a_j})^{\epsilon_j},$$
for some $k_j\in\{1,2,3\}, \epsilon_j\in\{\pm1\}$, and $a_j\in\mathbb C(\xi)^\times$. Moreover, if $\gb\lambda,\gb\mu\in\cal P_\mathbb A^s$, then $a_j$ can be taken in $\mathbb A^\times$ for all $j=1,\dots,m-1$.\hfill\qedsymbol
\end{lem}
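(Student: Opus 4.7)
The plan is to combine the explicit presentation of $\widetilde\Gamma_\xi$ from Proposition \ref{p:Gammarel} with a positivity-preserving rearrangement argument. Since $\cal P_{\xi,I_\bullet}$ is the free abelian subgroup of $\cal P_\xi$ generated by $\{\gb\omega_{i,a}:i\in I_\bullet,\ a\in\mathbb C(\xi)^\times\}$, and (by Proposition \ref{p:Gammarel}) the restriction of $\gamma_{_\xi}$ to $\cal P_{\xi,I_\bullet}$ is surjective with kernel the subgroup generated by the $\gb\tau_{\xi,k,a}$'s, the hypothesis $\gamma_{_\xi}(\gb\lambda)=\gamma_{_\xi}(\gb\mu)$ yields a presentation
$$\gb\lambda\gb\mu^{-1} = \prod_{s=1}^N \gb\tau_{\xi,k_s,a_s}^{\epsilon_s}$$
with $k_s\in\{1,2,3\}$, $\epsilon_s\in\{\pm 1\}$, and $a_s\in\mathbb C(\xi)^\times$.

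The naive choice $\gb\omega_j=\gb\mu\prod_{s=j}^N\gb\tau_{\xi,k_s,a_s}^{\epsilon_s}$ has the correct endpoints and the correct consecutive quotients, but the intermediate terms may fail to be dominant because of the negative exponents. The key idea to fix this is to split the indices into $S_\pm=\{s:\epsilon_s=\pm1\}$ and set $\gb\nu=\prod_{s\in S_-}\gb\tau_{\xi,k_s,a_s}$ and $\gb\pi=\prod_{s\in S_+}\gb\tau_{\xi,k_s,a_s}$. Both lie in $\cal P_{\xi,I_\bullet}^+$ since each $\gb\tau_{\xi,k,a}$ does (its components are of the form $\gb\omega_{i,a\xi^r}$ with $i\in I_\bullet$ and $r\ge 0$), and by construction $\gb\lambda\gb\nu=\gb\mu\gb\pi$. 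The desired chain then proceeds in two phases: first pass from $\gb\lambda$ to $\gb\lambda\gb\nu=\gb\mu\gb\pi$ by multiplying by the factors of $\gb\nu$ one at a time (so each step has $\epsilon_j=-1$); then pass from $\gb\mu\gb\pi$ to $\gb\mu$ by dividing out the factors of $\gb\pi$ one at a time (so each step has $\epsilon_j=+1$). Every intermediate term is either $\gb\lambda$ times a sub-product of factors of $\gb\nu$, or $\gb\mu$ times a sub-product of factors of $\gb\pi$, and hence lies in $\cal P_{\xi,I_\bullet}^+$.

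For the moreover statement, note that when $\xi=q$ the hypothesis $\gb\lambda,\gb\mu\in\cal P_\mathbb A^s$ means $\gb\lambda\gb\mu^{-1}$ is a product of $\gb\omega_{i,a}^{\pm 1}$ with $a\in\mathbb A^\times$. The relations of Proposition \ref{p:Gammarel} only identify $\chi_a$ with $\chi_{aq^r}$ (or $\chi_a^\pm$ with $\chi_{aq^r}^\pm$) for $r\in\cal T_k$, and since $q^r\in\mathbb A^\times$, the orbit of any $a\in\mathbb A^\times$ under these relations stays inside $\mathbb A^\times$. A routine reduction then shows that one can choose the initial expression for $\gb\lambda\gb\mu^{-1}$ with all $a_s\in\mathbb A^\times$, and the two-phase construction preserves this property.

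The main obstacle is positivity: the abstract identity in $\cal P_{\xi,I_\bullet}$ furnished by the presentation of $\widetilde\Gamma_\xi$ is immediate, but a direct interpolation would typically push intermediate products outside $\cal P_{\xi,I_\bullet}^+$. Sorting the $\gb\tau$-factors by the sign of their exponent and processing the positive block by multiplication before the negative block by division handles this cleanly; the remaining verifications (bookkeeping of signs and the $\mathbb A^\times$ condition) are then routine.
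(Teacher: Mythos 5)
Your argument is correct, and it is worth noting that the paper itself gives no proof of this lemma (it is recorded as being ``in between the lines'' of \cite{cmq,em}), so a self-contained derivation is exactly what is called for; your route --- read off from Proposition \ref{p:Gammarel} that $\cal Q_\xi\cap\cal P_{\xi,I_\bullet}$ is the subgroup generated by the $\gb\tau_{\xi,k,a}$ (using that $\cal P_{\xi,I_\bullet}$ is free on the $\gb\omega_{i,a}$, $i\in I_\bullet$, and that the isomorphism of Proposition \ref{p:Gammarel} sends $\gamma_{_\xi}(\gb\omega_{i,a})$ to $\chi_a$), and then restore dominance by performing all the steps with $\epsilon_j=-1$ (multiplications) before all the steps with $\epsilon_j=+1$ (divisions) --- is the natural one and surely close to what those references intend. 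The only place where you lean on the reader is the ``routine reduction'' in the moreover part. It deserves one more sentence: each relation $\gb\tau_{q,k,a}$ involves only spectral parameters in the single coset $aq^{\mathbb Z}\subseteq\mathbb C(q)^\times$, so $\cal P_{q,I_\bullet}$ splits as a direct sum of free subgroups indexed by the cosets of $q^{\mathbb Z}$, and the subgroup generated by all $\gb\tau$'s splits compatibly; since $\mathbb A^\times=\mathbb C^\times q^{\mathbb Z}$ is a union of such cosets and $\gb\lambda\gb\mu^{-1}$ is supported on those cosets, its components in all other cosets vanish, so it already lies in the subgroup generated by the $\gb\tau_{q,k,a}$ with $a\in\mathbb A^\times$. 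With that sentence added (and the trivial observation that for $\xi\ne q$ the moreover clause is automatic because $\mathbb C^\times\subseteq\mathbb A^\times$), your proof is complete.
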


\section{The Irreducible and Highest-Weight Representations}\label{s:fdrs}

\subsection{Weight Modules}
We now review the finite-dimensional representation theory of \us_\xi g. All the proofs can be found in \cite{anpoke,cpbook,lroot,l} (see also \cite{jm:weyl}).

Let $V$ be a \us_\xi g-module. Given $\mu\in P$ define
\begin{equation}
V_\mu = \left\{v\in V: k_iv=\xi_i^{\mu(h_i)}v \text{ and } \tqbinom{k_i}{l}v=\tqbinom{\mu(h_i)}{l}_{\xi_i}v\text{ for all } i\in I\right\}.
\end{equation}
If $\xi$ has infinite order, the second condition in the definition of $V_\mu$ is redundant. The space $V_\mu$ is said to be the weight space of $V$ of weight $\mu$ and the nonzero vectors of $\mu$ are called weight vectors of weight $\mu$. The module $V$ is said to be a weight module (of type 1) if
$$V=\opl_{\mu\in P}^{} V_\mu.$$
It is well-known that every finite-dimensional \us_\xi g-module  is isomorphic to the tensor product of a weight module of type 1 with a one-dimensional representation. We shall only consider representations of type 1 (without loss of generality) with finite-dimensional weight spaces and will refer to them simply as weight modules. The abelian tensor category of finite-dimensional \us_\xi g-weight modules will be denoted by $\cal C_\xi$.

Observe that the commutation relations \eqref{e:xhcrel} imply that
\begin{equation}\label{e:shiftws}
(x_i^\pm)^{(k)}V_\mu \subseteq V_{\mu \pm k\alpha_i} \qquad\text{for all}\qquad k\in\mathbb Z_{\ge 0},\ \ \mu\in P.
\end{equation}

\begin{defn}
A \us_\xi g-module $V$ is said to be integrable if, given $v\in V$ and $i\in I$, there exists $m\in\mathbb Z_{\ge 0}$ such that $(x_i^{\pm})^{(k)}v=0$ for all $k>m$.
\end{defn}

Quite clearly, every finite-dimensional \us_\xi g-module is integrable. If $V\in\cal C_\xi$, define the character of $V$ to be the element $\ch(V)$ of the integral group ring $\mathbb Z[P]$ given by
\begin{equation}
\ch(V) = \sum_{\mu\in P} \dim(V_\mu)e^\mu.
\end{equation}
Here, $e^\mu,\mu\in P$, denote the basis elements of $\mathbb Z[P]$ so that $e^\mu e^\nu=e^{\mu+\nu}$. The action of $\cal W$ on $P$ can be naturally extended to an action of $\cal W$ on $\mathbb Z[P]$.

\begin{prop}\label{p:Winv}
Let $V$ be an integrable \us_\xi g-module, $\mu\in P$, and $w\in\cal W$. Then, $V_\mu$ and $V_{w\mu}$ are isomorphic as vector spaces. In particular, the characters of objects in $\cal C_\xi$ are $\cal W$-invariant.
\hfill\qedsymbol
\end{prop}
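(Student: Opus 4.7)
The plan is to reduce the statement to the case of simple reflections and then exhibit, for each $i\in I$ and $\mu\in P$, an explicit linear isomorphism $V_\mu\xrightarrow{\sim}V_{s_i\mu}$ by means of Lusztig's braid symmetry on integrable modules. Once such an isomorphism is available, composing it along any reduced expression for $w\in\cal W$ gives $V_\mu\cong V_{w\mu}$ as vector spaces, and matching dimensions on both sides then forces $s_i\cdot\ch(V)=\ch(V)$ for every $i\in I$, whence the $\cal W$-invariance of $\ch(V)=\sum_\mu\dim(V_\mu)e^\mu$.

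For a fixed $i\in I$, I would restrict the $\ust_\xi g$-action on $V$ to the subalgebra $\us_\xi{g_i}\cong U_{\xi_i}(\lie{sl}_2)$. The integrability hypothesis transfers to this subalgebra, meaning that on each $v\in V$ only finitely many divided powers $(x_i^\pm)^{(k)}$ act nontrivially. This is precisely the setting in which Lusztig's symmetry operator, which I will denote $\gb T_i$ to distinguish it from the braid action on $\cal P_\xi$, is well defined by the finite sum
\begin{equation*}
\gb T_i(v)\;=\;\sum_{\substack{a,b,c\ge 0\\ -a+b-c=\mu(h_i)}}(-1)^{b}\,\xi_i^{b-ac}\,(x_i^-)^{(a)}(x_i^+)^{(b)}(x_i^-)^{(c)}\,v,\qquad v\in V_\mu.
\end{equation*}
Using the divided-power commutation relations \eqref{e:xhcrel} together with $[x_i^+,x_i^-]=(k_i-k_i^{-1})/(\xi_i-\xi_i^{-1})$ one checks that $\gb T_i(V_\mu)\subseteq V_{s_i\mu}$, and a mirror formula (with the roles of $x_i^+$ and $x_i^-$ interchanged and suitably adjusted signs and powers of $\xi_i$) supplies a two-sided inverse. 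Consequently $\gb T_i$ restricts to the desired vector space isomorphism $V_\mu\to V_{s_i\mu}$, proving the first assertion, and the character statement is immediate from the equality $\dim V_\mu=\dim V_{w\mu}$ for all $\mu\in P$ and all $w\in\cal W$.

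The one substantive point is that $\gb T_i$ and its inverse must be shown to be well defined on modules over Lusztig's \emph{restricted} $\mathbb A$-form and its specialization $\ust_\xi g$, rather than only on modules over the generic algebra. That is, the divided-power identities underlying the verifications above have to remain valid after specializing $q$ to $\xi$, including the case where $\xi$ is a root of unity. The standing hypothesis that the order $l$ of $\xi$ is odd and coprime to the lacing number of $\lie g$ is exactly what ensures this; the required identities and the well-definedness of $\gb T_i$ on integrable Lusztig-form modules are established uniformly in $\xi\in\mathbb C'$ in \cite[Ch.~5]{l} and the summaries in \cite{cpbook,lroot,anpoke}. Once those facts are invoked, no additional work is needed, and the proposition follows.
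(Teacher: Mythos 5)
Your argument is correct: the paper itself gives no proof of this proposition, deferring to \cite{anpoke,cpbook,lroot,l}, and your reduction to simple reflections via Lusztig's symmetry operators $T'_{i,e},T''_{i,-e}$ on integrable modules is precisely the standard argument found in those references (Lusztig's book, Chapters 5 and 41), so the approaches coincide. One minor caveat: these operators are defined already over the $\mathbb A$-form and therefore make sense on integrable modules for any specialization $\xi$, so the assumption that $l$ is odd and coprime to the lacing number is not what guarantees their well-definedness; this overstatement does not affect the validity of your proof.
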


If $v\in V$ is a weight vector such that $\us_\xi{n^+}^0v = 0$, then $v$ is called a highest-weight vector. If $V$ is generated by a highest-weight vector of weight $\lambda$, then $V$ is said to be a highest-weight module of highest weight $\lambda$. The concept of lowest-weight module is defined similarly by replacing $\us_\xi{n^+}^0$ with $\us_\xi{n^-}^0$.
The next proposition is standard.

\begin{prop}
Let $V$ be a highest-weight module. Then, $V$ has a unique maximal proper submodule and, hence, a unique irreducible quotient. In particular, $V$ is indecomposable.\hfill\qedsymbol
\end{prop}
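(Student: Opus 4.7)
The proof is a standard highest-weight argument; the key is to combine the triangular decomposition of $U_\xi(\tlie g)$ with the weight decomposition inherited from $V$.

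First I would fix a generating highest-weight vector $v\in V_\lambda$ of weight $\lambda\in P$. By definition $V=\us_\xi g\cdot v$, and since $\us_\xi{n^+}^0 v=0$ and $v$ is an eigenvector for $\ust_\xi h$, the triangular decomposition (recorded for \ustres g after Proposition~\ref{p:comultip}, which specializes at $\xi$) gives
\[
V \;=\; \ust_\xi{n^-}\cdot v.
\]
Next I would argue that $V$ is itself a weight module with one-dimensional top weight space. Indeed, $\ust_\xi{n^-}$ carries a $Q$-grading induced by the commutation relations $k_i(x_{j,r}^-)k_i^{-1}=\xi_i^{-c_{ij}}x_{j,r}^-$, so $\ust_\xi{n^-}=\bigoplus_{\beta\in Q^+}\ust_\xi{n^-}_{-\beta}$ with $\ust_\xi{n^-}_0=\mathbb C\cdot 1$. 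Applying this grading to $v$ (and using \eqref{e:shiftws}) gives $V_\mu=\ust_\xi{n^-}_{\mu-\lambda}\cdot v$, so $V=\bigoplus_{\mu\le\lambda} V_\mu$ and $V_\lambda=\mathbb C v$.

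Now I would prove the main claim. Let $W\subseteq V$ be any proper submodule. Since the $k_i$ and $\tqbinom{k_i}{l}$ commute with the action preserving $W$, and their simultaneous eigenspace decomposition on $V$ is exactly the weight space decomposition, we have $W=\bigoplus_\mu (W\cap V_\mu)$, so $W$ is itself a weight module. If $W_\lambda\ne 0$, then $W\supseteq\mathbb C v=V_\lambda$, and hence $W=\ust_\xi g\cdot v=V$, contradicting properness. Therefore every proper submodule satisfies $W_\lambda=0$. Consequently the sum $N=\sum_{W\subsetneq V}W$ also satisfies $N_\lambda=0$, so $v\notin N$ and $N\subsetneq V$. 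By construction $N$ contains every proper submodule, so it is the unique maximal one, and $V/N$ is the unique irreducible quotient.

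Finally, indecomposability is immediate: if $V=V_1\oplus V_2$ with both $V_1,V_2$ nonzero, then both are proper submodules, so both are contained in $N$, giving $V=V_1+V_2\subseteq N\subsetneq V$, a contradiction. The only subtle ingredient is verifying that an abstract submodule of a weight module is automatically a weight submodule, which is where the presence of $\tqbinom{k_i}{l}$ in \ust_\xi h (recorded in Lemma~\ref{l:hinres}) and the definition of $V_\mu$ via simultaneous eigenspaces of $k_i$ and $\tqbinom{k_i}{l}$ are used; the rest of the argument is purely formal.
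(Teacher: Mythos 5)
Your argument is correct, and it is exactly the standard highest-weight argument that the paper invokes without proof (the proposition is simply declared ``standard''): generate over the negative part, get a weight module with one-dimensional top weight space, observe that every submodule is graded by weights because it is stable under the commuting semisimple operators $k_i$ and $\tqbinom{k_i}{l}$, and take the sum of all proper submodules. The only quibble is notational: the statement is about $U_\xi(\lie g)$-modules, so the decomposition to quote is $U_\xi(\lie g)\cong U_\xi(\lie n^-)\otimes U_\xi(\lie h)\otimes U_\xi(\lie n^+)$ and $V=U_\xi(\lie n^-)v$ rather than $U_\xi(\tlie n^-)v$; with that substitution your proof goes through verbatim (and the same argument also yields the later highest-$\ell$-weight analogue for $U_\xi(\tlie g)$).
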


\begin{defn}
Given $\lambda\in P^+$,  let $W_\xi(\lambda)$ be the universal \us_\xi g-module with respect to the following property: $W_\xi(\lambda)$ is generated by a highest-weight vector $v$ of weight $\lambda$ satisfying $(x_i^-)^{(m)}v = 0$ for all  $i\in I$ and $m>\lambda(h_i)$. The module $W_\xi(\lambda)$ is called the Weyl module of highest weight $\lambda$. The unique irreducible quotient of $W_\xi(\lambda)$ is denoted by $V_\xi(\lambda)$.
\end{defn}

\begin{thm}
For every $\lambda\in P^+$, $W_\xi(\lambda)$ is an integrable \us_\xi g-module. Furthermore, it is the universal finite-dimensional highest-weight $\us_\xi g$-module of highest weight $\lambda$.\hfill\qedsymbol
\end{thm}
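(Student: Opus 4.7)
The plan is to prove the two assertions separately. The universal finite-dimensional property is the easier half and does not require finite-dimensionality of $W_\xi(\lambda)$ to be invoked first, while integrability (and finite-dimensionality) will be obtained via specialization from the generic case.

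For the universal property, let $V$ be any finite-dimensional highest-weight $\us_\xi g$-module of highest weight $\lambda$, generated by a highest-weight vector $w$. For each $i\in I$, the subalgebra $U_{\zeta_i}(\lie{sl}_2)\subseteq \us_\xi g$ corresponding to the simple root $\alpha_i$ acts on the cyclic submodule $U_{\zeta_i}(\lie{sl}_2)\cdot w\subseteq V$, which is finite-dimensional and a highest-weight module of highest weight $\lambda(h_i)$ over Lusztig's form of quantum $\lie{sl}_2$. The classification of such modules (cf.\ \cite{lroot}) forces $(x_i^-)^{(m)}w=0$ for every $m>\lambda(h_i)$. Hence $V$ satisfies the defining relations of $W_\xi(\lambda)$, and by universality $V$ is a quotient of $W_\xi(\lambda)$.

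For integrability, I transfer finite-dimensionality from the generic setting by specialization. The Weyl module $W_q(\lambda)$ is classically known to be finite-dimensional. Let $v_q$ be its highest-weight vector and set $W_\mathbb A(\lambda):=\uqres g\cdot v_q\subseteq W_q(\lambda)$; as a finitely generated torsion-free $\mathbb A$-submodule of $W_q(\lambda)$ whose $\mathbb C(q)$-span equals $W_q(\lambda)$, it is $\mathbb A$-free of rank $\dim_{\mathbb C(q)} W_q(\lambda)$. The specialization $\overline W:=\mathbb C_\xi\otimes_\mathbb A W_\mathbb A(\lambda)$ is then a finite-dimensional $\us_\xi g$-module in which the image $\bar v_q$ of $v_q$ is a highest-weight vector of weight $\lambda$ satisfying the Weyl relations. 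By universality of $W_\xi(\lambda)$, there is a surjection $W_\xi(\lambda)\twoheadrightarrow \overline W$; finite-dimensionality (whence integrability) of $W_\xi(\lambda)$ follows once one shows $\dim W_\xi(\lambda)\le \dim W_q(\lambda)$, so that this surjection is an isomorphism.

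I expect this dimension bound to be the main obstacle. The plan is to use the triangular decomposition of $\us_\xi g$ to write every element of $W_\xi(\lambda)$ as $Xv$ for some $X\in \us_\xi{n^-}$, and then to straighten $X$ modulo the defining relations $(x_i^-)^{(m)}v=0$ using a Lusztig-style PBW basis of $\us_\xi{n^-}$ together with the commutation relations \eqref{e:xhcrel} to push Cartan factors past $v$. This produces a $\mathbb C$-spanning set of $W_\xi(\lambda)$ indexed by (a subset of) an $\mathbb A$-basis of $W_\mathbb A(\lambda)$, hence of cardinality at most $\dim W_q(\lambda)$. Since the images in $\overline W$ of the corresponding basis vectors are linearly independent, the surjection must be injective. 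The technical subtlety lies in the careful handling of higher divided powers at the root of unity $\zeta$: standard denominators $[m]_{\zeta_i}!$ vanish for $m\ge l$, so one must work throughout within Lusztig's restricted form and use its commutation identities for divided-power generators.
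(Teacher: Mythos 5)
Your proof of the universal property is correct: for each $i\in I$ the cyclic submodule $U_{\zeta_i}(\lie{sl}_2)w$ is a finite-dimensional highest-weight module of highest weight $\lambda(h_i)$, its character is $\cal W$-invariant by Proposition \ref{p:Winv}, so its weights are bounded below by $-\lambda(h_i)$ and $(x_i^-)^{(m)}w=0$ for $m>\lambda(h_i)$; hence any finite-dimensional highest-weight module of highest weight $\lambda$ is a quotient of $W_\xi(\lambda)$. Likewise, specializing the admissible lattice $\uqres g v_q\subseteq W_q(\lambda)$ (Proposition \ref{p:uqlat}) does give a nonzero finite-dimensional quotient of $W_\xi(\lambda)$, so in particular $W_\xi(\lambda)\ne 0$ and $\dim W_\xi(\lambda)\ge\dim W_q(\lambda)$.

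The gap is in the step you yourself flag as the main obstacle, the bound $\dim W_\xi(\lambda)\le\dim W_q(\lambda)$. The defining relations of $W_\xi(\lambda)$ only control divided powers of the \emph{simple} negative root vectors applied to $v$, whereas a PBW monomial in $U_\xi(\lie n^-)$ involves divided powers $F_\beta^{(c)}$ of non-simple root vectors; already for $\lie g=\lie{sl}_3$ one has to argue why $F_{\alpha_1+\alpha_2}^{(c)}v$ dies for large $c$, and no routine straightening against $(x_i^-)^{(m)}v=0$ produces a spanning set of size at most $\dim W_q(\lambda)$. What you are asking for is precisely the statement that the abstractly presented module has the same character as $W_q(\lambda)$ (equivalently, that your surjection onto $\overline W$ is an isomorphism); this is true, but it is a theorem at the level of Lusztig's canonical basis/based modules and Andersen--Polo--Wen, not a consequence of PBW bookkeeping, and it is much more than the theorem requires. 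The standard argument behind the references the paper cites (\cite{anpoke,cpbook,lroot,l}) avoids any comparison with the generic case: first one proves integrability directly, by showing that local nilpotency of the operators $(x_i^\pm)^{(k)}$ propagates from the cyclic vector $v$ (where it holds by the defining relations) to all of $W_\xi(\lambda)=U_\xi(\lie n^-)v$, using that the adjoint action of $(x_i^\pm)^{(k)}$ on the generators of the restricted form is nilpotent thanks to the (higher-order) quantum Serre relations. Once integrability is known, Proposition \ref{p:Winv} makes the set of weights $\cal W$-stable; since all weights lie in $\lambda-Q^+$, Lemma \ref{l:wgroupstuff}(a) confines them to the finite set $P(\lambda)$, and each weight space is finite-dimensional because it is a quotient of a graded component of $U_\xi(\lie n^-)$. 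This gives finite-dimensionality and integrability without the dimension equality, and your universality argument then finishes the proof.
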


It is quite easy to see that any finite-dimensional irreducible \us_\xi g-module (of type 1) is highest-weight. Moreover, since every element of $P$ is conjugate to a dominant weight, it follows from Proposition \ref{p:Winv} that the highest weight must be in $P^+$. This proves:

\begin{cor}
Every simple object from $\cal C_\xi$ is isomorphic to $V_\xi(\lambda)$ for some $\lambda\in P^+$.\hfill\qedsymbol
\end{cor}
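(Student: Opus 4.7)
The plan is to show that any simple $V\in\cal C_\xi$ is a highest-weight module with highest weight in $P^+$, and then invoke the universal property of $W_\xi(\lambda)$ to identify $V$ with its unique irreducible quotient $V_\xi(\lambda)$.

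First I would produce a highest-weight vector in $V$. Since $V$ is finite-dimensional, the weight decomposition $V=\bigoplus_{\mu\in P}V_\mu$ has only finitely many nonzero summands, so the partial order on $P$ furnishes a maximal weight $\lambda$ of $V$. The weight-shift rule \eqref{e:shiftws}, extended to all Drinfeld generators $x_{i,r}^+$ (the derivation from \eqref{e:xhcrel} works for arbitrary $r$), forces any nonzero $v\in V_\lambda$ to satisfy $(x_{i,r}^+)^{(m)}v\in V_{\lambda+m\alpha_i}=0$ for all $i\in I$, $r\in\mathbb Z$, $m\ge 1$. Hence $\us_\xi{n^+}^0 v=0$, and simplicity then gives $V=\us_\xi g\cdot v$, so $V$ is a highest-weight module of highest weight $\lambda$.

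Next I would check $\lambda\in P^+$ via Proposition \ref{p:Winv}: if some $\lambda(h_i)<0$, then $\dim V_{s_i\lambda}=\dim V_\lambda\ne 0$, yet $s_i\lambda=\lambda-\lambda(h_i)\alpha_i>\lambda$ contradicts the maximality of $\lambda$. Thus every $\lambda(h_i)\ge 0$, i.e., $\lambda\in P^+$.

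Finally, I would match $V$ against $W_\xi(\lambda)$. Restricting to the subalgebra $\us_\xi{g_i}$ makes $v$ a highest-weight vector of $\lie{sl}_2$-weight $\lambda(h_i)\ge 0$ inside a finite-dimensional integrable module, so standard $\lie{sl}_2$-theory gives $(x_i^-)^{(m)}v=0$ for every $m>\lambda(h_i)$. Thus $v$ satisfies the relations defining $W_\xi(\lambda)$, and its universal property yields a surjection $W_\xi(\lambda)\twoheadrightarrow V$; simplicity then forces $V$ to coincide with the unique irreducible quotient, so $V\cong V_\xi(\lambda)$. No step presents a genuine obstacle; the only mild care required is the extension of the weight-shift rule to all Drinfeld generators $x_{i,r}^\pm$, which is immediate from \eqref{e:xhcrel}.
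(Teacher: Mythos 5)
Your argument is correct and follows essentially the same route as the paper: a maximal weight yields a highest-weight vector, simplicity makes $V$ highest-weight, Weyl-group invariance of characters (Proposition \ref{p:Winv}) forces the highest weight into $P^+$, and the relations $(x_i^-)^{(m)}v=0$ for $m>\lambda(h_i)$ together with the universal property of $W_\xi(\lambda)$ identify $V$ with $V_\xi(\lambda)$. One superfluous remark in your write-up: $\cal C_\xi$ is a category of modules over the finite-type algebra \us_\xi g, so no Drinfeld generators $x_{i,r}^\pm$ enter at all — the shift rule \eqref{e:shiftws} for $(x_i^\pm)^{(k)}$, $i\in I$, already covers exactly what is needed (loop generators only become relevant for $\wcal C_\xi$).
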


The proof of the next theorem is standard.

\begin{thm}\label{t:chconst}
The irreducible constituents of $V\in\cal C_\xi$ (counted with multiplicities) are completely determined by $\ch(V)$.\hfill\qedsymbol
\end{thm}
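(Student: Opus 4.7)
The plan is to prove the stronger statement that the family $\{\ch V_\xi(\lambda):\lambda\in P^+\}$ is linearly independent in $\mathbb Z[P]$; combined with additivity of $\ch$ on short exact sequences in $\cal C_\xi$, this will yield the theorem via a routine induction on composition length. For the additivity step, I would first observe that every object of $\cal C_\xi$ decomposes as the direct sum of its weight spaces, so taking the $\mu$-weight space is an exact functor on $\cal C_\xi$. It follows that for any short exact sequence $0\to V'\to V\to V''\to 0$ in $\cal C_\xi$ one has $\dim V_\mu=\dim V'_\mu+\dim V''_\mu$ and hence $\ch V=\ch V'+\ch V''$. Iterating along any composition series of $V$ (which exists by finite-dimensionality) then expresses $\ch V$ as $\sum_\lambda m_\lambda\ch V_\xi(\lambda)$, where $m_\lambda$ is the multiplicity of $V_\xi(\lambda)$ among the composition factors.

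For the linear independence, I would argue by extracting the maximal highest weight appearing. Suppose $\sum_{\lambda\in S}n_\lambda\ch V_\xi(\lambda)=0$ with $S\subset P^+$ a finite set on which all $n_\lambda$ are nonzero, and choose $\mu$ maximal in $S$ with respect to the partial order $\le$ on $P^+$. Because $V_\xi(\lambda)$ is generated by a highest-weight vector $v_\lambda$ of weight $\lambda$ with $V_\xi(\lambda)=\us_\xi{n^-}\cdot v_\lambda$, relation \eqref{e:shiftws} forces every weight of $V_\xi(\lambda)$ to lie in $\lambda-Q^+$. Consequently the coefficient of $e^\mu$ in $\ch V_\xi(\lambda)$ vanishes unless $\mu\le\lambda$, which by maximality of $\mu$ in $S$ leaves only the possibility $\lambda=\mu$. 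Applying \eqref{e:shiftws} once more, the $\mu$-weight space of $V_\xi(\mu)$ is spanned by its cyclic generator $v_\mu$, so $\dim V_\xi(\mu)_\mu=1$; hence the coefficient of $e^\mu$ in the assumed vanishing relation is precisely $n_\mu$, yielding $n_\mu=0$ and a contradiction.

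The argument presents no serious obstacle; the only step deserving a brief comment is the $1$-dimensionality of $V_\xi(\mu)_\mu$, which rests on the observation that any element of $\us_\xi{n^-}\cdot v_\mu$ of weight $\mu$ must, by \eqref{e:shiftws}, be a scalar multiple of $v_\mu$ (since every nonscalar PBW-monomial in $\us_\xi{n^-}^0$ strictly lowers the weight). Once linear independence is established, the induction on composition length recovers the multiset of irreducible constituents uniquely from $\ch V$.
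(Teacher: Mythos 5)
Your proof is correct and is exactly the standard argument the paper has in mind when it omits the proof of Theorem \ref{t:chconst} as ``standard'': additivity of $\ch$ along a composition series, plus linear independence of $\{\ch V_\xi(\lambda)\}_{\lambda\in P^+}$ via the triangularity $\dim V_\xi(\lambda)_\lambda=1$ and $V_\xi(\lambda)_\mu\ne 0\Rightarrow\mu\le\lambda$, both of which follow from \eqref{e:shiftws} and the triangular decomposition of $\us_\xi g$ exactly as you argue. No gaps.
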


The above results with $\xi=1$ recover the basic results on the finite-dimensional representation theory of $\lie g$. We shall use the notation $V(\lambda)$ for the $\lie g$-module corresponding to $V_1(\lambda)$.

\begin{prop}\label{p:ufinir}
If $\lambda\in P^+_l= \{\lambda\in P^+:\lambda(h_i)<l\text{ for all }i\in I\}$, then $V_\zeta(\lambda)$ is irreducible as a \usfin g-module.\hfill\qedsymbol
\end{prop}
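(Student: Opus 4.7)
My plan is to reduce the $\usfin g$-irreducibility of $V_\zeta(\lambda)$ to the well-known fact that, for restricted $\lambda$, $V_\zeta(\lambda)$ is already irreducible under the small quantum group $u_\zeta$ (the Frobenius--Lusztig kernel). As observed in the remark just before the proposition, $\usfin g$ is obtained from $u_\zeta$ by enlarging only the Cartan part: concretely, $u_\zeta$ is the subalgebra generated by the $x_i^\pm$ and $k_i^{\pm1}$, all of which already lie in $\usfin g$. Hence every $\usfin g$-submodule of $V_\zeta(\lambda)$ is automatically a $u_\zeta$-submodule, and it suffices to establish $u_\zeta$-irreducibility.

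For the latter, I would appeal to the classical theorem of Lusztig (\cite{lroot}; see also \cite{cproot}) which asserts that, when $\lambda \in P^+_l$, the restriction of $V_\zeta(\lambda)$ to $u_\zeta$ is the simple $u_\zeta$-module of highest weight $\lambda$. The essential input from the hypothesis $\lambda(h_i) < l$ is the Weyl-module relation $(x_i^-)^{(m)} v_\lambda = 0$ for $m > \lambda(h_i)$, which in particular kills every divided power of order $\ge l$ applied to $v_\lambda$; this makes the surjection $W_\zeta(\lambda) \twoheadrightarrow V_\zeta(\lambda)$ factor through the corresponding restricted $u_\zeta$-Weyl module, whose character, by Lusztig's character formula, equals $\ch(V(\lambda)) = \ch(V_\zeta(\lambda))$. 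A dimension comparison then forces $u_\zeta \cdot v_\lambda = V_\zeta(\lambda)$, and Lusztig's theorem further guarantees that no proper nonzero $u_\zeta$-submodule exists.

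The main obstacle is thus the invocation of Lusztig's restricted $u_\zeta$-irreducibility theorem, which I would cite rather than reprove; once it is in hand, the rest is a formal containment of subalgebras combined with the weight-space structure guaranteed by $\us_\zeta h \subseteq \usfin g$.
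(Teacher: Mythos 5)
Your overall reduction is correct and is exactly the standard argument behind this proposition, which the paper does not prove itself but delegates to the references \cite{anpoke,cpbook,lroot,l}: since $u_\zeta$ (generated by $x_i^\pm,k_i^{\pm1}$) sits inside \usfin g, which enlarges it only in the Cartan direction, every \usfin g-submodule of $V_\zeta(\lambda)$ is a $u_\zeta$-submodule, and Lusztig's theorem that $V_\zeta(\lambda)$ with $\lambda\in P^+_l$ restricts irreducibly to the small quantum group finishes the proof. So your first and last paragraphs are fine and in line with the paper's intent.

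The middle paragraph, however, should be dropped: the claim that the relevant restricted ($u_\zeta$-)Weyl module has character equal to $\ch(V(\lambda))=\ch(V_\zeta(\lambda))$ is false in general. For restricted $\lambda$ the character of $V_\zeta(\lambda)$ is given by Lusztig's Kazhdan--Lusztig-type formula, not by the Weyl character formula; already for $\lie g$ of type $A_2$ and $l=3$ the Weyl module $W_\zeta(\omega_1+\omega_2)$ is reducible, so $\ch(V_\zeta(\omega_1+\omega_2))\ne\ch(V(\omega_1+\omega_2))$ even though $\omega_1+\omega_2\in P^+_l$. The dimension comparison is also unnecessary: once Lusztig's restriction theorem is invoked, simplicity over $u_\zeta$ already gives everything, including generation by the highest-weight vector, so no character input is needed.
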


\subsection{$\ell$-Weight Modules}
This section is based on \cite{cpbook,cproot,fmroot,jm:weyl} and we refer to these papers for the omitted details. A few results intersect with those of \cite{beka} where the finite-dimensional representation theory of the De Concini-Kac specialization of \uqt g was studied (see also \cite{dchere}).

Let $V$ be a \ust_\xi g-module. A nonzero vector $v\in V$ is said to be an $\ell$-weight vector if there exists $\gb\lambda\in\cal P_\xi$ and $k\in\mathbb Z_{>0}$ such that $(\eta-\gb\Psi_\gb\lambda(\eta))^kv=0$ for all $\eta\in \ust_\xi h$. In that case, $\gb\lambda$ is said to be the $\ell$-weight of $v$. Let $V_\gb\lambda$ denote the subspace spanned by all $\ell$-weight vectors of $\ell$-weight $\gb\lambda$. $V$ is said to be an $\ell$-weight module if $V=\opl_{\gb\mu\in\cal P_\xi}^{} V_\gb\mu$.  Denote by $\wcal C_\xi$  the category of all finite-dimensional \ust_\xi g-$\ell$-weight modules.
Quite clearly $\wcal C_\xi$ is an abelian category.
Observe that if $V$ is an $\ell$-weight module, then $V$ is also a \us_\xi g-weight module and
\begin{equation}\label{e:lws}
V_\lambda = \opl_{\gb\lambda:\wt(\gb\lambda)=\lambda}^{} V_\gb\lambda.
\end{equation}
The following proposition was proved in \cite{fr,fmroot}.

\begin{prop}
Let $\xi\in\mathbb C'\backslash\{q\}$. If $V$ is a finite-dimensional \ust_\xi g-module which is a \us_\xi g-weight-module, then $V$ is an $\ell$-weight module.\hfill\qedsymbol
\end{prop}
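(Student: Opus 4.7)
The plan is to decompose $V$ into generalized eigenspaces for the commutative subalgebra \ust_\xi h and then to show that each of the characters thus obtained is of the form $\gb\Psi_{\gb\nu}$ with $\gb\nu \in \cal P_\xi$, reducing the nontrivial content to the rank-one case.

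First, since $V$ is a \us_\xi g-weight module we have $V = \bigoplus_{\mu\in P}V_\mu$, and because every generator of \ust_\xi h commutes with each $k_j$, this decomposition is preserved by \ust_\xi h. As $V$ is finite-dimensional and \ust_\xi h is commutative (it is generated, by Lemma \ref{l:hinres}, by $k_i^{\pm 1}$, $\tqbinom{k_i}{l}$, and the $\tilde h_{i,s}$), we may simultaneously triangularize on each $V_\mu$ to obtain $V_\mu = \bigoplus_\chi (V_\mu)_\chi$, indexed by characters $\chi : \ust_\xi h \to \mathbb C$. Each such $\chi$ necessarily satisfies $\chi(k_i^{\pm 1}) = \xi_i^{\pm\mu(h_i)}$ and $\chi(\tqbinom{k_i}{l}) = \tqbinom{\mu(h_i)}{l}_{\xi_i}$, so by Proposition \ref{p:lwl*} it suffices to prove that every $\chi$ coincides with $\gb\Psi_{\gb\nu}$ for some $\gb\nu \in \cal P_\xi$. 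Unpacking the definition of $\gb\Psi_{\gb\nu}$, this amounts to showing that, for each $i \in I$, the two formal series $\sum_{r\ge 0}\chi(\Lambda_{i,\pm r})u^r$ are the expansions at $0$ and at $\infty$ of a single rational function $\gb\nu_i(u) = \gb\lambda_i(u)/\gb\mu_i(u)$ with $\gb\lambda_i,\gb\mu_i \in \mathbb C[u]$ of constant term $1$.

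To establish this rationality and compatibility I would restrict the action to the subalgebra \ust_\xi{g_i}, which is isomorphic to $U_{\xi_i}(\tlie{sl}_2)$. The restriction of $V$ is a finite-dimensional weight module for this rank-one algebra, so the rank-one case of the proposition --- established for $\lie g = \lie{sl}_2$ in \cite{cpbook,cproot,fmroot} --- applies. That rank-one case rests on combining finite-dimensionality (which forces the commuting family $\{\psi_{i,r}^\pm\}_{r\in\mathbb Z}$ to span a finite-dimensional subspace of $\operatorname{End}(V_\mu)$, so that the scalar sequences $r\mapsto\chi(\psi_{i,\pm r}^\pm)$ satisfy eventual linear recurrences and hence define rational generating functions) with the Drinfeld relation $[x_{i,r}^+,x_{i,s}^-] = (\psi_{i,r+s}^+ - \psi_{i,r+s}^-)/(q_i - q_i^{-1})$, which rigidly binds the $+$ and $-$ series to one common rational function. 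Collecting the resulting $\gb\nu_i$ over $i \in I$ produces the required $\gb\nu \in \cal P_\xi$ with $\chi = \gb\Psi_{\gb\nu}$; no further compatibility across distinct $i$ is required, since all the operators in play already lie in the single commutative algebra \ust_\xi h. The principal obstacle I anticipate is exactly the rank-one compatibility between the $0$- and $\infty$-expansions: outside of this step, the reasoning is purely linear-algebraic and automatic. The hypothesis $\xi\neq q$ enters only to fix $\mathbb C$ as the ground field in which rationality is an unambiguous notion; at $\xi = 1$ the argument specializes to the classical decomposition of finite-dimensional $\tlie g$-weight modules by generalized eigencharacters on $\tlie h$, while at $\xi = \zeta$ the Chari--Pressley analysis of finite-dimensional $U_{\zeta_i}(\tlie{sl}_2)$-modules supplies precisely the rank-one input needed.
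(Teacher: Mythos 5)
The paper gives no argument for this proposition at all: it is stated with a citation to \cite{fr,fmroot}, so the only fair comparison is with the proof in those references. Your outline reproduces that route faithfully in its skeleton: decompose each weight space of $V$ into generalized eigenspaces for the commutative algebra $U_\xi(\tlie h)$ (legitimate, since $V$ is finite-dimensional over $\mathbb C$ and, by Lemma \ref{l:hinres}, $U_\xi(\tlie h)$ is generated by $k_i^{\pm1}$, $\tqbinom{k_i}{l}$, $\tilde h_{i,s}$), reduce the identification of each resulting character with some $\gb\Psi_{\gb\nu}$, $\gb\nu\in\cal P_\xi$, to the rank-one subalgebras $U_\xi(\tlie g_i)\cong U_{\xi_i}(\tlie{sl}_2)$, and quote the rank-one theory of \cite{cp:qaa,cproot,fmroot}. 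As a reduction-plus-citation this is sound, and it is no less complete than what the paper itself does.

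The one genuine flaw is your parenthetical account of \emph{why} the rank-one case holds, which is the entire nontrivial content of the statement. The inference ``the commuting family $\psi^\pm_{i,\pm r}$ spans a finite-dimensional subspace of $\operatorname{End}(V_\mu)$, hence the scalar sequence $r\mapsto\chi(\psi^\pm_{i,\pm r})$ satisfies an eventual linear recurrence and so has rational generating function'' is not valid: operators of the form $c_r\,{\rm Id}$ span a one-dimensional space for a completely arbitrary scalar sequence $(c_r)$, so bounded rank of the span gives no shift-type recurrence. Likewise, the single commutator $[x^+_{i,r},x^-_{i,s}]=(\psi^+_{i,r+s}-\psi^-_{i,r+s})/(q_i-q_i^{-1})$ does not by itself ``bind'' the two expansions to one rational function. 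What the cited rank-one proofs actually use is highest-$\ell$-weight structure and the classification of finite-dimensional simple $U_{\xi_i}(\tlie{sl}_2)$-modules (Drinfeld polynomials; at a root of unity, the restricted-form classification of \cite{cproot} and Theorem \ref{t:qtev}): one passes to a Jordan--H\"older filtration, notes that generalized eigenvalues of commuting operators on $V$ occur already on the simple subquotients, and reads off the $\ell$-weights of simples explicitly (as in Proposition \ref{p:lcsl2}), which yields simultaneously the rationality, the agreement of the expansions at $0$ and $\infty$, and the degree condition forcing $\wt(\gb\nu)=\mu$ --- the last point being exactly where the eigenvalue of $\tqbinom{k_i}{l}$, and not just of $k_i$, matters at a root of unity. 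So if you intend the rank-one step as a citation, your proof stands; if you intend the recurrence sketch as its proof, that step fails and must be replaced by the classification argument.
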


\begin{rem}
Note that not every finite-dimensional \uqt g-module lying in $\cal C_q$ is an $\ell$-weight module since $\mathbb C(q)$ is not algebraically closed. Instead, they are quasi-$\ell$-weight modules in a sense similar to that developed in  \cite{jm:hlanac} for hyper loop algebras. We shall not need this concept here.
\end{rem}

An $\ell$-weight vector $v$ of $\ell$-weight $\gb\lambda$ is said to be a highest-$\ell$-weight vector if $\eta v=\gb\Psi_\gb\lambda(\eta)v$ for every $\eta\in U_\xi(\tlie h)$ and $\ust_\xi{n^+}^0v=0$. A module $V$ is said to be a highest-$\ell$-weight module if it is generated by a highest-$\ell$-weight vector.
The notion of a lowest-$\ell$-weight module is defined similarly. If $V$ is a highest-$\ell$-weight module of highest $\ell$-weight $\gb\lambda$, then \eqref{e:xhcrel} implies
\begin{equation}\label{e:hw}
\dim(V_{\wt(\gb\lambda)}) = 1\qquad\text{and}\qquad V_\mu\ne 0 \ \text{ only if }\ \mu\le\wt(\gb\lambda).
\end{equation}
The next proposition is easily established using \eqref{e:hw}.

\begin{prop}
If $V$ is a highest-$\ell$-weight module, then it has a unique proper submodule and, hence, a unique irreducible quotient. In particular, $V$ is indecomposable.\hfill\qedsymbol
\end{prop}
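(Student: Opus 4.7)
The statement should read ``unique maximal proper submodule,'' as that is what makes the subsequent conclusion about the irreducible quotient meaningful; this is also what the analogous earlier proposition for highest-weight modules asserts. My plan is to follow the standard ``highest-weight argument,'' adapted to the $\ell$-weight setting at roots of unity.

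First I would fix a generator $v$ of $V$ that is a highest-$\ell$-weight vector of $\ell$-weight $\gb\lambda$, and set $\lambda=\wt(\gb\lambda)$. The crucial structural input is \eqref{e:hw}: $\dim V_\lambda=1$ and $V_\mu\ne 0$ implies $\mu\le\lambda$. Next I would verify that every submodule $W\subseteq V$ inherits the weight-space decomposition, i.e.\ $W=\bigoplus_{\mu\in P}(W\cap V_\mu)$. This is standard: the weight spaces of $V$ are the joint eigenspaces of the commuting semisimple operators $\{k_i,\tqbinom{k_i}{l}\,:\,i\in I\}$ (semisimplicity is given by the hypothesis that $V$ decomposes into weight spaces), and these operators are elements of $\us_\xi h\subseteq\ust_\xi g$, so they preserve $W$; the joint spectral decomposition of $W$ under them coincides with $W\cap\bigoplus_\mu V_\mu$.

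With this in place, the rest is a single observation. If $W\subsetneq V$ is a proper submodule, then $v\notin W$, and since $V_\lambda=\mathbb{C}v$ one concludes $W\cap V_\lambda=0$. Let $N$ denote the sum of all proper submodules of $V$. By the previous step, each summand has trivial intersection with $V_\lambda$, hence $N_\lambda=N\cap V_\lambda=0$, so $v\notin N$ and therefore $N$ is itself a proper submodule of $V$. By construction it contains every proper submodule, so $N$ is the unique maximal proper submodule, and $V/N$ is the unique irreducible quotient.

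Finally, for indecomposability, I would suppose $V=A\oplus B$ with both summands nonzero. Since $\dim V_\lambda=1$, the generator $v$ must lie in exactly one summand, say $v\in A$; but $v$ generates $V$, hence $A=V$ and $B=0$, a contradiction. I do not foresee a genuine obstacle here; the only point requiring any care at roots of unity is the invariance of submodules under the extra Cartan elements $\tqbinom{k_i}{l}$, which is immediate since they lie in $\ust_\xi g$.
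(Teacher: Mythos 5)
Your proof is correct and is exactly the standard argument the paper has in mind: it omits the proof as ``easily established using \eqref{e:hw}'', and your use of the one-dimensionality of $V_{\wt(\gb\lambda)}$ together with the weight decomposition of submodules (including the extra Cartan elements $\tqbinom{k_i}{l}$ at roots of unity) is precisely that route. You are also right that the intended statement is ``unique \emph{maximal} proper submodule,'' matching the earlier proposition for highest-weight modules.
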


\begin{prop}\label{p:drinpoly}
Let $V$ be a finite-dimensional highest-$\ell$-weight module of \ust_\xi g  of highest $\ell$-weight $\gb\lambda\in\cal P_\xi$, $v$ a highest-$\ell$-weight vector of $V$, and $\lambda=\wt(\gb\lambda)$. Then, $\gb\lambda \in \cal P_\xi^+$ and $(x^-_{i,r})^{(m)}v=0$ for all $i\in I, r\in\z, m\in\zs {>\lambda(h_i)}$.\hfill\qedsymbol
\end{prop}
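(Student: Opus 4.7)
My plan is to reduce both statements to the case $\lie g = \lie{sl}_2$ using the subalgebras $\ust_\xi{g_i} \cong U_{\xi_i}(\tlie{sl}_2)$. The cyclic $\ust_\xi{g_i}$-submodule of $V$ generated by $v$ is finite-dimensional, and $v$ is a highest-$\ell$-weight vector for it whose highest $\ell$-weight is $\gb\lambda_i$ viewed as an $\ell$-weight for $U_{\xi_i}(\tlie{sl}_2)$. Both conclusions involve only the $i$-th component of $\gb\lambda$ and the action of $\ust_\xi{g_i}$, so it suffices to prove the proposition when $\lie g = \lie{sl}_2$. Write $\alpha, h$ for the simple root and coroot, and set $N := \lambda(h)$.

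The vanishing $(x_r^-)^{(m)} v = 0$ for $m > N$ is a pure weight-space argument. By \eqref{e:shiftws}, this vector lies in $V_{\lambda - m\alpha}$. Since $V$ is finite-dimensional it is integrable, so Proposition \ref{p:Winv} gives $\dim V_\mu = \dim V_{s\mu}$ where $s$ is the nontrivial element of $\cal W$. Combined with \eqref{e:hw}, nonvanishing of $V_{\lambda - m\alpha}$ forces both $\lambda - m\alpha \le \lambda$ and $s(\lambda - m\alpha) = \lambda - (N - m)\alpha \le \lambda$, hence $0 \le m \le N$; for $m > N$ the space is zero.

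For the claim $\gb\lambda \in \cal P_\xi^+$, the same input applied with $m = N$ yields $\dim V_{-\lambda} = \dim V_\lambda = 1$; fix a nonzero $v^- \in V_{-\lambda}$ and write $(x_r^-)^{(N)} v = c_r v^-$ with scalars $c_r\in\mathbb C(\xi)$. Following the Chari--Pressley strategy adapted to divided powers, I would apply $x_s^+$ to the identities $(x_r^-)^{(N+1)} v = 0$ just established, expand using the Drinfeld commutation relations, and reduce modulo $x_s^+ v = 0$. Packaging the resulting identities into generating series in a formal variable $u$ produces an equality between, on one hand, the rational function determined by the sequence $(c_r)$ and, on the other, the formal series $\sum_{r \ge 0} \gb\Psi_{\gb\lambda}(\Lambda_{i, \pm r}) u^r$. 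One-dimensionality of $V_{-\lambda}$ then forces the $(c_r)$-generated rational function to be a polynomial of degree at most $N$, so each of $\gb\Psi_{\gb\lambda}(\Lambda_{i,+}(u))$ and $\gb\Psi_{\gb\lambda}(\Lambda_{i,-}(u))$ is the expansion at $u = 0$ and at $u = \infty$ respectively of a single such polynomial. This is exactly the condition $\gb\lambda \in \cal P_\xi^+$.

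The main obstacle is the divided-power bookkeeping at a root of unity: commuting $x_s^+$ past $(x_r^-)^{(N+1)}$ produces quantum binomial coefficients $\tqbinom{m}{k}_{\xi_i}$ which may vanish at a root of unity when they do not generically. The hypothesis that the order $l$ of $\xi$ is odd and coprime to the lacing number of $\lie g$ confines the anomaly to $[l]_{\xi_i} = 0$, and the divided-power relations of \cite{l-adv} (such as \eqref{e:xhcrel}) remain valid integrally over $\mathbb A$. Consequently the generic calculation of \cite{cpbook, jm:weyl} transfers to the restricted setting as in \cite{cproot}, yielding both claims simultaneously.
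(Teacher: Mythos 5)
The paper itself offers no proof of this proposition: it is imported from \cite{cpbook,cproot,fmroot,jm:weyl}, so your attempt has to be measured against the standard arguments in those references. The first half of your proposal is correct and complete. The reduction to $\ust_\xi{g_i}\cong U_{\xi_i}(\tlie{sl}_2)$ is legitimate (the cyclic module $\ust_\xi{g_i}v$ is a finite-dimensional highest-$\ell$-weight module whose highest $\ell$-weight is the rational function $\gb\lambda_i$, with consistent weight $\lambda(h_i)$), and the vanishing $(x_{i,r}^-)^{(m)}v=0$ for $m>\lambda(h_i)$ follows, exactly as you say, from $(x_{i,r}^-)^{(m)}v\in V_{\lambda-m\alpha_i}$ together with \eqref{e:hw} and Proposition \ref{p:Winv}; nothing in that argument is sensitive to $\xi$ being a root of unity.

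The gap is in the second half, $\gb\lambda\in\cal P_\xi^+$. What you give there is a plan (``I would apply $x_s^+$\dots, expand\dots, package into generating series'') rather than an argument, and the plan is modeled on the generic Chari--Pressley computation \cite{cp:qaa,cpbook}, which commutes a single $x_{s}^+$ past powers of $x_{r}^-$ and manipulates the resulting $\psi^\pm_{i,r}$; that computation carries factors $\tfrac{1}{q_i-q_i^{-1}}$ and $q$-integers such as $[\lambda(h_i)+1]_{\xi_i}$ which can vanish when $\xi=\zeta$ (e.g.\ if $l$ divides $\lambda(h_i)+1$), and your only response --- ``the generic calculation transfers to the restricted setting'' --- is precisely the assertion that requires proof. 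The restricted-setting proofs in \cite{cproot,jm:weyl} do not transfer that computation; they use a different tool, namely a Lusztig-type identity valid over $\mathbb A$ which, applied to a highest-$\ell$-weight vector, gives $\Lambda_{i,m}v=\pm(x_{i,0}^+)^{(m)}(x_{i,1}^-)^{(m)}v$. This is the missing idea: combined with the vanishing you already established it yields $\Lambda_{i,m}v=0$ for all $m>\lambda(h_i)$, so the series $\sum_{r\ge 0}\gb\Psi_{\gb\lambda}(\Lambda_{i,r})u^r$ is a polynomial; since $\gb\lambda\in\cal P_\xi$ by hypothesis, writing $\gb\lambda_i$ as a quotient of relatively prime polynomials forces the denominator to be $1$, i.e.\ $\gb\lambda\in\cal P_\xi^+$ (so the matching of expansions at $0$ and $\infty$ you worry about is already built into membership in $\cal P_\xi$). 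As written, the crucial step of your proof is missing; you should either invoke this divided-power identity explicitly or carry out an $\mathbb A$-integral version of the commutator computation, which is exactly the nontrivial content at a root of unity.
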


\begin{defn}
Let $\gb\lambda\in \cal P_\xi^+$ and $\lambda=\wt(\gb\lambda)$. The Weyl module $W_\xi(\gb\lambda)$ of highest $\ell$-weight $\gb\lambda$ is the universal \ust_\xi g-module with respect to the following property: $W_\xi(\gb\lambda)$ is generated by a highest-$\ell$-weight vector $v$ of $\ell$-weight $\gb\lambda$ satisfying  $(x_{i,r}^-)^{(m)}v=0$ for all $m>\lambda(h_i)$. Denote by $V_\xi(\gb\lambda)$ the irreducible quotient of $W_\xi(\gb\lambda)$.
\end{defn}

\begin{thm}\label{t:weylm}
For every $\gb\lambda\in\cal P_\xi^+$, $W_\xi(\gb\lambda)$ is nonzero and finite-dimensional. In particular, $W_\xi(\gb\lambda)$ is the universal finite-dimensional highest-$\ell$-weight \ust_\xi g-module of highest $\ell$-weight $\gb\lambda$.\hfill\qedsymbol
\end{thm}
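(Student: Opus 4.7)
The plan is to establish two assertions: (i) $W_\xi(\gb\lambda)\ne 0$, and (ii) $\dim W_\xi(\gb\lambda)<\infty$. The universality statement follows immediately from these together with Proposition \ref{p:drinpoly}, since any finite-dimensional highest-$\ell$-weight module of highest $\ell$-weight $\gb\lambda$ automatically satisfies the defining relations of $W_\xi(\gb\lambda)$, hence is a quotient.

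For (i), the cleanest route is via specialization from the generic setting. I would first choose a lift $\gb\lambda_\mathbb A\in\cal P_\mathbb A$ of $\gb\lambda$ (after passing to a finite integral extension of $\mathbb A$ and using Galois descent if necessary), and form the generic Weyl module $W_q(\gb\lambda_\mathbb A)$, which is finite-dimensional by the established theory of quantum loop algebras over $\mathbb C(q)$. Taking the $\uqtres g$-lattice $W_\mathbb A(\gb\lambda_\mathbb A)\subset W_q(\gb\lambda_\mathbb A)$ generated by a highest-$\ell$-weight vector and specializing, $\mathbb C_\xi\otimes_\mathbb A W_\mathbb A(\gb\lambda_\mathbb A)$ is a nonzero finite-dimensional \ust_\xi g-module with highest-$\ell$-weight vector of $\ell$-weight $\bgb{\gb\lambda_\mathbb A}=\gb\lambda$. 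By Proposition \ref{p:drinpoly} it must be a quotient of $W_\xi(\gb\lambda)$, so $W_\xi(\gb\lambda)\ne 0$.

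For (ii), I would proceed in three stages. First, setting $r=0$ in the defining relation yields $(x_i^-)^{(m)}v=0$ for $m>\lambda(h_i)$, so the \us_\xi g-submodule $\us_\xi g\cdot v$ is a quotient of $W_\xi(\lambda)$ and in particular finite-dimensional. Second, using Proposition \ref{p:drinpoly}, the quantum Serre relations, and the Drinfeld commutation relations between $x_{j,s}^+$ and $x_{i,r}^-$, one checks that $W_\xi(\gb\lambda)$ is integrable as a \us_\xi g-module; Proposition \ref{p:Winv} combined with Lemma \ref{l:wgroupstuff} then forces the set of weights to lie in the finite set $P(\lambda)$. Third, each weight space must be shown finite-dimensional. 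Here I would invoke a PBW-type description of $\ust_\xi{n^-}$ using Beck's real and imaginary root vectors: the defining relations, together with the loop commutation relations, reduce any monomial acting on $v$ to a linear combination of ordered PBW monomials whose total divided-power degree is bounded in terms of the target weight, yielding finite-dimensionality of each weight space.

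The main obstacle will be stage three: bounding the dimensions of the lower weight spaces at a root of unity. For the real root vectors, reduction to the $\lie{sl}_2$ case via the rank-one subalgebras \ust_\xi{g_i} works essentially as in the generic setting. The delicate part is control of the imaginary root vectors $\tilde h_{i,s}$ of Lemma \ref{l:hinres}: at a root of unity these do not act as scalars on arbitrary weight spaces, so one must use the explicit expression of $\tilde h_{i,s}$ in terms of the $\Lambda_{i,r}$ together with the scalar action of $k_i$ and $\tqbinom{k_i}{l}$ on $v$, and propagate this control through the Drinfeld relations to all weight spaces. The detailed argument, which ultimately reduces the problem to a finite-dimensional linear-algebraic statement controlled by $\lambda$, is carried out in \cite{cproot} and refined in \cite{jm:weyl}.
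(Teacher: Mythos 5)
Your overall strategy is the standard one and is, in effect, the route the paper itself relies on: the paper gives no in-text proof of Theorem \ref{t:weylm} (it is stated with a qed symbol and the subsection defers to \cite{cpbook,cproot,fmroot,jm:weyl}), and the parts of your argument that you do carry out are largely fine. Nonvanishing via Theorem \ref{t:uqtlat} is correct, and simpler than you make it: since $\mathbb C\subset\mathbb A$, the tuple $\gb\lambda$ itself already lies in $\cal P_\mathbb A^\times$, so no integral extension of $\mathbb A$ or Galois descent is needed; the specialized lattice is a nonzero quotient of $W_\xi(\gb\lambda)$ by the universal property, and this is not circular because Theorem \ref{t:uqtlat} only uses that universal property, not finite-dimensionality. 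Likewise, the universality statement from Proposition \ref{p:drinpoly}, and the confinement of the weights to the finite set $P(\lambda)$ via integrability, Proposition \ref{p:Winv} and Lemma \ref{l:wgroupstuff}, are correct in outline.

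The genuine gap is your stage three. Bounding the total divided-power degree of PBW monomials does not yield finite-dimensional weight spaces: for a fixed weight $\lambda-\eta$ there are only finitely many choices of indices $(i_j,k_j)$ with $\sum_j k_j\alpha_{i_j}=\eta$, but the loop indices $r_j$ in $(x^-_{i_j,r_j})^{(k_j)}$ range over all of $\mathbb Z$, so infinitely many PBW monomials have the prescribed weight and bounded degree (indeed, without the relations $(x^-_{i,r})^{(m)}v=0$ the weight spaces of the corresponding Verma-type module are infinite-dimensional). The whole content of the finiteness assertion is that only finitely many loop indices are needed modulo the span of the others, and this comes not from PBW straightening but from Garland-type identities in \uqtres g: applying suitable divided powers of the $x^+_{i,s}$ to $(x^-_{i,0})^{(k)}v$ and using that $\ust_\xi h$ acts on $v$ through $\gb\Psi_{\gb\lambda}$ (so $\Lambda_{i,r}$ acts on $v$ by the $r$-th coefficient of $\gb\lambda_i(u)$, which vanishes for $r$ beyond its degree), one expresses $x^-_{i,r}v$ for $r$ outside a finite window in terms of finitely many others, and then propagates this by induction to lower weight spaces. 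Your diagnosis that the delicate point is the imaginary root vectors ``not acting as scalars'' at a root of unity also mislocates the difficulty: they act on the highest-$\ell$-weight vector by scalars exactly as in the generic case; what changes at a root of unity is that the identities must be run in the restricted integral form with divided powers (one cannot divide by $[m]_\zeta$), which is precisely the refinement carried out in \cite{cproot} and \cite{jm:weyl}. Since you ultimately defer to exactly those references, as the paper does, the strategy is right, but the argument you actually wrote for the finiteness of weight spaces would not close the proof.
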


\begin{cor}
If $\xi\in\mathbb C'\backslash\{q\}$ and $V\in\wcal C_\xi$ is simple, then $V\cong V_\xi(\gb\lambda)$ for some $\gb\lambda\in \cal P^+$.\hfill\qedsymbol
\end{cor}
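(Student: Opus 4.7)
The plan is to produce a highest-$\ell$-weight vector in $V$ and then invoke the universal property of the Weyl module. Since $V\in\wcal C_\xi$ is a finite-dimensional $\ell$-weight module, relation \eqref{e:lws} shows that it is also a weight module for $\us_\xi g$ with only finitely many weights. Choose $\lambda\in P$ maximal (with respect to $\le$) such that $V_\lambda\ne 0$, and using the refinement of $V_\lambda$ into $\ell$-weight spaces, pick $\gb\lambda\in\cal P_\xi$ with $\wt(\gb\lambda)=\lambda$ and $V_\gb\lambda\ne 0$.

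The relation $k_ix_{j,r}^{\pm}k_i^{-1}=\xi_i^{\pm c_{ij}}x_{j,r}^{\pm}$, which is the Drinfeld-generator analogue of \eqref{e:shiftws}, implies $(x_{i,r}^+)^{(m)}V_\mu\subseteq V_{\mu+m\alpha_i}$, and since $\ust_\xi{n^+}$ is generated as an algebra by the divided powers $(x_{i,r}^+)^{(m)}$, every element of the augmentation ideal $\ust_\xi{n^+}^0$ shifts weights by an element of $Q^+\setminus\{0\}$. Maximality of $\lambda$ therefore forces $\ust_\xi{n^+}^0\cdot V_\gb\lambda=0$. Inside $V_\gb\lambda$ the commutative algebra $\ust_\xi h$ acts, and by the very definition of an $\ell$-weight space each operator $\eta-\gb\Psi_\gb\lambda(\eta)$ with $\eta\in\ust_\xi h$ is nilpotent. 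A commuting family of nilpotent operators on a nonzero finite-dimensional space admits a common nonzero kernel, so there exists $v\in V_\gb\lambda\setminus\{0\}$ with $\eta v=\gb\Psi_\gb\lambda(\eta)v$ for every $\eta\in\ust_\xi h$. Combined with $\ust_\xi{n^+}^0v=0$, this makes $v$ a highest-$\ell$-weight vector of $\ell$-weight $\gb\lambda$.

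Since $V$ is simple, it is generated by $v$, so $V$ is a highest-$\ell$-weight module of highest $\ell$-weight $\gb\lambda$. Proposition \ref{p:drinpoly} then yields $\gb\lambda\in\cal P_\xi^+$ along with $(x_{i,r}^-)^{(m)}v=0$ for every $m>\wt(\gb\lambda)(h_i)$, which are precisely the defining relations of $W_\xi(\gb\lambda)$. Theorem \ref{t:weylm} thus produces a surjection $W_\xi(\gb\lambda)\twoheadrightarrow V$, and simplicity of $V$ forces $V\cong V_\xi(\gb\lambda)$. Finally, the standing convention $\cal P_\xi=\cal P$ for $\xi\ne q$ places $\gb\lambda$ in $\cal P^+$. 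The only technical point worth singling out is the existence of a common eigenvector for $\ust_\xi h$ on $V_\gb\lambda$, which is just the standard fact about commuting nilpotent operators; every other step is an immediate application of results already recorded in the text.
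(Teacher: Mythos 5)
Your argument is correct and is exactly the standard route the paper leaves implicit behind its \qedsymbol: extract a maximal weight, get a common eigenvector for the commutative algebra $\ust_\xi h$ on the corresponding $\ell$-weight space, and then apply Proposition \ref{p:drinpoly} together with the universal property of $W_\xi(\gb\lambda)$ from Theorem \ref{t:weylm}, with simplicity and uniqueness of the irreducible quotient finishing the identification. The only cosmetic remark is that at a root of unity the weight-shift property for $(x_{i,r}^\pm)^{(m)}$ should be justified by \eqref{e:xhcrel} (which also handles the $\tqbinom{k_i}{l}$ part of the weight-space condition) rather than by the $k_i$-conjugation relation alone, but this is precisely what the paper records and does not affect the proof.
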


\begin{rem}
Note that not every irreducible finite-dimensional \uqt g-module is highest $\ell$-weight since $\mathbb C(q)$ is not algebraically closed. The classification of the irreducible finite-dimensional \uqt g-modules can be obtained  similarly to what was done in \cite{jm:hlanac} for the hyper loop algebras (cf. \cite{jm:weyl}).
\end{rem}

The following result about dual representations was proved in \cite{fmq,fmroot} for $\xi\ne 1$. For $\xi=1$ it follows easily from Theorem \ref{t:tev} below.

\begin{prop}\label{p:uqtdual}
For every $\gb\lambda\in\cal P_\xi^+$, $V_\xi(\gb\lambda)$ is a lowest-$\ell$-weight module with lowest $\ell$-weight $(\gb\lambda^*)^{-1}$. In particular, $V_\xi(\gb\lambda)^* \cong V_\xi(\gb\lambda^*)$.\hfill\qedsymbol
\end{prop}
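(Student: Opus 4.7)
The plan is to establish first the lowest-$\ell$-weight assertion and then to identify the precise lowest $\ell$-weight, from which the statement about duals follows formally.

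For the first assertion, set $\lambda=\wt(\gb\lambda)$. By Proposition \ref{p:Winv} together with \eqref{e:hw}, the weight space $V_\xi(\gb\lambda)_{w_0\lambda}$ is one-dimensional (isomorphic as a vector space to $V_\xi(\gb\lambda)_\lambda$). Pick a nonzero vector $v^-$ in it; then $v^-$ automatically lies in some $\ell$-weight space $V_\xi(\gb\lambda)_{\gb\mu}$ with $\wt(\gb\mu)=w_0\lambda$. By Lemma \ref{l:wgroupstuff}(a) applied to the weights appearing in $V_\xi(\gb\lambda)$, the weight $w_0\lambda$ is the unique minimal element, so \eqref{e:shiftws} forces $(x^-_{i,r})^{(m)}v^-=0$ for all $i\in I$, $r\in\mathbb Z$ and $m>0$. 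Hence $v^-$ is a lowest-$\ell$-weight vector. Since $V_\xi(\gb\lambda)$ is irreducible, it is generated by $v^-$, so it is a lowest-$\ell$-weight module.

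The second and main step is the identification $\gb\mu = T_{w_0}\gb\lambda = (\gb\lambda^*)^{-1}$. The strategy is to reduce to $\lie{sl}_2$ by means of the braid group action. More precisely, fix a reduced expression $w_0=s_{i_N}\cdots s_{i_1}$ and apply Lusztig's braid symmetries $T_{i_j}$ on the module side. Inductively, if $T_{i_{j-1}}\cdots T_{i_1}$ applied to a highest-$\ell$-weight vector produces a vector whose $\ell$-weight (restricted to the $U_\xi(\tlie{g}_{i_j})$-action) is controlled by $(T_{i_{j-1}}\cdots T_{i_1}\gb\lambda)_{i_j}$, then another application of $T_{i_j}$ flips this $\ell$-weight to its ``lowest'' counterpart in the $U_\xi(\tlie{g}_{i_j})$-string, which is exactly the action of the braid operator on $\cal P_\xi$. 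Iterating $N$ times and using the compatibility established in \cite{bp,vcbraid} (which is precisely why the braid group action on $\cal P_\xi$ was designed as it was), we arrive at a vector of weight $w_0\lambda$ whose $\ell$-weight is $T_{w_0}\gb\lambda$. Since $V_\xi(\gb\lambda)_{w_0\lambda}$ is one-dimensional, this must be $\gb\mu$. The hardest part of the argument is the $\lie{sl}_2$-base case, which amounts to the explicit determination of the $\ell$-weight decomposition of the finite-dimensional irreducible $U_\xi(\tlie{sl}_2)$-modules. This can be extracted from the classification via tensor products of evaluation modules (or in the root-of-unity case, also using Frobenius factors), reducing each factor to the rank-one Kirillov--Reshetikhin calculation $\gb\omega_{1,a,r}\mapsto \gb\omega_{1,a\xi^{r+1},r}^{-1}$.

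The ``In particular'' statement is then a formal consequence. Let $u\in V_\xi(\gb\lambda)^*$ be the linear functional dual to $v^-$ with respect to some $\ell$-weight basis; using the action of the antipode and the diagonal part of the comultiplication formula in Proposition \ref{p:comultip}(a,d), one checks that $u$ is a highest-$\ell$-weight vector for $V_\xi(\gb\lambda)^*$ whose $\ell$-weight is the inverse $\gb\mu^{-1} = (T_{w_0}\gb\lambda)^{-1} = \gb\lambda^*$; here the inversion reflects that $S(k_i)=k_i^{-1}$ and $S$ inverts the generating series for the $\Psi_i^\pm$. Moreover $u$ generates $V_\xi(\gb\lambda)^*$ because $V_\xi(\gb\lambda)^*$ is simple and $u\neq 0$. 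Hence $V_\xi(\gb\lambda)^*$ is a highest-$\ell$-weight module of highest $\ell$-weight $\gb\lambda^*$, which by the uniqueness of irreducibles with a given highest $\ell$-weight (classification via Theorem \ref{t:weylm}) gives $V_\xi(\gb\lambda)^*\cong V_\xi(\gb\lambda^*)$.

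The main obstacle is the $\lie{sl}_2$ base case together with the compatibility of Lusztig's braid symmetries on modules with the combinatorial braid group action on $\cal P_\xi$ at roots of unity; this is where one must appeal most directly to \cite{bp,vcbraid} (for $\xi=q$ or $\xi=\zeta$) and to Theorem \ref{t:tev} (for $\xi=1$, where everything collapses to evaluation modules and the formula is just $\wt$-duality in the classical sense).
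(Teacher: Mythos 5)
Your argument is essentially correct, and it is worth noting that it is a genuine proof where the paper offers none: the paper simply cites \cite{fmq,fmroot} for $\xi\ne 1$ and Theorem \ref{t:tev} for $\xi=1$. Your two steps are exactly the standard route those references take, and in fact the key identification in your second step is already available inside the paper: Corollary \ref{c:Twv} with $w=w_0$ says $V_{T_{w_0}\gb\lambda}=V_{w_0\wt(\gb\lambda)}$ for any finite-dimensional highest-$\ell$-weight module, which combined with your (correct) observation that $V_\xi(\gb\lambda)_{w_0\lambda}$ is one-dimensional and killed by all $(x^-_{i,r})^{(m)}$, $m>0$, gives the lowest-$\ell$-weight statement with lowest $\ell$-weight $T_{w_0}\gb\lambda=(\gb\lambda^*)^{-1}$. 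The one place where your plan diverges from how the paper would fill in the details is the root-of-unity case of the braid compatibility: \cite{vcbraid} (and \cite{bp}) work in the generic setting, and the paper does not redo Chari's argument at $\xi=\zeta$; it transports Proposition \ref{p:Twv} from $\xi=q$ to $\xi\in\mathbb C'\setminus\{q\}$ by specialization, via Proposition \ref{p:lcharspec} and the admissible lattice of Theorem \ref{t:uqtlat}. So either invoke Corollary \ref{c:Twv} directly or argue by specialization; claiming \cite{vcbraid} covers $\xi=\zeta$ as stated is not accurate, though the gap is easily repaired with the paper's own tools.

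Two smaller points. First, your rank-one formula has a spectral-parameter slip: in the paper's centered-string convention \eqref{e:qsfw}, the lowest $\ell$-weight of the $\lie{sl}_2$ Kirillov--Reshetikhin module is $T_i(\gb\omega_{i,a,r})=\gb\omega_{i,a\xi^2,r}^{-1}$, not $\gb\omega_{i,a\xi^{r+1},r}^{-1}$ (the latter mixes the ``string starting at $a$'' convention with the centered one). Second, in the duality step the assertion that ``$S$ inverts the generating series for $\Psi_i^\pm$'' is only true modulo the correction terms in Proposition \ref{p:comultip}(d), which is stated only modulo $\ust_\xi g X^-\otimes\ust_\xi g X^+$; one must check that these corrections do not contribute when the functional dual to the lowest-$\ell$-weight vector is paired against the one-dimensional extreme weight space. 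This is the standard computation carried out in \cite{fmq,fmroot} (and in Chari--Pressley's book), so it is citable rather than a genuine obstruction, but as written it is asserted rather than proved. With these repairs your proof is complete and, unlike the paper's citation, uniform in $\xi\in\mathbb C'$.
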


The above results for \ust_1 g recover the basic results on the finite-dimensional representation theory of $\tlie g$. We may use the notation $V(\gb\lambda)$ and $W(\gb\lambda)$ for the $\tlie g$-modules corresponding to $V_1(\gb\lambda)$ and $W_1(\gb\lambda)$, respectively.


\begin{prop}\label{p:lufinir}
If $\gb\lambda\in\cal P^+_l$, then $V_\zeta(\gb\lambda)$ is irreducible as a \ustfin g-module.\hfill\qedsymbol
\end{prop}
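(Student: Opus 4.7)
The plan is to mirror the proof of the finite-type analogue, Proposition \ref{p:ufinir}. Let $v$ be a highest-$\ell$-weight generator of $V := V_\zeta(\gb\lambda)$ and set $W := \ustfin g\cdot v$. I would establish two things:
\begin{enumerate}
\item $W = V$,
\item every nonzero \ustfin g-submodule $W'$ of $V$ contains $v$.
\end{enumerate}
Together these give the irreducibility of $V$ as a \ustfin g-module.

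For (a), the hypothesis $\gb\lambda\in\cal P^+_l$ means, via the decomposition \eqref{e:frobfactor} $\gb\lambda = \gb\lambda'\phi_l(\gb\lambda'')$, that $\gb\lambda''=1$. I would invoke the Frobenius--Steinberg tensor product decomposition for \ust_\zeta g (discussed in Section \ref{s:stb}, Theorem \ref{t:frobtensor}) to identify $V_\zeta(\gb\lambda)$ with $V_\zeta(\gb\lambda')\otimes\mathrm{Fr}^*(V(\gb\lambda''))$. Here the second tensor factor is trivial, so $V\cong V_\zeta(\gb\lambda')$, and the divided powers $(x_{i,r}^\pm)^{(m)}$ with $m\ge l$---which are precisely the generators of \ust_\zeta g absent from \ustfin g---act on $V$ only through the (now trivial) Frobenius slot. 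They contribute nothing beyond what the undivided generators $x_{i,r}^\pm$ already produce from $v$. Hence $W$ is already \ust_\zeta g-stable, and $W=V$ by $\ust_\zeta g$-irreducibility.

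For (b), note that $\ust_\zeta h\subseteq\ustfin g$, since the Cartan generators $k_i^{\pm 1}$, $\tqbinom{k_i}{l}$ and the $\tilde h_{i,s}$ of Lemma \ref{l:hinres} all lie in \ustfin g by construction; hence $W'$ inherits the $\ell$-weight decomposition of $V$. Pick a nonzero $\ell$-weight vector $w\in W'$ whose weight $\mu\in P$ is maximal under $\le$. For each $i\in I$ and $r\in\mathbb Z$, the vector $x_{i,r}^+w\in W'$ has weight $\mu+\alpha_i>\mu$ and must therefore vanish; thus $\ustfin{n^+}^0 w=0$. By the same Frobenius/Steinberg mechanism invoked in (a) (applied now to the arbitrary vector $w\in V$ rather than to $v$), the higher divided powers $(x_{i,r}^+)^{(m)}$ with $m\ge l$ also annihilate $w$, so in fact $\ust_\zeta{n^+}^0 w=0$. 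Hence $w$ is a highest-$\ell$-weight vector of the irreducible \ust_\zeta g-module $V=V_\zeta(\gb\lambda)$, and by uniqueness such a vector must be a scalar multiple of $v$. In particular $v\in W'$, and $W'\supseteq W=V$.

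The main obstacle is the redundancy of the divided powers of order $\ge l$ invoked in both (a) and (b): the hypothesis $\gb\lambda\in\cal P^+_l$ (no Frobenius factors) is exactly what kills the Frobenius tensor factor, and the clean way to cash this in is the Frobenius--Steinberg factorization of Section \ref{s:stb}. A bare-hands proof without that tool would require an intricate combinatorial argument using the coproduct formulas of Proposition \ref{p:comultip} together with the Drinfeld polynomial data of $V_\zeta(\gb\lambda)$; the Steinberg factorization packages this efficiently. The same obstacle underlies Proposition \ref{p:ufinir} in the finite-type setting, and the loop version goes through by the identical pattern.
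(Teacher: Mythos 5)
There is a genuine gap, and it sits exactly where the mathematical content of the proposition lies. Both of your appeals to the ``Frobenius/Steinberg mechanism'' are unjustified: when $\gb\lambda\in\cal P^+_l$ we have $\gb\lambda''=1$, so Theorem \ref{t:frobtensor}(b) degenerates to the tautology $V_\zeta(\gb\lambda)\cong V_\zeta(\gb\lambda)\otimes(\text{trivial})$ and carries no information about how the divided powers $(x_{i,r}^\pm)^{(m)}$, $m\ge l$, act. It is simply false that these elements ``act only through the Frobenius slot'': already for $\lie g=\lie{sl}_2$ and $\gb\lambda=\gb\omega_{i,a,r}\gb\omega_{i,b,s}$ with $r,s<l\le r+s$ and the two factors in general position (so $\gb\lambda\in\cal P^+_l$), the lowest weight of $V_\zeta(\gb\lambda)$ differs from the highest by $(r+s)\alpha_i$ with $r+s\ge l$, and $(x_{i,0}^-)^{(m)}$ for $m\ge l$ acts nontrivially on this module. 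Thus your step (a) -- that the higher divided powers ``contribute nothing beyond'' $\ustfin g\cdot v$ -- is precisely the statement to be proved, not a consequence of the factorization. The same problem is fatal in step (b): the implication ``$x_{i,r}^+w=0$ for all $r$ $\Rightarrow$ $(x_{i,r}^+)^{(m)}w=0$ for $m\ge l$'' fails in general at a root of unity (on a Frobenius pullback $\widetilde{\rm Fr}_\zeta^*(V(\gb\lambda''))$ every vector is killed by all $x_{i,r}^\pm$ while divided powers act nontrivially), and the whole point of the hypothesis $\gb\lambda\in\cal P^+_l$ is to rule such behavior out inside $V_\zeta(\gb\lambda)$ -- which requires an argument, not an appeal to the (here vacuous) tensor factorization. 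Note also that you cannot rescue (b) by weight maximality: $W'$ is only a \ustfin g-submodule, hence not stable under $(x_{i,r}^+)^{(m)}$, and the weight $\mu+m\alpha_i$ may very well support a nonzero weight space of $V$, so maximality of $\mu$ in $W'$ gives nothing about these elements.

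What a correct proof needs is genuinely loop-theoretic input: one must show that the currents $x_{i,r}^\pm$, $r\in\mathbb Z$, by themselves connect the relevant weight spaces when no Frobenius $\zeta$-factors occur. For $\lie g=\lie{sl}_2$ this is done through the realization of $V_\zeta(\gb\lambda)$ as a tensor product of evaluation modules with $\lambda_j\in P^+_l$ in general position (Theorem \ref{t:qtev}) together with the explicit action formulas of Lemma \ref{l:sl2actev} and a nonvanishing-determinant computation of the type used in the proof of the $\lie{sl}_2$-case of Theorem \ref{t:hlwtensor}; the general case then follows by restricting to the subalgebras \ust_\zeta{g_i} and standard weight arguments (or, alternatively, by the q-character methods of \cite{fmroot}). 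This is the route taken in the literature the paper cites for this statement (the proposition is quoted, not reproved, in the paper); your skeleton -- (a) generation from the top and (b) every nonzero \ustfin g-submodule contains the top -- is the right frame, but both pillars are currently asserted rather than proved.
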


We will also need the following theorem which follows from the results of \cite{becnak:2sc,kas:cb,kas:cbmod}.

\begin{thm}\label{t:lwmdim}
Let $\gb\lambda\in\cal P_\xi^+, \lambda=\sum_{i\in I} s_i\omega_i=\wt(\gb\lambda)$. Then,
$\ch(W_\xi(\gb\lambda)) = \prod_{i\in I} s_i\ch(V_q(\gb\omega_{i,1}))$.\hfill\qedsymbol
\end{thm}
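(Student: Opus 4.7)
The plan is to bootstrap the generic-$q$ case---where the formula is essentially established in \cite{becnak:2sc,kas:cb,kas:cbmod}---to arbitrary $\xi\in\mathbb C'$ via specialization. In the generic case, for any lift $\tilde{\gb\lambda}\in\cal P_q^+$ of the classical weight $\lambda$, the cited works produce an identification of the character of $W_q(\tilde{\gb\lambda})$ with a product of characters of fundamental modules:
$$\ch W_q(\tilde{\gb\lambda}) = \prod_{i\in I} \ch V_q(\gb\omega_{i,a_i})^{s_i}$$
for any choice of $a_i\in\mathbb C(q)^\times$. Because the automorphism $\varrho_a$ of Proposition \ref{p:ssp} is the identity on $U_q(\lie g)$ and sends $V_q(\gb\omega_{i,1})$ to $V_q(\gb\omega_{i,a})$, the underlying $\lie g$-character of $V_q(\gb\omega_{i,a})$ does not depend on $a$; thus the right-hand side equals $\prod_{i\in I} \ch V_q(\gb\omega_{i,1})^{s_i}$ and, as a byproduct, $\ch W_q(\tilde{\gb\lambda})$ depends only on $\lambda$.

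Next, I would choose a lift $\tilde{\gb\lambda}\in\cal P_\mathbb A^s$ of $\gb\lambda$---which is possible by the very definition of $\cal P_\mathbb A^s$---and let $\tilde v$ be a highest-$\ell$-weight vector of $W_q(\tilde{\gb\lambda})$. Following the integral-form analysis of \cite{cproot}, the \uqtres g-submodule $L\subseteq W_q(\tilde{\gb\lambda})$ generated by $\tilde v$ is an admissible $\mathbb A$-lattice whose $\mu$-weight space has $\mathbb A$-rank equal to $\dim_{\mathbb C(q)} W_q(\tilde{\gb\lambda})_\mu$. Setting $\bar L:=L\otimes_{\mathbb A}\mathbb C_\xi$, the image $\bar v$ of $\tilde v$ is a highest-$\ell$-weight vector of $\ell$-weight $\gb\lambda$ that generates $\bar L$ as a \ust_\xi g-module. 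The relations $(x_{i,r}^-)^{(m)}\tilde v = 0$ for $m>\lambda(h_i)$, which hold in $W_q(\tilde{\gb\lambda})$ by Proposition \ref{p:drinpoly}, descend to $\bar v$; the universal property of $W_\xi(\gb\lambda)$ then yields a surjection $W_\xi(\gb\lambda)\twoheadrightarrow\bar L$, and hence the inequality $\ch W_\xi(\gb\lambda)\ge \ch W_q(\tilde{\gb\lambda})$ weight space by weight space.

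For the reverse inequality I would run the standard PBW-type argument for Weyl modules, as in \cite{cpbook} and \cite{jm:weyl}: starting from a highest-$\ell$-weight generator of $W_\xi(\gb\lambda)$ and expanding it against an ordered monomial basis built from the divided powers $(x_{i,r}^-)^{(k)}$, the defining relations of the Weyl module---together with Proposition \ref{p:drinpoly} at $q=\xi$---force $\dim W_\xi(\gb\lambda)_\mu \le \dim W_q(\tilde{\gb\lambda})_\mu$ for every $\mu\in P$. Combined with the previous paragraph this gives the desired equality. The main obstacle is constructing the admissible lattice with the asserted $\mathbb A$-rank in each weight space and verifying the PBW-style upper bound; both are already present in the literature \cite{cproot,becnak:2sc}, so the theorem is effectively a packaging of those ingredients and of the twist by $\varrho_a$, rather than a new computation.
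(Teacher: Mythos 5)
Your first two steps are fine and match the tools the paper actually has at its disposal: the generic-$q$ character identity together with independence of the spectral parameters via $\varrho_a$, and the lower bound $\ch W_\xi(\gb\lambda)\ge\ch W_q(\tilde{\gb\lambda})$ obtained by specializing the admissible lattice $L=U_\mathbb A(\tlie g)\tilde v$, which is exactly Theorem \ref{t:uqtlat} (due to \cite{cpweyl,cha:fer,jm:weyl}, not \cite{cproot}) combined with the universal property of $W_\xi(\gb\lambda)$. The gap is your third step. There is no ``standard PBW-type argument'' that forces $\dim W_\xi(\gb\lambda)_\mu\le\dim W_q(\tilde{\gb\lambda})_\mu$: the spanning arguments of \cite{cp:qaa,cpbook} only give finite-dimensionality with a crude bound, and the sharp upper bound at generic $q$ was precisely the Chari--Pressley dimension conjecture, whose only known proof goes through Kashiwara's extremal weight modules and the Beck--Nakajima global basis \cite{kas:cb,kas:cbmod,becnak:2sc}. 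Worse, the direction you need is exactly the one that naive specialization cannot give: if you present an $\mathbb A$-form of the Weyl module by generators and relations and base-change to $\mathbb C_\xi$, the relations can only degenerate, so the fiber dimension is \emph{a priori} $\ge$ the generic dimension (upper semicontinuity), never $\le$. Ruling out this extra growth is equivalent to a freeness statement -- the global Weyl module (equivalently, the extremal weight module) is free over its highest-weight ring compatibly with Lusztig's form $U_\mathbb A(\tlie g)$ -- and that is the deep content of \cite{becnak:2sc}, not a monomial bookkeeping exercise; one then realizes $W_\xi(\gb\lambda)$ as a quotient of a fiber of this free module to get the upper bound.

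For comparison, the paper does not prove Theorem \ref{t:lwmdim} at all: it is stated as a consequence of \cite{becnak:2sc,kas:cb,kas:cbmod} (with details deferred to \cite{jm:weyl}). So your proposal is not an alternative proof but a repackaging whose only nontrivial half is delegated to a nonexistent elementary argument; to make it correct you must replace the ``PBW'' paragraph by the integral-form freeness input from the cited works.
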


\subsection{The q-characters}

Let $\mathbb Z[\cal P_\xi]$ be the integral group ring over $\cal P_\xi$. If $V$ is an $\ell$-weight module, the q-character of $V$ is defined to be the following element of $\mathbb Z[\cal P_\xi]$:
\begin{equation}
\chl(V) = \sum_{\gb\mu\in\cal P_\xi} \dim(V_\gb\mu)\gb\mu.
\end{equation}
Consider the following partial order on $\mathbb Z[\cal P_\xi]$. Given $\eta\in\mathbb Z[\cal P_\xi]$, write $\eta=\sum_{\gb\mu\in\cal P_\xi} \eta(\gb\mu)\gb\mu$ and define
\begin{equation*}
\eta \le \eta' \qquad\text{iff}\qquad \eta(\gb\mu)\le \eta'(\gb\mu) \qquad\text{for all}\qquad \gb\mu\in\cal P_\xi.
\end{equation*}
We shall say $\eta$ is an upper (lower) bound for $\chl(V)$ provided $\chl(V)\le \eta$ ($\eta\le\chl(V)$). The evaluation map $\epsilon_\xi$ induces a ring homomorphism $\mathbb Z[\cal P_q]\to \mathbb Z[\cal P]$ also denoted by $\epsilon_\xi$.

\begin{rem}
The original definition of q-characters, given by Frenkel and Reshetikhin in \cite{fr}, was motivated by the study of deformed $\cal W$-algebras and used the concept of transfer matrices which involves the $R$-matrix of $U_q(\hlie g)$. Because of the quantum nature of the original definition, the name q-character was chosen. It was proved in \cite[Proposition 2.4]{fmq} that the definition given above coincides with the definition of \cite{fr}.  Following the convention of adding the prefix ``$\ell$-'' to the name of classical concepts for naming their loop analogues, the name $\ell$-characters was used instead of q-characters in \cite{cmq,cm:fund,jm:hla,jm:hlam,jm:hlanac} (notice that quantum groups are not present at all in \cite{jm:hla,jm:hlam,jm:hlanac}). Since the term q-character has become traditional and it is more widespread in the literature, we follow here the original terminology of \cite{fr} for naming this concept. We remark that we use the term q-character as the name of the concept only and the prefix ``q-'' should not be confused with the letter chosen to denote the quantum parameter.
\end{rem}

The proof of the following theorem is analogous to that of Theorem \ref{t:chconst}.

\begin{thm}\label{t:chlconst}
Let $V\in\wcal C_\xi$. The irreducible constituents (counted with multiplicities) of $V$ are completely determined by $\chl(V)$.\hfill\qedsymbol
\end{thm}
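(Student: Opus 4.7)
The strategy mirrors the standard proof of Theorem~\ref{t:chconst}. I first establish that $\chl$ is additive on short exact sequences in $\wcal C_\xi$, and then recover the multiplicities of the irreducible constituents from $\chl(V)$ via a triangularity argument using a maximal $\ell$-weight.

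For additivity, given any short exact sequence $0\to V'\to V\to V''\to 0$ in $\wcal C_\xi$, the generalized eigenspace decomposition for the commutative subalgebra $\ust_\xi h$ is preserved by subobjects and quotients, so $\dim V_{\gb\mu}=\dim V'_{\gb\mu}+\dim V''_{\gb\mu}$ for every $\gb\mu\in\cal P_\xi$. Iterating along a Jordan--H\"older filtration of $V$ gives
\begin{equation*}
\chl(V)=\sum_{\gb\mu\in\cal P_\xi^+}[V:V_\xi(\gb\mu)]\,\chl(V_\xi(\gb\mu)),
\end{equation*}
where $[V:V_\xi(\gb\mu)]$ denotes the Jordan--H\"older multiplicity. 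It thus suffices to show that this right-hand side determines the multiplicities $[V:V_\xi(\gb\mu)]$.

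I proceed by induction on $\sum_{\mu\in P}\dim V_\mu$. If $V\ne 0$, pick $\gb\lambda\in\cal P_\xi$ with $V_{\gb\lambda}\ne 0$ and with $\wt(\gb\lambda)$ maximal (in the partial order on $P$) among the weights of $V$. By maximality, any nonzero $v\in V_{\gb\lambda}$ is annihilated by $\ust_\xi{n^+}^0$ (the action of each $x_{i,r}^+$ shifts weight by $+\alpha_i$), so $v$ generates a highest-$\ell$-weight submodule and Proposition~\ref{p:drinpoly} yields $\gb\lambda\in\cal P_\xi^+$. For any composition factor $V_\xi(\gb\mu)$ of $V$, its highest-weight space contributes to $V_{\wt(\gb\mu)}$, so $\wt(\gb\mu)\le\wt(\gb\lambda)$. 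Conversely, if $V_\xi(\gb\mu)_{\gb\lambda}\ne 0$, then \eqref{e:hw} forces $\wt(\gb\lambda)\le\wt(\gb\mu)$, whence $\wt(\gb\mu)=\wt(\gb\lambda)$; but the $\wt(\gb\mu)$-weight space of $V_\xi(\gb\mu)$ coincides with its one-dimensional highest-$\ell$-weight line, which carries the $\ell$-weight $\gb\mu$ itself. Hence $\dim V_\xi(\gb\mu)_{\gb\lambda}=\delta_{\gb\mu,\gb\lambda}$, and summing over composition factors gives $\dim V_{\gb\lambda}=[V:V_\xi(\gb\lambda)]$, so this multiplicity is determined by $\chl(V)$. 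Subtracting $[V:V_\xi(\gb\lambda)]\,\chl(V_\xi(\gb\lambda))$ from $\chl(V)$ produces the q-character of a finite-length module of strictly smaller total weight-dimension, and the induction hypothesis completes the argument.

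The only delicate point is verifying that the maximal-weight $\ell$-weight $\gb\lambda$ is automatically dominant, which reduces to the observation that a vector of maximal weight generates a highest-$\ell$-weight submodule; this is immediate from \eqref{e:shiftws} and Proposition~\ref{p:drinpoly}. Everything else is the verbatim $\ell$-analog of the classical character-theoretic proof of Theorem~\ref{t:chconst}.
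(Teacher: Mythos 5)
Your proposal is correct and is essentially the argument the paper intends when it declares the proof "analogous to that of Theorem \ref{t:chconst}": additivity of $\chl$ on short exact sequences together with the triangularity argument at an $\ell$-weight whose weight is maximal, which recovers the top multiplicity and allows induction. Two cosmetic points only: a general vector of $V_{\gb\lambda}$ is merely a generalized $\ust_\xi h$-eigenvector, so to invoke Proposition~\ref{p:drinpoly} one should pick an actual common eigenvector (it exists since $\ust_\xi h$ is commutative and $V_{\gb\lambda}$ is finite-dimensional and $\ust_\xi h$-stable), and maximality alone does not give $\wt(\gb\mu)\le\wt(\gb\lambda)$ for every constituent in a partial order --- but your "conversely" step via \eqref{e:hw} already yields the equality $\wt(\gb\mu)=\wt(\gb\lambda)$ that the count $\dim V_\xi(\gb\mu)_{\gb\lambda}=\delta_{\gb\mu,\gb\lambda}$ actually needs.
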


It turns out that the q-characters of objects from $\wcal C_\xi$ are not invariant with respect to the braid group action on $\cal P_\xi$ in general. In fact, the theory of q-characters is much more intricate than that of characters and it is not yet completely understood unless $\xi=1$.  Even the proof of the following basic theorem is involved when $\xi\ne 1$ \cite{fmq,fmroot} (see \cite{cmq} for a different approach when $\lie g$ is of classical type). For $\xi=1$ this theorem follows easily from Theorem \ref{t:tev} below (see also \cite{cms}).

\begin{thm}\label{t:cone}
Let $V$ be a finite-dimensional highest-$\ell$-weight module of highest $\ell$-weight $\gb\lambda\in\cal P_\xi^+$. If $\gb\lambda_i(u)$ splits in $\mathbb C(\xi)[u]$ for every $i\in I$, then $V$ is an $\ell$-weight module and $V_\gb\mu\ne 0$ only if $\gb\mu\le \gb\lambda$. More precisely, if $\gb\lambda = \prod_{j=1}^m \gb\omega_{\lambda_j,a_j}$ for some $\lambda_j\in P^+$ and $a_j\in\mathbb C(\xi)^\times$ and if $V_\gb\mu\ne 0$, then $\gb\mu = \gb\lambda\left(\prod_{k=1}^{m'}\gb\alpha_{i_k,a_{j_k}\xi^{b_k}}\right)^{-1}$ for some $i_k\in I, j_k\in\{1,\dots,m\}$, and $0\le b_k< r^\vee h^\vee$. In particular, $\gb\mu = \prod_{k=1}^{m''} \gb\omega_{i'_k,a_{j'_k}\xi^{b'_k}}$ for some $i'_k\in I, j'_k\in\{1,\dots,m\}$, and $0\le b'_k\le r^\vee h^\vee$. \hfill\qedsymbol
\end{thm}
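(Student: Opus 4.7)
The plan is to reduce the statement first to the Weyl module $W_\xi(\gb\lambda)$ and then to tensor products of fundamental modules, where the $\ell$-weight structure is controlled by the Frenkel-Mukhin theory of q-characters. As a preliminary step, I would establish that $V$ is an $\ell$-weight module. When $\xi\ne q$, this follows from the proposition recalled just before Proposition \ref{p:drinpoly}: by \eqref{e:shiftws} and the fact that the cyclic vector has weight $\wt(\gb\lambda)\in P$, the module $V$ is a finite-dimensional $\us_\xi g$-weight module, hence automatically an $\ell$-weight module. When $\xi = q$, the splitting hypothesis on $\gb\lambda_i(u)$ ensures that the eigenvalues of the $\Lambda_{i,r}$ on the cyclic vector all lie in $\mathbb C(q)$; commutativity of \uqt h together with finite-dimensionality of $V$ then forces every generalized $\ell$-weight of $V$ to lie in $\cal P_q$.

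By Theorem \ref{t:weylm}, every highest-$\ell$-weight module of highest $\ell$-weight $\gb\lambda$ is a quotient of $W_\xi(\gb\lambda)$, so it suffices to prove the precise $\ell$-weight description for $V = W_\xi(\gb\lambda)$. Writing $\gb\lambda = \prod_{j=1}^m\gb\omega_{\lambda_j,a_j}$, Lemma \ref{l:cyclic}(b) produces a permutation $\sigma$ for which the reordered factors $\gb\omega_{\lambda_{\sigma(j)},a_{\sigma(j)}}$ are in $q$-resonant order; combined with Theorem \ref{t:lwmdim}, this shows that $W_q(\gb\lambda)$ is isomorphic to the corresponding ordered tensor product of modules of the form $V_q(\gb\omega_{\lambda_j,a_j})$, each of which further decomposes as a tensor product of fundamental modules. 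Consequently, the $\ell$-weights of $W_q(\gb\lambda)$ are products of $\ell$-weights of fundamental modules at specific evaluation points. For $\xi\in\mathbb C'$ with $\xi\ne 1$, the same conclusion descends via the specialization theory of Section \ref{s:stb}, while for $\xi=1$ the theorem is obtained directly from Theorem \ref{t:tev}.

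This reduces the problem to controlling the $\ell$-weights of a single fundamental module $V_q(\gb\omega_{i,a})$, which by the results of Frenkel-Mukhin \cite{fmq,fmroot} are known to take the form $\gb\omega_{i,a}\prod_k \gb\alpha_{j_k,aq^{b_k}}^{-1}$ with $0\le b_k < r^\vee h^\vee$. The upper bound stems from Proposition \ref{p:uqtdual} and the explicit formula for $\gb\lambda^*$, which show that the lowest $\ell$-weight of $V_q(\gb\omega_{i,a})$ is $\gb\omega_{w_0\cdot i,aq^{r^\vee h^\vee}}^{-1}$, so that all intermediate $\ell$-weights lie on descending chains in $\cal Q_\xi^+$ bounded by this shift. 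Multiplying over tensor factors and using $\wt(\gb\alpha_{i,a})=\alpha_i$ produces $\gb\mu = \gb\lambda\prod_k\gb\alpha_{i_k,a_{j_k}\xi^{b_k}}^{-1}\le\gb\lambda$; expanding each $\gb\alpha_{i,a}$ in terms of fundamental $\ell$-weights then yields the final product $\prod_k\gb\omega_{i'_k,a_{j'_k}\xi^{b'_k}}$ with $0\le b'_k\le r^\vee h^\vee$.

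The main obstacle will be the uniformity of the bound $r^\vee h^\vee$ in the root-of-unity case: while for $\xi=q$ it follows from established q-character theory, for $\xi=\zeta$ one must verify through the specialization framework of Section \ref{s:stb} that no $\ell$-weights outside this range appear upon specialization. This should follow from the fact that specialization preserves the highest $\ell$-weight and that the $\ell$-weight support of the specialization of a highest-$\ell$-weight module is contained in (a controlled enlargement of) the specialization of the generic $\ell$-weight support, so that the bound is inherited from the generic case via the isomorphism between $W_q(\gb\lambda)$ and the ordered tensor product of fundamentals.
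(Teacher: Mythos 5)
The paper does not actually prove this theorem: it is quoted from Frenkel--Mukhin \cite{fmq,fmroot} (whose q-character argument applies directly at roots of unity), with the $\xi=1$ case deduced from Theorem \ref{t:tev}. Your route is genuinely different: reduce to the Weyl module, identify $W_q(\gb\lambda)$ with a suitably ordered tensor product of fundamental modules (Corollary \ref{c:cyclic} plus Theorem \ref{t:lwmdim}, i.e.\ Theorem \ref{t:wtfund}), use multiplicativity of q-characters \eqref{e:multchl} together with the known description of $\wtl(V_q(\gb\omega_{i,a}))$, and then transfer the root-of-unity case from the generic one by specialization. This buys a proof in the root-of-unity setting that only cites the generic fundamental-module case from \cite{fmq}; but note that this citation is still the hard core of the statement, so the argument is a reduction to the literature rather than an independent proof -- much as in the paper itself.

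Two steps need repair. First, your preliminary claim for $\xi=q$ -- that commutativity of \uqt h and finite-dimensionality force all generalized $\ell$-weights into $\cal P_q$ once the highest one lies there -- is not a valid inference: over the non-algebraically closed field $\mathbb C(q)$ the joint generalized eigenvalues could a priori lie in a proper algebraic extension, and that they do not is precisely part of what the theorem asserts (it follows from the explicit form $\gb\mu=\gb\lambda\prod_k\gb\alpha_{i_k,a_{j_k}q^{b_k}}^{-1}$, not the other way around); this is harmless only because your Weyl-module reduction supersedes it. Second, and more seriously, the descent to $\xi=\zeta$ is incomplete as written: an arbitrary finite-dimensional highest-$\ell$-weight \ust_\zeta g-module is only known to be a quotient of $W_\zeta(\gb\lambda)$, while specialization (Theorem \ref{t:uqtlat}) produces $\overline{W_q(\gb\lambda_{\mathbb A})}$ as a quotient of $W_\zeta(\gb\lambda)$ -- the containment goes the wrong way, so Proposition \ref{p:lcharspec} alone does not control $\wtl(W_\zeta(\gb\lambda))$. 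You must close this by choosing a lift $\gb\lambda_{\mathbb A}\in\cal P_{\mathbb A}^s$ of $\gb\lambda$ and noting that $\ch(\overline{W_q(\gb\lambda_{\mathbb A})})=\ch(W_q(\gb\lambda_{\mathbb A}))=\ch(W_\zeta(\gb\lambda))$ by Theorems \ref{t:uqtlat} and \ref{t:lwmdim}, so the surjection $W_\zeta(\gb\lambda)\twoheadrightarrow\overline{W_q(\gb\lambda_{\mathbb A})}$ is an isomorphism; then Proposition \ref{p:lcharspec} gives $\chl(W_\zeta(\gb\lambda))=\epsilon_\zeta(\chl(W_q(\gb\lambda_{\mathbb A})))$ exactly, with no ``controlled enlargement,'' and the stated form of the $\ell$-weights specializes. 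Finally, the bound $0\le b_k<r^\vee h^\vee$ for fundamentals does not follow merely from knowing the lowest $\ell$-weight and ``descending chains'': one needs the dual cone property (the module viewed as lowest-$\ell$-weight) together with the freeness of $\cal Q_q$ on the simple $\ell$-roots to pin down the spectral parameters -- or one simply cites it from \cite{fmq,fmroot}, as the paper does.
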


\begin{rem}
If $\gb\lambda_i(u)$ does not split for some $i\in I$ (in particular $\xi=q$), $V$ is not an $\ell$-weight module, but rather a quasi-$\ell$-weight module, as mentioned previously. An analogue of the above theorem in this case can be established by following the ideas of  \cite{jm:hlanac}. Observe that it follows that $W_q(\gb\lambda)\in\wcal C_q$ iff the roots of $\gb\lambda_i(u)$ are in $\mathbb C(q)$ for all $i\in I$.
\end{rem}

It follows from \cite[Corollary 2]{fr} (see also \cite[Lemma 5.4]{cmq}) that
\begin{equation}\label{e:multchl}
\chl(V\otimes W) = \chl(V)\chl(W)
\end{equation}
for every $V,W\in\wcal C_\xi$. In particular, this and Theorem \ref{t:chlconst} imply:

\begin{cor}\label{c:constensor}
Let $V_1,\dots, V_m\in\wcal C_\xi$ and $\sigma\in S_m$. The irreducible constituents of $V_{\sigma(1)}\otimes\cdots\otimes V_{\sigma(m)}$ (counted with multiplicities) do not depend on $\sigma$.\hfill\qedsymbol
\end{cor}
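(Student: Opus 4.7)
The proof is essentially immediate from the two facts just stated. The plan is to first apply the multiplicativity \eqref{e:multchl} of the q-character iteratively to the tensor product $V_{\sigma(1)}\otimes\cdots\otimes V_{\sigma(m)}$, obtaining
\[
\chl(V_{\sigma(1)}\otimes\cdots\otimes V_{\sigma(m)}) = \prod_{j=1}^{m}\chl(V_{\sigma(j)}).
\]
Since the right-hand side is a product in the commutative ring $\mathbb Z[\cal P_\xi]$, it equals $\prod_{j=1}^{m}\chl(V_j)$, which is visibly independent of the permutation $\sigma\in S_m$. Thus all the tensor products obtained by reordering share the same q-character.

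Next I would invoke Theorem \ref{t:chlconst}, which asserts that for any object of $\wcal C_\xi$ the list of irreducible constituents (with multiplicities) is determined by its q-character. Applying this to each of the tensor products $V_{\sigma(1)}\otimes\cdots\otimes V_{\sigma(m)}$, and using the equality of q-characters established above, we conclude that these tensor products have the same irreducible constituents counted with multiplicities. This yields the statement of the corollary. There is no real obstacle here; the only point to check is that each tensor product indeed lies in $\wcal C_\xi$ so that Theorem \ref{t:chlconst} applies, which is immediate since $\wcal C_\xi$ is closed under tensor products (it is the tensor category of finite-dimensional $\ell$-weight modules).
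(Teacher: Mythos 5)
Your argument is correct and is exactly the paper's own reasoning: the corollary is stated there as an immediate consequence of the multiplicativity of q-characters \eqref{e:multchl} (together with commutativity of $\mathbb Z[\cal P_\xi]$) and Theorem \ref{t:chlconst}, which says the q-character determines the irreducible constituents with multiplicities. Nothing further is needed.
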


\begin{rem}
While $\cal C_\xi$ is a braided tensor category for any value of $\xi$, $\wcal C_\xi$ is not braided when $\xi\ne 1$.
\end{rem}

\section{Specialization of Modules, Evaluation Modules, and Frobenius Pullbacks}\label{s:stb}

\subsection{Specialization of modules}\label{ss:lattices}
In this subsection, let $\xi\in\mathbb C'\backslash\{q\}$. Recall that an $\mathbb A$-lattice of a $\mathbb C(q)$-vector space $V$ is the $\mathbb A$-span of a basis of $V$. If $V$ is a \uqt g-module (resp. \uq g-module) and $L$ is an $\mathbb A$-lattice of $V$, we will say that $L$ is a \uqtres g-admissible lattice (resp. \uqres g-admissible lattice) if it is a \uqtres g-submodule (resp. \uqres g-submodule) of $V$. Given a \uqtres g-admissible lattice (resp. \uqres g-admissible lattice) $L$ of a module $V$, define
\begin{equation}\label{e:clm}
\bar L =L\otimes_\mathbb A \mathbb C_\xi.
\end{equation}
Then $\bar L$ is a \ust_\xi g-module (resp. \us_\xi g-module) and $\dim_\mathbb C(\bar L)=\dim_{\mathbb C(q)}(V)$.
Given $v\in L$, we shall denote an element of the form $v\otimes 1\in\bar L$ simply by $v$.
The proof of the following proposition can be found in \cite[Proposition 4.2]{l-adv}.

\begin{prop}\label{p:uqlat}\hfill\\ \vspace{-15pt}
\begin{enumerate}
\item Let $V\in\cal C_q$ and $L$ be a \uqres g-admissible lattice of $V$. Then, $L=\opl_{\mu\in P}^{} L\cap V_\mu$. In particular, $\ch(\bar L)=\ch(V)$.
\item Let $\lambda\in P^+$, $v$ a highest-weight vector of $V_q(\lambda)$, and $L=\uqres gv$. Then, $L$ is a \uqres g-admissible lattice of $V$.\hfill\qedsymbol
\end{enumerate}
\end{prop}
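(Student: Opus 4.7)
The plan is to prove the two parts separately. For part (a), the objective is to extract weight components within $L$: given $v \in L$ with the finite weight-space decomposition $v = \sum_{j=1}^k v_{\mu_j}$ inside $V$, I would show each $v_{\mu_j}$ lies in $L$. The key point is that $\uqres{h}$ preserves $L$ and acts on every weight space $V_\mu$ by a character valued in $\mathbb A$, with $k_i$ acting by $q_i^{\mu(h_i)} \in \mathbb A^\times$ and $\tqbinom{k_i}{s}$ by $\tqbinom{\mu(h_i)}{s}_{q_i} \in \mathbb A$. The plan is to produce, for each $\mu_j$, an element $P_j \in \uqres{h}$ whose eigenvalue on $V_{\mu_l}$ equals $\delta_{jl}$ for $l = 1, \dots, k$, whereupon $v_{\mu_j} = P_j v \in L$ and hence $L = \bigoplus_{\mu} L \cap V_{\mu}$. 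The character equality $\ch(\bar L) = \ch(V)$ is then immediate: tensoring with $\mathbb C_\xi$ gives $\bar L = \bigoplus_{\mu} \overline{L \cap V_{\mu}}$, and each $L \cap V_{\mu}$, being a finitely generated torsion-free module over the PID $\mathbb A$, is free of rank $\dim_{\mathbb C(q)} V_{\mu}$.

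The main obstacle in (a) is the $\mathbb A$-integrality of the projectors $P_j$. A naive Vandermonde formula based on $k_i$ alone introduces denominators $q_i^{\mu_j(h_i)} - q_i^{\mu_l(h_i)}$ which are not units in $\mathbb A$ (for instance they vanish at $q = 1$). My workaround is to exploit the divided binomials $\tqbinom{k_i}{s}$: their eigenvalues $\tqbinom{\mu(h_i)}{s}_{q_i} \in \mathbb A$, as $s$ varies, give enough $\mathbb A$-valued functions of $\mu(h_i)$ to realize, by a finite-difference argument, the characteristic function of any prescribed integer value in a bounded range, with coefficients in $\mathbb A$. When a single $h_i$ does not separate all the $\mu_l$, I would combine several indices $i$; since distinct weights are separated by some $h_i$, an iteration of this process produces the required $P_j \in \uqres{h}$.

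For part (b), I would begin from the triangular decomposition $\uqres{g} \cong \uqres{n^-} \otimes \uqres{h} \otimes \uqres{n^+}$. Since $v$ is a highest-weight vector of weight $\lambda$, the augmentation ideal of $\uqres{n^+}$ kills $v$, while $\uqres{h}$ acts on $v$ through the character $k_i \mapsto q_i^{\lambda(h_i)}$, $\tqbinom{k_i}{s} \mapsto \tqbinom{\lambda(h_i)}{s}_{q_i}$, whose values lie in $\mathbb A$; hence $\uqres{h} \cdot v = \mathbb A v$ and $L = \uqres{n^-} \cdot v$. Using the standard PBW-style basis of $\uqres{n^-}$ given by ordered monomials in divided powers of the negative quantum root vectors, each monomial $\prod_\beta (x_\beta^-)^{(m_\beta)}$ sends $v$ into the weight space of weight $\lambda - \sum_\beta m_\beta \beta$. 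Because $V_q(\lambda)$ is finite-dimensional with weights in a finite subset of $\lambda - Q^+$, only finitely many such monomials can act nontrivially on $v$, so $L$ is a finitely generated $\mathbb A$-module; being torsion-free and finitely generated over the PID $\mathbb A$, it is free of some finite rank. Finally, $\uq{g} = \uqres{g} \otimes_\mathbb A \mathbb C(q)$ and $V_q(\lambda) = \uq{g} \cdot v$ imply $L \otimes_\mathbb A \mathbb C(q) = V_q(\lambda)$, so the rank equals $\dim_{\mathbb C(q)} V_q(\lambda)$ and $L$ is a $\uqres{g}$-admissible $\mathbb A$-lattice.
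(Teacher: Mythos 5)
Your proof is correct and follows essentially the same route as the source the paper cites for this proposition (Lusztig \cite{l-adv}, Proposition 4.2): integral weight projectors in \uqres h built from the divided binomials, and, for (b), the triangular decomposition of \uqres g together with weight considerations for divided-power monomials acting on the highest-weight vector. The only point to make explicit in (a) is that for weights with $\mu(h_i)<0$ you should work with the shifted elements $\tqbinom{k_i;c}{s}$ (which lie in \uqres g, as the paper notes), or observe that these are $\mathbb A$-combinations of elements $k_i^{a}\tqbinom{k_i}{t}$, so that your unitriangular finite-difference argument applies on a range of non-negative integers.
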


Recall the definitions of $\cal P_\mathbb A^\times $ and $\cal P_\mathbb A^s$ given in \S\ref{ss:lwl}. Recall also that, for $\gb\mu\in\cal P_\mathbb A$, $\bgb\mu$ denotes the element of $\cal P$ obtained from $\gb\mu$ by applying the evaluation map $\epsilon_\xi$. The following theorem was proved in \cite{cpweyl} for simply laced $\lie g$ and in \cite{cha:fer} for $\lie g$ of lacing number $r^\vee=2$. The $G_2$-case presents a few extra subtleties and the proof will appear in the survey \cite{jm:weyl}.

\begin{thm}\label{t:uqtlat}
Let $V$ be a \uqt g-highest-$\ell$-weight module with highest-$\ell$-weight $\gb\lambda\in \cal P_\mathbb A^\times $, $v$  a highest-$\ell$-weight vector of $V$, and $L=U_\mathbb A(\tlie g)v$. Then, $L$ is a $U_\mathbb A(\tlie g)$-admissible lattice of $V, \bar L$ is a quotient of $W_\xi(\bgb\lambda)$, and $\ch(\bar L)=\ch(V)$.\hfill\qedsymbol
\end{thm}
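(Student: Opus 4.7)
My plan is to split the argument into two steps: (i) reduce the statement to the case $V=W_q(\gb\lambda)$, and (ii) establish the conclusion for the Weyl module itself. For step (i), since $V$ is finite-dimensional (being a highest-$\ell$-weight quotient that must sit inside the universal finite-dimensional such module) and $W_q(\gb\lambda)$ is the universal finite-dimensional highest-$\ell$-weight module of highest $\ell$-weight $\gb\lambda$ (Theorem~\ref{t:weylm}), its universal property supplies a surjection $\pi\colon W_q(\gb\lambda)\twoheadrightarrow V$ sending the canonical generator $v_0$ to $v$. Put $L_0:=U_\mathbb A(\tlie g)v_0$; then $L=\pi(L_0)$. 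Granted that $L_0$ is $U_\mathbb A(\tlie g)$-admissible (which is the content of step (ii)), $L$ is a finitely generated $\mathbb A$-submodule of the $\mathbb C(q)$-vector space $V$, hence $\mathbb A$-torsion-free, hence free over the PID $\mathbb A=\mathbb C[q,q^{-1}]$, and $L\otimes_\mathbb A \mathbb C(q)=V$, so $L$ is admissible. Applying the right-exact functor $-\otimes_\mathbb A \mathbb C_\xi$ to $L_0\twoheadrightarrow L$ yields $\bar L_0\twoheadrightarrow\bar L$, so if $\bar L_0$ is a quotient of $W_\xi(\bgb\lambda)$ then so is $\bar L$; and the character equality $\ch(\bar L)=\ch(L)=\ch(V)$ follows from the weight-space decomposition of an admissible lattice.

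For step (ii), I would first apply the triangular decomposition of $U_\mathbb A(\tlie g)$ to reduce $L_0$ to $U_\mathbb A(\tlie n^-)v_0$. The identity $U_\mathbb A(\tlie n^+)^0 v_0=0$ is immediate from $v_0$ being highest-$\ell$-weight. The containment $U_\mathbb A(\tlie h)v_0\subseteq \mathbb A v_0$ must be checked on the generators of $U_\mathbb A(\tlie h)$: with $\lambda=\wt(\gb\lambda)$, the element $k_i$ acts by $q_i^{\lambda(h_i)}$ and $\tqbinom{k_i}{s}$ by $\tqbinom{\lambda(h_i)}{s}_{q_i}$, both in $\mathbb A$; while each $\Lambda_{i,\pm r}$ acts by the coefficient of $u^r$ in $\gb\lambda_i^{\pm}(u)$. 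The hypothesis $\gb\lambda\in \cal P_\mathbb A^\times$ guarantees $\gb\lambda_i(u)\in\mathbb A[u]$ and, because its leading coefficient is invertible, a short computation with the elementary symmetric functions of the roots shows the reciprocal $\gb\lambda_i^{-}(u)$ also lies in $\mathbb A[u]$.

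The technical heart of step (ii) is producing an explicit $\mathbb A$-basis of $L_0=U_\mathbb A(\tlie n^-)v_0$. The plan is to invoke Beck's PBW-type basis of $U_\mathbb A(\tlie n^-)$ expressed in terms of divided powers of real and imaginary quantum affine root vectors, and to show that the Weyl-module defining relations $(x_{i,r}^-)^{(m)} v_0=0$ for $m>\lambda(h_i)$, together with the commutation relations among the root vectors, collapse the PBW basis to a finite set of monomials whose images on $v_0$ remain $\mathbb A$-linearly independent. Once this basis is produced, $\bar L_0$ is spanned by the specialized monomials acting on $\bar v_0$; the vector $\bar v_0$ is a highest-$\ell$-weight vector of $\ell$-weight $\bgb\lambda$ still satisfying the Weyl-module relations (which specialize cleanly since they are framed in terms of divided powers), so the universal property of $W_\xi(\bgb\lambda)$ yields the desired surjection $W_\xi(\bgb\lambda)\twoheadrightarrow\bar L_0$; and $\ch(\bar L_0)=\ch(L_0)=\ch(W_q(\gb\lambda))$ follows weight-by-weight from the freeness of each weight component.

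The main obstacle is precisely this explicit PBW-basis construction inside $W_q(\gb\lambda)$. The simply-laced case was handled via crystal-basis techniques in \cite{cpweyl}; the lacing-number-two case required intricate root-vector identities in \cite{cha:fer}; and the $G_2$ case, where the commutation relations among root vectors of three distinct lengths become substantially more delicate, is where most of the new combinatorial work has to be carried out, and this is what is announced to appear in \cite{jm:weyl}.
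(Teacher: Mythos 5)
The paper itself offers no proof of Theorem \ref{t:uqtlat}: the statement is imported from \cite{cpweyl} (simply laced case) and \cite{cha:fer} (lacing number $2$), with the $G_2$ case deferred to \cite{jm:weyl}. Your plan — reduce to $V=W_q(\gb\lambda)$ via a surjection from the Weyl module, and defer the hard finiteness statement for the lattice $U_\mathbb A(\tlie g)v_0$ inside $W_q(\gb\lambda)$ to exactly those references — is therefore consistent with how the result is treated here, and the surrounding mechanics are sound: the hypothesis $\gb\lambda\in\cal P_\mathbb A^\times$ is used precisely so that the eigenvalues of all $\Lambda_{i,\pm r}$ (including the negative ones) lie in $\mathbb A$; a finitely generated torsion-free module over the PID $\mathbb A$ is free, and its rank equals $\dim V$ because $L$ spans $V$ over $\mathbb C(q)$; $\ch(\bar L)=\ch(V)$ is then Proposition \ref{p:uqlat}(a); and the image of the highest-$\ell$-weight vector satisfies the defining relations of $W_\xi(\bgb\lambda)$, giving the surjection. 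The genuinely technical content — that $U_\mathbb A(\tlie n^-)v_0$ is a finitely generated $\mathbb A$-module — is not carried out by you, but it is not carried out in the paper either.

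Two points should be repaired. First, your justification that $V$ is finite-dimensional is incorrect: a highest-$\ell$-weight module does not ``sit inside'' $W_q(\gb\lambda)$, and highest-$\ell$-weight modules with polynomial highest $\ell$-weight need not be finite-dimensional — the universal highest-$\ell$-weight module (with no lowering relations imposed) is infinite-dimensional, and for it the conclusion that $\bar L$ is a quotient of the finite-dimensional module $W_\xi(\bgb\lambda)$ cannot hold. Finite-dimensionality of $V$ — equivalently, by Proposition \ref{p:drinpoly}, the relations $(x^-_{i,r})^{(m)}v=0$ for $m>\wt(\gb\lambda)(h_i)$ — has to be read as part of the hypotheses (as it is in every application in the paper, where $V$ is a quotient of $W_q(\gb\lambda)$); only then does Theorem \ref{t:weylm} supply your surjection $\pi$. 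Second, in step (ii) what must be produced is a finite $\mathbb A$-spanning set of $L_0$: once finite generation is known, freeness is automatic from torsion-freeness over $\mathbb A$, so insisting that the collapsed PBW monomials remain $\mathbb A$-linearly independent is an unnecessary extra requirement (and is not the form in which the cited arguments proceed, which exhibit a spanning set with bounded loop exponents and then compare with known character/dimension information).
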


From now on, given a highest-$\ell$-weight module $V$ as in the above proposition, we will denote by $\overline{V}$ the \ust_\xi g-module $\bar L$ as constructed there.

\begin{prop}\label{p:lcharspec}
Let $L$ be a \uqtres g-admissible lattice of $V\in\wcal C_q$. Then, $\chl(\overline V) = \epsilon_\xi(\chl(V))$.
\end{prop}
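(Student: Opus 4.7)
The strategy is to compare the finite-dimensional multiplicities of $\ell$-weight spaces on the two sides by working at the level of characteristic polynomials of single operators $\eta\in\uqtres h$, rather than by trying to decompose the lattice $L$ into $\ell$-weight pieces.

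First I would establish an integrality statement: for every $\gb\mu\in\cal P_q$ with $V_{\gb\mu}\ne 0$, I claim $\gb\mu\in\cal P_\mathbb A$, so that $\bgb\mu\in\cal P$ is defined. Indeed, $L$ is a finitely generated free $\mathbb A$-module preserved by each $\eta\in\uqtres h$, so the matrix of $\eta$ in any $\mathbb A$-basis of $L$ has entries in $\mathbb A$ and hence its characteristic polynomial lies in $\mathbb A[x]$. The eigenvalues $\gb\Psi_{\gb\mu}(\eta)\in\mathbb C(q)$, being roots of this polynomial, are integral over $\mathbb A=\mathbb C[q,q^{-1}]$; since $\mathbb A$ is a UFD, hence integrally closed in $\mathbb C(q)$, they lie in $\mathbb A$. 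Applying this to $\eta=\Lambda_{i,\pm r}$ shows that the coefficients of $\gb\mu_i(u)$ belong to $\mathbb A$.

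Next, for any $\eta\in\uqtres h$, the characteristic polynomial of the action on $V$ factors as
$$p_\eta(x)\;=\;\prod_{\gb\mu}\bigl(x-\gb\Psi_{\gb\mu}(\eta)\bigr)^{\dim_{\mathbb C(q)} V_{\gb\mu}}\;\in\;\mathbb A[x].$$
Because $\bar L=L\otimes_\mathbb A\mathbb C_\xi$, the matrix of $\eta$ on $\bar L$ is obtained by applying $\epsilon_\xi$ to the matrix on $L$, so its characteristic polynomial is $\epsilon_\xi(p_\eta)=\prod_{\gb\mu}(x-\gb\Psi_{\bgb\mu}(\eta))^{\dim V_{\gb\mu}}$. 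On the other hand, since $\bar L\in\wcal C_\xi$ is an $\ell$-weight module with $\eta$ acting on $\bar L_\nu$ with sole eigenvalue $\gb\Psi_\nu(\eta)$, the same polynomial also factors as $\prod_{\nu}(x-\gb\Psi_{\nu}(\eta))^{\dim\bar L_{\nu}}$. To match terms I need a single $\eta$ whose specialization $\bar\eta$ separates the finite set $S=\{\bgb\mu:V_{\gb\mu}\ne 0\}\cup\{\nu:\bar L_\nu\ne 0\}\subset\cal P$. By Proposition \ref{p:lwl*} distinct elements of $\cal P$ give distinct characters of $\ust_\xi h$; since $\ust_\xi h=\uqtres h\otimes_\mathbb A\mathbb C_\xi$ and $S$ is finite, a generic $\mathbb C$-linear combination of images of elements of $\uqtres h$ separates $S$, and I lift it to some $\eta\in\uqtres h$. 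Comparing the two factorizations then forces $\dim\bar L_\nu=\sum_{\gb\mu:\bgb\mu=\nu}\dim V_{\gb\mu}$ for every $\nu$, which is exactly $\chl(\bar L)=\epsilon_\xi(\chl(V))$.

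The main obstacle is avoiding the tempting but misleading approach of trying to prove $L=\bigoplus_{\gb\mu}(L\cap V_{\gb\mu})$ as a straight analogue of Proposition \ref{p:uqlat}(a). Such a decomposition can genuinely fail whenever two distinct $\ell$-weights of $V$ collapse under the specialization $\epsilon_\xi$: the $\mathbb C(q)$-spectral projectors onto the generalized $\ell$-weight spaces then involve denominators that vanish at $q=\xi$, so they need not preserve the $\mathbb A$-lattice $L$, and there is no reason for the sum of the intersections $L\cap V_{\gb\mu}$ to equal $L$. The characteristic-polynomial route sidesteps this issue, since it relies only on $\mathbb A$-coefficient invariants of the action and on the single separating $\eta$ produced above, rather than on a compatible direct-sum decomposition of the lattice.
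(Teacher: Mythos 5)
Your proof is correct and takes essentially the same route as the paper's: elements of \uqtres h act on an $\mathbb A$-basis of $L$ by matrices with entries in $\mathbb A$, so their characteristic polynomials lie in $\mathbb A[x]$ and specialize under $\epsilon_\xi$ to the characteristic polynomials on $\overline{L}$, whence $\chl(\overline V)=\epsilon_\xi(\chl(V))$. The paper runs this argument with the individual operators $\Lambda_{i,r}$ and leaves the final matching of joint eigenvalue multiplicities implicit (``the proposition follows''), while you make that step explicit via the integrality of the eigenvalues over $\mathbb A$ and a single separating element of \uqtres h; this is a harmless and welcome elaboration of the same idea rather than a different approach.
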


\begin{proof}
Let $B=\{v_1,\dots, v_m\}$ be an $\mathbb A$-basis of $L$ and, hence, also a $\mathbb C(q)$-basis of $V$. Given $i\in I$ and $r\in\mathbb Z$, let $B_{i,r}$ be the matrix of the action of $\Lambda_{i,r}$ on $V$ with respect to $B$. Then, the entries $b_{s,t}$ of $B_{i,r}$ are in $\mathbb A$ and the characteristic polynomial $p(u)$ of the action of $\Lambda_{i,r}$ on $V$ is $p(u)=\det(u{\rm Id}-B_{i,r})\in\mathbb A[u]$. It follows that the entries of the matrix of the action of $\Lambda_{i,r}$ on $\overline L$ with respect to the basis $\overline B$ are $\epsilon_\xi(b_{s,t})$ and, hence, the characteristic polynomial is $\overline{p(u)}$. The proposition follows.
\end{proof}

\begin{rem}
A version of Proposition \ref{p:lcharspec} was stated in \cite[Theorem 3.2]{fmroot} for $V$ irreducible and $L$ as in Theorem \ref{t:uqtlat}. However, the statement of \cite[Theorem 3.2]{fmroot} is missing the hypothesis $\gb\lambda\in\cal P_\mathbb A^s$. This hypothesis is necessary since otherwise $V\notin\wcal C_q$ and hence $V \supsetneqq\opl_{\gb\mu}^{} V_\gb\mu$. In Example \ref{ex:uqtlat} below we shall see that the ``loop'' analogue of the first statement of Proposition \ref{p:uqlat}(a) does not hold. Namely, if $L$ is an \uqtres g-admissible lattice of $V\in\wcal C_q$, it may not be true that $L=\opl_{\gb\mu\in\cal P_q}^{} L\cap V_\gb\mu$ even if $V$ is a highest-$\ell$-weight module with highest $\ell$-weight $\gb\lambda\in\cal P_\mathbb A^s$ and $L$ is as in Theorem \ref{t:uqtlat}.
\end{rem}

Given $V\in\wcal C_\xi$, set
\begin{equation}
\wtl(V) = \{\gb\mu\in\cal P_\xi: V_\gb\mu\ne 0\}.
\end{equation}

\begin{cor}\label{c:nodom}
Let $\gb\lambda\in\cal P_\mathbb A^\times $ and suppose $\epsilon_\xi(\wtl(V_q(\gb\lambda)))\cap\cal P_\xi^+=\{\bgb\lambda\}$. Then, $\overline{V_q(\gb\lambda)}\cong V_\xi(\bgb\lambda)$.
\end{cor}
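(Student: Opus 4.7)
The plan is to show that $\overline{V_q(\gb\lambda)}$ is a highest-$\ell$-weight module whose only possible composition factor is $V_\xi(\bgb\lambda)$, and then use a one-line multiplicity count to conclude that it is actually irreducible.

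First, I would invoke Theorem \ref{t:uqtlat}: it gives that $\overline{V_q(\gb\lambda)}$ is a quotient of the Weyl module $W_\xi(\bgb\lambda)$, hence is itself a highest-$\ell$-weight module of highest $\ell$-weight $\bgb\lambda$. In particular the $\bgb\lambda$-$\ell$-weight space of $\overline{V_q(\gb\lambda)}$ has dimension $1$, and $V_\xi(\bgb\lambda)$ is the unique irreducible quotient of $\overline{V_q(\gb\lambda)}$.

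Next, Proposition \ref{p:lcharspec} yields $\chl(\overline{V_q(\gb\lambda)}) = \epsilon_\xi(\chl(V_q(\gb\lambda)))$, so $\wtl(\overline{V_q(\gb\lambda)}) \subseteq \epsilon_\xi(\wtl(V_q(\gb\lambda)))$. By the classification of simple objects of $\wcal C_\xi$, each composition factor of $\overline{V_q(\gb\lambda)}$ is isomorphic to some $V_\xi(\gb\mu)$ with $\gb\mu \in \cal P_\xi^+$. Because $\gb\mu$ appears (with multiplicity one) as the highest $\ell$-weight of $V_\xi(\gb\mu)$ and q-characters are additive on short exact sequences, any such $\gb\mu$ must lie in $\wtl(\overline{V_q(\gb\lambda)}) \cap \cal P_\xi^+ \subseteq \epsilon_\xi(\wtl(V_q(\gb\lambda))) \cap \cal P_\xi^+ = \{\bgb\lambda\}$. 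Hence every composition factor of $\overline{V_q(\gb\lambda)}$ is isomorphic to $V_\xi(\bgb\lambda)$.

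Finally, writing $\chl(\overline{V_q(\gb\lambda)}) = k\,\chl(V_\xi(\bgb\lambda))$ where $k$ is the number of composition factors, I would compare the coefficients of $\bgb\lambda$ on both sides: on the left it equals $1$, since $\overline{V_q(\gb\lambda)}$ is highest-$\ell$-weight of highest $\ell$-weight $\bgb\lambda$ (cf. \eqref{e:hw}); on the right it equals $k$, since $V_\xi(\bgb\lambda)$ contributes $\bgb\lambda$ with multiplicity one. Therefore $k=1$, so $\overline{V_q(\gb\lambda)}$ is irreducible and $\overline{V_q(\gb\lambda)} \cong V_\xi(\bgb\lambda)$, as claimed. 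There is no real obstacle: once Proposition \ref{p:lcharspec} and Theorem \ref{t:uqtlat} are in hand, the corollary is essentially a formal consequence of the hypothesis on dominant specialized $\ell$-weights combined with additivity of q-characters on composition series.
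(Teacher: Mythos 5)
Your argument is correct and is essentially the paper's own proof: both rest on Theorem \ref{t:uqtlat} (so that $\overline{V_q(\gb\lambda)}$ is a highest-$\ell$-weight quotient of $W_\xi(\bgb\lambda)$) together with Proposition \ref{p:lcharspec} and the hypothesis $\epsilon_\xi(\wtl(V_q(\gb\lambda)))\cap\cal P_\xi^+=\{\bgb\lambda\}$ to exclude any dominant $\ell$-weight, hence any constituent, other than the one attached to $\bgb\lambda$. The only cosmetic difference is the endgame: the paper notes that a reducible $\overline{V_q(\gb\lambda)}$ would contain an irreducible submodule whose dominant highest $\ell$-weight would differ from $\bgb\lambda$, while you count the multiplicity of $\bgb\lambda$ in the q-character to force the composition length to be one.
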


\begin{proof}
Since $V_\xi(\bgb\lambda)$ is a quotient of $\overline{V_q(\gb\lambda)}$, it suffices to show that the latter is irreducible. If this were not the case, $\overline{V_q(\gb\lambda)}$ would have to contain an irreducible submodule and, hence, $\epsilon_\xi(\wtl(V_q(\gb\lambda)))$ would have to  contain an element of $\cal P_\xi^+$ other than $\bgb\lambda$.
\end{proof}

\begin{rem}
Notice that Theorem \ref{t:cone} implies that, if $\wtl(V_q(\gb\lambda))\cap\cal P_q^+=\{\gb\lambda\}$ and either $\xi$ is not a root of unity or $\xi=\zeta$ and $l>2r^\vee h^\vee$, then the hypothesis of the above corollary is satisfied.
\end{rem}

Let $\lie g=\lie{sl}_2$, $I=\{i\}$, $v$ be a highest-weight vector of $V_q(\lambda), \lambda\in P^+$, and $L=\uqres gv$. Then it is easy to see that
\begin{equation}\label{e:sl2lat}
\{(x_i^-)^{(k)}v: 0\le k\le\lambda(h_i)\} \quad\text{is an $\mathbb A$-basis of $L$ and }\quad (x_i^+)^{(\lambda(h_i))}(x_i^-)^{(\lambda(h_i))}v =v.
\end{equation}
In particular, the image of this set in $\bar L$ is a $\mathbb C$-basis of $\bar L$.
Now let $\lie g$ be general again, $v$ and $L$ be as in Theorem \ref{t:uqtlat}, and $\lambda=\wt(\gb\lambda)$. Fix a reduced expression $s_{i_N}\cdots s_{i_1}$ for $w\in\cal W$, and  consider
\begin{equation}
m_j = (s_{i_{j-1}}\cdots s_{i_1}\lambda)(h_{i_j}).
\end{equation}
Proceeding inductively on $j$, it follows from Lemma \ref{l:wgroupstuff}(b) and \eqref{e:sl2lat} that $m_j\in\mathbb Z_{\ge 0}$,
\begin{equation}\label{e:lwlat}
(x_{i_1}^+)^{(m_1)}\cdots (x_{i_j}^+)^{(m_j)}(x_{i_j}^-)^{(m_j)}\cdots (x_{i_1}^-)^{(m_1)}v=v,
\end{equation}
and $\{(x_{i_j}^-)^{(m_j)}\cdots (x_{i_1}^-)^{(m_1)}v\}$ is an $\mathbb A$-basis of $L\cap V_{s_{i_j}\cdots s_{i_1}\lambda}$. Set
\begin{equation}\label{e:v_w}
v_w = (x_{i_N}^-)^{(m_N)}\cdots (x_{i_1}^-)^{(m_1)}v.
\end{equation}

\begin{prop}\label{p:Twv}
Let $V, v, w, \lambda,\gb\lambda, m_j$ be as above. Then, $V_{T_w\gb\lambda}=V_{w\lambda}=\mathbb Cv_w$.
\end{prop}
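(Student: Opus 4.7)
The preceding discussion already yields $V_{w\lambda}=\mathbb C(q)v_w$: by Proposition \ref{p:uqlat}(a) we have $L=\bigoplus_\mu L\cap V_\mu$, and $\{v_w\}$ has been identified as an $\mathbb A$-basis of $L\cap V_{w\lambda}$. Since $\uqt{h}$ commutes with every $k_i$, it preserves each weight space; acting on the one-dimensional space $V_{w\lambda}$ it is therefore given by scalars, so $v_w$ is an $\ell$-weight vector of some $\ell$-weight $\gb\mu\in\cal P_q$ with $\wt(\gb\mu)=w\lambda=\wt(T_w\gb\lambda)$, and the task reduces to identifying $\gb\mu$ with $T_w\gb\lambda$.

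The plan is to proceed by induction on $\ell(w)$; the case $\ell(w)=0$ is trivial. For the inductive step, write $w=s_iw'$ with $\ell(s_iw')=\ell(w')+1$ and set $m=(w'\lambda)(h_i)$, so that $v_w=(x_i^-)^{(m)}v_{w'}$ and, by induction, $v_{w'}\in V_{T_{w'}\gb\lambda}$. A crucial intermediate claim is that $v_{w'}$ is a highest-$\ell$-weight vector for the subalgebra $\uqt{g_i}\cong U_{q_i}(\tlie{sl}_2)$: indeed $x_{i,r}^+v_{w'}\in V_{w'\lambda+\alpha_i}$ for every $r\in\mathbb Z$, and Lemma \ref{l:wgroupstuff}(b) together with the $\cal W$-invariance of $\ch(V)$ from Proposition \ref{p:Winv} (which forces every weight of the integrable highest-weight module $V$ into $P(\lambda)$, since each dominant $\cal W$-representative of a weight of $V$ must be $\le\lambda$) shows that $V_{w'\lambda+\alpha_i}=0$. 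By Lemma \ref{l:Twdominant}, $(T_{w'}\gb\lambda)_i(u)$ is a polynomial, of degree $m$, and this polynomial is precisely the highest $\ell$-weight of the $\uqt{g_i}$-submodule generated by $v_{w'}$.

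The $\tlie{sl}_2$-version of the proposition---essentially the computation motivating the definition $(T_i\gb\nu)_i(u)=(\gb\nu_i(q_i^2u))^{-1}$---identifies the $i$-th component of $\gb\mu$ with $(T_iT_{w'}\gb\lambda)_i(u)=(T_w\gb\lambda)_i(u)$. For $j\ne i$, one computes the scalar by which each $\Lambda_{j,r}$ acts on $v_w=(x_i^-)^{(m)}v_{w'}$ by commuting $\Lambda_{j,r}$ past $(x_i^-)^{(m)}$ through iterated use of the Drinfeld relation $[h_{j,r},x_{i,s}^-]=-\tfrac{1}{r}[rc_{ji}]_j x_{i,r+s}^-$, and then applying the result to $v_{w'}$, whose $\ell$-weight is known inductively. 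Reorganising this as an identity of formal power series in $u$ reproduces exactly the factor $\prod_{k=1}^{|c_{ji}|}(T_{w'}\gb\lambda)_i(q^{d_i+|c_{ji}|+1-2k}u)$ appearing in $(T_i\gb\nu)_j(u)$, whence $\gb\mu_j=(T_w\gb\lambda)_j$ and the induction closes.

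The main technical obstacle is this last commutation calculation: normal-ordering $\Lambda_{j,r}$ past a divided power of $x_{i,s}^-$ produces a cascade of terms whose generating-function resummation must be matched to the braid formula. This is precisely the content of \cite[Proposition 3.2]{vcbraid}, on which the argument ultimately rests; the $\mathfrak{sl}_2$-input in the previous paragraph is, by contrast, a short direct calculation in the rank-one Weyl/evaluation module.
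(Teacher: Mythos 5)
Your argument is correct and, at bottom, follows the same route as the paper: the paper's proof consists of exactly your first step (the lattice discussion gives $V_{w\lambda}=\mathbb C(q)v_w$, and by \eqref{e:lws} it remains to see $v_w\in V_{T_w\gb\lambda}$) followed by a citation of \cite[Proposition 4.1]{vcbraid} for that last fact. What you add is a sketch of the induction on $\ell(w)$ that proves the cited result: the reduction to the node $i$ via $x_{i,r}^+v_{w'}=0$ (Lemma \ref{l:wgroupstuff}(b) plus Proposition \ref{p:Winv}, as in the proof of Corollary \ref{c:Twdominant}), the rank-one identification of the $i$-th component, and the commutation of the imaginary-root generators past $(x_i^-)^{(m)}$ for $j\ne i$. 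This is sound (e.g.\ the degree count $\deg (T_{w'}\gb\lambda)_{i}=(w'\lambda)(h_i)=m$ is justified since $\wt(T_{w'}\gb\lambda)=w'\lambda$ and the component is a polynomial by Lemma \ref{l:Twdominant}), but note that you do not actually carry out the resummation either, and your attribution is off: the generating-function computation you defer to is the content of \cite[Proposition 4.1]{vcbraid} (precisely what this paper cites), whereas \cite[Proposition 3.2]{vcbraid} is the purely combinatorial polynomiality statement recorded here as Lemma \ref{l:Twdominant}. So your write-up is essentially an unwinding of the paper's citation, equally reliant on \cite{vcbraid} at the one genuinely technical point.
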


\begin{proof}
As mentioned above, $\{v_w\}$ is a basis for $V_{w\lambda}$ and we have the last equality. Since $\wt(T_w\gb\lambda)=w\lambda$, it follows from \eqref{e:lws} that it remains to show that $v_w\in V_{T_w\gb\lambda}$. This was done in \cite[Proposition 4.1]{vcbraid}.
\end{proof}

\begin{cor}\label{c:Twv}
Let $V$ be a finite-dimensional highest-$\ell$-weight \ust_\xi g-module of highest $\ell$-weight $\gb\lambda\in\cal P_\xi^+$. Then, $V_{T_w\gb\lambda}=V_{w\wt(\gb\lambda)}$ for all $w\in\cal W$.
\end{cor}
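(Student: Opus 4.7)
The plan is to reduce to the Weyl module case and then transfer the analogous generic statement (Proposition \ref{p:Twv}) to the root of unity setting via the theory of specialization developed in the previous subsection.

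First I would reduce to the case $V = W_\xi(\gb\lambda)$. By Theorem \ref{t:weylm}, any finite-dimensional highest-$\ell$-weight $\ust_\xi g$-module of highest $\ell$-weight $\gb\lambda$ is a quotient of $W_\xi(\gb\lambda)$. Since a quotient map is $\ust_\xi h$-equivariant, it preserves both weight and $\ell$-weight decompositions, and one has $V_{\mu} = \pi(W_\xi(\gb\lambda)_\mu)$ and $V_{\gb\mu}\supseteq \pi(W_\xi(\gb\lambda)_{\gb\mu})$ for any surjection $\pi\colon W_\xi(\gb\lambda)\to V$. Hence an equality of the form $W_\xi(\gb\lambda)_{T_w\gb\lambda} = W_\xi(\gb\lambda)_{w\lambda}$ immediately yields the analogous equality for $V$, where $\lambda = \wt(\gb\lambda)$.

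Next, for the Weyl module itself, I would establish the two inequalities that pin down the dimension. Since $W_\xi(\gb\lambda)_\lambda = \mathbb C v$ is one-dimensional and $W_\xi(\gb\lambda)$ is integrable, Proposition \ref{p:Winv} gives $\dim W_\xi(\gb\lambda)_{w\lambda} = 1$. In view of \eqref{e:lws} and $\wt(T_w\gb\lambda)=w\lambda$, one has $W_\xi(\gb\lambda)_{T_w\gb\lambda}\subseteq W_\xi(\gb\lambda)_{w\lambda}$, so it remains only to show $W_\xi(\gb\lambda)_{T_w\gb\lambda}\ne 0$.

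This is where specialization enters. Viewing the polynomials of $\gb\lambda\in\cal P$ (with nonzero complex roots) as polynomials over $\mathbb A$ produces a lift $\tilde{\gb\lambda}\in\cal P_\mathbb A^s$ with $\overline{\tilde{\gb\lambda}} = \gb\lambda$. The generic Weyl module $W_q(\tilde{\gb\lambda})$ lies in $\wcal C_q$ (since $\tilde{\gb\lambda}$ splits in $\mathbb A^\times$), and by Proposition \ref{p:Twv} one has $W_q(\tilde{\gb\lambda})_{T_w\tilde{\gb\lambda}} = \mathbb C(q)v_w$, in particular it is one-dimensional. Theorem \ref{t:uqtlat} guarantees that the admissible lattice $L=\uqres g\tilde v$ satisfies that $\overline L$ is a quotient of $W_\xi(\gb\lambda)$, and Proposition \ref{p:lcharspec} gives $\chl(\overline L) = \epsilon_\xi(\chl(W_q(\tilde{\gb\lambda})))$. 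A direct inspection of the formulas defining the braid group action shows that it commutes with $\epsilon_\xi$, i.e. $\overline{T_w\tilde{\gb\lambda}} = T_w\gb\lambda$. Hence the coefficient of $T_w\gb\lambda$ in $\chl(\overline L)$ is at least $\dim W_q(\tilde{\gb\lambda})_{T_w\tilde{\gb\lambda}}=1$, and the surjection $W_\xi(\gb\lambda)\twoheadrightarrow\overline L$ gives $\dim W_\xi(\gb\lambda)_{T_w\gb\lambda}\ge 1$, which combined with the upper bound proves the desired equality.

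The main obstacle I expect is a purely notational one: one must carefully track two different braid group actions (the $q$-action on $\cal P_q$ and the $\xi$-action on $\cal P_\xi$) through the specialization map and verify that they are compatible. Once this compatibility is in place, the argument proceeds by combining three inputs already in hand: the generic analogue Proposition \ref{p:Twv}, the Weyl-invariance of $\ch$ (Proposition \ref{p:Winv}), and the comparison of $q$-characters under specialization (Proposition \ref{p:lcharspec}).
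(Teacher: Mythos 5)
Your argument is correct and is essentially the paper's own proof: for $\xi\ne q$ the paper also uses the inclusion $\cal P_\xi^+\subseteq\cal P_\mathbb A^s$ to lift $\gb\lambda$, applies Proposition \ref{p:Twv} to the generic Weyl module, and transfers the conclusion through Proposition \ref{p:lcharspec} (the paper simply calls this ``immediate''). Your write-up merely makes explicit the details the paper suppresses — the reduction to $W_\xi(\gb\lambda)$, the upper bound $\dim V_{w\lambda}=1$ from Proposition \ref{p:Winv} together with \eqref{e:lws}, and the compatibility of the braid group action with $\epsilon_\xi$ — all of which are implicit in the paper's one-line proof.
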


\begin{proof}
For $\xi=q$ this is Proposition \ref{p:Twv} again. Otherwise, since $\cal P_\xi^+\subseteq\cal P_\mathbb A^s$, the result is immediate from Propositions \ref{p:Twv} and \ref{p:lcharspec}.
\end{proof}

\begin{cor}\label{c:Twdominant}
Let $\xi\in\mathbb C'$ and $\gb\lambda\in\cal P_\xi^+$ be such that $\gb\lambda_i(u)$ splits in $\mathbb C(q)[u]$ for all $i\in I$ and suppose that $i_0\in I$ and $w\in\cal W$ are such that $\ell(s_{i_0}w)=\ell(w)+1$. Then, $T_w\gb\lambda\le\gb\lambda$ and $(T_w\gb\lambda)_{i_0}(u)\in\mathbb C(\xi)[u]$ splits. Moreover, the roots of $T_w\gb\lambda_{i_0}(u)$ form a subset of $\cup_k a_k\xi^\mathbb Z$ where $a_k$ runs through the set of roots of $\gb\lambda_i(u), i\in I$.
\end{cor}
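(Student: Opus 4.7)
The plan is to realize $T_w\gb\lambda$ as an $\ell$-weight of the finite-dimensional module $V_\xi(\gb\lambda)$ and then invoke Theorem \ref{t:cone}. A priori $T_w\gb\lambda$ is only a tuple of rational functions; the real content of the statement is that it turns out to be a polynomial tuple with roots in a prescribed set, and that is exactly the shape of the conclusion of Theorem \ref{t:cone} once we know that $T_w\gb\lambda\in\wtl(V_\xi(\gb\lambda))$.

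Concretely, I would set $V=V_\xi(\gb\lambda)$ and $\lambda=\wt(\gb\lambda)$. Corollary \ref{c:Twv} gives $V_{T_w\gb\lambda}=V_{w\lambda}$. The space $V_\lambda$ is one-dimensional, being spanned by the highest-$\ell$-weight vector of $V$, and the character of $V$ is $\cal W$-invariant by Proposition \ref{p:Winv}, so $V_{w\lambda}\ne 0$. Therefore $T_w\gb\lambda\in\wtl(V)$.

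Since the $\gb\lambda_i(u)$ split by hypothesis, Theorem \ref{t:cone} applies to $V$ and tells us that every $\gb\mu\in\wtl(V)$ satisfies $\gb\mu\le\gb\lambda$ and admits a factorization $\gb\mu=\prod_{k=1}^{m''}\gb\omega_{i'_k,a_{j'_k}\xi^{b'_k}}$ with $0\le b'_k\le r^\vee h^\vee$, where $a_1,\dots,a_m\in\mathbb C(\xi)^\times$ are the parameters of a decomposition $\gb\lambda=\prod_j\gb\omega_{\lambda_j,a_j}$. Specializing to $\gb\mu=T_w\gb\lambda$ gives $T_w\gb\lambda\le\gb\lambda$ together with the factorization; reading off the $i_0$-component then yields $(T_w\gb\lambda)_{i_0}(u)=\prod_{k:\,i'_k=i_0}(1-a_{j'_k}\xi^{b'_k}u)\in\mathbb C(\xi)[u]$, which clearly splits in $\mathbb C(\xi)[u]$ and whose roots $a_{j'_k}^{-1}\xi^{-b'_k}$ lie in $\bigcup_k a_k\xi^\mathbb Z$, where $a_k$ ranges over the roots of the $\gb\lambda_i(u)$'s, as required.

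I foresee no serious obstacle, the essential work being already contained in Corollary \ref{c:Twv} and Theorem \ref{t:cone}. It is worth noting that the length hypothesis $\ell(s_{i_0}w)=\ell(w)+1$ does not enter the argument at all; the same reasoning gives the polynomial, splitting, and root-containment conclusions for every component of $T_w\gb\lambda$ simultaneously. The hypothesis is presumably recorded to mark the context in which the corollary will be applied, in parallel with Lemma \ref{l:Twdominant} and Lemma \ref{l:wgroupstuff}(b).
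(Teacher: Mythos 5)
Your first two steps are sound: Corollary \ref{c:Twv} together with the $\cal W$-invariance of characters (Proposition \ref{p:Winv}) does show that $T_w\gb\lambda\in\wtl(V_\xi(\gb\lambda))$, and the first part of Theorem \ref{t:cone} then gives $T_w\gb\lambda\le\gb\lambda$. The gap is in the step carrying the real content of the corollary, namely that $(T_w\gb\lambda)_{i_0}(u)$ is a split polynomial. You obtain it by reading the ``in particular'' clause of Theorem \ref{t:cone} as saying that every $\ell$-weight of $V_\xi(\gb\lambda)$ is a product of fundamental $\ell$-weights with nonnegative exponents, so that every component of every $\ell$-weight is a polynomial. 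That reading cannot be right: the clause only constrains the spectral parameters occurring in the (Laurent) monomial expression of an $\ell$-weight, exponents $\pm1$ being allowed. Indeed, for $\lie g=\lie{sl}_2$ Proposition \ref{p:lcsl2} gives $\chl(V_\xi(\gb\omega_{i,a}))=\gb\omega_{i,a}+(\gb\omega_{i,a\xi^2})^{-1}$, and in general the lowest $\ell$-weight $T_{w_0}\gb\lambda=(\gb\lambda^*)^{-1}$ of $V_\xi(\gb\lambda)$ (Proposition \ref{p:uqtdual}) has components which are inverses of polynomials. Your closing remark that the hypothesis $\ell(s_{i_0}w)=\ell(w)+1$ ``does not enter the argument at all'' is the symptom of the error: your reasoning would equally prove that $(T_{s_{i_0}}\gb\lambda)_{i_0}(u)$ is a polynomial, whereas by the definition of the braid group action it equals $(\gb\lambda_{i_0}(\xi_{i_0}^2u))^{-1}$, nonpolynomial whenever $\gb\lambda_{i_0}(u)$ is nonconstant, even though $T_{s_{i_0}}\gb\lambda$ is an $\ell$-weight of $V_\xi(\gb\lambda)$ by your own first step.

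What is missing is precisely the $\lie{sl}_2$-reduction in which the length hypothesis is used. Since $\ell(s_{i_0}w)=\ell(w)+1$, Lemma \ref{l:wgroupstuff}(b) gives $w^{-1}\alpha_{i_0}\in R^+$, so $w\wt(\gb\lambda)+\alpha_{i_0}$ is not a weight of $V_\xi(\gb\lambda)$ and hence $x_{i_0,r}^+v_w=0$ for all $r\in\mathbb Z$, where $v_w$ is as in \eqref{e:v_w} and spans $V_\xi(\gb\lambda)_{T_w\gb\lambda}$ by Corollary \ref{c:Twv}. Consequently $\ust_\xi{g_{i_0}}v_w$ is a finite-dimensional highest-$\ell$-weight $\ust_\xi{g_{i_0}}$-module with highest $\ell$-weight $(T_w\gb\lambda)_{i_0}(u)$, so Proposition \ref{p:drinpoly} forces this component to be a polynomial; combined with the factorization $T_w\gb\lambda=\gb\lambda\prod_k\gb\alpha_{i_k,a_{j_k}\xi^{b_k}}^{-1}$ from Theorem \ref{t:cone} (your step, correctly interpreted) it splits in $\mathbb C(\xi)[u]$ with roots in $\cup_k a_k\xi^{\mathbb Z}$. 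This is essentially the paper's proof, which runs by induction on $\ell(w)$ and applies this $\lie{sl}_2$-reduction at each step; with that step inserted your argument closes, but as written the splitting claim is unproved.
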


\begin{proof}
The proof goes inductively on $\ell(w)$. If $\ell(w)=0$ there is nothing to be done. Otherwise, let $j\in I$ and $w'\in\cal W$ be such that $w=s_jw'$ and $\ell(w')=\ell(w)-1$. By the induction hypothesis, $T_{w'}\gb\lambda\le\gb\lambda$. In particular, by the very definition of $\gb\alpha_{i,a}$,  $T_{w'}\gb\lambda = \gb\mu\gb\nu^{-1}$ for unique relatively prime $\gb\mu,\gb\nu\in\cal P_\xi^+$ such that $\gb\mu_i(u),\gb\nu_i(u)$ split in $\mathbb C(\xi)[u]$ for all $i\in I$. Also by the induction hypothesis, $(T_{w'}\gb\lambda)_j(u)\in\mathbb C(\xi)[u]$, i.e., $\gb\nu_j(u)=1$ and the roots of $\gb\mu_j(u)$ lie in the set $\cup_k a_k\xi^\mathbb Z$.

Let $V=V_\xi(\gb\lambda)$ and $v_w$ be defined as in \eqref{e:v_w}. It follows from Lemma \ref{l:wgroupstuff}(b) that $\ust_\xi{n^+_j}^0v_{w'}=0$ and, hence, that $\ust_\xi{g_j}v_{w'}$ is finite-dimensional $\ust_\xi{g_j}$-highest-$\ell$-weight module of highest-$\ell$-weight $(T_{w'}\gb\lambda)_j(u)$.
It now follows from Theorem \ref{t:cone} that $T_w\gb\lambda\le T_{w'}\gb\lambda$. Moreover, by Lemma \ref{l:wgroupstuff}(b) again, $\ust_\xi{n^+_{i_0}}^0v_{w}=0$ and $\ust_\xi{g_{i_0}}v_w$ is a finite-dimensional $\ust_\xi{g_{i_0}}$-highest-$\ell$-weight module of highest-$\ell$-weight $(T_{w}\gb\lambda_{i_0})(u)$. In particular, $(T_{w}\gb\lambda)_{i_0}(u)$ is polynomial which splits in $\mathbb C(\xi)[u]$. Using Theorem \ref{t:cone} once more the last statement follows.
\end{proof}

\begin{rem}
For $\xi$ of infinite order, the above corollary was first stated in \cite{cmq}. As observed in \cite{chhe:beyond}, the proof of \cite{cmq} was incomplete. The approach we used above is a representation theoretic alternative to the one presented in \cite[\S2.10]{chhe:beyond} which relies instead purely on the Weyl group action on $P$ and the braid group action on $\cal P_\xi$.
\end{rem}

\subsection{Evaluation modules}\label{ss:evm}
Given a $\lie g$-module $V$, let $V(a)$ be the $\tlie g$-module obtained by pulling-back the evaluation map ${\rm ev}_a:\tlie g\to\lie g, a\in\mathbb C^\times$. Such modules are called evaluation modules. If $V=V(\lambda)$ we use the notation $V(\lambda,a)$ for the corresponding evaluation module. The next theorem can be found in \cite{cpweyl}.

\begin{thm}\label{t:tev}
Let $\gb\lambda\in\cal P^+$.
\begin{enumerate}
\item If $\gb\lambda = \gb\omega_{\lambda,a}$ for some $\lambda\in P^+$ and some $a\in\mathbb C^\times$, then $V(\gb\lambda)\cong V(\lambda,a)$.
\item If $\gb\lambda = \prod_j \gb\omega_{\lambda_j,a_j}$ with $\lambda_j\in P^+$ and $a_i\ne a_j$ when $i\ne j$, then $V(\gb\lambda)\cong \otm_j^{} V(\lambda_j,a_j)$ and $W(\gb\lambda)\cong\otm_j^{} W(\gb\omega_{\lambda_j,a_j})$.\hfill\qedsymbol
\end{enumerate}
\end{thm}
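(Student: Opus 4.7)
\emph{Part (a).} Let $v_\lambda\in V(\lambda)$ be a highest-weight vector, viewed in $V(\lambda,a)$ via pullback along $\mathrm{ev}_a$. From $\mathrm{ev}_a(x_{i,r}^{\pm})=a^rx_i^{\pm}$ and $\mathrm{ev}_a(h_{i,s})=a^sh_i$ one obtains $x_{i,r}^+v_\lambda=0$ and $h_{i,s}v_\lambda=a^s\lambda(h_i)v_\lambda$. Substituting the latter into the defining series $\Lambda_i^{\pm}(u)=\exp\bigl(-\sum_{s\ge1}\tfrac{h_{i,\pm s}}{s}u^s\bigr)$, valid at $\xi=1$, yields $\Lambda_i^+(u)v_\lambda=(1-au)^{\lambda(h_i)}v_\lambda=(\gb\omega_{\lambda,a})_i(u)v_\lambda$ and similarly for $\Lambda_i^-$. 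Hence $v_\lambda$ is a highest-$\ell$-weight vector of $\ell$-weight $\gb\omega_{\lambda,a}$. Since the $\tlie g$-action on $V(\lambda,a)$ factors through $\mathrm{ev}_a$ and $V(\lambda)$ is $\lie g$-simple, $V(\lambda,a)$ is $\tlie g$-simple, hence coincides with $V(\gb\omega_{\lambda,a})$.

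\emph{Part (b), the simple module.} Set $T=\bigotimes_j V(\lambda_j,a_j)$ and $v=\bigotimes_j v_j$. Since $h_{i,r}$ is primitive in $U(\tlie g)$, iteration of the coproduct gives $h_{i,s}v=\bigl(\sum_j a_j^s\lambda_j(h_i)\bigr)v$, so the computation of part (a) yields $\Lambda_i^+(u)v=\prod_j(1-a_ju)^{\lambda_j(h_i)}v=\gb\lambda_i(u)v$ and $\tlie n^+v=0$. The essential observation is that the entire $\tlie g$-action on $T$ factors through the Lie algebra surjection
\[
\pi\colon\tlie g\twoheadrightarrow\lie g^{\oplus n},\qquad x\otimes p(t)\mapsto\bigl(p(a_1)x,\dots,p(a_n)x\bigr),
\]
whose surjectivity follows from Lagrange interpolation at the distinct $a_j$'s. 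Indeed $T$ is the $\pi$-pullback of the external tensor product $V(\lambda_1)\boxtimes\cdots\boxtimes V(\lambda_n)$, which is simple over $\lie g^{\oplus n}$ (a standard consequence of each $V(\lambda_j)$ being $\lie g$-simple). Therefore $T$ is a simple $\tlie g$-module of highest $\ell$-weight $\gb\lambda$, so $T\cong V(\gb\lambda)$; combined with part (a) this gives $V(\gb\lambda)\cong\bigotimes_j V(\lambda_j,a_j)$.

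\emph{Part (b), the Weyl module.} Let $T_W=\bigotimes_j W(\gb\omega_{\lambda_j,a_j})$ and let $v=\bigotimes_j v_j$ be the tensor of the canonical generators. The same Cartan-coproduct computation shows $v$ is a highest-$\ell$-weight vector of $\ell$-weight $\gb\lambda$. Because the primitive summands of $\Delta(x_{i,r}^-)$ commute across distinct tensor positions, the iterated coproduct gives
\[
(x_{i,r}^-)^{(m)}v=\sum_{m_1+\cdots+m_n=m}(x_{i,r}^-)^{(m_1)}v_1\otimes\cdots\otimes(x_{i,r}^-)^{(m_n)}v_n,
\]
which vanishes whenever $m>\lambda(h_i)=\sum_j\lambda_j(h_i)$, since pigeonhole forces some $m_j>\lambda_j(h_i)$ and the corresponding factor then annihilates $v_j$. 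The universal property of $W(\gb\lambda)$ thus produces a $\tlie g$-homomorphism $\phi\colon W(\gb\lambda)\to T_W$ sending generator to $v$. Applying Theorem \ref{t:lwmdim} both to $W(\gb\lambda)$ and to each factor $W(\gb\omega_{\lambda_j,a_j})$ yields $\dim W(\gb\lambda)=\prod_j\dim W(\gb\omega_{\lambda_j,a_j})=\dim T_W$, so $\phi$ is an isomorphism iff it is surjective, i.e.\ iff $T_W$ is cyclic on $v$.

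\emph{Main obstacle.} The chief remaining difficulty is cyclicity of $T_W$ on $v$, which is strictly harder than for evaluation modules since $\tlie g$ does not act on a Weyl module through a Lie algebra quotient. The plan for handling this is to exploit that, by Theorem \ref{t:cone} combined with the identity $\gb\alpha_{i,a}=\gb\omega_{i,a}(T_i\gb\omega_{i,a})^{-1}$ specialized at $\xi=1$, every $\ell$-weight of $W(\gb\omega_{\lambda_j,a_j})$ has the form $\gb\omega_{\mu,a_j}$ for some $\mu\in P$; consequently the $\tlie h$-action on this Weyl module is governed by evaluation at $a_j$. Lagrange interpolation on the distinct $a_j$ then separates the $\tlie h$-action on individual tensor factors, and combined with cyclicity of each $W(\gb\omega_{\lambda_j,a_j})$ on $v_j$ together with a downward induction on $\ell$-weight, this produces every pure tensor $u_1v_1\otimes\cdots\otimes u_nv_n$ inside $U(\tlie g^-)v$, establishing cyclicity and hence $W(\gb\lambda)\cong T_W$.
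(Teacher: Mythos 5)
For this theorem the paper offers no proof of its own; it simply cites \cite{cpweyl}, so your argument has to stand on its own. Part (a) and the simple-module half of part (b) do: the computation of the $\ell$-weight of the highest-weight vector, the simplicity of the pullback along the surjection ${\rm ev}_a$, and the Lagrange-interpolation surjection $\tlie g\twoheadrightarrow\lie g^{\oplus n}$ identifying $\otm_j^{}V(\lambda_j,a_j)$ with the pullback of an external tensor product of simples are all correct and are essentially the standard classical arguments. Likewise, your reduction of the Weyl-module isomorphism to cyclicity of $T_W$ on $v$ (via the universal property of $W(\gb\lambda)$ and the dimension count from Theorem \ref{t:lwmdim}) is sound.

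The genuine gap is that the cyclicity itself — which is the real content of the second isomorphism in (b) — is never proved; you explicitly leave it as a ``plan,'' and the plan as stated has unresolved points. First, the $\tlie h$-action on a Weyl-module factor $W(\gb\omega_{\lambda_j,a_j})$ is not scalar on $\ell$-weight spaces: only the generalized eigenvalues are governed by evaluation at $a_j$, so ``Lagrange interpolation separates the $\tlie h$-action on individual tensor factors'' must be replaced by an argument with projections onto generalized $\tlie h$-eigenspaces (these do preserve the submodule $U(\tlie g)v$, but that needs to be said and used). Second, when you apply $x_i^-\otimes t^r$ to a pure tensor, the terms attached to different tensor positions have to be separated by showing their $\ell$-weight supports (at the distinct points $a_j$) are disjoint, and the nested downward induction producing every pure tensor $u_1v_1\otimes\cdots\otimes u_nv_n$ is not carried out; multiplicities of $\ell$-weights make this less immediate than the sketch suggests. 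A cleaner and standard way to close the gap — and it is the route of \cite{cpweyl}, the reference the paper relies on — is to show that each finite-dimensional $W(\gb\omega_{\lambda_j,a_j})$ is annihilated by $\lie g\otimes(t-a_j)^{N_j}\mathbb C[t,t^{-1}]$ for some $N_j$; since the ideals $(t-a_j)^{N_j}$ are pairwise comaximal, the Chinese Remainder Theorem makes $T_W$ a module over $\bigoplus_j\lie g\otimes\mathbb C[t,t^{-1}]/(t-a_j)^{N_j}$, and a tensor product of cyclic modules over a direct sum of Lie algebras is cyclic on the tensor product of the generators. That yields surjectivity of your map $\phi$ at once and avoids the delicate spectral-separation induction altogether; as written, your proof of $W(\gb\lambda)\cong\otm_j^{}W(\gb\omega_{\lambda_j,a_j})$ is incomplete without one of these arguments.
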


In the remainder of this subsection we assume $\lie g=\lie{sl}_{n+1}$ and $\xi\in\mathbb C'\backslash\{q\}$. We need an extended version of the quantum group \us_\xi g. Namely, consider the $\mathbb C(q)$-algebra $U_q'(\lie g)$ given by generators $x_i^\pm, k_\mu^{\pm 1}$ with $i\in I, \mu\in P$, and the following defining relations:
\begin{gather*}
k_\mu k_\mu^{-1} = k_\mu^{-1}k_\mu =1, \quad k_\mu k_\nu = k_{\mu+\nu}, \quad k_\mu x_{j}^\pm k_\mu^{-1} = q^{\pm\mu(h_j)}x_{j}^{\pm},\quad  [x_{i}^+ , x_{j}^-]=\delta_{i,j} \frac{k_{\alpha_i} - k_{\alpha_i}^{-1}}{q - q^{-1}}
\end{gather*}
and the quantum Serre's relations. There is an obvious monomorphism of algebras $U_q(\lie g)\to U_q'(\lie g)$ such that $k_i\mapsto k_{\alpha_i}$. A representation $V$ of $U_q'(\lie g)$ is said to be of type 1 if the generators $k_\nu, \nu\in P$, act diagonally with eigenvalues of the form $q^{(\nu,\mu)}$ for some $\mu\in P$ where $(\cdot,\cdot)$ is the bilinear form such that $(\alpha_i,\alpha_j)=c_{ij}$. It is not difficult to see that restriction establishes an equivalence of categories from that of type 1 finite-dimensional $U_q'(\lie g)$-modules to $\cal C_q$. From now on we shall identify these two categories using this equivalence. One can construct the algebras $U_\mathbb A'(\lie g)$ and $U_\xi'(\lie g)$ similarly as \uqres g and \us_\xi g.
The next proposition was proved in \cite[\S2]{jim:qan} and \cite[Proposition 3.4]{cp:small}.

\begin{thm}\label{t:qev}
Suppose $\lie g$ is of type $A_n$. There exists an algebra homomorphism ${\rm ev}: \uqt g\to U_q'(\lie g)$ such that, if $\lambda\in P^+$ and $V$ is the pull-back of $V_q(\lambda)$ by ${\rm ev}$, then there exists $m(\lambda)\in\mathbb Z$ such that $V$ is isomorphic to $V_q(\gb\lambda)$ where
$$\gb\lambda = \prod_{i\in I}\gb\omega_{i,a_i,\lambda(h_i)} \qquad\text{with}\qquad a_1 = q^{m(\lambda)} \quad\text{and}\quad \frac{a_{i+1}}{a_i} = q^{\lambda(h_i)+\lambda(h_{i+1})+1} \quad\text{for}\quad i<n.$$
Moreover, the image of  \uqtres g by ${\rm ev}$ is contained in $U_\mathbb A'(\lie g)$. \hfill\qedsymbol
\end{thm}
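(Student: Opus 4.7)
The plan is to follow Jimbo's original argument in \cite{jim:qan} (refined in \cite{cp:small}). First I would define ${\rm ev}$ on the Drinfeld--Jimbo generators $\{x_i^\pm, k_i^{\pm 1}\}_{i\in\hat I}$ of $U_q(\hlie{sl}_{n+1})$: set ${\rm ev}$ to be the identity on \uq g, and send $x_0^\pm$ and $k_0$ to explicit elements of $U_q'(\lie g)$ built from iterated $q$-commutators of the $x_i^\mp$, $i\in I$, multiplied by a factor $k_\mu^{\pm 1}$ with $\mu\in P\setminus Q$. The need for such a $\mu$ is precisely what forces the codomain to be the enlarged algebra $U_q'(\lie g)$ rather than \uq g. One then checks that the images satisfy both the quantum Serre relations at the $0$-node and the defining relation $1=k_0\prod_{i\in I} k_i^{\theta_i}$ of $\uqt g$; this is a direct if tedious verification using standard $q$-bracket identities, and the freedom in the scalar normalization of $\mu$ is what produces the integer $m(\lambda)$ of the statement.

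Let $V$ denote the pull-back of $V_q(\lambda)$ by ${\rm ev}$, and let $v\in V_q(\lambda)$ be a highest-weight vector. Since ${\rm ev}$ restricts to the identity on \uq g, the \uq g-module structure on $V$ is unchanged, so $v$ has weight $\lambda$ and satisfies $x_i^+ v=0$ for all $i\in I$. I would next verify that $v$ is in fact annihilated by $x_{i,r}^+$ for every $i\in I$ and $r\in\mathbb Z$, using Beck's isomorphism \cite{be} to express each such generator as an iterated bracket built from the $x_j^\pm$ ($j\in I$) and $x_0^-$; after applying ${\rm ev}$ and repeatedly invoking the Serre relations in $U_q'(\lie g)$, each term kills $v$ by weight considerations. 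Hence $v$ is a highest-$\ell$-weight vector of $V$. Any $\uqt g$-stable subspace of $V$ is automatically ${\rm ev}(\uqt g)$-stable, hence \uq g-stable, so \uq g-irreducibility of $V_q(\lambda)$ forces $V$ to be $\uqt g$-irreducible. Combined with Theorem \ref{t:weylm}, this gives $V\cong V_q(\gb\lambda)$ for a unique $\gb\lambda\in\cal P_q^+$ with $\wt(\gb\lambda)=\lambda$.

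To pin down $\gb\lambda$, I would compute the action of $\Psi_i^\pm(u)$ on $v$. Using Beck's formulas, $\Psi_i^\pm(u)$ expands as an explicit combination of $q$-commutators involving $x_0^\pm$, the $x_j^\mp$'s, and the $k_\mu$'s; after applying ${\rm ev}$, the computation reduces to a manipulation inside $U_q'(\lie g)$ acting on $V_q(\lambda)$. The resulting eigenvalue of $\Psi_i^\pm(u)$ on $v$ factors as $(\gb\lambda^\pm)_i(u)$ with $\gb\lambda_i(u) = f_{a_i,\lambda(h_i)}(u)$; matching the powers of $q$ that appear as one moves from node $i$ to node $i+1$ along the type-$A_n$ chain yields the recursion $a_{i+1}/a_i = q^{\lambda(h_i)+\lambda(h_{i+1})+1}$, while the seed $a_1 = q^{m(\lambda)}$ is dictated by the choice of $\mu$ in ${\rm ev}(x_0^\pm)$. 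Finally, for the integrality statement, since ${\rm ev}$ is built from $q$-commutators of Chevalley generators and of the $k_\mu$'s, each divided power $(x_{i,r}^\pm)^{(m)}$ maps into $U_\mathbb A'(\lie g)$ by the standard divided-power identities; combined with Lemma \ref{l:hinres}, this yields ${\rm ev}(\uqtres g)\subseteq U_\mathbb A'(\lie g)$. The main technical obstacle throughout is this Drinfeld--Beck bookkeeping, which is what produces the precise exponent $\lambda(h_i)+\lambda(h_{i+1})+1$ in the recursion.
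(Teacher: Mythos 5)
The paper does not actually prove Theorem \ref{t:qev}: it is quoted from \cite[\S 2]{jim:qan} and \cite[Proposition 3.4]{cp:small}, and your outline follows exactly that route (define ${\rm ev}$ on the Kac--Moody generators, the identity on $U_q(\lie g)$ and $x_0^{\pm}$ sent to a $k_\mu^{\mp 1}$-twisted iterated $q$-commutator of the $x_i^{\mp}$; pull back $V_q(\lambda)$; note irreducibility and the highest-$\ell$-weight property; compute the eigenvalues of $\Psi_i^{\pm}(u)$ on the highest-weight vector to read off the Drinfeld polynomials). So in approach you are doing what the cited sources do. One simplification: the step ``$x^+_{i,r}v=0$ for all $r$'' needs no Serre-relation bookkeeping at all, since ${\rm ev}$ preserves the $Q$-grading, so ${\rm ev}(x^+_{i,r})$ raises the $U_q(\lie g)$-weight by $\alpha_i$ and kills $v$ simply because $V_q(\lambda)_{\lambda+\alpha_i}=0$.

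The one place where your justification is genuinely too thin is the ``moreover'' statement ${\rm ev}(U_{\mathbb A}(\tlie g))\subseteq U'_{\mathbb A}(\lie g)$. The $\mathbb A$-form $U_{\mathbb A}(\tlie g)$ is generated by the divided powers $(x^{\pm}_{i,r})^{(m)}$ of the Drinfeld generators, for which you have written down no formula; ``standard divided-power identities'' handle a monomial of the shape (unit)$\cdot k_\mu^{\pm 1}\cdot x_i^{\pm}$, but not an arbitrary element given by iterated $q$-commutators, and Lemma \ref{l:hinres} (which places $h_{i,s}/[s]_i$ in $U_{\mathbb A}(\tlie g)$) does not address the issue. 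The standard ways to close this are: (i) use that $U_{\mathbb A}(\tlie g)$ coincides with Lusztig's form generated by the $k_i^{\pm1}$ and the divided powers $(x_i^{\pm})^{(m)}$ of the affine Chevalley generators, $i\in\hat I$ (via Beck's braid-group automorphisms, cf.\ \cite{be,bcp,cproot}); this reduces the claim to the single family ${\rm ev}((x_0^{\pm})^{(m)})$, which is an $\mathbb A$-multiple of a power of $k_\mu^{\mp1}$ times the $m$-th divided power of a root vector attached to $\pm\theta$, hence lies in $U_{\mathbb A}(\lie g)\subseteq U'_{\mathbb A}(\lie g)$ by Lusztig's theory of braid-group root vectors; or (ii) write the explicit type-$A$ formulas for ${\rm ev}(x^{\pm}_{i,r})$ as in \cite{cp:small} and check integrality of their divided powers directly. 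With that step repaired, your sketch is a faithful reconstruction of the cited proof.
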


Given $a\in\mathbb C(q)^\times$, let
\begin{equation}
{\rm ev}_a = {\rm ev}\circ\varrho_a
\end{equation}
where $\varrho_a$ is defined in Proposition \ref{p:ssp}. Denote by $V_q(\lambda,a)$ the pull-back of $V_q(\lambda)$ by the evaluation map ${\rm ev}_a$. It is easy to see that $V_q(\lambda,a)\cong V_q(\gb\lambda) $ where
\begin{equation}\label{e:evdrin}
\gb\lambda = \prod_{i\in I}\gb\omega_{i,a_i,\lambda(h_i)} \qquad\text{with}\qquad a_1 = aq^{m(\lambda)} \quad\text{and}\quad \frac{a_{i+1}}{a_i} = q^{\lambda(h_i)+\lambda(h_{i+1})+1} \quad\text{for}\quad i<n.
\end{equation}

\begin{rem}
It turns out that if $\lie g\ne\lie{sl}_2$ and $\xi\ne 1$, the simple objects from $\wcal C_\xi$ cannot be realized as tensor products of evaluation modules in general (cf. Theorem \ref{t:tev}). In fact, it is known (see \cite{cha:fer} for instance) that there exists $i\in I$ and $m\in\mathbb Z_{\ge 0}$ such that the action of $U_q(\lie g)$ on $V_q(m\omega_i)$ cannot be extended to one of $U_q(\tlie g)$. This is one of the main reasons behind the fact that the theory of q-characters in the quantum context is much more intricate than in the classical one. Namely, for $\xi=1$ the q-character of the irreducible representations can be easily computed using Theorem \ref{t:tev} and the character of the simple $\lie g$-modules. In the quantum case, unless $\lie g$ is of type $A$, evaluation modules do not exist and, hence, Theorem \ref{t:tev} makes no sense. For type $A$ one could expect that Theorem \ref{t:tev} would still hold and, hence, the q-character theory of simple modules would be reduced to that of the evaluation ones. However, as already mentioned, it is known that this is true only for $\lie g=\lie{sl}_2$. We remark also that there exists a weaker quantum analogue of Theorem \ref{t:tev} for any $\lie g$. Namely, every irreducible module is a tensor product of the so-called prime modules (every evaluation module is prime). The study of q-characters of prime modules is far more complicated than those of evaluation modules. In fact, to the best of our knowledge, the classification of the Drinfeld polynomials corresponding to prime modules is not known. For more details on prime modules we refer the reader to \cite{her:simple} and the references therein.
\end{rem}

Given $a\in\mathbb C^\times$ and $\lambda\in P^+$, let $\mu=-w_0\lambda$ and $b=aq^{-n-1}=aq^{-r^\vee h^\vee}$. Then, it is not difficult to see using Proposition \ref{p:uqtdual} that $V_q(\mu,b)^*$ is isomorphic to $V_q(\gb\lambda)$ where $\gb\lambda$ is such that
\begin{equation}\label{e:evdrin*}
\gb\lambda = \prod_{i\in I}\gb\omega_{i,a_i,\lambda(h_i)} \qquad\text{with}\qquad a_n = aq^{m(\mu)} \quad\text{and}\quad \frac{a_{i+1}}{a_i} = q^{-(\lambda(h_i)+\lambda(h_{i+1})+1)} \quad\text{for}\quad i<n.
\end{equation}

Denote also by ${\rm ev}$ the induced map ${\rm ev}: \ust_\xi g\to U_\xi'(\lie g)$. Similarly, given $a\in\mathbb C^\times$, one defines the evaluation map ${\rm ev}_a:\ust_\xi g\to U_\xi'(\lie g)$.  Denote by $V_\xi(\lambda,a)$ the pull-back of $V_\xi(\lambda)$ by ${\rm ev}_a$ and, similarly, let $W_\xi(\lambda,a)$ be the  pull-back of $W_\xi(\lambda)$.

\begin{prop}\label{p:sevrep}
Let $\lambda\in P^+$ and $a\in\mathbb C^\times$. Then $V_\xi(\lambda,a)$ is the irreducible quotient of $\overline{V_q(\lambda,a)}$. In particular, $V_\xi(\lambda,a)\cong V_\xi(\bgb\lambda)$ where $\gb\lambda$ is as in \eqref{e:evdrin}.
\end{prop}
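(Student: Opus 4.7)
The strategy is to show that $\overline{V_q(\lambda,a)}$ coincides, as a \ust_\xi g-module, with the pullback of $\overline{V_q(\lambda)}$ via the specialized map ${\rm ev}_a:\ust_\xi g\to U_\xi'(\lie g)$, after which the first statement is immediate from the definition of $V_\xi(\lambda)$ as the irreducible quotient of $W_\xi(\lambda)$.

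First I would pick a highest-weight vector $v\in V_q(\lambda)$ and set $L=\uqres g\,v$. By Proposition \ref{p:uqlat}(b), $L$ is a \uqres g-admissible lattice of $V_q(\lambda)$; it is in fact $U_\mathbb A'(\lie g)$-stable, since the action of the extra generators $k_\mu$ on each weight space is a power of $q$ and thus a unit of $\mathbb A$ under the type-$1$ convention. Next, since $a\in\mathbb C^\times\subseteq\mathbb A^\times$, Proposition \ref{p:ssp} guarantees that $\varrho_a$ preserves \uqtres g; combined with the containment ${\rm ev}(\uqtres g)\subseteq U_\mathbb A'(\lie g)$ of Theorem \ref{t:qev}, this yields ${\rm ev}_a(\uqtres g)\subseteq U_\mathbb A'(\lie g)$. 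Hence $L$ is also a \uqtres g-admissible lattice of $V_q(\lambda,a)$, and
\[
\overline{V_q(\lambda,a)}\;=\;L\otimes_\mathbb A\mathbb C_\xi
\]
is precisely the pullback via the specialized ${\rm ev}_a$ of the $U_\xi'(\lie g)$-module $\overline{V_q(\lambda)}=L\otimes_\mathbb A\mathbb C_\xi$.

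Second, the image $\bar v$ of $v$ in $\overline{V_q(\lambda)}$ is a highest-weight vector of weight $\lambda$ satisfying $(x_i^-)^{(m)}\bar v=0$ for all $m>\lambda(h_i)$, since these divided-power relations already hold in the finite-dimensional \uq g-module $V_q(\lambda)$ and pass to the specialization. Therefore $\overline{V_q(\lambda)}$ is a highest-weight quotient of $W_\xi(\lambda)$, and its unique irreducible quotient must equal $V_\xi(\lambda)$. Pulling back through ${\rm ev}_a$ identifies the irreducible quotient of $\overline{V_q(\lambda,a)}$ with $V_\xi(\lambda,a)$, proving the first claim.

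For the \emph{in particular} assertion, identify $V_q(\lambda,a)\cong V_q(\gb\lambda)$ as \uqt g-modules via \eqref{e:evdrin}, so that $\overline{V_q(\lambda,a)}\cong\overline{V_q(\gb\lambda)}$. By Theorem \ref{t:uqtlat}, the latter is a highest-$\ell$-weight quotient of $W_\xi(\bgb\lambda)$, whose unique irreducible quotient is by definition $V_\xi(\bgb\lambda)$; comparison with the first claim then yields $V_\xi(\lambda,a)\cong V_\xi(\bgb\lambda)$. The only delicate point in the entire argument is the $\mathbb A$-stability of $L$ under ${\rm ev}_a$, but that reduces at once to the two compatibility statements cited above, so no substantial obstacle is anticipated.
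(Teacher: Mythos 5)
Your argument is correct and is essentially the paper's own proof: both rest on the lattice $L=\uqres g\,v$ together with the containment ${\rm ev}_a(\uqtres g)\subseteq U_\mathbb A'(\lie g)$ (Theorem \ref{t:qev} plus Proposition \ref{p:ssp}), which identifies $\overline{V_q(\lambda,a)}$ with the ${\rm ev}_a$-pullback of $\overline{V_q(\lambda)}$, so its irreducible quotient is $V_\xi(\lambda,a)$, and Theorem \ref{t:uqtlat} then gives $V_\xi(\lambda,a)\cong V_\xi(\bgb\lambda)$. The only cosmetic difference is that the paper starts from the affine lattice $U_\mathbb A(\tlie g)v$ and observes it equals $\uqres g\,v$, whereas you go in the opposite direction.
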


\begin{proof}
Let $v$ and $L$ be as in Theorem \ref{t:uqtlat}. It follows that $L=\uqres gv$ and, therefore, the irreducible quotient of $\overline{V_q(\lambda,a)}$ is isomorphic to $V_\xi(\lambda)$ as a \us_\xi g-module.
The result now follows easily.
\end{proof}

The following Proposition was proved in \cite{abenak:root,cpmin} for $\xi\ne 1$ (for $\xi=1$ it follows from Theorem \ref{t:tev}).

\begin{prop}
Let $\xi\in\mathbb C'\backslash\{q\}$,  $\gb\lambda\in\cal P_\xi^+$ and $\lambda=\wt(\gb\lambda)$. Then, $V_\xi(\gb\lambda)\cong V_\xi(\lambda)$ as \us_\xi g-module iff $\gb\lambda$ is as in \eqref{e:evdrin} or \eqref{e:evdrin*} with $\xi$ in place of $q$.\hfill\qedsymbol
\end{prop}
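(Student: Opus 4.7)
The plan is to treat the two directions separately, with the ``if'' direction being a direct application of Proposition \ref{p:sevrep} and duality. Specifically, if $\gb\lambda$ has the form \eqref{e:evdrin} with $\xi$ in place of $q$, Proposition \ref{p:sevrep} identifies $V_\xi(\gb\lambda)$ with $V_\xi(\lambda,a)$, the pull-back of $V_\xi(\lambda)$ through ${\rm ev}_a$; by construction, its \us_\xi g-structure is $V_\xi(\lambda)$. If instead $\gb\lambda$ has form \eqref{e:evdrin*}, set $\mu=-w_0\lambda$ and $b=a\xi^{-r^\vee h^\vee}$; by Proposition \ref{p:uqtdual} we have $V_\xi(\gb\lambda)\cong V_\xi(\mu,b)^*$, whose \us_\xi g-structure is $V_\xi(\mu)^*\cong V_\xi(-w_0\mu)=V_\xi(\lambda)$.

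For the \emph{only if} direction at $\xi=1$, I would invoke Theorem \ref{t:tev}: writing $\gb\lambda = \prod_j \gb\omega_{\lambda_j, a_j}$ with pairwise distinct $a_j$ and $\lambda_j \in P^+ \setminus \{0\}$, we have $V(\gb\lambda) \cong \bigotimes_j V(\lambda_j, a_j)$, whose restriction to $\lie g$ is $\bigotimes_j V(\lambda_j)$. Comparing against $V(\lambda)$ via Weyl's dimension formula for $\lie{sl}_{n+1}$ forces the tensor product to have a single factor, giving $\gb\lambda = \gb\omega_{\lambda, a}$, which is the coinciding $\xi=1$ form of both \eqref{e:evdrin} and \eqref{e:evdrin*}.

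For the \emph{only if} direction at $\xi=\zeta$, the plan is to transport the \ust_\zeta g-structure on $V_\zeta(\gb\lambda)$ along the given \us_\zeta g-isomorphism, obtaining an extension of the natural \us_\zeta g-action on $V_\zeta(\lambda)$ to a \ust_\zeta g-action. The evaluation maps ${\rm ev}_a$ and their duals, interpreted through Proposition \ref{p:sevrep}, produce precisely the extensions whose Drinfeld polynomials are of the forms \eqref{e:evdrin} or \eqref{e:evdrin*}. The work then reduces to showing that these exhaust all possible \ust_\zeta g-extensions of $V_\zeta(\lambda)$; this is the content of the type $A_n$ classifications in \cite{cpmin} for the generic case and in \cite{abenak:root} for the root of unity case, where the authors analyse the constraints imposed on the Drinfeld--Cartan generators $\Lambda_{i,r}$ by their commutation with the Chevalley operators and conclude that any compatible loop action must arise from an evaluation map.

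The main obstacle is this classification step at $\xi=\zeta$. Unlike the generic situation, at a root of unity the algebra \ust_\zeta g has a larger centre and the Cartan-like elements can carry nontrivial Jordan structure on general modules; controlling this requires exploiting that $V_\zeta(\gb\lambda)$ is highest-$\ell$-weight, so that $\Lambda_{i,r}$ acts semisimply on each weight space. Once that semisimplicity is in hand, the combinatorial analysis of commutation constraints proceeds as in the generic case, and Proposition \ref{p:sevrep} translates the resulting restriction on the Drinfeld polynomial back to the forms \eqref{e:evdrin} and \eqref{e:evdrin*}.
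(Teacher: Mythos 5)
Your proposal takes essentially the same route as the paper, which offers no argument of its own for this proposition but simply attributes it to \cite{cpmin} (generic $\xi$) and \cite{abenak:root} (root of unity), with the case $\xi=1$ deduced from Theorem \ref{t:tev} --- exactly where your ``only if'' direction ultimately lands --- while your ``if'' direction via Proposition \ref{p:sevrep} and duality through Proposition \ref{p:uqtdual} is the intended easy half. The only caveat is your aside that being highest-$\ell$-weight forces the $\Lambda_{i,r}$ to act semisimply on weight spaces at a root of unity, which is not justified ($\ell$-weight spaces are only generalized eigenspaces), but nothing hinges on it since you defer the classification step to the same cited references.
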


We now give explicit formulas for the action of the elements of \ust_\xi g on evaluation modules in the case $\lie g=\lie{sl}_2$. In this case one can normalize ${\rm ev}$ so that $m(\lambda)=0$ for all $\lambda\in P^+$ and, hence, $V_q(\lambda,a)\cong V_q(\gb\omega_{i,a,\lambda(h_i)})$ where $I=\{i\}$. Given $\lambda\in P^+$, let $v_0^\lambda$ be a highest-weight vector of $W_\xi(\lambda)$ and set $v_k^\lambda = (x_i^-)^{(k)}v_{k-1}^\lambda$, for $0<k\le\lambda(h_i)$. Then, $\{v_k^\lambda:0\le k\le\lambda(h_i)\}$ is a basis of $W_\xi(\lambda)$. For convenience, set $v_{-1}^\lambda=v_{\lambda(h_i)+1}^\lambda=0$. The next lemma was proved in \cite[\S4.2]{cp:qaa}.

\begin{lem}\label{l:sl2actev}
The following holds in $W_\xi(\lambda,a)$:
\begin{gather*}
x_{i,r}^+v_k^\lambda = (a\xi^{\lambda(h_i)-2k+2})^r[m-k+1]_\xi\ v_{k-1}^\lambda,\qquad
x_{i,r}^-v_k^\lambda = (a\xi^{\lambda(h_i)-2k})^r[k+1]_\xi\ v_{k+1}^\lambda,\\\text{and}\\
\psi^+_{i,s} v_0^\lambda = (\xi-\xi^{-1})(a\xi^{\lambda(h_i)})^s[\lambda(h_i)]_\xi\ v_0^\lambda,
\end{gather*}
for every $r,s\in\mathbb Z, s>0$.\hfill\qedsymbol
\end{lem}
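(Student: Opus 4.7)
The plan is to combine the classical $U_\xi(\lie{sl}_2)$-action at $r=0$ with a bootstrap via the Drinfeld relation $[h_{i,1},x_{i,s}^\pm]=\pm[2]_\xi\, x_{i,s+1}^\pm$, which is the $\lie{sl}_2$-specialization of the loop relations recalled earlier (since $c_{ii}=2$). I organize the proof in three stages.

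First, for $r=0$ the two $x_{i,0}^\pm$-formulas are the standard quantum $\lie{sl}_2$-identities for the action on a highest-weight module with basis $v_k^\lambda=(x_i^-)^{(k)}v_0^\lambda$; the symbol ``$m$'' appearing in the first displayed formula is a typographical error and should read $\lambda(h_i)$.

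Second, I will pin down the action of $\psi_{i,s}^+$, and hence of $h_{i,1}$, on $v_0^\lambda$. By Proposition \ref{p:sevrep} applied in the $\lie{sl}_2$-setting, the highest $\ell$-weight of $V_\xi(\lambda,a)$ is $\gb\omega_{i,a,\lambda(h_i)}$, so $v_0^\lambda$ is a highest $\ell$-weight vector satisfying $\sum_{r\ge0}\Lambda_{i,r}v_0^\lambda\, u^r=\prod_{j=0}^{\lambda(h_i)-1}(1-a\xi^{\lambda(h_i)-1-2j}u)\,v_0^\lambda$. Substituting into $\Psi_i^+(u)=k_i\,\Lambda_i^+(u\xi^{-1})/\Lambda_i^+(u\xi)$ produces a massive telescoping, leaving a single geometric factor whose $u^s$-coefficient is exactly $(\xi-\xi^{-1})(a\xi^{\lambda(h_i)})^s[\lambda(h_i)]_\xi$; this proves the third formula. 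The same computation pins down the eigenvalue $\eta_0$ of $h_{i,1}$ on $v_0^\lambda$ via the logarithmic relation $\Lambda_i^+(u)=\exp(-\sum_{s\ge1}\tilde h_{i,s}u^s)$.

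Third, each weight space of $V_\xi(\lambda,a)$ is one-dimensional and $h_{i,1}\in\ust_\xi h$, so there is a scalar $\eta_k$ with $h_{i,1}v_k^\lambda=\eta_k v_k^\lambda$. The Drinfeld relation above then gives, by induction on $r$, $x_{i,r}^\pm v_k^\lambda=\bigl(\pm\tfrac{\eta_{k\mp 1}-\eta_k}{[2]_\xi}\bigr)^r x_{i,0}^\pm v_k^\lambda$, so it suffices to verify $\eta_{k-1}-\eta_k=[2]_\xi\,a\xi^{\lambda(h_i)-2k+2}$. Starting from the value of $\eta_0$ found in stage two, these differences are obtained by commuting $h_{i,1}$ through $x_i^-$ iteratively, which reduces to a transparent finite recursion in $\mathbb{C}$. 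The main obstacle is a mild circularity here, because $\eta_k$ is most naturally extracted from the very $x_{i,1}^-$-action one is trying to determine; the cleanest way around it is to reverse the logic and check directly that the stated formulas define a $U_\xi(\tlie{sl}_2)$-representation on $\bigoplus_k \mathbb{C}\, v_k^\lambda$---a finite list of scalar Drinfeld and Serre-type identities in $\mathbb{C}$---and then invoke the universality of $W_\xi(\lambda,a)$ as the universal finite-dimensional highest-$\ell$-weight module of $\ell$-weight $\gb\omega_{i,a,\lambda(h_i)}$ to conclude that this prescribed action must coincide with the one on $W_\xi(\lambda,a)$.
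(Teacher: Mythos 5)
Your stages 1 and 2 are essentially sound: the $r=0$ identities are the standard quantum $\lie{sl}_2$ formulas (and you are right that the ``$m$'' in the statement should be $\lambda(h_i)$), and since $v_0^\lambda$ is a highest-$\ell$-weight vector of $\ell$-weight $\gb\omega_{i,a,\lambda(h_i)}$ (Theorem \ref{t:qev}/Proposition \ref{p:sevrep} with the normalization $m(\lambda)=0$), the telescoping of $\Psi_i^+(u)=k_i\Lambda_i^+(u\xi^{-1})/\Lambda_i^+(u\xi)$ does yield $\psi^+_{i,s}v_0^\lambda=(\xi-\xi^{-1})(a\xi^{\lambda(h_i)})^s[\lambda(h_i)]_\xi v_0^\lambda$. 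The reduction in stage 3 to the differences $\eta_{k-1}-\eta_k$ via $[h_{i,1},x_{i,s}^\pm]=\pm[2]_\xi x_{i,s+1}^\pm$ is also reasonable (note $[2]_\xi\ne 0$ under the standing hypotheses, and the $r<0$ half needs the separate relation with $h_{i,-1}$, whose eigenvalues come from $\Lambda_i^-$). The genuine gap is your escape from the circularity you yourself flag. First, $W_\xi(\lambda,a)$ is \emph{not} the universal finite-dimensional highest-$\ell$-weight module of highest $\ell$-weight $\gb\omega_{i,a,\lambda(h_i)}$: by Theorem \ref{t:weylm} that module is the loop Weyl module $W_\xi(\gb\omega_{i,a,\lambda(h_i)})$, which has dimension $2^{\lambda(h_i)}$ by Theorem \ref{t:lwmdim}, whereas $W_\xi(\lambda,a)$ is merely one $(\lambda(h_i)+1)$-dimensional quotient of it. So even granting a module $M=\bigoplus_k\mathbb C v_k$ realizing your formulas, universality only gives surjections of $W_\xi(\gb\omega_{i,a,\lambda(h_i)})$ onto both $M$ and $W_\xi(\lambda,a)$; it does not identify them, and at $\xi=\zeta$ (where $W_\zeta(\lambda)$, hence $W_\zeta(\lambda,a)$, may be reducible) you cannot fall back on irreducibility to force the identification. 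Second, in the root-of-unity case ``checking a finite list of scalar Drinfeld and Serre-type identities'' does not define a representation of the restricted algebra at all: $U_\zeta(\tlie{sl}_2)$ is generated by the divided powers $(x_{i,r}^\pm)^{(m)}$, which cannot be recovered from $x_{i,r}^\pm$ when $[m]_\zeta!=0$; your relations only govern the subalgebra $U^{\rm fin}_\zeta(\tlie{sl}_2)$.

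Two honest repairs are available. (i) Stay inside $W_\xi(\lambda,a)$ and close the recursion: besides the $h_{i,1}$-commutators, use $[x_{i,1}^+,x_{i,0}^-]=\psi_{i,1}^+/(\xi-\xi^{-1})=k_ih_{i,1}$; together with the value of $\eta_0$ from your stage 2 this determines, by induction on $k$, both $\eta_k$ and the coefficients of $x_{i,1}^\pm v_k^\lambda$ simultaneously, after which induction on $r$ (and the mirror argument with $h_{i,-1}$) covers all $r\in\mathbb Z$. (ii) Prove the formulas over $\mathbb C(q)$ --- where $W_q(\lambda,a)=V_q(\lambda,a)$ is irreducible, or simply quote \cite[\S4.2]{cp:qaa} as the paper does, since the paper itself offers no proof beyond that citation --- and then specialize: by Theorem \ref{t:qev} the evaluation map sends $\uqtres g$ into $U_\mathbb A'(\lie g)$, the lattice $\bigoplus_k\mathbb A v_k^\lambda$ is stable, and all coefficients in the stated formulas lie in $\mathbb A$, so applying $\epsilon_\xi$ gives the lemma for every $\xi\in\mathbb C'$.
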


\subsection{The Frobenius homomorphism}\label{ss:frob}

Recall the Frobenius homomorphism defined in \cite{l} (see also \cite{cproot,fmroot}).

\begin{thm}\label{t:frob}
There exists a Hopf algebra homomorphism $\widetilde{\rm Fr}_\zeta:\ustres g\to U(\tlie g)$ such that
\begin{equation*}
\widetilde{\rm Fr}_\zeta(k_i)=1, \qquad
\widetilde{\rm Fr}_\zeta((x_{i,r}^\pm)^{(k)}) =
\begin{cases}
\frac{(x_{i,r}^\pm)^{k/l}}{(k/l)!}, &\text{ if } l \text{ divides } k,\\
0, &\text{ otherwise.}
\end{cases}
\end{equation*}
In particular,
\begin{equation}
\widetilde{\rm Fr}_\zeta(\Lambda_{i,r}) =
\begin{cases}
\Lambda_{i,r/l}, &\text{ if } l \text{ divides } r,\\
0, &\text{ otherwise,}
\end{cases}
\quad\text{and}\quad
\widetilde{\rm Fr}_\zeta(\tqbinom{k_i}{r}) =
\begin{cases}
\binom{h_i}{r/l}, &\text{ if } l \text{ divides } r,\\
0, &\text{ otherwise.}
\end{cases}
\end{equation}

\vspace{-20pt}\hfill\qedsymbol
\end{thm}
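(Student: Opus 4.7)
My plan is to construct $\widetilde{\rm Fr}_\zeta$ first on a simpler generating set, verify it respects the defining relations after specialization at $q = \zeta$, and only then derive the stated formulas for $\Lambda_{i,\pm r}$ and $\tqbinom{k_i}{r}$ as consequences. Rather than working with Drinfeld's loop presentation directly, I would use the Chevalley-type generators $(x_i^\pm)^{(k)}, k_i^{\pm 1}$, $i \in \hat I$, of the quantum Kac-Moody algebra $U_q(\hlie g)$ (whose central quotient is \uqt g), for which the list of defining relations is shorter. Define $\widetilde{\rm Fr}_\zeta$ at the $\mathbb A$-level by $\widetilde{\rm Fr}_\zeta(k_i) = 1$ and
\[
\widetilde{\rm Fr}_\zeta((x_i^\pm)^{(k)}) = \begin{cases} (x_i^\pm)^{k/l}/(k/l)!, & l \mid k,\\ 0, & \text{otherwise,}\end{cases}
\]
valued in $U(\tlie g)$, then specialize to a homomorphism $\ustres g \to U(\tlie g)$.

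The bulk of the work is checking well-definedness as an algebra homomorphism. The key arithmetic inputs are $[l]_{\zeta_i} = 0$ (so $[m]_{\zeta_i}! = 0$ for $m \ge l$) and Lucas' identity $\tqbinom{al + a'}{bl + b'}_{\zeta_i} = \binom{a}{b}\tqbinom{a'}{b'}_{\zeta_i}$ for $0 \le a', b' < l$. Combined with $k_i \mapsto 1$, these force: (i) the divided-power product relation $(x_i^\pm)^{(a)}(x_i^\pm)^{(b)} = \tqbinom{a+b}{a}_{q_i}(x_i^\pm)^{(a+b)}$ to specialize correctly (if $l \nmid a$ or $l \nmid b$ both sides vanish; otherwise they match by Lucas); (ii) the weight-shift relation $k_i(x_j^\pm)^{(m)}k_i^{-1} = q_i^{\pm m c_{ij}}(x_j^\pm)^{(m)}$ to be preserved since $\zeta_i^{mc_{ij}} = 1$ whenever $l \mid m$; (iii) the quantum Serre relation to reduce either to $0 = 0$ or to the classical Serre relation term by term; and (iv) the commutator $[x_i^+, x_j^-] = \delta_{ij}(k_i - k_i^{-1})/(q_i - q_i^{-1})$ to become trivial after Frobenius, since $\widetilde{\rm Fr}_\zeta(x_i^\pm) = 0$ (because $1 < l$) and $\widetilde{\rm Fr}_\zeta(k_i - k_i^{-1}) = 0$. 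Hopf compatibility is then checked on generators using the quantum Leibniz rule $\Delta((x_i^+)^{(k)}) = \sum_{m} q_i^{m(k-m)}(x_i^+)^{(m)} k_i^{k-m} \otimes (x_i^+)^{(k-m)}$ (and its $x^-$ analogue), which by Lucas specializes termwise to the classical divided-power coproduct.

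Given the algebra and Hopf structure, the formulas on the Cartan-type elements follow. For $\tqbinom{k_i}{r}$, evaluating on a weight vector of weight $\mu$ yields the scalar $\tqbinom{\mu(h_i)}{r}_{\zeta_i}$, which by Lucas equals $\binom{\mu(h_i)/l}{r/l}$ when $l \mid r$ and vanishes otherwise; this matches the action of $\binom{h_i}{r/l}$ in $U(\tlie g)$ after taking $k_i \mapsto 1$. For $\Lambda_{i,\pm r}$, use the generating series identity $\Lambda_i^\pm(u) = \exp(-\sum_{s \ge 1} \tilde h_{i,\pm s} u^s)$ from Lemma \ref{l:hinres}: an induction on $s$, expressing $\tilde h_{i,\pm s}$ in terms of the $\Lambda_{i,\pm t}$ with $t \le s$ and using Beck's Chevalley-vs-Drinfeld isomorphism to transfer the Frobenius from Chevalley to loop generators, shows that $\widetilde{\rm Fr}_\zeta(\tilde h_{i,\pm s})$ equals $h_{i,\pm s/l}/(s/l)$ if $l \mid s$ and vanishes otherwise, whereupon the exponential collapses to $\sum_r \Lambda_{i,\pm r} u^{lr}$, as claimed. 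The main obstacle is the bookkeeping required to lift the Chevalley-level verification to the Drinfeld-realization statement, since the full list of Drinfeld loop relations (notably the mixed commutator $[x_{i,r}^+, x_{j,s}^-] = \delta_{ij}(\psi_{i,r+s}^+ - \psi_{i,r+s}^-)/(q_i - q_i^{-1})$) must be seen to survive specialization; one handles this by invoking Beck's equivalence of presentations rather than checking the Drinfeld relations individually.
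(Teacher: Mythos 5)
The paper itself gives no proof of Theorem \ref{t:frob}: it is quoted from \cite{l} (see also \cite{cproot,fmroot}), so your sketch must be measured against Lusztig's construction, and there it has a genuine gap at precisely the step you call ``the bulk of the work''. You define $\widetilde{\rm Fr}_\zeta$ on the divided powers $(x_i^\pm)^{(k)}$, $i\in\hat I$, and propose to verify well-definedness by checking that the defining relations of $U_q(\hlie g)$, put in divided-power form and specialized at $q=\zeta$, go to valid identities in $U(\tlie g)$. But $\ustres g=\mathbb C_\zeta\otimes_\mathbb A\uqtres g$ is \emph{not} presented by that list: Lusztig's $\mathbb A$-form is defined as a subalgebra of \uqt g, not by generators and relations, and the relations you check (divided-power products, $k_i$-conjugation, quantum Serre, the degree-one commutator) are far from a complete set --- already in rank one the integral form satisfies the higher commutation identities $(x_i^+)^{(a)}(x_i^-)^{(b)}=\sum_t (x_i^-)^{(b-t)}\tqbinom{k_i;2t-a-b}{t}(x_i^+)^{(a-t)}$, and the divided-power Serre-type identities in degrees divisible by $l$ (the very relations whose Frobenius images would be the nontrivial classical Serre relations) are absent from your list. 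Hence verifying your relations does not produce a homomorphism out of \ustres g. The obstruction is intrinsic: the Frobenius cannot be obtained by specializing any $\mathbb C(q)$-level algebra map, since over $\mathbb C(q)$ one has $(x_i^\pm)^{(k)}=(x_i^\pm)^k/[k]_i!$, so any map killing $x_i^\pm$ kills every divided power. This is exactly why Lusztig works intrinsically with the halves $\mathbf f$ (characterized via the bilinear form and canonical basis), constructs the Frobenius there, and only afterwards patches the halves together by checking cross relations; that machinery is what the paper imports by citation and what your relation check does not replace.

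The second half of your plan, deriving the formulas for $\tqbinom{k_i}{r}$ and $\Lambda_{i,r}$ once the homomorphism exists, is in the right spirit (and the $\tqbinom{k_i}{r}$ computation via the Lucas identity is fine), but the $\Lambda_{i,r}$ step as written assumes what must be proved: declaring by ``an induction using Beck's isomorphism'' that $\widetilde{\rm Fr}_\zeta(\tilde h_{i,\pm s})$ equals $h_{i,\pm s/l}/(s/l)$ for $l\mid s$ and $0$ otherwise is not a computation, because $\widetilde{\rm Fr}_\zeta$ is prescribed only on divided powers of real root vectors. One must first express $\Lambda_{i,r}$ integrally in terms of those, e.g.\ via the Garland-type identities relating $\Lambda_{i,r}$ to $(x_{i,0}^+)^{(r)}(x_{i,1}^-)^{(r)}$ modulo the left ideal $\uqtres g\,\uqtres{n^+}^0$ as in \cite{cproot}, and only then apply $\widetilde{\rm Fr}_\zeta$ and use the triangular decomposition to identify the Cartan part. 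With that ingredient the ``in particular'' clause does follow; but the heart of the theorem is the existence statement, and that is the step your argument does not establish.
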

Above, $\binom{h_i}{s}=\frac{h_i(h_i-1)\cdots (h_i-s+1)}{s!}$ for $s\ge 0$.

We shall denote by ${\rm Fr}_\zeta$ the restriction of $\widetilde{\rm Fr}_\zeta$ to \usres g. If $V$ is a $\tlie g$-module (resp. $\lie g$-module), denote by $\widetilde{\rm Fr}_\zeta^*(V)$ (resp. ${\rm Fr}_\zeta^*(V)$) the pull-back of $V$ by $\widetilde{\rm Fr}_\zeta$ (resp. ${\rm Fr}_\zeta$). Since $\widetilde{\rm Fr}_\zeta$ is a Hopf algebra map we have
\begin{equation}\label{e:Frontensor}
\widetilde{\rm Fr}_\zeta^*(V\otimes W)\cong \widetilde{\rm Fr}_\zeta^*(V)\otimes \widetilde{\rm Fr}_\zeta^*(W).
\end{equation}

Given $\lambda\in P^+$, there exist unique $\lambda'\in P^+_l,\lambda''\in P^+$ such that $\lambda=\lambda'+l\lambda''$. Also, for $\gb\lambda\in\cal P^+$, recall the decomposition $\gb\lambda = \gb\lambda'\phi_l(\gb\lambda'')$ given by \eqref{e:frobfactor}. The following theorem was proved in \cite{cproot,lroot}.

\begin{thm}\label{t:frobtensor}
Let $\lambda\in\cal P^+$ and $\gb\lambda\in\cal P^+$. Then:
\begin{enumerate}
\item $V_\zeta(\lambda) \cong V_\zeta(\lambda')\otimes V_\zeta(l\lambda'')$. Moreover,  $V_\zeta(l\lambda'')\cong {\rm Fr}_\zeta^*(V(\lambda''))$.
\item $V_\zeta(\gb\lambda) \cong V_\zeta(\gb\lambda')\otimes V_\zeta(\phi_l(\gb\lambda''))\cong V_\zeta(\phi_l(\gb\lambda''))\otimes V_\zeta(\gb\lambda')$. Moreover,  $V_\zeta(\phi_l(\gb\lambda''))\cong \widetilde{\rm Fr}_\zeta^*(V(\gb\lambda''))$.\hfill\qedsymbol
\end{enumerate}
\end{thm}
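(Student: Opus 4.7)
The plan is to prove the two statements in order, first showing the Frobenius pullbacks coincide with the claimed modules and then establishing the tensor product decompositions. The main ingredients are the explicit formulas for $\widetilde{\rm Fr}_\zeta$ on the generators $(x_{i,r}^\pm)^{(k)}$, $\tqbinom{k_i}{r}$ and $\Lambda_{i,r}$, the quantum Lucas identity $\tqbinom{lm}{l}_{\zeta_i}=m$ at roots of unity, the surjectivity of the Frobenius homomorphism, and the comultiplication formulas of Proposition \ref{p:comultip} together with the fact that $(x_{i,r}^\pm)^{(k)}$ annihilates any Frobenius pullback when $0<k<l$.

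First I would handle the Frobenius pullback isomorphisms. For part (a), let $v$ be a highest-weight vector of $V(\lambda'')$. In ${\rm Fr}_\zeta^*(V(\lambda''))$, the formulas of Theorem \ref{t:frob} give $k_i v=v$, $\tqbinom{k_i}{l}v=h_i\cdot v=\lambda''(h_i) v$, $\tqbinom{k_i}{r}v=0$ for $0<r<l$, and $(x_i^+)^{(k)}v=0$ for every $k\geq 1$. Combined with the identity $\tqbinom{lm}{l}_{\zeta_i}=m$, this shows $v$ has weight $l\lambda''$ and is a highest-weight vector killed by $(x_i^-)^{(k)}$ for $k>l\lambda''(h_i)$ (since $(x_i^-)^{(lm)}v=(x_i^-)^m v/m!$ vanishes for $m>\lambda''(h_i)$). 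Hence ${\rm Fr}_\zeta^*(V(\lambda''))$ is a quotient of $W_\zeta(l\lambda'')$. Irreducibility follows from surjectivity of ${\rm Fr}_\zeta$: any $\us_\zeta g$-submodule is automatically a $U(\lie g)$-submodule of the irreducible module $V(\lambda'')$. Thus ${\rm Fr}_\zeta^*(V(\lambda''))\cong V_\zeta(l\lambda'')$. The loop version in (b) is parallel, using $\widetilde{\rm Fr}_\zeta(\Lambda_{i,r})=\Lambda_{i,r/l}$ when $l\mid r$ (and zero otherwise); the highest-$\ell$-weight vector $w$ of $V(\gb\lambda'')$ acquires $\ell$-weight $\phi_l(\gb\lambda'')$ in $\widetilde{\rm Fr}_\zeta^*(V(\gb\lambda''))$ because $\sum_r \Lambda_{i,r} u^r$ acts via $\sum_r \Lambda_{i,r/l}u^r = \gb\lambda''_i(u^l)$.

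Next I would prove the tensor product decomposition. Let $v'\in V_\zeta(\lambda')$ and $v''\in V_\zeta(l\lambda'')$ be highest-weight vectors; then $v'\otimes v''$ has weight $\lambda$ and satisfies the highest-weight vector relations, yielding a surjection $W_\zeta(\lambda)\twoheadrightarrow \us_\zeta g\cdot(v'\otimes v'')$. To deduce $V_\zeta(\lambda)\cong V_\zeta(\lambda')\otimes V_\zeta(l\lambda'')$ it suffices to show the tensor product is irreducible. The key structural fact is that $V_\zeta(l\lambda'')={\rm Fr}_\zeta^*(V(\lambda''))$ is annihilated by $(x_i^\pm)^{(k)}$ for $0<k<l$, so using the comultiplication for divided powers (Lusztig's form), the $\usfin g$-action on the tensor product intertwines the first factor alone (modulo the Frobenius contributions on the second). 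Proposition \ref{p:ufinir} then gives $\usfin g(v'\otimes v'')=V_\zeta(\lambda')\otimes(\mathbb Cv'')$, while the divided powers $(x_i^\pm)^{(lm)}$ generate, via the Frobenius pullback, a transitive action of $U(\lie g)$ on the $V(\lambda'')$-factor. A standard double-commutant argument—any $\us_\zeta g$-submodule containing $v'\otimes v''$ is stable under both the ``small'' $\usfin g$-action on the first factor and the Frobenius-mediated $U(\lie g)$-action on the second—forces the submodule to equal the whole tensor product, proving irreducibility.

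For part (b), the same skeleton works with $\ustfin g$ replacing $\usfin g$, Proposition \ref{p:lufinir} replacing Proposition \ref{p:ufinir}, and $\widetilde{\rm Fr}_\zeta$ in place of ${\rm Fr}_\zeta$. The genuinely new item is the commutativity $V_\zeta(\gb\lambda')\otimes V_\zeta(\phi_l(\gb\lambda''))\cong V_\zeta(\phi_l(\gb\lambda''))\otimes V_\zeta(\gb\lambda')$, which is not automatic since $\wcal C_\zeta$ is not braided. I would establish it by running the irreducibility argument again for the reversed tensor product: the relevant comultiplication terms from Proposition \ref{p:comultip} again simplify because $(x_{i,r}^\pm)^{(k)}$ with $0<k<l$ annihilate the Frobenius-pullback factor, showing that $v''\otimes v'$ is cyclic and generates an irreducible highest-$\ell$-weight module of highest $\ell$-weight $\gb\lambda'\phi_l(\gb\lambda'')=\gb\lambda$. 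Both tensor products are therefore isomorphic to $V_\zeta(\gb\lambda)$. The hardest step is precisely this bookkeeping: controlling the comultiplications of $(x_{i,r}^\pm)^{(k)}$ and of $\Lambda_{i,r}$, $\psi_{i,r}^\pm$ modulo the ideals appearing in Proposition \ref{p:comultip}, and verifying both that the highest-$\ell$-weight computation on the reversed side still yields $\gb\lambda$ and that the cyclicity argument survives the change of order.
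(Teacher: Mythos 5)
There is a preliminary point of calibration: the paper does not prove Theorem \ref{t:frobtensor} at all — it imports part (a) from \cite{lroot} and part (b) from \cite{cproot} — so your proposal can only be judged on its own merits. The Frobenius-pullback identifications are fine and essentially complete: the computation that the pulled-back highest-($\ell$-)weight vector has weight $l\lambda''$ (via $\tqbinom{l m}{l}_{\zeta_i}=m$), respectively $\ell$-weight $\phi_l(\gb\lambda'')$, the Weyl-module relations, and irreducibility from surjectivity of $\widetilde{\rm Fr}_\zeta$ are all correct. The gap is in the Steinberg-type tensor product step. Your ``standard double-commutant argument'' rests on the claim that the divided powers $(x_i^\pm)^{(lm)}$ furnish, through Frobenius, a $U(\lie g)$-action ``on the $V(\lambda'')$-factor'' under which any submodule is stable. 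That is not true as stated: $\Delta\bigl((x_i^\pm)^{(lm)}\bigr)$ contains cross terms $(x_i^\pm)^{(a)}\otimes(x_i^\pm)^{(b)}$ with $0<a,b$ and $l\mid b$, which act nontrivially on the first factor, so there is no a priori action ``on the second factor'' preserving submodules. Closing this requires either Lusztig's Clifford-theoretic argument (the restriction of the tensor product to \usfin g is $V_\zeta(\lambda')$-isotypic, so every submodule is of the form $V_\zeta(\lambda')\otimes W$, and one then checks that $W$ is a $\lie g$-submodule), or a direct singular-vector analysis: the vectors killed by all $(x_i^+)^{(k)}$ with $0<k<l$ are exactly $\mathbb Cv'\otimes{\rm Fr}_\zeta^*(V(\lambda''))$ (this uses Proposition \ref{p:ufinir} and a weight argument), applying $(x_i^+)^{(lm)}$ to $v'\otimes u$ kills all cross terms because $v'$ is highest weight and drives any singular vector to $v'\otimes v''$, and an induction using $(x_i^-)^{(l)}$ then recovers the whole module from $V_\zeta(\lambda')\otimes v''$. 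None of this appears in your write-up, and it is the heart of the theorem.

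In part (b) the gap is sharper. Proposition \ref{p:comultip} only controls $\Delta$ on the first powers $x_{i,r}^\pm$, $h_{i,r}$, $\Lambda_{i,r}$, and only modulo certain ideals; the paper stresses that no closed formula for $\Delta$ on the loop generators is known, and in particular nothing is available for $\Delta\bigl((x_{i,r}^\pm)^{(k)}\bigr)$ with $r\ne 0$ and $k\ge l$ — yet at a root of unity the elements $x_{i,r}^\pm$ alone do not generate $U_\zeta(\tlie g)$, so cyclicity and irreducibility cannot be checked on them alone. This is exactly the ``bookkeeping'' you yourself single out as the hardest step and then defer, and it includes the reversed-order isomorphism $V_\zeta(\phi_l(\gb\lambda''))\otimes V_\zeta(\gb\lambda')\cong V_\zeta(\gb\lambda)$, which needs its own argument since $\wcal C_\zeta$ is not braided. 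Chari--Pressley's actual proof of (b) proceeds through a substantially longer analysis of (pseudo-)highest-$\ell$-weight vectors and the Frobenius map rather than through Proposition \ref{p:comultip}-style estimates. So your proposal is a correct outline of the standard strategy, but the irreducibility/cyclicity core of both (a) and (b) is asserted rather than proved — which is precisely the content the paper chooses to quote from \cite{lroot,cproot} rather than reprove.
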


\begin{cor}\label{c:frev}
Let $\lambda\in P^+, a\in\mathbb C^\times$, and suppose $\lie g=\lie{sl}_{n+1}$. Then, $V_\zeta(l\lambda,a)\cong \widetilde{\rm Fr}_\zeta^*(V(\lambda,a^l))\cong V_\zeta(\phi_l(\gb\omega_{\lambda,a^l}))$.
\end{cor}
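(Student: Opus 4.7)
The plan is to establish both isomorphisms by identifying Drinfeld polynomials after specialization and invoking the classification of irreducible objects in $\wcal C_\zeta$.

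The second isomorphism $\widetilde{\rm Fr}_\zeta^*(V(\lambda,a^l)) \cong V_\zeta(\phi_l(\gb\omega_{\lambda,a^l}))$ is essentially free: by Theorem \ref{t:tev}(a), $V(\lambda,a^l) \cong V(\gb\omega_{\lambda,a^l})$ as $\tlie g$-modules, and then Theorem \ref{t:frobtensor}(b) gives $\widetilde{\rm Fr}_\zeta^*(V(\gb\omega_{\lambda,a^l})) \cong V_\zeta(\phi_l(\gb\omega_{\lambda,a^l}))$. So the real content is the first isomorphism.

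For $V_\zeta(l\lambda,a) \cong V_\zeta(\phi_l(\gb\omega_{\lambda,a^l}))$, I would use Proposition \ref{p:sevrep}: $V_\zeta(l\lambda,a) \cong V_\zeta(\bgb\mu)$, where $\gb\mu\in\cal P_\mathbb A^\times$ is the Drinfeld polynomial of $V_q(l\lambda,a)$ given by \eqref{e:evdrin}, namely
\begin{equation*}
\gb\mu = \prod_{i\in I}\gb\omega_{i,a_i,l\lambda(h_i)},\quad a_1=aq^{m(l\lambda)},\quad \frac{a_{i+1}}{a_i}=q^{l\lambda(h_i)+l\lambda(h_{i+1})+1}.
\end{equation*}
By the classification, it then suffices to verify the identity $\bgb\mu = \phi_l(\gb\omega_{\lambda,a^l})$ in $\cal P_\zeta$.

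The verification rests on two points that I would carry out in sequence. First, a direct computation (or invocation of \eqref{e:skrpoly}, noting that $\lie{sl}_{n+1}$ is simply laced so $\zeta_i=\zeta$ for all $i$) using $\zeta^l=1$ and the bijectivity of $j\mapsto l-1-2j\pmod l$ on $\{0,\dots,l-1\}$ shows
\begin{equation*}
(\bgb\omega_{i,\bar a_i,l m_i})_j(u)=\delta_{i,j}(1-\bar a_i^{\,l}u^l)^{m_i},\qquad m_i:=\lambda(h_i),
\end{equation*}
which is precisely $\phi_l(\gb\omega_{i,\bar a_i^{\,l}})^{m_i}$ restricted to coordinate $j$. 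Second, $\bar a_1^{\,l}=a^l\zeta^{lm(l\lambda)}=a^l$ and the recursion gives $(\bar a_{i+1}/\bar a_i)^l=\zeta^{l(l\lambda(h_i)+l\lambda(h_{i+1})+1)}=1$, so $\bar a_i^{\,l}=a^l$ for every $i$. Combining these gives $\bgb\mu=\phi_l(\gb\omega_{\lambda,a^l})$ and hence the desired isomorphism.

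There is no real obstacle — both steps are bookkeeping with roots of unity, and the heart of the computation (the Frobenius-type identity) has already appeared as \eqref{e:skrpoly}. The only point requiring a little care is the oddness of $l$, which ensures the exponents $l-1-2j$ run over a full set of residues mod $l$, so that the product telescopes to $1-a^l u^l$.
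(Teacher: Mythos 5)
Your proposal is correct and follows essentially the same route as the paper: the paper's proof also deduces $V_\zeta(l\lambda,a)\cong V_\zeta(\phi_l(\gb\omega_{\lambda,a^l}))$ from Proposition \ref{p:sevrep} together with \eqref{e:skrpoly} (you merely write out the root-of-unity bookkeeping that the paper leaves as ``one easily sees''), and then obtains the remaining isomorphism from Theorems \ref{t:frobtensor} and \ref{t:tev}. The only blemish is notational: the $j$-th component of $\bgb\omega_{i,\bar a_i,lm_i}$ should be $(1-\bar a_i^{\,l}u^l)^{\delta_{i,j}m_i}$ (equal to $1$ for $j\ne i$), not $\delta_{i,j}(1-\bar a_i^{\,l}u^l)^{m_i}$.
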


\begin{proof}
Using Proposition \ref{p:sevrep} and \eqref{e:skrpoly}, one easily sees that $V_\zeta(l\lambda,a)\cong V_\zeta(\phi_l(\gb\omega_{\lambda,a^l}))$ and we are done by Theorems \ref{t:frobtensor} and \ref{t:tev}.
\end{proof}

The following result is easily established.

\begin{prop}\label{p:tevsamep}
Let $\lambda,\mu\in P^+, a\in\mathbb C^\times$, and write $V(\lambda)\otimes V(\mu) = \opl_{j=1}^m V(\nu_j)$ for some $m\in\mathbb Z_{\ge 0}$ and $\nu_j\in P^+$. Then $V(\lambda,a)\otimes V(\mu,a)\cong \opl_{j=1}^m V(\nu_j,a)$.\hfill\qedsymbol
\end{prop}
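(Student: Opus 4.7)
The plan is to observe that the evaluation map $\mathrm{ev}_a \colon \tlie g \to \lie g$, $x\otimes t^r \mapsto a^r x$, is a Lie algebra homomorphism, so it extends to a Hopf algebra homomorphism $U(\tlie g)\to U(\lie g)$. The module $V(\lambda,a)$ is by definition the pullback $\mathrm{ev}_a^* V(\lambda)$, and everything will follow from the fact that pullbacks along Hopf algebra homomorphisms commute with tensor products and direct sums.

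Concretely, I would first verify that for any two $\lie g$-modules $W_1, W_2$ there is a natural isomorphism of $\tlie g$-modules
\begin{equation*}
\mathrm{ev}_a^*(W_1\otimes W_2) \;\cong\; \mathrm{ev}_a^*(W_1)\otimes \mathrm{ev}_a^*(W_2).
\end{equation*}
This is a one-line check on generators $y=x\otimes t^r\in\tlie g$: on the left $y$ acts as $a^r x\otimes 1 + a^r\otimes x$ via the $\lie g$-coproduct pulled back, while on the right, using the $\tlie g$-coproduct, it acts as $(a^r x)\otimes 1 + 1\otimes (a^r x)$; these coincide. Next, pullback clearly commutes with finite direct sums, since the underlying vector space and the action both decompose.

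Applying these two facts to the decomposition $V(\lambda)\otimes V(\mu) = \bigoplus_{j=1}^m V(\nu_j)$ of $\lie g$-modules yields
\begin{equation*}
V(\lambda,a)\otimes V(\mu,a) \;=\; \mathrm{ev}_a^* V(\lambda)\otimes \mathrm{ev}_a^* V(\mu) \;\cong\; \mathrm{ev}_a^*\bigl(V(\lambda)\otimes V(\mu)\bigr) \;\cong\; \bigoplus_{j=1}^m \mathrm{ev}_a^* V(\nu_j) \;=\; \bigoplus_{j=1}^m V(\nu_j,a),
\end{equation*}
which is the claim. There is no serious obstacle here; the only thing worth stressing is that the argument is purely formal and relies on $\mathrm{ev}_a$ being a homomorphism of Hopf algebras, which in turn holds because $a^r$ is multiplicative in the loop variable.
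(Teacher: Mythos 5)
Your argument is correct and is precisely the one the paper has in mind (the proposition is stated there as "easily established" with no written proof): since $\mathrm{ev}_a$ is a Lie algebra homomorphism, it induces a Hopf algebra map $U(\tlie g)\to U(\lie g)$, so pullback along it commutes with tensor products and finite direct sums, and applying this to the Clebsch--Gordan decomposition of $V(\lambda)\otimes V(\mu)$ gives the claim. The only essential point, which you use correctly, is that both factors are evaluated at the same parameter $a$, so a single pullback $\mathrm{ev}_a^*$ applies to the whole tensor product.
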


By combining the above proposition with Theorem \ref{t:tev} we get:

\begin{cor}\label{c:tev}
Let $m\in\mathbb Z_{\ge 0}, \lambda_j\in P^+$, and $a_j\in\mathbb C^\times$ for $j=1,\dots,m$. Then $\otm_{j=1}^{m} V(\lambda_j,a_j)$ is irreducible iff $a_i\ne a_j$ for all $i\ne j$.\hfill\qedsymbol
\end{cor}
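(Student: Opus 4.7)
The plan is to treat the two directions separately, both following directly from the two results cited in the lead-in to the corollary. For the ``if'' direction, assume the $a_j$ are pairwise distinct. Set $\gb\lambda=\prod_{j=1}^m \gb\omega_{\lambda_j,a_j}\in\cal P^+$. By Theorem \ref{t:tev}(b), there is an isomorphism
\[
V(\gb\lambda)\;\cong\;\bigotimes_{j=1}^m V(\lambda_j,a_j),
\]
and $V(\gb\lambda)$ is simple by definition. (If some $\lambda_j=0$, the corresponding factor is trivial and can be deleted from both sides; the non-vacuous content of the statement concerns the case in which every $\lambda_j$ is nonzero, which I tacitly assume below.)

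For the ``only if'' direction, suppose $a_i=a_j=a$ for some $i\ne j$. Collect the factors according to their evaluation parameter: let $b_1,\dots,b_k\in\mathbb C^\times$ be the distinct values among $a_1,\dots,a_m$, let $S_r=\{j: a_j=b_r\}$, and set $\mu^{(r)}=\sum_{j\in S_r}\lambda_j\in P^+$. By hypothesis at least one $S_r$, say $S_1$, has $|S_1|\ge 2$. Applying Proposition \ref{p:tevsamep} inductively within each group, one obtains
\[
\bigotimes_{j\in S_r} V(\lambda_j,b_r)\;\cong\;\bigoplus_{\nu} m_\nu^{(r)}\,V(\nu,b_r),
\]
where $m_\nu^{(r)}$ is the multiplicity of $V(\nu)$ in $\bigotimes_{j\in S_r} V(\lambda_j)$ as a $\lie g$-module.

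The key observation is that for $r=1$ this direct sum has at least two nonzero summands, because if $\lambda,\mu\in P^+\setminus\{0\}$ then $V(\lambda)\otimes V(\mu)$ is not irreducible (its highest weight is $\lambda+\mu$, but weight multiplicity or a dimension count shows that $V(\lambda+\mu)$ is a proper $\lie g$-submodule). Combining the decompositions for all groups $S_r$ (distributing the tensor product over the direct sums in the $S_1$ factor) yields
\[
\bigotimes_{j=1}^m V(\lambda_j,a_j)\;\cong\;\bigoplus_{\nu} m_\nu^{(1)}\Bigl(V(\nu,b_1)\otimes \bigotimes_{r=2}^k \bigotimes_{j\in S_r} V(\lambda_j,b_r)\Bigr),
\]
a direct sum of at least two nonzero $\tlie g$-modules, so the left-hand side is reducible.

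The only mild obstacle is the observation that $V(\lambda)\otimes V(\mu)$ is reducible for nonzero dominant $\lambda,\mu$; this is standard classical representation theory and requires no further machinery. Everything else is a straightforward bookkeeping exercise combining Theorem \ref{t:tev}(b) (for distinct parameters) with Proposition \ref{p:tevsamep} (for coinciding parameters).
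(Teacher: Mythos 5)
Your argument is correct and is exactly the one the paper intends: the corollary is stated as an immediate consequence of Theorem \ref{t:tev}(b) (pairwise distinct parameters give an irreducible tensor product, namely $V(\gb\lambda)$) together with Proposition \ref{p:tevsamep} and the classical fact that $V(\lambda)\otimes V(\mu)$ is reducible for nonzero dominant $\lambda,\mu$ (coinciding parameters force a nontrivial direct sum decomposition). Your tacit restriction to nonzero $\lambda_j$ is the right reading of the statement, since otherwise trivial factors make the ``only if'' direction vacuously fail; the paper leaves this implicit.
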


\begin{cor}\label{c:compredt}
Let $m\in\mathbb Z_{>0},\lambda,\lambda_j\in P^+, a,a_j\in\mathbb C^\times$ for $j=1,\dots,m$ and $\gb\lambda=\prod_{j=1}^m\gb\omega_{\lambda_j,a_j}$. We have:
\begin{enumerate}
\item If $a_i\ne a_j$ for all $i\ne j$, then $V_\zeta(\phi_l(\gb\lambda))\cong \otm_{j=1}^m V_\zeta(\phi_l(\gb\omega_{\lambda_j,a_j}))$.
\item If $V(\lambda)\otimes V(\lambda_1)\cong \opl_{k=1}^N V(\mu_j)$ for some $N\in\mathbb Z_{>0}$ and $\mu_j\in P^+$, then $V_\zeta(\phi_l(\gb\omega_{\lambda,a}))\otimes V_\zeta(\phi_l(\gb\omega_{\lambda_1,a}))$ $\cong \opl_{k=1}^N V_\zeta(\phi_l(\gb\omega_{\mu_j,a}))$.
\end{enumerate}
\end{cor}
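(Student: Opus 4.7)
The plan is to deduce both statements as direct consequences of Theorem \ref{t:frobtensor}(b), which identifies $V_\zeta(\phi_l(\gb\nu))$ with the Frobenius pull-back $\widetilde{\rm Fr}_\zeta^*(V(\gb\nu))$ for any $\gb\nu\in\cal P^+$, combined with the compatibility \eqref{e:Frontensor} of $\widetilde{\rm Fr}_\zeta^*$ with tensor products, together with the classical results Theorem \ref{t:tev} and Proposition \ref{p:tevsamep} on the $\tlie g$-module side.

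For part (a), I would first apply Theorem \ref{t:frobtensor}(b) to rewrite $V_\zeta(\phi_l(\gb\lambda))$ as $\widetilde{\rm Fr}_\zeta^*(V(\gb\lambda))$. Since the $a_j$ are pairwise distinct, Theorem \ref{t:tev}(b) gives $V(\gb\lambda)\cong\otm_{j=1}^m V(\lambda_j,a_j)$, and Theorem \ref{t:tev}(a) identifies each $V(\lambda_j,a_j)$ with $V(\gb\omega_{\lambda_j,a_j})$. Applying $\widetilde{\rm Fr}_\zeta^*$ and invoking \eqref{e:Frontensor} inductively commutes the pull-back past the tensor product, and a final application of Theorem \ref{t:frobtensor}(b) to each factor in reverse turns $\widetilde{\rm Fr}_\zeta^*(V(\gb\omega_{\lambda_j,a_j}))$ back into $V_\zeta(\phi_l(\gb\omega_{\lambda_j,a_j}))$.

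For part (b), the same strategy applies, but the evaluation points now coincide, so Theorem \ref{t:tev}(b) is unavailable for decomposing the tensor product. Instead, after transporting both sides to the $\tlie g$-module side via Theorem \ref{t:frobtensor}(b) and \eqref{e:Frontensor}, I would invoke Proposition \ref{p:tevsamep} to obtain $V(\lambda,a)\otimes V(\lambda_1,a)\cong\opl_{k=1}^N V(\mu_j,a)$. Pulling back this decomposition by $\widetilde{\rm Fr}_\zeta$ (which manifestly respects direct sums, since pull-back along an algebra map does) and applying Theorem \ref{t:frobtensor}(b) once more on each summand $V(\mu_j,a)\cong V(\gb\omega_{\mu_j,a})$ produces the asserted decomposition $\opl_{k=1}^N V_\zeta(\phi_l(\gb\omega_{\mu_j,a}))$.

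There is no genuine obstacle: both parts amount to bookkeeping translations between the $\tlie g$-side and the $\ust_\zeta\lie g$-side through the Frobenius homomorphism, and every ingredient has been assembled immediately above the corollary. The only point worth double-checking is that \eqref{e:Frontensor} is being applied to modules on which $\widetilde{\rm Fr}_\zeta^*$ is defined, i.e., that each $V(\lambda_j,a_j)$ (respectively $V(\mu_j,a)$) is a $\tlie g$-module of the appropriate type so that Theorem \ref{t:frobtensor}(b) applies term-by-term; this is immediate from Theorem \ref{t:tev}(a).
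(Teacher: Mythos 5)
Your argument is correct and is exactly the paper's route: the paper's proof is a one-line citation of Proposition \ref{p:tevsamep}, Theorems \ref{t:frobtensor} and \ref{t:tev}, and Corollary \ref{c:frev}, i.e., precisely the transport through $\widetilde{\rm Fr}_\zeta^*$ via \eqref{e:Frontensor} that you spell out. The only cosmetic difference is that you do not need to invoke Corollary \ref{c:frev} explicitly, since Theorem \ref{t:frobtensor}(b) together with Theorem \ref{t:tev}(a) already gives the identifications you use.
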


\begin{proof}
Immediate from Proposition \ref{p:tevsamep}, Theorems \ref{t:frobtensor} and \ref{t:tev}, and Corollary \ref{c:frev}.
\end{proof}

The following theorem was proved in \cite{cp:qaa,cproot}.

\begin{thm}\label{t:qtev}
Let $\lie g=\lie{sl}_2, m,m'\in\mathbb Z_{\ge 0}, a_1,\dots,a_m,b_1,\dots,b_{m'}\in\mathbb C^\times, \lambda_1,\dots,\lambda_m\in P_l^+\backslash\{0\}$, and $\mu_1,\dots,\mu_{m'}\in P^+\backslash\{0\}$. Then, the tensor product
$$V_\xi(\lambda_1,a_1)\otimes\cdots\otimes V_\xi(\lambda_m,a_m)\otimes V_\xi(l\mu_1,b_1)\otimes\cdots\otimes V_\xi(l\mu_{m'},b_{m'})$$
is irreducible iff the polynomials $\gb\omega_{i,a_j,\lambda_j(h_i)}$ and  $\phi_l(\gb\omega_{\mu_k,b_k^l}), j=1,\dots,m,k=1,\dots m'$, are in general position. In particular, every simple object of $\wcal C_\xi$ is isomorphic to a tensor product of evaluation modules.\hfill\qedsymbol
\end{thm}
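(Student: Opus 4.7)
The theorem contains two claims: an irreducibility criterion for tensor products and a classification of all simple objects of $\wcal C_\xi$. My plan is to use the Frobenius splitting of Theorem \ref{t:frobtensor} to decouple the ``small'' (quantum) tensor factors from the Frobenius-type ones, reducing the root of unity case to the generic case for the small factors and to the classical Lie algebra case for the Frobenius factors. Set
$$V_Q=V_\xi(\lambda_1,a_1)\otimes\cdots\otimes V_\xi(\lambda_m,a_m),\qquad V_F=V_\xi(l\mu_1,b_1)\otimes\cdots\otimes V_\xi(l\mu_{m'},b_{m'}),$$
together with $\gb\lambda_Q=\prod_j\gb\omega_{i,a_j,\lambda_j(h_i)}$, $\gb\lambda_F=\prod_k\phi_l(\gb\omega_{\mu_k,b_k^l})$, and $\gb\lambda=\gb\lambda_Q\gb\lambda_F$.

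For the Frobenius part, Corollary \ref{c:frev} together with \eqref{e:Frontensor} gives $V_F\cong\widetilde{\rm Fr}_\zeta^*\bigl(\bigotimes_k V(\mu_k,b_k^l)\bigr)$, and by Corollary \ref{c:tev} this inner classical tensor product is irreducible iff $b_i^l\ne b_j^l$ for $i\ne j$, which is precisely the general position condition among the Frobenius $\xi$-factors. Since $\widetilde{\rm Fr}_\zeta$ is surjective (Theorem \ref{t:frob}), pullback preserves (ir)reducibility. For the small part, I would compute the action on $V_Q$ using Lemma \ref{l:sl2actev} together with the comultiplication formulas in Proposition \ref{p:comultip}; because each $\lambda_j(h_i)<l$, all the quantum integers $[m]_{\xi_i}$ that arise remain nonzero, so the generic-$q$ argument of Chari--Pressley \cite{cp:qaa} transports essentially verbatim and shows that $V_Q$ is irreducible iff the small polynomials are in general position, in which case $V_Q\cong V_\xi(\gb\lambda_Q)$ with $\gb\lambda_Q\in\cal P_l^+$.

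Under general position of the full family, both $V_Q$ and $V_F$ are irreducible and the decomposition $\gb\lambda=\gb\lambda_Q\cdot\phi_l\bigl(\prod_k\gb\omega_{\mu_k,b_k^l}\bigr)$ coincides with the canonical factorization \eqref{e:frobfactor}. Theorem \ref{t:frobtensor}(b) then yields $V_Q\otimes V_F\cong V_\xi(\gb\lambda)$, which is irreducible. Conversely, if general position fails for the whole family, it must already fail within the small polynomials or within the Frobenius polynomials (a quantum $\xi$-factor of one polynomial and a Frobenius $\xi$-factor of another never obstruct general position); in that case one of $V_Q,V_F$ is reducible, and tensoring a proper submodule with the complementary factor yields a proper submodule of the whole tensor product. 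Corollary \ref{c:constensor} makes this reducibility argument independent of ordering.

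For the classification clause, given any simple $V=V_\xi(\gb\lambda)\in\wcal C_\xi$, I would apply the $\xi$-factorization of $\gb\lambda_i(u)$: each quantum $\xi$-factor $f_{a,r}$ is the Drinfeld polynomial of the small evaluation module $V_\xi(r\omega_i,a)$, while each Frobenius $\xi$-factor $1-bu^l$ is, by Corollary \ref{c:frev} and \eqref{e:skrpoly}, the Drinfeld polynomial of $V_\xi(l\omega_i,c)$ for any chosen $c$ with $c^l=b$. By construction these factors are in general position, so the first part of the theorem makes their tensor product irreducible with Drinfeld polynomial $\gb\lambda$, hence isomorphic to $V$. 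The main obstacle I anticipate is the small-part analysis: although the scalars $[m]_{\xi_i}$ remain nonzero for $m<l$, one must carefully verify that the action of the Cartan generators $\tqbinom{k_i}{l}$ and $h_{i,s}$ on tensor products does not produce unexpected highest-$\ell$-weight vectors out of root-of-unity resonances invisible at generic $q$, so that the Chari--Pressley classification of highest-$\ell$-weight vectors of a tensor product really is complete in this setting.
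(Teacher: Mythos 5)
This statement is not proved in the paper at all: it is quoted verbatim from \cite{cp:qaa,cproot}, so there is no internal argument to measure you against, and your proposal has to stand on its own. Its architecture (split off the Frobenius part via Theorem \ref{t:frobtensor} and Corollary \ref{c:frev}, handle it by Corollary \ref{c:tev} and surjectivity of $\widetilde{\rm Fr}_\zeta$, and treat the ``small'' part separately) is indeed the right skeleton, and your bookkeeping with general position is mostly sound: cross-pairs of a quantum string and a Frobenius-type polynomial are automatically in general position, and under general position the product of the small strings is $\xi$-regular, so your identification with the canonical factorization \eqref{e:frobfactor} is legitimate. But the crux of the theorem in the root of unity setting is exactly the step you leave as ``transports essentially verbatim'': the \emph{iff} criterion for irreducibility of $V_Q=V_\xi(\lambda_1,a_1)\otimes\cdots\otimes V_\xi(\lambda_m,a_m)$ with all $\lambda_j\in P_l^+$. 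Nonvanishing of $[m]_{\xi}$ for $m<l$ does not by itself rule out new singular ($\ell$-highest-weight) vectors created by root-of-unity resonances (the exponents $\xi^{\pm(r+r'-2p)}$ collapse modulo $l$, so both the reducibility construction and the completeness of the list of highest-$\ell$-weight vectors need reworking); this is precisely the content of \cite{cproot} and none of the results available in this paper (Theorem \ref{t:hlwtensor}, Corollary \ref{c:hlwtfund}) supply it, since they only give highest-$\ell$-weight criteria under the stronger resonant-order hypothesis, not irreducibility under mere general position. You flag this yourself, but as written it is an unproved core, not a verbatim transfer.

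There is also a concrete error in your derivation of the ``in particular'' clause. If $\gb\lambda$ has a Frobenius $\xi$-factor $1-bu^l$ with multiplicity $s\ge 2$, then assigning a separate module $V_\xi(l\omega_i,c)$, $c^l=b$, to each occurrence produces a family that is \emph{not} in general position (the Frobenius factors coincide, no matter which $l$-th roots $c$ you choose), and its tensor product is in fact reducible, being the pullback of $V(\omega_i,b)^{\otimes s}$; so the first part of the theorem cannot be invoked and your candidate tensor product is not isomorphic to $V_\xi(\gb\lambda)$. The repair is to group each distinct $b$ with multiplicity $s$ into a single evaluation module $V_\xi(ls\omega_i,c)$, whose Drinfeld polynomial is $\phi_l(\gb\omega_{s\omega_i,b})=(1-bu^l)^s$ by Corollary \ref{c:frev} and Theorem \ref{t:frobtensor}(b); this is exactly why the statement allows arbitrary $\mu_k\in P^+$ rather than only fundamental weights. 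With that grouping, and with the small-part criterion genuinely established (or explicitly cited from \cite{cproot}), your reduction would go through.
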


\section{Highest-$\ell$-weight tensor products}\label{s:hlwtensor}

\subsection{Main theorem and preliminary lemmas} The next theorem for the special case that $\xi$ has infinite order was the main result of \cite{vcbraid} and for $\xi=1$ it follows from Theorem \ref{t:tev}. The goal of this section is to prove it in the root of unity setting.

\begin{thm}\label{t:hlwtensor}
Let  $m\in\mathbb Z_{>0}$ and $\gb\lambda_j\in\cal P_\xi^+$ for $j=1,\dots,m$. If $(\gb\lambda_1,\dots,\gb\lambda_m)$ is in $\xi$-resonant order, then $V_\xi(\gb\lambda_1)\otimes\cdots\otimes V_\xi(\gb\lambda_m)$ is a highest-$\ell$-weight \ust_\xi g-module. Moreover, if $\xi=\zeta$ and $\gb\lambda_j\in\cal P_l^+$ for all $j=1,\dots,m$, then $V_\zeta(\gb\lambda_1)\otimes\cdots\otimes V_\zeta(\gb\lambda_m)$ is generated by the action of \ustfin g on the top weight space.
\end{thm}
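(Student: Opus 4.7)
The plan is to lift, apply the generic theorem, and specialize. Lift each $\gb\lambda_j$ to $\widetilde{\gb\lambda}_j\in\cal P_\mathbb A^s$ by factoring $\gb\lambda_j$ into quantum $\zeta$-factors $f_{a_k,r_k}$ (with $0<r_k<l$) and Frobenius factors $1-b_ku^l$ via Definition \ref{d:qfactor2}, then lifting each quantum factor to the $\mathbb A[u]$ polynomial given by \eqref{e:qsf} with $q$ in place of $\zeta$, and each Frobenius factor to $f_{c_k,l}\in\mathbb A[u]$ with $c_k^l=b_k$, which specializes correctly by \eqref{e:skrpoly}. The resulting $\widetilde{\gb\lambda}_j\in\cal P_\mathbb A^s$ satisfies $\overline{\widetilde{\gb\lambda}_j}=\gb\lambda_j$.

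The tuple $(\widetilde{\gb\lambda}_1,\dots,\widetilde{\gb\lambda}_m)$ is in $q$-resonant order: since $q$ has infinite order, condition (c) of Definition \ref{d:resonant} is vacuous and (a) is implied by (b). For (b), Corollary \ref{c:Twdominant} ensures that the roots of $T_w\widetilde{\gb\lambda}_j$ lie in $\mathbb C^\times\cdot q^\mathbb Z$, so the $q$-resonance inequality $\tfrac{a_k}{a_j'}\ne q^{-(r_k+r_j'-2p)}$ in $\mathbb C(q)$ cleanly separates the complex factor from the $q$-exponent, and any violation would specialize at $\zeta$ to a $\zeta$-resonance violation of condition (b) on the $\gb\lambda_j$'s; by contrapositive, the assumed $\zeta$-resonance of the $\gb\lambda_j$'s forces $q$-resonance of the lifts. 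By the main theorem of \cite{vcbraid}, $V_q(\widetilde{\gb\lambda}_1)\otimes\cdots\otimes V_q(\widetilde{\gb\lambda}_m)$ is a highest-$\ell$-weight \uqt g-module cyclic from $\tilde v=\tilde v_1\otimes\cdots\otimes\tilde v_m$.

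For the transfer to roots of unity, first observe that the tensor of top vectors $v=v_1\otimes\cdots\otimes v_m\in\bigotimes_jV_\zeta(\gb\lambda_j)$ is a highest-$\ell$-weight vector of $\ell$-weight $\prod_j\gb\lambda_j$ by Proposition \ref{p:comultip}, and it satisfies the Weyl module relations $(x_{i,r}^-)^{(k)}v=0$ for $k>(\sum_j\wt(\gb\lambda_j))(h_i)$, since the coproduct of such a divided power on $v$ decomposes into sums of per-factor divided powers each bounded by $\lambda_j(h_i)$; hence $\ust_\zeta g\cdot v$ is a quotient of $W_\zeta(\prod_j\gb\lambda_j)$. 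The main obstacle is to show $\ust_\zeta g\cdot v=\bigotimes_jV_\zeta(\gb\lambda_j)$: the plan is to specialize the generic surjection $\phi:W_q(\prod_j\widetilde{\gb\lambda}_j)\twoheadrightarrow V_q(\widetilde{\gb\lambda}_1)\otimes\cdots\otimes V_q(\widetilde{\gb\lambda}_m)$ using compatible admissible lattices (Theorem \ref{t:uqtlat}) and compose with the tensor of irreducible quotient surjections $\overline{V_q(\widetilde{\gb\lambda}_j)}\twoheadrightarrow V_\zeta(\gb\lambda_j)$; a character comparison via Proposition \ref{p:lcharspec} and Theorem \ref{t:chlconst} then forces the composite surjection to fill out the whole tensor product, establishing cyclicity of $v$.

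For the moreover statement with $\gb\lambda_j\in\cal P_l^+$, each $V_\zeta(\gb\lambda_j)$ is \ustfin g-irreducible by Proposition \ref{p:lufinir}, and the lifts carry no Frobenius factors, so each $V_q(\widetilde{\gb\lambda}_j)$ has weights satisfying $|\mu(h_i)|<l$; in the tensor product any divided power $(x_{i,r}^\pm)^{(k)}$ acting nontrivially on $v$ decomposes via its coproduct into per-factor divided powers all of order $<l$ (by the single-factor weight bound), and each such per-factor divided power equals $\tfrac{1}{[k]_q!}(x_{i,r}^\pm)^k$ with $[k]_\zeta!\in\mathbb C^\times$ at specialization, so it is reached by iterated action of $\Delta(x_{i,r}^\pm)$, yielding \ustfin g-cyclicity of the tensor product from the tensor of top vectors.
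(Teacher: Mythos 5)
Your lift-and-specialize strategy breaks at its crucial step: cyclicity does not descend through specialization in the way you claim. Granting that the lifted tuple is in $q$-resonant order and that $V_q(\widetilde{\gb\lambda}_1)\otimes\cdots\otimes V_q(\widetilde{\gb\lambda}_m)$ is cyclic on $\tilde v$, the specialization of the lattice $U_\mathbb A(\tlie g)\tilde v$ maps into $\bigotimes_j\overline{V_q(\widetilde{\gb\lambda}_j)}$ with image exactly $\ust_\zeta g(v_1\otimes\cdots\otimes v_m)$ --- the very submodule you want to show is everything --- and this map is in general neither injective nor surjective (the inclusion of lattices $U_\mathbb A\tilde v\subseteq\bigotimes_j U_\mathbb A\tilde v_j$ can be proper of full rank; Example \ref{ex:uqtlat} in the paper is exactly a warning of this kind). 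The character comparison you invoke cannot rescue this: Proposition \ref{p:lcharspec} gives $\chl$ of the specialized lattice equal to $\prod_j\chl\bigl(\overline{V_q(\widetilde{\gb\lambda}_j)}\bigr)$, which is in general \emph{strictly larger} than $\prod_j\chl(V_\zeta(\gb\lambda_j))$ (the modules $\overline{V_q(\widetilde{\gb\lambda}_j)}$ are usually reducible, cf.\ \S\ref{ss:charfund}), so there is no equality of characters forcing your composite to be surjective; its image could a priori be a proper submodule. This is precisely why the paper does \emph{not} argue by lifting: the analogous generic statements (Corollary \ref{c:cyclic}, Theorem \ref{t:wtfund}) are false at roots of unity, so any argument that transports cyclicity from $q$ to $\zeta$ must use the $\zeta$-resonance hypothesis at the root of unity itself. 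The paper instead proves the theorem directly at $\zeta$, by induction on $m$ along a reduced expression of $w_0$, using Lemma \ref{l:TqW}, Lemma \ref{l:lth=all}, Corollary \ref{c:wtfundr}, and the explicit $\lie{sl}_2$ determinant computation of \cite{cp:qaa} (with the Frobenius decomposition of Theorem \ref{t:frobtensor} handling the non-regular part).

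Two further problems. First, your contrapositive argument that $\zeta$-resonance of the $\gb\lambda_j$ forces $q$-resonance of the lifts is not valid: Frobenius $\zeta$-factors are only required to be pairwise distinct in condition (a) of Definition \ref{d:resonant}, and conditions (b), (c) involve only the regular parts $\gb\lambda_j'$ under the braid twists, whereas your lift $f_{c_k,l}$ of a Frobenius factor becomes a genuine quantum $q$-factor of length $l$ subject to the much stronger resonance inequalities (and is also moved around by $T_w$); a $q$-resonance violation involving such a lifted factor does not specialize to any violation of the $\zeta$-resonance conditions on the originals, so the implication you need is unproven (and sensitive to the choice of the $l$-th roots $c_k$, for which only finitely many choices exist). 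Second, in the ``moreover'' part you assert that $\gb\lambda_j\in\cal P_l^+$ forces all weights of $V_q(\widetilde{\gb\lambda}_j)$ to satisfy $|\mu(h_i)|<l$; this is false ($\cal P_l^+$ means $\zeta$-regular, e.g.\ $\gb\omega_{i,a}^{l+1}\in\cal P_l^+$ has $\wt(h_i)=l+1$), so your bound on the per-factor divided powers, and hence the claimed $\ustfin g$-cyclicity, does not follow. The paper obtains the ``moreover'' statement from Proposition \ref{p:lufinir}, Theorem \ref{t:qtev} and the $\lie{sl}_2$ computation, not from a weight bound.
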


The proof will be given in the following subsections. We first recall two important consequences of the theorem in the case $\xi$ has infinite order.

\begin{cor}\label{c:cyclic}
Let $\xi$ have infinite order. Given $a_1,\dots,a_k\in\mathbb C(\xi)$, there exists $\sigma\in S_k$ such that $V_\xi(\gb\omega_{i_1,a_{\sigma(1)},m_1})\otimes\cdots\otimes V_\xi(\gb\omega_{i_k,a_{\sigma(k)},m_k})$ is highest-$\ell$-weight for any choice of $i_1,\dots,i_k$ and of $m_1,\dots,m_k$.
\end{cor}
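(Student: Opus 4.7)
The plan is to deduce this directly from Lemma \ref{l:cyclic} combined with Theorem \ref{t:hlwtensor}. Although Lemma \ref{l:cyclic} is stated over $\mathbb C(q)$, its proof is a direct computation involving only the cyclic subgroup generated by $q$ in $\mathbb C(q)^\times$, and it goes through verbatim with $\xi$ in place of $q$ whenever $\xi\in\mathbb C^\times$ has infinite order (so that $\xi^\mathbb Z$ is torsion-free). I would state and appeal to the $\xi$-version explicitly as a preliminary remark.

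The first step is to produce $\sigma\in S_k$ such that
$$\frac{a_{\sigma(j)}}{a_{\sigma(s)}}\notin \xi^{\mathbb Z_{>0}} \qquad\text{whenever}\qquad j>s.$$
Such a permutation exists by a topological-sort argument: form the directed graph on $\{1,\dots,k\}$ with an edge $s\to j$ whenever $a_j/a_s\in\xi^{\mathbb Z_{>0}}$, and observe that the transitivity of $\xi^{\mathbb Z_{>0}}$ together with the antisymmetry coming from $\xi$ having infinite order makes this graph a DAG. Any linear extension of it gives the desired $\sigma$. The key point is that this selection depends only on the scalars $a_1,\dots,a_k$, not on the data $i_1,\dots,i_k$ nor on $m_1,\dots,m_k$.

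Next, I invoke the $\xi$-version of Lemma \ref{l:cyclic}(a): for this $\sigma$, the $k$-tuple
$$(\gb\omega_{i_1,a_{\sigma(1)},m_1},\dots,\gb\omega_{i_k,a_{\sigma(k)},m_k})$$
is in $\xi$-resonant order for every choice of $i_1,\dots,i_k\in I$ and $m_1,\dots,m_k\in\mathbb Z_{\ge 1}$, because the sufficient condition identified there involves only the $a$-ratios. Finally, applying Theorem \ref{t:hlwtensor} to this $\xi$-resonantly ordered tuple yields that the tensor product $V_\xi(\gb\omega_{i_1,a_{\sigma(1)},m_1})\otimes\cdots\otimes V_\xi(\gb\omega_{i_k,a_{\sigma(k)},m_k})$ is a highest-$\ell$-weight $\ust_\xi g$-module.

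The argument has no real obstacle; the only conceptual point worth emphasizing is that the same $\sigma$ works uniformly across all choices of $i_j$ and $m_j$. This uniformity is exactly what Lemma \ref{l:cyclic}(b) was designed to provide, and it is the reason the hypothesis in part (a) of that lemma is phrased purely in terms of ratios of the $a_j$'s rather than in terms of the more elaborate $\xi$-resonance conditions involving the multiplicities $m_j$ and the nodes $i_j$.
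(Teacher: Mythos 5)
Your proposal is correct and is essentially the paper's own proof, which reads ``Immediate from Theorem \ref{t:hlwtensor} and Lemma \ref{l:cyclic}''; you merely make explicit the routine points the paper leaves implicit, namely that Lemma \ref{l:cyclic} transfers verbatim to any $\xi$ of infinite order and that the permutation in part (b) depends only on the $a_j$'s. One cosmetic quibble: with your edge convention $s\to j$ when $a_j/a_s\in\xi^{\mathbb Z_{>0}}$, a linear extension puts $s$ before $j$, which is the opposite of the requirement $a_{\sigma(j)}/a_{\sigma(s)}\notin\xi^{\mathbb Z_{>0}}$ for $j>s$, so you should either reverse the orientation of the edges or take the reverse of the linear extension.
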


\begin{proof}
Immediate from Theorem \ref{t:hlwtensor} and Lemma \ref{l:cyclic}.
\end{proof}

This corollary was used in \cite{cmq} together with Theorem \ref{t:lwmdim} for proving the following theorem.

\begin{thm}\label{t:wtfund}
Suppose $\xi$ has infinite order and that $\gb\lambda=\prod_{j=1}^m \gb\omega_{i_j,a_j}\in\cal P_\xi^+$ for some $i_j\in I, a_j\in\mathbb C(\xi)^\times$. Then, there exists $\sigma\in S_m$ such that $W_\xi(\gb\lambda)\cong V_\xi(\gb\omega_{i_{\sigma(1)},a_{\sigma(1)}})\otimes\cdots\otimes V_\xi(\gb\omega_{i_{\sigma(m)},a_{\sigma(m)}})$.\hfill\qedsymbol
\end{thm}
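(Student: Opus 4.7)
The plan is to combine Corollary \ref{c:cyclic} with the character formula in Theorem \ref{t:lwmdim} and a squeeze argument: produce a tensor product of fundamentals that is \emph{simultaneously} a quotient of $W_\xi(\gb\lambda)$ and has the same character as $W_\xi(\gb\lambda)$.

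First, I would apply Corollary \ref{c:cyclic} to the data $(i_1,a_1),\dots,(i_m,a_m)$ with all $m_j=1$ to obtain a permutation $\sigma\in S_m$ such that
\[
T := V_\xi(\gb\omega_{i_{\sigma(1)},a_{\sigma(1)}})\otimes\cdots\otimes V_\xi(\gb\omega_{i_{\sigma(m)},a_{\sigma(m)}})
\]
is highest-$\ell$-weight. A tensor product of highest-$\ell$-weight vectors of the factors is clearly a highest-$\ell$-weight vector whose $\ell$-weight is the product of the individual highest $\ell$-weights; since $\cal P_\xi$ is abelian, this product equals $\gb\lambda$. By the universal property (Theorem \ref{t:weylm}), $T$ is therefore a quotient of $W_\xi(\gb\lambda)$, so in particular $\chi(T)\le\chi(W_\xi(\gb\lambda))$ where I write $\chi$ for the ordinary character in $\mathbb Z[P]$.

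Next I would pin down the character of each fundamental factor. Since $\xi$ has infinite order, Theorem \ref{t:lwmdim} applied to the single $\ell$-weight $\gb\omega_{i,a}$ gives $\ch(W_\xi(\gb\omega_{i,a}))=\ch(V_q(\gb\omega_{i,1}))$. On the other hand, $V_q(\gb\omega_{i,a})$ is irreducible over $\uqt g$, and by Theorem \ref{t:uqtlat} together with Corollary \ref{c:nodom} its specialization is $V_\xi(\gb\omega_{i,a})$ with $\ch(V_\xi(\gb\omega_{i,a}))=\ch(V_q(\gb\omega_{i,a}))=\ch(V_q(\gb\omega_{i,1}))$ (the last equality because $\varrho_a$ of Proposition \ref{p:ssp} is the identity on $\uq g$, so twisting by the spectral parameter does not change the character). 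Hence $V_\xi(\gb\omega_{i,a})\cong W_\xi(\gb\omega_{i,a})$ and both have character $\ch(V_q(\gb\omega_{i,1}))$.

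Now I would close the loop by computing
\[
\ch(T) \;=\; \prod_{j=1}^m \ch\!\bigl(V_\xi(\gb\omega_{i_{\sigma(j)},a_{\sigma(j)}})\bigr) \;=\; \prod_{j=1}^m \ch\!\bigl(V_q(\gb\omega_{i_j,1})\bigr),
\]
which by Theorem \ref{t:lwmdim} is exactly $\ch(W_\xi(\gb\lambda))$ (noting that, writing $\wt(\gb\lambda)=\sum_i s_i\omega_i$, one has $s_i=\#\{j:i_j=i\}$). Combined with the surjection $W_\xi(\gb\lambda)\twoheadrightarrow T$, equality of characters forces that surjection to be an isomorphism.

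The only delicate point, and where I would expect readers to want details, is the equality $\ch(V_\xi(\gb\omega_{i,a}))=\ch(V_q(\gb\omega_{i,1}))$, i.e.\ that a fundamental module at generic $\xi$ has the same character as its $\uqt g$-counterpart and that this character is independent of the spectral parameter. The first half rests on the specialization machinery (Theorem \ref{t:uqtlat} and Corollary \ref{c:nodom}, applicable here because for a fundamental $\ell$-weight the only dominant $\ell$-weight in $\wtl(V_q(\gb\omega_{i,a}))$ is $\gb\omega_{i,a}$ itself); the second half is immediate from Proposition \ref{p:ssp}. Everything else is a one-line application of results already collected in the paper.
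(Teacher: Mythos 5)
Your argument is correct and follows essentially the route the paper itself indicates (via \cite{cmq}): reorder using Corollary \ref{c:cyclic} so that the tensor product of fundamentals is highest-$\ell$-weight, hence a quotient of $W_\xi(\gb\lambda)$ by Theorem \ref{t:weylm} and Proposition \ref{p:comultip}, and then force the surjection to be an isomorphism by matching characters through Theorem \ref{t:lwmdim}. The only outside ingredient is the identity $\ch(V_\xi(\gb\omega_{i,a}))=\ch(V_q(\gb\omega_{i,1}))$, i.e.\ the irreducibility of fundamental Weyl modules for $\xi$ of infinite order (which your Corollary \ref{c:nodom} route reduces to the Frenkel--Mukhin fact that $\gb\omega_{i,a}$ is the unique dominant $\ell$-weight of $V_q(\gb\omega_{i,a})$); this is precisely the ``well-known'' fact the paper itself invokes, so it is not a genuine gap.
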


\begin{rem}
Corollary \ref{c:cyclic} and Theorem \ref{t:wtfund} are false if $\xi=\zeta$ (cf. \cite[Remark 7.18]{vava:standard}). Such different behavior in the root of unity setting stems from two sources. One is related to the possible lack of $\zeta$-regularity of the highest $\ell$-weight which can happen  for any $\lie g$ (see Corollary \ref{c:wtfundr} and the discussion after Corollary \ref{c:wtfundrgen}). The second is the possible reducibility of the Weyl modules whose highest $\ell$-weigh is a fundamental $\ell$-weight. This does not occur for $\lie g$ of type $A$ because every fundamental weight is minuscule. For an example of the latter situation see Example \ref{ex:dn} below and the remark that follows it.
\end{rem}

Before proceeding with the proof of Theorem \ref{t:hlwtensor}, we give a counterexample showing that the ``loop'' analogue of the first statement of Proposition \ref{p:uqlat}(a) does not hold in general.

\begin{ex}\label{ex:uqtlat}
Let $\lie g=\lie{sl}_2, I=\{i\}$ and $a,b\in\mathbb A^\times$ be such that $\frac{a}{b}\ne q^{-2}$. It follows that $V=V_q(\gb\omega_{i,a})\otimes V_q(\gb\omega_{i,b})$ is a highest-$\ell$-weight module with highest $\ell$-weight $\gb\lambda=\gb\omega_{i,a}\gb\omega_{i,b}\in\cal P_\mathbb A^s$. Let $v$ and $w$ be highest-$\ell$-weight vectors of $V_q(\gb\omega_{i,a})$ and $V_q(\gb\omega_{i,b})$, respectively, and set $v_1=x_i^-v, w_1=x_i^-w$. Then $v_1\in V_q(\gb\omega_{i,a})_{\gb\omega_{i,aq^2}^{-1}}, w_1\in V_q(\gb\omega_{i,b})_{\gb\omega_{i,bq^2}^{-1}}$. Using Proposition \ref{p:comultip}(d) one easily sees that \begin{equation}\label{e:lwvuqtlat}
v_1\otimes w\in V_{\gb\omega_{i,b}\gb\omega_{i,aq^2}^{-1}} \qquad\text{and}\qquad v\otimes w_1+c v_1\otimes w\in V_{\gb\omega_{i,a}\gb\omega_{i,bq^2}^{-1}}
\end{equation}
for some  $c\in\mathbb C(q)$ (notice that Proposition \ref{p:comultip}(d) does not allow us to compute $c$ precisely). It follows from \eqref{e:lwvuqtlat} that, if $a=b$, any weight vector of $V$ is an $\ell$-weight vector and, hence, this would not be a counterexample. Thus, assume from now on that $a\ne b$.

Set also $\gbr v_1=x_i^-(v\otimes w), \gbr v_2=x_{i,1}^-(v\otimes w), \gbr w_1=v\otimes w_1+c v_1\otimes w, \gbr w_2= v_1\otimes w$, and $L=\uqtres g(v\otimes w)$. It is known that $\{\gbr v_1,\gbr v_2\}$ is an $\mathbb A$-basis for the zero weight space $L_0:=L\cap V_0$.
Using Lemma \ref{l:sl2actev} and Proposition \ref{p:comultip}, one easily computes that
\begin{equation}\label{e:uqtlat}
\gbr v_1 = q^{-1}v_1\otimes w +v\otimes w_1 \qquad\text{and}\qquad \gbr v_2=aq^2v_1\otimes w+ bq v\otimes w_1.
\end{equation}

We will show that there exists no $\mathbb A$-basis of $L_0$ formed by $\ell$-weight vectors provided $(b-aq^2)\notin \mathbb A^\times$. Since there are plenty of choices for $a,b\in\mathbb A^\times$ such that $b-aq^2\notin \mathbb A^\times$, we get our counter example.

Using \eqref{e:lwvuqtlat}, we see that any such basis would be of the form $\{\alpha\gbr w_1,\beta\gbr w_2\}$ for some $\alpha,\beta\in\mathbb C(q)$.
By contradiction, suppose there exist $\alpha,\beta\in\mathbb C(q)$ such that $\{\alpha\gbr w_1,\beta\gbr w_2\}$  is an $\mathbb A$-basis of $L_0$. Then, the matrix $A$ whose columns are the coordinates of $\gbr v_1,\gbr v_2$ with respect to this basis must have entries in $\mathbb A$ and its determinant must be in $\mathbb A^\times$. Using \eqref{e:uqtlat} one easily sees that
\begin{equation}
A=\begin{pmatrix}
\alpha^{-1}& qb\alpha^{-1}\\ (q^{-1}-c)\beta^{-1}&(aq-c\beta)q\beta^{-1}
\end{pmatrix}
\qquad\text{and}\qquad \det(A) = (\alpha\beta)^{-1}(aq^2-b).
\end{equation}
It follows that $\alpha^{-1},\beta^{-1}\in\mathbb A$ and, hence, $\det(A)\in\mathbb A^\times$ iff $\alpha,\beta,(b-aq^2)\in\mathbb A^\times$.
\end{ex}


For the next two lemmas we will use the following notation. Let $\gb\lambda,\gb\mu\in\cal P_\xi^+, \lambda=\wt(\gb\lambda),\mu=\wt(\gb\mu)$, and $v_{_\gb\lambda}, v_{_\gb\mu}$ be highest-$\ell$-weight vectors of highest-$\ell$-weight modules $V,W$ of highest $\ell$-weight $\gb\lambda$ and $\gb\mu$, respectively.  Let also $v_{_\gb\lambda}^w$ be a generator of $V_{w\lambda}$ for $w\in\cal W$.

\begin{lem}\label{l:TqW}
 Suppose $i\in I$ and $w\in\cal W$ are such that $\ell(s_iw)=\ell(w)+1$. Then, the \ust_\xi{g_i}-module $\ust_\xi{g_i}(v_{_\gb\lambda}^w\otimes v_{_\gb\mu})$ is a quotient of $W_\xi((T_w\gb\lambda)_i\gb\mu_i)$.
\end{lem}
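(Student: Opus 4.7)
The plan is to verify that $v_{_\gb\lambda}^w\otimes v_{_\gb\mu}$ is a highest-$\ell$-weight vector of the $\ust_\xi{g_i}$-module $V\otimes W$ with highest $\ell$-weight $(T_w\gb\lambda)_i\gb\mu_i$, and then invoke the universal property of Weyl modules (Theorem \ref{t:weylm}) applied to $\ust_\xi{g_i}\cong U_{\zeta_i}(\tlie{sl}_2)$. Corollary \ref{c:Twdominant} ensures that $(T_w\gb\lambda)_i(u)\in\mathbb C(\xi)[u]$, so $(T_w\gb\lambda)_i\gb\mu_i$ is a well-defined dominant $\ell$-weight of $\ust_\xi{g_i}$, and the module $\ust_\xi{g_i}(v_{_\gb\lambda}^w\otimes v_{_\gb\mu})\subseteq V\otimes W$ is automatically finite-dimensional.

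First I would check that $v_{_\gb\lambda}^w$ itself is a highest-$\ell$-weight vector for $\ust_\xi{g_i}$ of $\ell$-weight $(T_w\gb\lambda)_i$. Since $V_{w\lambda}$ is one-dimensional and is preserved by $\ust_\xi h$, the vector $v_{_\gb\lambda}^w$ is an eigenvector of every element of $\ust_\xi h$; by Corollary \ref{c:Twv} we have $V_{w\lambda}=V_{T_w\gb\lambda}$, so these eigenvalues are the ones prescribed by $T_w\gb\lambda$, and in particular the Cartan part of $\ust_\xi{g_i}$ acts on $v_{_\gb\lambda}^w$ through $(T_w\gb\lambda)_i$. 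For the vanishing $\ust_\xi{n^+_i}^0 v_{_\gb\lambda}^w=0$ I would reproduce the argument used in the proof of Corollary \ref{c:Twdominant}: under the hypothesis $\ell(s_iw)=\ell(w)+1$, Lemma \ref{l:wgroupstuff}(b) gives $w\lambda+\alpha_i\notin P(\lambda)$, and since every weight of $V$ lies in $P(\lambda)$, one obtains $x_{i,r}^+v_{_\gb\lambda}^w=0$ for every $r\in\mathbb Z$.

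The next step is to transfer this information to the tensor product by means of Proposition \ref{p:comultip}. For $r\ge 0$, part (b) yields $\Delta(x_{i,r}^+)\equiv x_{i,r}^+\otimes 1+k_i\otimes x_{i,r}^++\sum_{j=1}^r\psi^+_{i,j}\otimes x_{i,r-j}^+$ modulo $\ust_\xi gX^-\otimes\ust_\xi g(X^+)^{(2)}+\ust_\xi gX^-\otimes\ust_\xi gX^+$; an analogous identity holds for $r<0$. Each explicit summand kills $v_{_\gb\lambda}^w\otimes v_{_\gb\mu}$: either $x_{i,s}^+$ lands on the left factor (annihilated by the previous paragraph) or on the right factor (annihilated because $v_{_\gb\mu}$ is a highest-$\ell$-weight vector of $\ust_\xi g$). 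The modular correction also vanishes on $v_{_\gb\lambda}^w\otimes v_{_\gb\mu}$, because in each of its summands an element of $X^+$ or of $(X^+)^{(2)}$ is applied first to $v_{_\gb\mu}$ and produces zero. An entirely parallel analysis using Proposition \ref{p:comultip}(d) and Lemma \ref{l:hinres} shows that the Cartan part of $\ust_\xi{g_i}$ acts on $v_{_\gb\lambda}^w\otimes v_{_\gb\mu}$ through the product of the two factor $\ell$-weights, namely $(T_w\gb\lambda)_i\gb\mu_i$.

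Combining all of this, $\ust_\xi{g_i}(v_{_\gb\lambda}^w\otimes v_{_\gb\mu})$ is a finite-dimensional $\ust_\xi{g_i}$-highest-$\ell$-weight module of highest $\ell$-weight $(T_w\gb\lambda)_i\gb\mu_i$, and Theorem \ref{t:weylm} produces the desired surjection from $W_\xi((T_w\gb\lambda)_i\gb\mu_i)$. I expect the main technical point to be the modular-remainder computation in Proposition \ref{p:comultip}: one has to verify that in each summand of the correction an element of $X^+$ or of $(X^+)^{(2)}$ is positioned so as to act on $v_{_\gb\mu}$ first, so that the highest-$\ell$-weight property of $v_{_\gb\mu}$ \emph{as an $\ust_\xi g$-vector}, not merely as an $\ust_\xi{g_i}$-vector, can be invoked to kill the correction.
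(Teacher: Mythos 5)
Your proposal is correct and is essentially the paper's own argument, which the authors dispatch in one line as straightforward from Proposition \ref{p:comultip} and Corollary \ref{c:Twv}: one identifies $v_{_\gb\lambda}^w\otimes v_{_\gb\mu}$ as a highest-$\ell$-weight vector for $\ust_\xi{g_i}$ of $\ell$-weight $(T_w\gb\lambda)_i\gb\mu_i$ (using Lemma \ref{l:wgroupstuff}(b) for the annihilation of $v_{_\gb\lambda}^w$, exactly as in the proof of Corollary \ref{c:Twdominant}) and then invokes the universal property of the Weyl module, finite-dimensionality supplying the remaining relations via Proposition \ref{p:drinpoly}. The only caveat --- glossed over by the paper just as by you --- is that at a root of unity $\ust_\xi{n^+_i}^0$ is generated by the divided powers $(x_{i,r}^+)^{(m)}$, so the annihilation should be checked for all $m\ge 1$, not just $m=1$: on $v_{_\gb\lambda}^w$ the same weight argument works because $w\lambda+m\alpha_i\notin P(\lambda)$ for every $m\ge 1$, while on the tensor product one needs the analogous coproduct congruences for divided powers (cf.\ Lemma 7.5 of \cite{cproot}, cited by the paper alongside Proposition \ref{p:comultip}), since Proposition \ref{p:comultip} itself only records the case $m=1$.
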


\begin{proof}
Straightforward using Proposition \ref{p:comultip} and Corollary \ref{c:Twv} (cf. \cite[Lemma 4.3]{vcbraid} for $\xi$ not a root of unity).
\end{proof}

\begin{lem}\label{l:lth=all}
Suppose $v_{_\gb\lambda}^{w_0}\otimes v_{_\gb\mu}\in\ust_\xi g(v_{_\gb\lambda}\otimes v_{_\gb\mu})$. Then, $V\otimes W = \ust_\xi g(v_{_\gb\lambda}\otimes v_{_\gb\mu})$. Moreover, if $\xi=\zeta, V=\ustfin g v_{_\gb\lambda}$,  $W=\ustfin g v_{_\gb\mu}$, and $v_{_\gb\lambda}\in \ustfin{n^+} v_{_\gb\lambda}^{w_0}$, then  $V\otimes W = \ustfin g(v_\gb\lambda\otimes v_\gb\mu)$.
\end{lem}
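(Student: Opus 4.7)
The plan is a two-step argument. First, I would establish $v_{_\gb\lambda}^{w_0}\otimes W\subseteq M$, writing $M:=\ust_\xi g(v_{_\gb\lambda}\otimes v_{_\gb\mu})$; second, I would bootstrap this to $V\otimes W\subseteq M$.

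For the first step, the key observation is that $w_0\lambda$ is the unique minimal weight in $P(\lambda)$ by Lemma \ref{l:wgroupstuff}(a), and $V_{w_0\lambda}$ is the one-dimensional $\ell$-weight space $V_{T_{w_0}\gb\lambda}$ by Corollary \ref{c:Twv}. Combining $\mathcal W$-invariance of $\ch(V)$ (Proposition \ref{p:Winv}) with \eqref{e:hw} shows that any weight of $V$ strictly below $w_0\lambda$ is zero, so every element of $X^-$ annihilates $v_{_\gb\lambda}^{w_0}$, while $\psi^{\pm}_{i,s}v_{_\gb\lambda}^{w_0}$ is a scalar multiple of $v_{_\gb\lambda}^{w_0}$. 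Applying Proposition \ref{p:comultip}(c) to $v_{_\gb\lambda}^{w_0}\otimes u$ for $u\in W$, the error terms in $\ust_\xi gX^-\otimes\ust_\xi gX^{+}+\ust_\xi g(X^-)^{(2)}\otimes\ust_\xi gX^{+}$ vanish because the first tensor slot kills $v_{_\gb\lambda}^{w_0}$, and one obtains
\[
\Delta(x_{i,r}^-)(v_{_\gb\lambda}^{w_0}\otimes u)\;=\;v_{_\gb\lambda}^{w_0}\otimes x_{i,r}^-u
\]
modulo scalar multiples of $v_{_\gb\lambda}^{w_0}\otimes u$, with an analogous identity for $(x_{i,r}^-)^{(m)}$. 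Iterating from the hypothesis $v_{_\gb\lambda}^{w_0}\otimes v_{_\gb\mu}\in M$ and using $W=\ust_\xi{n^-}v_{_\gb\mu}$ yields $v_{_\gb\lambda}^{w_0}\otimes W\subseteq M$.

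For the second step, introduce $T:=\{v\in V:\,v\otimes W\subseteq M\}$; by Step~1, $v_{_\gb\lambda}^{w_0}\in T$. The standard comultiplication of the finite part $\us_\xi g$ together with the invertibility of $k_i$ on $W$ makes it routine to show that $T$ is $\us_\xi g$-stable: for $y=x_i^{\pm},(x_i^{\pm})^{(m)},k_i^{\pm 1}$, one expresses the desired $yv\otimes w$ as $y(v\otimes w')-(\text{terms of the form }v\otimes w'')$, and varies $w'\in W$. By \eqref{e:lwlat}, taking a reduced expression $w_0=s_{i_N}\cdots s_{i_1}$ with associated integers $m_j$ gives $v_{_\gb\lambda}=(x_{i_1}^+)^{(m_1)}\cdots(x_{i_N}^+)^{(m_N)}v_{_\gb\lambda}^{w_0}\in\us_\xi(\lie n^+)v_{_\gb\lambda}^{w_0}\subseteq T$. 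To upgrade to $\ust_\xi g$-stability of $T$ (and hence $T=\ust_\xi gv_{_\gb\lambda}^{w_0}=V$, so $V\otimes W=M$), I would proceed by upward induction on the weight of $v\in T$: at weight $\mu$, Proposition \ref{p:comultip}(b)-(d) applied to a loop generator (say $x_{i,r}^-$, $h_{i,r}$, or $(x_{i,r}^-)^{(m)}$) acting on $v\otimes w$ isolates a term $x_{i,r}^-v\otimes(\text{scalar times }w)$ inside $M$, modulo terms $v'\otimes w'$ with $\wt(v')<\mu$ coming from the middle $\sum x_{i,r-j}^-v\otimes\psi^\pm_{i,j}w$ and the error $\ust_\xi gX^-\otimes\ust_\xi gX^{+}$ slots; by the induction hypothesis $v'\in T$, so these land in $M$, and $x_{i,r}^-v\in T$ follows by varying $w$.

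For the ``moreover'' part the same scheme applies verbatim with $\ust_\xi g$ replaced by $\ustfin g$ throughout; the hypothesis $v_{_\gb\lambda}\in\ustfin{n^+}v_{_\gb\lambda}^{w_0}$ supplies what \eqref{e:lwlat} would have given had divided powers $(x_i^+)^{(m)}$ with $m\ge l$ been available in the small algebra. The main obstacle I anticipate is the careful handling of the error terms in Proposition \ref{p:comultip}(b)-(d) during the weight induction in Step~2: whereas they vanish trivially on $v_{_\gb\lambda}^{w_0}$ because $X^-$ kills it, for a general $v\in T\cap V_\mu$ they produce nonzero combinations that must be placed in $M$ using the induction hypothesis, and organizing this bookkeeping—especially for the divided powers $(x_{i,r}^-)^{(m)}$, whose comultiplication is not spelled out explicitly in Proposition \ref{p:comultip}—is the delicate point.
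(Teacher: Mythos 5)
Your Step 1 and your climb from $v_{_\gb\lambda}^{w_0}$ to $v_{_\gb\lambda}$ using only the finite part (which suffices thanks to \eqref{e:lwlat} and the exact triangular coproducts of the $(x_i^{\pm})^{(m)}$, $i\in I$) are fine and close in spirit to the paper's argument. The genuine gap is in the claimed $\ust_\xi g$-stability of the transporter set $T=\{v\in V:\,v\otimes W\subseteq M\}$, $M=\ust_\xi g(v_{_\gb\lambda}\otimes v_{_\gb\mu})$, by ``upward induction on the weight of $v$''. Two things break. First, in the expansion of $\Delta(x_{i,r}^-)(v\otimes w)$ the middle terms $x_{i,r-j}^-v\otimes\psi^+_{i,j}w$ have first components of weight $\wt(v)-\alpha_i$, which is exactly the weight of the vector $x_{i,r}^-v$ you are trying to place in $T$; no induction on weight (upward or downward) says anything about them. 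In the paper these same-weight terms are handled by separate subinductions on the divided power $k_1$ and on the loop degree $r_1$, a device your scheme does not have. Second, the error terms in Proposition \ref{p:comultip} are only known to lie in spaces such as $\ust_\xi gX^-\otimes\ust_\xi gX^+$ (or with $(X^-)^{(2)}$), whose first tensor factors are \emph{arbitrary} elements of $\ust_\xi g$ multiplied on the right by a single lowering generator. Applied to a general $v\in T\cap V_\mu$ they produce vectors $u_1(x_1v)$ with $u_1\in\ust_\xi g$ arbitrary, so their weights are in no way bounded by $\mu$; the assertion ``terms $v'\otimes w'$ with $\wt(v')<\mu$'' is false, and your induction hypothesis cannot absorb them.

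The paper avoids both problems precisely because it never applies the coarse coproduct to an abstract element of a membership set: it applies it only to vectors of the explicit shape $(x_{i_1,r_1}^+)^{(k_1)}\cdots(x_{i_m,r_m}^+)^{(k_m)}v_{_\gb\lambda}^{w_0}$ (and, in the last step, monomials in the $(x_i^{\pm})^{(k)}$, $i\in\hat I$, applied to $v_{_\gb\lambda}$, whose coproducts are exact). For such vectors the rightmost lowering factor of an error term can be commuted through the raising monomial and annihilates $v_{_\gb\lambda}^{w_0}$, so every error contribution is again of the same shape with strictly smaller height, and the triple induction (height of the monomial, divided power, loop degree) closes. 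Recording only ``$v\in T$ and $\wt(v)=\mu$'' discards exactly the structural information that makes the error terms tractable. Note also that for the ``moreover'' statement your finite-part shortcut does not carry over verbatim: $\ustfin{n^+}$ is generated by all $x_{i,r}^+$, $r\in\mathbb Z$, without divided powers, so already the climb from $v_{_\gb\lambda}^{w_0}$ to $v_{_\gb\lambda}$ requires stability of $T$ under loop raising operators, i.e.\ precisely the unproved step. To repair the argument you would essentially have to replace the weight induction by the paper's induction on monomials.
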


\begin{proof}
The proof is a refinement of that of \cite[Lemma 4.2]{vcbraid}. Since $(x_{i,r}^-)^{(k)}v_{_\gb\lambda}^{w_0}=0$ for all $i\in I, r,k\in\mathbb Z, k>0$, it follows from Proposition \ref{p:comultip} that $(x_{i,r}^-)^{(k)}(v_{_\gb\lambda}^{w_0}\otimes v_{_\gb\mu}) = v_{_\gb\lambda}^{w_0}\otimes ((x_{i,r}^-)^{(k)}v_{_\gb\mu})$. Hence, $v_{_\gb\lambda}^{w_0}\otimes W\subseteq \ust_\xi g(v_{_\gb\lambda}\otimes v_{_\gb\mu})$. Similarly, if $\xi=\zeta, V=\ustfin g v_{_\gb\lambda}$, and $W=\ustfin g v_{_\gb\mu}$, then $v_{_\gb\lambda}^{w_0}\otimes W\subseteq \ustfin g(v_{_\gb\lambda}\otimes v_{_\gb\mu})$.

Next we prove that $v_{_\gb\lambda}\otimes W\subseteq \ust_\xi g(v_{_\gb\lambda}\otimes v_{_\gb\mu})$ and, under the hypothesis of the second statement, that $v_{_\gb\lambda}\otimes W\subseteq \ustfin g(v_{_\gb\lambda}\otimes v_{_\gb\mu})$. Since $v_{_\gb\lambda}\in \ust_\xi{n^+}v_{_\gb\lambda}^{w_0}$ (respectively, $v_{_\gb\lambda}\in \ustfin{n^+}v_{_\gb\lambda}^{w_0}$), it suffices to show that
\begin{equation}\label{e:lth=all}
\left((x_{i_1,r_1}^+)^{(k_1)}\cdots (x_{i_m,r_m}^+)^{(k_m)}\ v_{_\gb\lambda}^{w_0}\right)\otimes W \subseteq \ust_\xi g(v_{_\gb\lambda}\otimes v_{_\gb\mu})
\end{equation}
for all $m\in\mathbb Z_{\ge 0}, k_j\in\mathbb Z_{>0}, i_j\in I, r_j\in\mathbb Z$ (for the second statement it suffices to prove this with $k_j=1$ for all $j=1,\dots,m$ and with \ustfin{g} in place of \ust_\xi g). We proceed by induction on the height of $\sum_j k_j\alpha_{i_j}$. Induction clearly starts for $m=0$. We shall write the proof for the first statement only since the second one is proved similarly.

Let $v=(x_{i_2,r_2}^+)^{(k_2)}\cdots (x_{i_m,r_m}^+)^{(k_m)}\ v_{_\gb\lambda}^{w_0}$ and assume, by induction hypothesis, that $v\otimes W\subseteq$ \ust_\xi g $(v_{_\gb\lambda}\otimes v_{_\gb\mu})$. We now prove that $((x_{i_1,r_1}^+)^{(k_1)}v)\otimes W\subseteq \ust_\xi g(v_{_\gb\lambda}\otimes v_{_\gb\mu})$. We consider only the case $r_1\ge 0$ since the case $r_1<0$ is similar. The proof is by  induction on $k_1$ with a further subinduction on $r_1$. Let $v'\in W$. By Proposition \ref{p:comultip} we have
\begin{align*}
(x_{i_1,r_1}^+)^{(k_1)}(v\otimes v') = ((x_{i_1,r_1}^+)^{(k_1)}v)\otimes v' + \varpi+\varpi'
\end{align*}
where $\varpi$ is a sum of vectors which belong to
$$\left((x_{i_1',r_1'}^+)^{(k_1')}\cdots (x_{i_{m'}',r_{m'}'}^+)^{(k_{m'}')}\ v_{_\gb\lambda}^{w_0}\right)\otimes W$$
with the height of $\sum_{j=1}^{m'} k_j'\alpha_{i_j'}$ strictly smaller than that of $\sum_{j=1}^m k_j\alpha_{i_j}$, while $\varpi'$ is a sum of vectors belonging to
$$\left((x_{i_1,r_1}^+)^{(k)}(x_{i_1,s_1}^+)^{(k_1')}\cdots (x_{i_1,s_{m'}}^+)^{(k_{m'}')}\ v\right)\otimes W$$
with $0\le k<k_1, 0\le s_j<r_1$ and $k+\sum_{j=1}^{m'} k_j'=k_1$. Hence, by the induction hypothesis on the height we have $\varpi\in \ust_\xi g(v_{_\gb\lambda}\otimes v_{_\gb\mu})$, while the same is true for $\varpi'$ by the subinduction hypothesis on $k_1$ and $r_1$ (observe that if $k_1=1$ then $k=0$ and if $r_1=0$ then $\varpi'=0$ which shows that both subinductions start). Since $(x_{i_1,r_1}^+)^{(k_1)}(v\otimes v')$ obviously belongs to $\ust_\xi g(v_{_\gb\lambda}\otimes v_{_\gb\mu})$, it follows that $((x_{i_1,r_1}^+)^{(k_1)}v)\otimes v'\in \ust_\xi g(v_{_\gb\lambda}\otimes v_{_\gb\mu})$ for all $v'\in W$. This completes the proof of \eqref{e:lth=all}.

It now suffices to show that
\begin{equation}\label{e:lth=allc}
\left((x_{i_1}^{\epsilon_1})^{(k_1)}\cdots (x_{i_m}^{\epsilon_m})^{(k_m)}\ v_{_\gb\lambda}\right)\otimes W \subseteq \ust_\xi g(v_{_\gb\lambda}\otimes v_{_\gb\mu})
\end{equation}
for all $m\in\mathbb Z_{\ge 0}, k_j\in\mathbb Z_{>0}, i_j\in\hat I$, and $\epsilon_j\in\{+,-\}$.  The proof of \eqref{e:lth=allc} is identical to the proof of \eqref{e:lth=all} replacing $(x_{i_j,r_j}^+)^{(k_j)}$ by $(x_{i_j}^{\epsilon_j})^{(k_j)}$ and $v_{_\gb\lambda}^{w_0}$ by $v_{_\gb\lambda}$.
\end{proof}

The next lemma is easily established.

\begin{lem}\label{l:ressl2}
Let $\gb\lambda\in\cal P_\xi^+$ and let $v$ be a highest-$\ell$-weight vector of $V_\xi(\gb\lambda)$. Then $\ust_\xi{g_i}v$ is an irreducible \ust_\xi{g_i}-module for all $i\in I$.\hfill\qedsymbol
\end{lem}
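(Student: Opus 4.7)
The plan is to argue by contradiction: assuming $K\subsetneq M:=\ust_\xi{g_i}v$ is a nonzero proper $\ust_\xi{g_i}$-submodule, I will exploit the irreducibility of $V:=V_\xi(\gb\lambda)$ as a $\ust_\xi g$-module to reach a contradiction. Set $\lambda=\wt(\gb\lambda)$. First I would pin down the weight-structure of $M$: since $\ust_\xi{g_i}$ shifts $\lie h$-weight only by multiples of $\alpha_i$, since $x_{i,r}^+v=0$ (as $v$ is $\ust_\xi g$-highest), and since $(x_{i,r}^-)^{(k)}v=0$ for $k>\lambda(h_i)$ by Proposition \ref{p:drinpoly}, the weights of $M$ lie in the string $\{\lambda-k\alpha_i:0\le k\le\lambda(h_i)\}$, and $K$ correspondingly decomposes by $\lie h$-weight in this string.

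Since $v\notin K$ (else $K\supseteq\ust_\xi{g_i}v=M$) and $K\ne 0$, the irreducibility of $V$ forces $\ust_\xi g\cdot K=V$, so I may write $v=\sum_j u_j k_j$ with weight-homogeneous $u_j\in\ust_\xi g$ and $k_j\in K$. Projecting onto $V_\lambda=\mathbb Cv$ picks up only those terms with $u_j$ of weight $m_j\alpha_i$ and $k_j\in K\cap M_{\lambda-m_j\alpha_i}$ for some $m_j\in\{0,\dots,\lambda(h_i)\}$. Next, using the PBW factorization $\ust_\xi g\cong\ust_\xi{n^-}\otimes\ust_\xi h\otimes\ust_\xi{n^+}$, I would write each such $u_j=\sum u^-_\alpha u^0 u^+_\beta$ with $u^-_\alpha\in\ust_\xi{n^-}$, $u^0\in\ust_\xi h$, $u^+_\beta\in\ust_\xi{n^+}$ of weights $-\alpha,0,\beta$ respectively (so $\alpha,\beta\in Q^+$ with $\beta-\alpha=m_j\alpha_i$). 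Applying $u^+_\beta$ to $k_j$ lands in $V_{\lambda+\alpha}$; since $\lambda$ is the top weight of $V$, this is nonzero only when $\alpha=0$, forcing $\beta=m_j\alpha_i$. The crucial combinatorial input is that by $\mathbb Z$-linear independence of the simple roots in $Q$, every weight-$m_j\alpha_i$ element of $\ust_\xi{n^+}$ is a polynomial in the $x_{i,r}^+$ alone, hence lies in $\ust_\xi{n^+_i}\subseteq\ust_\xi{g_i}$.

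Consequently $u^+_{m_j\alpha_i}k_j\in\ust_\xi{g_i}\cdot K\subseteq K$, while simultaneously $u^+_{m_j\alpha_i}k_j\in V_\lambda=\mathbb Cv$; since $v\notin K$ the intersection is zero, so $u^+_{m_j\alpha_i}k_j=0$ and therefore $\pi_\lambda(u_j k_j)=u^0\cdot 0=0$ for every $j$. Summing gives $v=\pi_\lambda(v)=0$, the desired contradiction, forcing $K=0$ and hence the irreducibility of $M$. The main potential obstacle is the weight bookkeeping in the middle paragraph---isolating which PBW-monomials contribute to $V_\lambda$ and identifying the surviving raising factor as living inside $\ust_\xi{g_i}$---but both points are elementary consequences of $\mathbb Z$-linear independence of the simple roots. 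Note that the argument is uniform in $\xi\in\mathbb C'$ and uses neither the lowest-$\ell$-weight structure of $V$ nor any root-of-unity-specific tool.
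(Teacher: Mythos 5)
Your proof is correct, and since the paper gives no argument for this lemma beyond declaring it "easily established", your contradiction argument is exactly the standard justification it has in mind: a proper $\ust_\xi{g_i}$-submodule $K$ of $\ust_\xi{g_i}v$ cannot contain $v$, and by the triangular decomposition together with the $Q$-grading only the weight-$m\alpha_i$ part of $\ust_\xi{n^+}$ — which lies in $\ust_\xi{g_i}$ and hence preserves $K$ — could return $K$ to the top weight space, so $\ust_\xi gK$ meets $V_\xi(\gb\lambda)_{\wt(\gb\lambda)}$ trivially, contradicting irreducibility. The two points you gloss over are routine but worth stating: $K$ decomposes along the weights $\lambda-m\alpha_i$ because at $\xi=\zeta$ the operators $k_i$ and $\tqbinom{k_i}{l}$ jointly separate these weights, and "weight-homogeneous $u_j$" must be understood via the $Q$-grading of $\ust_\xi g$ coming from the generators (not via eigenvalues of conjugation by the $k_j$, which do not distinguish weights at a root of unity).
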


\subsection{The $\lie{sl}_2$-case}

Throughout this subsection we let $\lie g=\lie{sl}_2$ and write $I=\{i\}$.

\begin{proof}[Proof of Theorem \ref{t:hlwtensor} for $\lie g=\lie{sl}_2$]
If $\xi=1$ this is part of Theorem \ref{t:tev} and if $\xi$ has infinite order this follows from \cite[Lemma 4.10]{cp:qaa}. Thus, assume $\xi=\zeta$. We first prove  the second statement.  We proceed by induction on $m$ observing that there is nothing to prove when $m=1$. Let $v_j, j=1,\dots,m$, be highest-$\ell$-weight vectors for $V_\zeta(\gb\lambda_j)$ and $v'=v_2\otimes\cdots\otimes v_m$ . By the induction hypothesis, $V'=\ustfin g v'$.

The proof runs parallel to that of \cite[Lemma 4.10]{cp:qaa}. Let $I=\{i\}$. By Theorem \ref{t:qtev}, it suffices to consider the case $\gb\lambda_j = \gb\omega_{i,a_j,n_j}$ for all $j=1,\dots,m$ and some $a_j\in\mathbb C^\times$ and $0<n_j<l$. Given $k\in\{0,\dots,n_j\}$, set $v_j^k=(x_i^-)^kv_j$ so that $\{v_j^k:k=0,\dots,n_1\}$ is a basis of $V(\gb\lambda_j)$. By Lemma \ref{l:lth=all}, it suffices to show that
\begin{equation}\label{e:hlwtensorsl2}
v_1^{k+1}\otimes v'\in \ustfin g(v_1^k\otimes v') \quad\text{for all}\quad k=0,\dots,n_1-1.
\end{equation}
It is not difficult to see, using Proposition \ref{p:comultip}, that there exist $a_{r,s}\in\mathbb C$ such that
\begin{equation}
x_{i,r}^-(v_1^k\otimes v') = \sum_{s=1}^m a_{r,s} w_s
\end{equation}
where $w_1 = v_1^{k+1}\otimes v', w_2=v_1^k\otimes v_2^1\otimes v_3\otimes\cdots\otimes v_m, \dots, w_m=v_1^k\otimes v_2\otimes\cdots\otimes v_{m-1}\otimes v_m^1$. In fact, $a_{r,s}$ can be explicitly computed using Lemma \ref{l:sl2actev}. It turns out that the determinant of the matrix $(a_{r,s}), 0\le r,s\le m$, is different from zero iff $a_1/a_j\ne \zeta^{-(n_1+n_j-2k)}$ and $a_j/a_\ell\ne \zeta^{-(n_j+n_\ell)}$ for all $1<j<\ell\le m$ (see the proof of \cite[Lemma 4.10]{cp:qaa}). Since $(\gb\lambda_1,\dots,\gb\lambda_m)$ is in $\xi$-resonant order, \eqref{e:hlwtensorsl2} follows.

To prove the first statement, write $\gb\lambda_j = \gb\lambda_j'\phi_l(\gb\lambda_j'')$ as in \eqref{e:frobfactor} and notice that Theorem \ref{t:frobtensor}(b) and Corollary \ref{c:compredt}(a) imply that
$$V_\zeta(\gb\lambda_1)\otimes\cdots\otimes V_\zeta(\gb\lambda_m) \cong V_\zeta(\gb\lambda_1')\otimes\cdots\otimes V_\zeta(\gb\lambda_m')\otimes V_\zeta(\phi_l(\gb\mu))$$
where $\gb\mu = \prod_{j=1}^m \gb\lambda_j''$. Set $V=V_\zeta(\gb\lambda_1')\otimes\cdots\otimes V_\zeta(\gb\lambda_m')$ and $\gb\lambda=\prod_{j=1}^m\gb\lambda_j'$. Since we have already proved the ``moreover'' part of the theorem, it follows that $V$ is a highest-$\ell$-weight module of highest-$\ell$-weight $\gb\lambda$. Let $v_{_\gb\lambda}, v_{_\gb\mu}$ be highest-$\ell$-weight vectors of $V$ and $W=V_\zeta(\phi_l(\gb\mu))$, respectively. By the second statement of Theorem \ref{t:frobtensor}(b), $x_{i,r}^-$ acts trivially on $W$ and, hence, $V\otimes v_{_\gb\mu}\in\ustfin g(v_{_\gb\lambda}\otimes v_{_\gb\mu})$. We are done by Lemma \ref{l:lth=all}.
\end{proof}

\begin{cor}\label{c:hlwtfund}
Let $m\in\mathbb Z_{>0}, a_1,\dots, a_m\in\mathbb C(\xi)^\times$. Then $V=V_\xi(\gb\omega_{i,a_1})\otimes \cdots\otimes V_\xi(\gb\omega_{i,a_m})$ is highest-$\ell$-weight iff $a_j/a_k\ne \xi^{-2}$ for all $j<k$.
\end{cor}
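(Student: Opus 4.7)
For the ``if'' direction, note that since $\lie g=\lie{sl}_2$ the remark following Definition~\ref{d:resonant} collapses the defining conditions of $\xi$-resonant order to condition (a) alone; for a pair of fundamental $\ell$-weights $(\gb\omega_{i,a_j},\gb\omega_{i,a_k})$ this amounts to $(f_{a_j,1},f_{a_k,1})$ being in $\xi$-resonant order, which by the remark following Lemma~\ref{l:regular} is equivalent to $a_j/a_k\ne\xi^{-2}$. Hence the hypothesis places $(\gb\omega_{i,a_1},\dots,\gb\omega_{i,a_m})$ in $\xi$-resonant order and Theorem~\ref{t:hlwtensor} immediately yields that $V$ is highest-$\ell$-weight (the case $\xi=1$ follows identically using the $\xi=1$ clause of Definition~\ref{d:resonant}).

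For the converse I will argue by contradiction. Suppose $V$ is highest-$\ell$-weight and, for each $s$, let $v_s$ be a highest-$\ell$-weight vector of $V_\xi(\gb\omega_{i,a_s})$; set $v:=v_1\otimes\cdots\otimes v_m$, $\gb\lambda:=\prod_s\gb\omega_{i,a_s}$, and $w_s:=v_1\otimes\cdots\otimes x_i^- v_s\otimes\cdots\otimes v_m$. Then $\{w_1,\dots,w_m\}$ is a basis of the weight space $V_{\wt(\gb\lambda)-\alpha_i}$, which consequently has dimension $m$. The PBW decomposition $\ust_\xi g\cong \ust_\xi{n^-}\otimes\ust_\xi h\otimes\ust_\xi{n^+}$, combined with the fact that $\ust_\xi{n^+}^0 v=0$ and the observation that the weight $-\alpha_i$ subspace of $\ust_\xi{n^-}$ is spanned by $\{x_{i,r}^-:r\in\mathbb Z\}$, implies that $(\ust_\xi g\cdot v)_{\wt(\gb\lambda)-\alpha_i}$ coincides with the $\mathbb C$-linear span of $\{x_{i,r}^- v:r\in\mathbb Z\}$. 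Thus, assuming $V=\ust_\xi g\cdot v$ forces this span to have dimension exactly $m$.

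Writing $x_{i,r}^- v=\sum_s A_{r,s}w_s$, one computes the coefficients $A_{r,s}$ by iterating Proposition~\ref{p:comultip}(c) (and its counterpart for $r<0$), together with Lemma~\ref{l:sl2actev} and the known eigenvalues of $\psi_{i,j}^+$ and $\psi_{i,-j}^-$ on fundamental highest-$\ell$-weight vectors. The resulting positive-degree generating series $G_s(u):=\sum_{r\ge 1}A_{r,s}u^r$ is a rational function whose denominator is divisible by $\prod_{k\le s}(1-a_k\xi u)$ and whose numerator carries the factors $(1-a_k\xi^{-1}u)$ contributed by the positions $k<s$ (and analogous expressions hold for $r<0$). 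A Vandermonde-type / determinant analysis --- precisely the special case $n_s=1$ of the determinant computation already performed in the proof of Theorem~\ref{t:hlwtensor} for $\lie g=\lie{sl}_2$, ultimately due to \cite[Lemma 4.10]{cp:qaa} --- then shows that the rank of the matrix $(A_{r,s})_{r\in\mathbb Z,\,s}$ equals $m$ if and only if $a_j/a_k\ne\xi^{-2}$ for every $j<k$: if $a_j/a_k=\xi^{-2}$ with $j<k$, then the numerator factor $(1-a_k\xi^{-1}u)$ in $G_s(u)$ for $s>j$ cancels the denominator factor $(1-a_j\xi u)$, imposing a linear relation among the $G_s$ and dropping the rank strictly below $m$. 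The main technical point is precisely this generating-function / determinant bookkeeping; since it is inherited essentially unchanged from the proof of Theorem~\ref{t:hlwtensor}, it can be quoted from there rather than reproduced.
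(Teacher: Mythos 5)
Your ``if'' direction is fine and is exactly the paper's: the remark following the definition of $\xi$-resonant order identifies the condition $a_j/a_k\ne\xi^{-2}$ for $j<k$ with $(\gb\omega_{i,a_1},\dots,\gb\omega_{i,a_m})$ being in $\xi$-resonant order, and Theorem \ref{t:hlwtensor} (already proved for $\lie{sl}_2$) applies.

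The converse, however, has a genuine gap. You correctly reduce to showing that $\operatorname{span}\{x_{i,r}^-v:r\in\mathbb Z\}$ has dimension $<m$ when some $a_j/a_k=\xi^{-2}$, but you then assert that this rank statement for the \emph{full} matrix $(A_{r,s})_{r\in\mathbb Z}$ is ``precisely'' the determinant computation in the proof of Theorem \ref{t:hlwtensor} and can be quoted from there. It cannot: that computation (from \cite[Lemma 4.10]{cp:qaa}) evaluates one specific finite minor, namely the $m\times m$ matrix built from the rows $r=0,\dots,m-1$, and is used there only in the direction ``resonance hypothesis $\Rightarrow$ determinant nonzero.'' When the hypothesis fails, the vanishing of that single minor says nothing about the infinitely many remaining rows ($r<0$ and $r\ge m$), so a priori the rank could still be $m$; to rule this out along your lines you would have to exhibit one linear relation $\sum_s c_sA_{r,s}=0$ valid simultaneously for all $r\in\mathbb Z$, i.e.\ a common relation among the positive-mode, zero-mode, and negative-mode generating series (the latter involving the $\psi^-_{i,-j}$-eigenvalues), which is a new computation not contained in the quoted proof and which you only gesture at. (Your bookkeeping is also reversed: by Proposition \ref{p:comultip}(c) the $\psi$-factors land on the tensor slots to the \emph{right} of the slot receiving $x^-_{i,r}$, so the cancellation in $G_s$ involves positions $k>s$, not $k<s$.) The paper closes exactly this hole differently: if $V$ were highest-$\ell$-weight it would be a quotient of $W_\xi(\gb\lambda)$, and from the proof of Theorem \ref{t:weylm} the weight space $W_\xi(\gb\lambda)_{(m-2)\omega_i}$ is spanned by $x^-_{i,r}w$ with $r=0,\dots,m-1$ only; hence only the finite $m\times m$ determinant matters, and its vanishing under $a_j/a_k=\xi^{-2}$ (which is what the $\lie{sl}_2$ proof of Theorem \ref{t:hlwtensor} does supply) already forces $\dim(\ust_\xi g\,v)_{(m-2)\omega_i}<m=\dim V_{(m-2)\omega_i}$. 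Either import that Weyl-module finiteness step, or actually carry out the all-modes generating-function argument; as written, neither is done.
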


\begin{proof}
The ``if'' direction is a particular case of the previous proposition while the ``only if'' direction is a corollary of the proof. In fact, suppose $a_s/a_{s'}=\xi^{-2}$ for some $s,s'$, let $v_j$ be a highest-$\ell$-weight vector of $V(\gb\omega_{i,a_j}), v=v_1\otimes\cdots v_m$, and $\gb\lambda=\prod_{j=1}^m \gb\omega_{i,a_j}$. Let also $w$ be a highest-$\ell$-weight vector for $W_\xi(\gb\lambda)$. It follows from the proof of Theorem \ref{t:weylm} that $W_\xi(\gb\lambda)_{(m-2)\omega_i}$ is spanned by $\{x_{i,r}^-w: r=0,\dots,m-1\}$. Let $V'$ be the submodule of $V$ generated by $v$. Since $V'$ is a quotient of $W_\xi(\gb\lambda)_{(m-2)\omega_i}$ and $\dim(V_{(m-2)\omega_i})=m$, it suffices to show that the set $\{x_{i,r}^-v: r=0,\dots,m-1\}$ is linearly dependent. But this was done as part of the above proof for the $\lie{sl}_2$-case of Theorem \ref{t:hlwtensor}.
\end{proof}

\begin{cor}\label{c:wtfundr}
Let $\gb\lambda\in\cal P_\xi^+$. Then, $W_\xi(\gb\lambda)$ is isomorphic to a tensor product of fundamental representations iff $\gb\lambda$ is $\xi$-regular. Moreover, in that case, $W_\xi(\gb\lambda)=\ustfin{sl_2}W_\xi(\gb\lambda)_{\wt(\gb\lambda)}$.
\end{cor}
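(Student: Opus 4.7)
The plan is to combine Theorem \ref{t:hlwtensor} (just established in the $\lie{sl}_2$ case) with Corollary \ref{c:hlwtfund} and the dimension count from Theorem \ref{t:lwmdim}. Write $I=\{i\}$ and $\gb\lambda=\prod_{j=1}^m\gb\omega_{i,a_j}$, so $\wt(\gb\lambda)=m\omega_i$. For $\lie{sl}_2$, by the remark following Definition \ref{d:resonant}, condition (b) of that definition is equivalent to condition (a) and condition (c) is vacuous; thus a tuple $(\gb\omega_{i,b_1},\dots,\gb\omega_{i,b_m})$ is in $\xi$-resonant order precisely when $b_j/b_k\ne\xi^{-2}$ for every $j<k$, matching exactly the condition appearing in Corollary \ref{c:hlwtfund}.

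For the ``$\Leftarrow$'' direction, assume $\gb\lambda$ is $\xi$-regular and pick $\sigma\in S_m$ as in Definition \ref{d:resonant}, so that $(\gb\omega_{i,a_{\sigma(1)}},\dots,\gb\omega_{i,a_{\sigma(m)}})$ is in $\xi$-resonant order. Theorem \ref{t:hlwtensor} then gives that $V:=V_\xi(\gb\omega_{i,a_{\sigma(1)}})\otimes\cdots\otimes V_\xi(\gb\omega_{i,a_{\sigma(m)}})$ is a highest-$\ell$-weight \ust_\xi{sl_2}-module of highest $\ell$-weight $\gb\lambda$, whence by the universal property of Theorem \ref{t:weylm} there is a surjection $W_\xi(\gb\lambda)\twoheadrightarrow V$. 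Each tensor factor has dimension $2$, so $\dim V=2^m$; meanwhile Theorem \ref{t:lwmdim} yields $\ch(W_\xi(\gb\lambda))=\ch(V_q(\gb\omega_{i,1}))^m$, and $V_q(\gb\omega_{i,1})$ is the $2$-dimensional fundamental evaluation module, so $\dim W_\xi(\gb\lambda)=2^m$. The surjection is therefore an isomorphism.

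For the ``$\Rightarrow$'' direction, suppose $W_\xi(\gb\lambda)\cong V_\xi(\gb\omega_{i,b_1})\otimes\cdots\otimes V_\xi(\gb\omega_{i,b_m})$. The left side is highest-$\ell$-weight by definition, hence so is the right side, and its highest $\ell$-weight (the product of the factors' highest $\ell$-weights, given by Proposition \ref{p:comultip}(d)) must equal $\gb\lambda$; comparing polynomials identifies $(b_1,\dots,b_m)$ as a permutation of $(a_1,\dots,a_m)$. Corollary \ref{c:hlwtfund} applied to the right side forces $b_j/b_k\ne\xi^{-2}$ for all $j<k$, and by the pairwise characterization noted in the first paragraph this exhibits $(\gb\omega_{i,b_1},\dots,\gb\omega_{i,b_m})$ as a $\xi$-resonantly ordered rearrangement of the factors of $\gb\lambda$, so $\gb\lambda$ is $\xi$-regular.

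For the ``moreover'' clause the only substantive case is $\xi=\zeta$, since otherwise $\ustfin{sl_2}=\ust_\xi{sl_2}$ and the statement is tautological. Each fundamental $\ell$-weight $\gb\omega_{i,a_j}$ corresponds to the linear polynomial $1-a_ju$, which is a single quantum $\xi$-factor and hence lies in $\cal P_l^+$. The second assertion of Theorem \ref{t:hlwtensor} therefore applies and gives $V=\ustfin{sl_2}\cdot V_{\wt(\gb\lambda)}$; transporting this across the isomorphism $W_\xi(\gb\lambda)\cong V$ established in the forward direction yields the claimed equality $W_\xi(\gb\lambda)=\ustfin{sl_2}W_\xi(\gb\lambda)_{\wt(\gb\lambda)}$. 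No step here is a genuine obstacle: the substantive content lives inside Theorem \ref{t:hlwtensor} and Corollary \ref{c:hlwtfund}, and the only care required is verifying that in the $\lie{sl}_2$ setting $\xi$-regularity, $\xi$-resonant order, and the pairwise condition $b_j/b_k\ne\xi^{-2}$ all coincide.
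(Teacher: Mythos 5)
Your proof is correct and follows essentially the same route as the paper: the equivalence, via Theorem \ref{t:lwmdim}, between the existence of an isomorphism and the tensor product being highest-$\ell$-weight, combined with Corollary \ref{c:hlwtfund}/Theorem \ref{t:hlwtensor} and the dictionary between $\xi$-regularity and $\xi$-resonant orderings, and the second statement of Theorem \ref{t:hlwtensor} (with $\gb\omega_{i,a}\in\cal P_l^+$) for the ``moreover'' clause. The only differences are presentational, e.g.\ you spell out the identification of the parameters $(b_1,\dots,b_m)$ with a permutation of $(a_1,\dots,a_m)$, which the paper leaves implicit.
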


\begin{proof}
Write $\gb\lambda=\prod_{j=1}^m\gb\omega_{i,a_j}$. Then $W_\xi(\gb\lambda)$ is a tensor product of fundamental representations iff it is isomorphic to $V_\xi(\gb\omega_{i,a_{\sigma(1)}})\otimes \cdots\otimes V_\xi(\gb\omega_{i,a_{\sigma(m)}})$ for some $\sigma\in S_m$. By Theorem \ref{t:lwmdim}, the dimension of such tensor products coincides with that of  $W_\xi(\gb\lambda)$. Hence, an isomorphism exists iff the tensor product is highest-$\ell$-weight. If $\gb\lambda$ is not $\xi$-regular, then the previous corollary and Lemma \ref{l:regular} imply that, for any choice of $\sigma$, such tensor product is not highest-$\ell$-weight and, hence, cannot be isomorphic to $W_\xi(\gb\lambda)$. Conversely, if $\gb\lambda$ is $\xi$-regular, then the previous corollary and Lemma \ref{l:regular} imply that  there exists $\sigma$ for which such tensor product is highest-$\ell$-weight.

The second statement follows from the $\lie{sl}_2$-case of Theorem \ref{t:hlwtensor} since $\gb\omega_{i,a}\in \cal P_l^+$ for all $a$.
\end{proof}

\subsection{Proof of Theorem \ref{t:hlwtensor} for roots of unity in the general case}
The first statement is deduced from the second in the same manner as done for the $\lie{sl}_2$-case in the previous subsection. The proof of the second statement runs parallel to the one for the generic case with a few modifications. We again proceed by induction on $m$ and observe that there is nothing to prove when $m=1$. Once more, let $v_j, j=1,\dots,m$ be highest-$\ell$-weight vectors for $V_\zeta(\gb\lambda_j)$ and $v'=v_2\otimes\cdots\otimes v_m$. By the induction hypothesis, $V'=\ustfin g v'$.

Let $s_{i_N}\cdots s_{i_1}$ be a reduced expression for $w_0$ satisfying the properties in the definition of $\zeta$-regularity. By Lemma \ref{l:lth=all}, it suffices to show that
\begin{equation}\label{e:hlwtensorr}
v_1^{s_{i_j}\cdots s_{i_1}}\otimes v' \in \ustfin{g_{i_j}} (v_1^{s_{i_{j-1}}\cdots s_{i_1}}\otimes v') \quad\text{for all}\quad j=1,\dots,N.
\end{equation}
Since $\gb\lambda_k\in \cal P_l^+$, Proposition \ref{p:lufinir} and Lemma \ref{l:ressl2} imply that $\ustfin {g_{i_j}}v_k=\ustfin {g_{i_j}}v_k$ is an irreducible \ustfin {g_{i_j}}-module of highest-$\ell$-weight $(\gb\lambda_k)_{i_j}$. By condition (a) of the definition of $\zeta$-resonant ordering, the $(m-1)$-tuple $((\gb\lambda_2)_{i_j}, \dots, (\gb\lambda_m)_{i_j})$ of polynomials is in $\zeta_{i_j}$-resonant order. In particular, it follows from the $\lie{sl}_2$-case that
$$\ustfin{g_{i_j}}v' = (\ustfin{g_{i_j}} v_2)\otimes\cdots\otimes (\ustfin{g_{i_j}} v_m).$$
On the other hand, $\ustres {g_{i_j}}v_1^{s_{i_{j-1}}\cdots s_{i_1}}$ is a quotient of $W_\zeta((T_{i_{j-1}}\cdots T_{i_1}\gb\lambda_1)_{i_j})$ by Lemma \ref{l:TqW}. By condition (c) of the definition of $\zeta$-resonant ordering, the polynomial $(T_{i_{j-1}}\cdots T_{i_1}\gb\lambda_1)_{i_j}$ is $\zeta_{i_j}$-regular. Therefore, by Corollary \ref{c:wtfundr}, $W_\zeta((T_{i_{j-1}}\cdots T_{i_1}\gb\lambda_1)_{i_j})$ is isomorphic to a tensor product of the form $V_\zeta(\gb\omega_{i_j,a_1})\otimes\cdots\otimes V_\zeta(\gb\omega_{i_j,a_k})$
for some $a_1,\dots,a_k\in\mathbb C^\times$ and where $k=\wt(T_{i_{j-1}}\cdots T_{i_1}\gb\lambda_1)(h_{i_j})$. Here the ordering of the tensor product is chose so that $(\gb\omega_{i_j,a_1},\dots,\gb\omega_{i_j,a_k})$ is a tuple of polynomials in $\zeta_{i_j}$-resonant ordering. Moreover, by the $\lie{sl}_2$-case, this tensor product is generated by the action of $\ustfin{g_{i_j}}$ on the highest weight space. Condition (b) of the definition of $\zeta$-resonant ordering says that tuple of polynomials $\left((T_{i_{j-1}}\cdots T_{i_1}\gb\lambda_1)_{i_j}, (\gb\lambda_2)_{i_j}, \dots, (\gb\lambda_m)_{i_j}\right)$ is in $\zeta_{i_j}$-resonant order. One easily checks that this implies that $(\gb\omega_{i_j,a_1},\dots,\gb\omega_{i_j,a_k},$ $(\gb\lambda_2)_{i_j}, \dots, (\gb\lambda_m)_{i_j})$
is also a tuple of polynomials in $\zeta_{i_j}$-resonant order. It then follows from the $\lie{sl}_2$-case  that
$$\ustfin{g_{i_j}} (v_1^{s_{i_{j-1}}\cdots s_{i_1}}\otimes v') = (\ustfin{g_{i_j}} v_1^{s_{i_{j-1}}\cdots s_{i_1}})\otimes (\ustfin{g_{i_j}}v')$$
which implies \eqref{e:hlwtensorr}.\hfill\qedsymbol

\begin{cor}\label{c:wtfundrgen}
Suppose $\gb\lambda=\prod_{j=1}^m\gb\omega_{i_j,a_j}\in\cal P_\xi^+$ is $\xi$-regular and that $W_\xi(\gb\omega_{i_j,a_j})$ is irreducible for all $j=1,\dots,m$. Then, $W_\xi(\gb\lambda)$ is isomorphic to a tensor product of fundamental representations and $W_\xi(\gb\lambda)=\ustfin{g}W_\xi(\gb\lambda)_{\wt(\gb\lambda)}$.
\end{cor}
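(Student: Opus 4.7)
The plan is to combine the tensor product theorem (Theorem \ref{t:hlwtensor}) with the universal property and dimension formula of Weyl modules (Theorems \ref{t:weylm} and \ref{t:lwmdim}) to promote a surjection to an isomorphism, along the same lines as the second half of the proof of Corollary \ref{c:wtfundr}. Since $\gb\lambda$ is $\xi$-regular, by definition there exists $\sigma\in S_m$ such that the $m$-tuple $(\gb\omega_{i_{\sigma(1)},a_{\sigma(1)}},\dots,\gb\omega_{i_{\sigma(m)},a_{\sigma(m)}})$ is in $\xi$-resonant order. Theorem \ref{t:hlwtensor} then tells us that
\begin{equation*}
T := V_\xi(\gb\omega_{i_{\sigma(1)},a_{\sigma(1)}})\otimes\cdots\otimes V_\xi(\gb\omega_{i_{\sigma(m)},a_{\sigma(m)}})
\end{equation*}
is a finite-dimensional highest-$\ell$-weight module of highest $\ell$-weight $\prod_{j=1}^m \gb\omega_{i_{\sigma(j)},a_{\sigma(j)}}=\gb\lambda$. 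By the universal property in Theorem \ref{t:weylm} this yields a surjection $\pi\colon W_\xi(\gb\lambda)\twoheadrightarrow T$.

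Next, I will compare dimensions. Writing $\wt(\gb\lambda)=\sum_{i\in I}s_i\omega_i$ (so $s_i=\#\{j:i_j=i\}$), Theorem \ref{t:lwmdim} gives
\begin{equation*}
\dim W_\xi(\gb\lambda)=\prod_{i\in I}\bigl(\dim V_q(\gb\omega_{i,1})\bigr)^{s_i}=\prod_{j=1}^m\dim V_q(\gb\omega_{i_j,1}).
\end{equation*}
Applied to each fundamental $\ell$-weight, the same theorem gives $\dim W_\xi(\gb\omega_{i_j,a_j})=\dim V_q(\gb\omega_{i_j,1})$; and by the irreducibility hypothesis $W_\xi(\gb\omega_{i_j,a_j})=V_\xi(\gb\omega_{i_j,a_j})$. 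Multiplying out, $\dim T=\prod_j\dim V_\xi(\gb\omega_{i_j,a_j})=\dim W_\xi(\gb\lambda)$, so $\pi$ must be an isomorphism.

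For the final assertion $W_\xi(\gb\lambda)=\ustfin g\,W_\xi(\gb\lambda)_{\wt(\gb\lambda)}$, I will invoke the second part of Theorem \ref{t:hlwtensor}. If $\xi$ is generic there is nothing to prove since $\ustfin g=\ust_\xi g$. In the case $\xi=\zeta$, observe that each fundamental $\ell$-weight $\gb\omega_{i,a}$ lies in $\cal P_l^+$, since its $\zeta_i$-factorization consists of the single quantum factor $f_{a,1}$ and no Frobenius factor. Consequently the hypotheses of the second part of Theorem \ref{t:hlwtensor} are in force, and $T$ is generated by $\ustfin g$ acting on the top weight space (which is spanned by the tensor product of the highest $\ell$-weight vectors). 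Transporting through the isomorphism $W_\xi(\gb\lambda)\cong T$, which respects the $P$-grading, gives the claim.

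No step is a serious obstacle: the argument is essentially bookkeeping once $\xi$-regularity has been used to produce the correctly ordered tuple. The one point requiring attention is that Theorem \ref{t:lwmdim} truly produces the same product on both sides of the dimension comparison, which is where the irreducibility hypothesis on the $W_\xi(\gb\omega_{i_j,a_j})$ is essential; without it, the dimension of $T$ could strictly exceed that of $W_\xi(\gb\lambda)$ (this is precisely the phenomenon flagged in the remark after Theorem \ref{t:wtfund} and illustrated later in Example \ref{ex:dn}).
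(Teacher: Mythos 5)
Your proof is correct and follows essentially the same route as the paper, which simply declares the argument identical to that of Corollary \ref{c:wtfundr}: use $\xi$-regularity to pick an ordering in $\xi$-resonant order, apply Theorem \ref{t:hlwtensor} to get a highest-$\ell$-weight tensor product, match dimensions via Theorem \ref{t:lwmdim} (with the irreducibility hypothesis supplying $W_\xi(\gb\omega_{i_j,a_j})=V_\xi(\gb\omega_{i_j,a_j})$), and deduce the $\ustfin g$-generation statement from the second part of Theorem \ref{t:hlwtensor} since $\gb\omega_{i,a}\in\cal P_l^+$. Your explicit spelling out of the surjection from the Weyl module and of why the fundamental $\ell$-weights lie in $\cal P_l^+$ is exactly what the paper leaves implicit.
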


\begin{proof}
The proof is identical to that of Corollary \ref{c:wtfundr}.
\end{proof}

The hypothesis that $W_\xi(\gb\omega_{i_j,a_j})$ is irreducible in the above corollary is well-known to be vacuous if $\xi$ has infinite order and, hence, is needed only for $\xi$ of finite order. Regardless of the value of $\xi$, the hypothesis is also clearly vacuous whenever $\omega_{i_j}$ is a minuscule weight. We will see in \S\ref{ss:charfund} below that in the root of unity case $W_\xi(\gb\omega_{i_j,a_j})$ may be reducible. In the $\lie{sl}_2$-case we proved the converse of the above theorem (Corollary \ref{c:wtfundr}). One of the reasons that made this possible is that Corollary \ref{c:hlwtfund} establishes a necessary and sufficient condition for a tensor product of fundamental representations to be highest-$\ell$-weight while Theorem \ref{t:hlwtensor} gives only a sufficient condition even for $\lie g=\lie{sl}_2$ as it is clear from Theorem \ref{t:qtev}. In fact, Theorem \ref{t:qtev} makes it natural to wonder if weak resonant ordering is a sufficient and necessary condition in general. It is clear from the proof above that, in order to prove Theorem \ref{t:hlwtensor} with weak resonant order in place of resonant ordering, it suffices to prove it for $\lie g=\lie{sl}_2$. The latter is true for a two-fold tensor product of evaluation modules.

\begin{rem}
Theorem \ref{t:hlwtensor} provides an algorithm for deciding if the simple module associated to a given dominant $\ell$-weight $\gb\lambda$ is not prime. Namely, if $\gb\lambda=\gb\mu\gb\nu$ for some nontrivial  $\gb\mu,\gb\nu\in\cal P_\xi^+$ such that both $(\gb\mu,\gb\nu)$ and $(\gb\nu,\gb\mu)$ are in $\xi$-resonant order, then $V_\xi(\gb\lambda)$ is isomorphic to $V_\xi(\gb\mu)\otimes V_\xi(\gb\nu)$ and, hence, is not prime (one uses the well-known fact that if a tensor product is highest-$\ell$-weight in any order, than it is irreducible and the isomorphism class does not depend on the order).
\end{rem}

\section{Blocks and Characters of Fundamental Modules}\label{s:block}

\subsection{On Jordan-H\"older constituents}\label{ss:JHconst}
We will need several results regarding the irreducible constituents of \ust_\xi g-modules obtained by specializing \uqt g-modules. We begin by recalling the following well-known fact.

\begin{prop}\label{p:constensor}
Let $\cal C$ be a Jordan-H\"older tensor category and $V,W$ objects in $\cal C$. The set of irreducible constituents of $V\otimes W$ (counted with multiplicities) is the union of the sets of irreducible constituents of $V_i\otimes W_j$ where $V_i$ runs through the irreducible constituents of $V$ and $W_j$ runs through the irreducible constituents of $W$.\hfill\qedsymbol
\end{prop}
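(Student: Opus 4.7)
The plan is to exploit biexactness of the tensor product together with the Jordan-Hölder theorem in $\cal C$. First I would fix composition series
$$0=V_0\subset V_1\subset\cdots\subset V_m=V \qquad\text{and}\qquad 0=W_0\subset W_1\subset\cdots\subset W_n=W$$
with $V_i':=V_i/V_{i-1}$ and $W_j':=W_j/W_{j-1}$ irreducible. These exist because $\cal C$ is a Jordan-Hölder category.

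Next, applying the exact functor $-\otimes W$ to the composition series of $V$ yields a filtration $0=V_0\otimes W\subset V_1\otimes W\subset\cdots\subset V_m\otimes W=V\otimes W$ whose successive quotients are canonically $V_i'\otimes W$. For each $i$, I would then insert between $V_{i-1}\otimes W$ and $V_i\otimes W$ the preimages, under the quotient map $V_i\otimes W\twoheadrightarrow V_i'\otimes W$, of the filtration of $V_i'\otimes W$ obtained by applying the exact functor $V_i'\otimes-$ to the composition series of $W$. The successive quotients of this refined filtration of $V\otimes W$ are exactly the modules $V_i'\otimes W_j'$ for $i=1,\dots,m$ and $j=1,\dots,n$.

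Finally, I would choose a composition series of each $V_i'\otimes W_j'$ (again using the Jordan-Hölder hypothesis) and concatenate these with the refined filtration above to produce an honest composition series of $V\otimes W$. By the Jordan-Hölder theorem, the resulting multiset of irreducible constituents is, up to isomorphism, an invariant of $V\otimes W$, and it coincides by construction with the disjoint union (with multiplicities) of the multisets of irreducible constituents of the $V_i'\otimes W_j'$. This is exactly the statement of the proposition.

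No step of this argument is really a genuine obstacle; the only point that requires minor care is ensuring that the concatenation in the last step produces a strictly increasing chain, but this is automatic since each $V_i'\otimes W_j'$ is the successive quotient of a proper inclusion in the refined filtration. The entire proof is a formal manipulation valid in any abelian category in which every object has finite length and the tensor product is biexact.
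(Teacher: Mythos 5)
Your argument is correct and is exactly the standard refinement-of-filtrations proof; the paper records this proposition as a well-known fact without giving a proof, so there is nothing different to compare against. The only hypothesis you invoke beyond finite length, biexactness of $\otimes$, does hold in the categories of interest here (finite-dimensional modules over a Hopf algebra, with tensor product taken over the base field), so the proof applies as written.
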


Recall the definitions of $I_\bullet\subseteq I$ from \S\ref{ss:basicnot} (Table 1) and of $\cal P_{\xi,I_\bullet}^+$ given at the end of \S\ref{ss:ec}.

\begin{thm}\label{t:generator}
Every simple object of $\cal C_\xi$ is isomorphic to the quotient of a submodule of a tensor product of the modules $W_\xi(\omega_i)$ for $i\in I_\bullet$.
\end{thm}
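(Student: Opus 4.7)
The plan is to descend the problem to classical Lie-algebra representation theory via admissible lattices and then invoke a standard tensor-generation fact for $\lie g$. The nodes of $I_\bullet$ in Table~1 are chosen precisely so that $\{\omega_i\bmod Q:i\in I_\bullet\}$ generates $P/Q$; combined with the Cartan embeddings $V(\mu_1+\mu_2)\hookrightarrow V(\mu_1)\otimes V(\mu_2)$, this yields the classical fact that for every $\lambda\in P^+$ there exist $i_1,\ldots,i_k\in I_\bullet$ such that $V(\lambda)$ appears as a direct $\lie g$-summand of $V(\omega_{i_1})\otimes\cdots\otimes V(\omega_{i_k})$. This is the one substantive input; it is verified case by case from Table~1, the highlighted fundamental modules (standard, spin, minuscule, or adjoint, depending on type) being well-known to tensor-generate the category of finite-dimensional $\lie g$-modules.

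By the semisimplicity of $\cal C_q$, this decomposition lifts to generic $q$: $V_q(\lambda)$ is a direct $\uq g$-summand of $V_q(\omega_{i_1})\otimes\cdots\otimes V_q(\omega_{i_k})$, and I would denote by $\pi$ the corresponding $\uq g$-equivariant projection. For each $j$, let $L_j=\uqres g\cdot v_j$ be the $\uqres g$-admissible lattice generated by a highest-weight vector of $V_q(\omega_{i_j})$ (Proposition~\ref{p:uqlat}(b)), and set $M:=L_1\otimes_{\mathbb A}\cdots\otimes_{\mathbb A}L_k$. Using the integrality of the comultiplication, $M$ is a $\uqres g$-admissible lattice of the tensor product. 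A standard argument then shows that $\pi(M)$ is a finitely generated, torsion-free $\mathbb A$-submodule of $V_q(\lambda)$ whose $\mathbb C(q)$-span is all of $V_q(\lambda)$, so $\pi(M)$ is itself a $\uqres g$-admissible lattice of $V_q(\lambda)$.

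Specializing at $\xi$ produces a surjection $\bar\pi:\bar M\twoheadrightarrow\overline{\pi(M)}$ of $\us_\xi g$-modules, with $\bar M\cong\bar L_1\otimes_{\mathbb C}\cdots\otimes_{\mathbb C}\bar L_k$ since specialization commutes with tensor products of admissible lattices. Each $\bar L_j$ is generated by the image of $v_j$, which is a highest-weight vector of weight $\omega_{i_j}$ satisfying the Weyl-module relations $(x_i^-)^{(m)}\bar v_j=0$ for $m>\omega_{i_j}(h_i)$ inherited from $V_q(\omega_{i_j})$; hence $\bar L_j$ is a quotient of $W_\xi(\omega_{i_j})$, and consequently $\bar M$ is a quotient of $W_\xi(\omega_{i_1})\otimes\cdots\otimes W_\xi(\omega_{i_k})$. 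Meanwhile, Proposition~\ref{p:uqlat}(a) gives $\ch\overline{\pi(M)}=\ch V(\lambda)$, so its $\lambda$-weight space is one-dimensional; any nonzero vector there is a highest-weight vector of weight $\lambda$ and generates a submodule of $\overline{\pi(M)}$ whose unique irreducible quotient is $V_\xi(\lambda)$. Composing the resulting chain of maps exhibits $V_\xi(\lambda)$ as a subquotient -- equivalently, the quotient of a submodule -- of $W_\xi(\omega_{i_1})\otimes\cdots\otimes W_\xi(\omega_{i_k})$.

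The hard part is the classical tensor-generation step, which is a finite case-by-case verification using the Dynkin data; everything else is a formal lattice-and-specialization argument that relies only on Proposition~\ref{p:uqlat} and the universal property of the Weyl modules $W_\xi(\omega_{i_j})$. In particular, no genuinely new quantum-affine input is required for this statement.
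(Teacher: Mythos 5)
Your argument is correct, and while it follows the same overall skeleton as the paper's proof (decompose $V_q(\lambda)$ inside a tensor product of the modules $V_q(\omega_{i_j})$, $i_j\in I_\bullet$, take the $U_{\mathbb A}(\lie g)$-lattices generated by highest-weight vectors, and specialize), it diverges at the one genuinely delicate point: how to guarantee that a highest-weight vector of weight $\lambda$ survives specialization. The paper works inside the lattice $L=L_1\otimes\cdots\otimes L_m$ itself: it picks an $\mathbb A$-linear combination $v'$ in $L_\lambda$ generating a copy of $V_q(\lambda)$ over $U_q(\lie g)$ and then renormalizes its coefficients to relatively prime elements of $\mathbb C[q]$ so that at least one does not vanish under $\epsilon_\zeta$, producing a nonzero highest-weight vector directly inside $\overline L\cong W_\zeta(\omega_i)^{\otimes m}$. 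You instead push the lattice forward along the $U_q(\lie g)$-equivariant projection $\pi$, check that $\pi(M)$ is again admissible (finitely generated and torsion-free over the PID $\mathbb A$, spanning $V_q(\lambda)$), and use Proposition \ref{p:uqlat}(a) together with right-exactness of specialization to find the highest-weight vector in the quotient $\overline{\pi(M)}$ of $\overline M$, then pull back through the surjections from $\bigotimes_j W_\xi(\omega_{i_j})$. This is a clean, basis-free substitute for the paper's gcd normalization trick, and it has the minor additional economy that you only need each $\overline{L_j}$ to be a quotient of $W_\xi(\omega_{i_j})$, not isomorphic to it; the paper's version, in exchange, exhibits the highest-weight vector inside the tensor product of Weyl modules itself and avoids the small lattice verification for $\pi(M)$. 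Two cosmetic remarks: the paper writes its proof only for $I_\bullet$ a singleton, whereas you treat general $I_\bullet$ uniformly, which is fine; and your opening justification of the classical tensor-generation fact (generation of $P/Q$ plus Cartan components) is only a heuristic as stated -- one really needs that every fundamental $V(\omega_j)$, $j\in I$, occurs in some tensor product of the $V(\omega_i)$, $i\in I_\bullet$ -- but since you defer, as the paper does, to the well-known case-by-case verification, this is not a gap.
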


\begin{proof}
For simplicity, we write the proof of the theorem for the case that $I_\bullet$ is a singleton, i.e., that $\lie g$ is not of type $D_{2m}$. Thus, let $i$ denote the unique element of $I_\bullet$.
For $\xi=1$ the result is well-known and the proof for $\xi$ not a root of unity is analogous. Moreover, every simple object of $\cal C_\xi$ is an irreducible summand of a tensor power of $V_\xi(\omega_i)=W_\xi(\omega_i)$. We now consider the case $\xi=\zeta$.

Let $\lambda\in P^+$ and $m\in\mathbb Z_{\ge 0}$ be such that $V_q(\lambda)$ is a summand of $V_q(\omega_i)^{\otimes m}$. Fix highest-weight vectors $v_j$ of the $j$-th factor of this tensor product, let $L_j = \uqres g v_j$, $j=1,\dots,m$, and $L=L_1\otimes\cdots\otimes L_m$. Quite clearly $\overline L\cong \otimes_j\overline{L_j}\cong W_\zeta(\omega_i)^{\otimes m}$. Hence, it suffices to show that there exists $v\in L$ such that $x_k^+v=0$ for all $k\in I$, $\uq g v\cong V_q(\lambda)$, and the image $\bar v$ of $v$ in $\overline{L}$ is nonzero.

The $\mathbb A$-module $L_\lambda$ has an $\mathbb A$-basis formed by elements of the form
$$\left( (x_{i_{1,1}}^-)^{(k_{1,1})}\cdots (x_{i_{r_1,1}}^-)^{(k_{r_1,1})} v_1\right) \otimes\cdots\otimes \left( (x_{i_{1,m}}^-)^{(k_{1,m})}\cdots (x_{i_{r_m,m}}^-)^{(k_{r_m,m})} v_m\right):= v_{\vec{i},\vec{k}}.$$
Here $\vec{i}=(i_{1,1},\cdots,i_{r_1,1};\cdots; i_{1,m},\cdots,i_{r_m,m})$ and similarly for $\vec{k}$. It easily follows that there is an $\mathbb A$-linear combination of such vectors, say $v'$, satisfying the two first properties required of $v$. Write
$$v'=\sum_{\vec i,\vec k} c_{\vec i,\vec k}\ v_{\vec i,\vec k}, \quad c_{\vec i,\vec k}\in\mathbb A,$$
and observe that there exist $c'_{\vec i,\vec k}\in\mathbb C[q]$ and $f\in\mathbb A$ such that the nonzero $c'_{\vec i,\vec k}$ are relatively prime in $\mathbb C[q]$ and
$$v' = f\sum_{\vec i,\vec k} c'_{\vec i,\vec k}\ v_{\vec i,\vec k}:= fv.$$
Since the $c'_{\vec i,\vec k}$ are relatively prime, it follows that $\epsilon_\zeta(c'_{\vec i,\vec k})\ne 0$ for at least one value of the pair $(\vec i,\vec k)$. The theorem follows.
\end{proof}

In order to prove an affine analogue of the above theorem we will need the following proposition which will also be used to prove the main result of \S\ref{ss:blocks}.

\begin{prop}\label{p:constspec}
Let $\gb\lambda\in\cal P_\mathbb A^s$, $V$ a nontrivial quotient of $W_q(\gb\lambda)$, and $\xi\in\mathbb C'\backslash\{q\}$. If $V_q(\gb\mu)$ is an irreducible constituent of $V$ with multiplicity $m$, then $V_\xi(\overline{\gb\mu})$ is an irreducible constituent of $\overline V$ with multiplicity at least $m$.
\end{prop}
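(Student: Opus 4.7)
The plan is to construct a filtration of \uqres g-admissible lattices compatible with a composition series of $V$, specialize the whole filtration at $q=\xi$, and then recognize each irreducible constituent $V_q(\gb\mu_k)$ of $V$ as giving rise to a copy of $V_\xi(\overline{\gb\mu_k})$ in $\bar V$.

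First I would let $v$ be the image in $V$ of a highest-$\ell$-weight vector of $W_q(\gb\lambda)$ and set $L=\uqres g\cdot v$; by (the proof of) Theorem \ref{t:uqtlat}, $L$ is an admissible lattice of $V$ and, by definition, $\bar V=\bar L$. Pick a composition series $0=V_0\subsetneq V_1\subsetneq\cdots\subsetneq V_N=V$ of $V$ in $\wcal C_q$ with $V_k/V_{k-1}\cong V_q(\gb\mu_k)$, so that by hypothesis $\#\{k:\gb\mu_k=\gb\mu\}=m$. Set $L_k=L\cap V_k$. Since $\mathbb A$ is a PID, a routine clearing-of-denominators argument shows that $L_k$ is a \uqres g-admissible lattice of $V_k$ and that the injection $L_k/L_{k-1}\hookrightarrow V_k/V_{k-1}$ realizes $L_k/L_{k-1}$ as an admissible lattice of $V_q(\gb\mu_k)$. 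Each short exact sequence
\[
0\to L_{k-1}\to L_k\to L_k/L_{k-1}\to 0
\]
is a sequence of free $\mathbb A$-modules, hence splits, so tensoring with $\mathbb C_\xi$ preserves exactness. This produces a filtration $0=\bar L_0\subseteq\bar L_1\subseteq\cdots\subseteq\bar L_N=\bar V$ of $\bar V$ by \ust_\xi g-submodules with successive quotients $\bar L_k/\bar L_{k-1}\cong\overline{L_k/L_{k-1}}$.

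The key step is then to show that for each $k$ the \ust_\xi g-module $\overline{L_k/L_{k-1}}$ contains $V_\xi(\overline{\gb\mu_k})$ as an irreducible constituent. By Proposition \ref{p:uqlat}(a), the weight grading is preserved under specialization, so $\wt(\gb\mu_k)$ remains the top weight of $\overline{L_k/L_{k-1}}$ and the corresponding weight space is one-dimensional. By Proposition \ref{p:lcharspec}, $\chl(\overline{L_k/L_{k-1}})=\epsilon_\xi(\chl(V_q(\gb\mu_k)))$, so $\overline{\gb\mu_k}$ is the unique $\ell$-weight at weight $\wt(\gb\mu_k)$. Any simple constituent $V_\xi(\gb\nu)$ of $\overline{L_k/L_{k-1}}$ satisfies $\wt(\gb\nu)\le\wt(\gb\mu_k)$, and at least one attains equality. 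For such a constituent, the one-dimensional top weight space of $V_\xi(\gb\nu)$ carries $\ell$-weight $\gb\nu$; comparing with the top $\ell$-weight space of $\overline{L_k/L_{k-1}}$ forces $\gb\nu=\overline{\gb\mu_k}$.

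To conclude, refining $\bar L_0\subset\cdots\subset\bar L_N$ to a composition series of $\bar V$, each of the $m$ indices $k$ with $\gb\mu_k=\gb\mu$ contributes at least one copy of $V_\xi(\overline{\gb\mu})$, giving the claimed multiplicity. The main obstacle is the technical verification in the first paragraph that the lattice intersections $L\cap V_k$ yield admissible lattices of the subquotients and that specialization commutes with the filtration; both points rely crucially on the freeness of $L_k$ and $L_k/L_{k-1}$ as $\mathbb A$-modules, which makes the relevant short exact sequences split and therefore robust under the non-flat base change $\mathbb A\to\mathbb C_\xi$.
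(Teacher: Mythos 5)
Your argument is correct, but it takes a genuinely different route from the paper's. The paper first reduces, by induction on the length of $V$, to the case where $V_q(\gb\mu)$ is a submodule of $V$, and then produces an \emph{integral} highest-$\ell$-weight vector: a highest-$\ell$-weight vector of $V_q(\gb\mu)$ is a solution of a linear system written in an $\mathbb A$-basis of the weight space $L_{\wt(\gb\mu)}$ of the lattice $L=\uqtres g v$, and since the $\ell$-weights of $V$ lie in $\cal P_\mathbb A$ (Theorem \ref{t:cone}, using $\gb\lambda\in\cal P_\mathbb A^s$) one may rescale the solution so that its coefficients lie in $\mathbb C[q]$ and are relatively prime; its image in $\overline V$ is then nonzero and generates a highest-$\ell$-weight submodule of highest $\ell$-weight $\bgb\mu$, exactly as in the proof of Theorem \ref{t:generator}. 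You instead intersect $L$ with an entire composition series, use freeness over the PID $\mathbb A$ to split the resulting short exact sequences so that specialization commutes with the filtration, and then pin down the top constituent of each specialized subquotient by comparing $\ch$ (Proposition \ref{p:uqlat}(a)) and $\chl$ (Proposition \ref{p:lcharspec}). Your version treats all constituents simultaneously, makes the multiplicity count completely transparent, and in fact shows more: every $V_q(\gb\mu_k)$ in the series contributes a copy of $V_\xi(\bgb\mu_k)$ to $\overline V$. The paper's version is shorter and avoids discussing lattices of subquotients, at the price of the induction and of the explicit clearing-of-denominators step; both proofs ultimately rest on the same two pillars, namely that $\mathbb A$ is a PID and that $\gb\lambda\in\cal P_\mathbb A^s$ keeps everything integral.

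Two small points should be tightened. First, the lattice and its intersections must be taken over $\uqtres g$ (you wrote \uqres g in places): you need $L_k=L\cap V_k$ and $L_k/L_{k-1}$ to be $\uqtres g$-admissible so that $\bar L_k/\bar L_{k-1}$ is a $\ust_\xi g$-module and Proposition \ref{p:lcharspec} applies; this is automatic since $L$ is a $\uqtres g$-module and $V_k$ is a $\uqt g$-submodule. Second, to apply Proposition \ref{p:lcharspec} to $L_k/L_{k-1}$ you should note explicitly that $V$, hence each $V_q(\gb\mu_k)$, lies in $\wcal C_q$ with $\ell$-weights in $\cal P_\mathbb A$ (so that $\epsilon_\xi$ of its q-character makes sense and weights are preserved under evaluation); this is precisely where the hypothesis $\gb\lambda\in\cal P_\mathbb A^s$ enters, via Theorem \ref{t:cone}, and it is the same use of the hypothesis made in the paper's proof.
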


\begin{proof}
By an obvious induction on the length of $V$, it suffices to show the proposition in the case $V_q(\gb\mu)$ is a submodule of $V$. The argument is essentially the same as that of \cite[Proposition 4.16]{jm:hla}. Namely, let $v$ be a highest-$\ell$-weight vector of $V$ and $L=\uqtres gv$. Then, from the proof of Theorem \ref{t:uqtlat},  $L$ has an $\mathbb A$-basis consisting of vectors which are $\mathbb A$-linear combinations of elements of the form $(x_{\alpha_{i_1},r_1}^-)^{(k_1)}\cdots (x_{\alpha_{i_s},r_s}^-)^{(k_s)}v$. Let $v_1,\cdots, v_r$ be an $\mathbb A$-basis for $L_\mu$ where $\mu=\wt(\gb\mu)$. Any highest-$\ell$-weight vector for $V_q(\gb\mu)$ is a solution $\sum_{j=1}^r c_jv_j$, for some $c_j\in\mathbb C(q)$, of the linear system
$$(x_{i,s}^+)^{(k)}\left(\sum_{j=1}^r c_jv_j\right)=0, \qquad\Lambda_{i,s}\left(\sum_{j=1}^r c_jv_j\right)=\gb\Psi_\gb\mu(\Lambda_{i,s})\left(\sum_{j=1}^r c_jv_j\right)$$
for all $i\in I, s\in\mathbb Z, k\in\mathbb Z_{> 0}$.  By assumption, there exists a nontrivial solution for this system. Since $L$ is admissible and the $\ell$-weights of $V$ are in $\cal P_\mathbb A$ (by Theorem \ref{t:cone} given that  $\gb\lambda\in\cal P_\mathbb A^s$), it follows that there exists a solution with the $c_j$ lying in $\mathbb A$. Hence, there is also a solution with $c_j\in\mathbb C[q]$ and such that the nonzero $c_j$ are relatively prime. This completes the proof similarly to that of Theorem \ref{t:generator}.
\end{proof}

\begin{thm}\label{t:lgenerator}
For every $\gb\lambda\in\cal P_\xi^+$, there exists $\gb\mu\in\cal P_{\xi,I_\bullet}^+$ such that $V_\xi(\gb\lambda)$ is an irreducible constituent of $W_\xi(\gb\mu)$. Moreover, if $\gb\lambda\in\cal P_\mathbb A^s$, then $\gb\mu$ can be chosen to be in $\cal P_\mathbb A^s$ as well.
\end{thm}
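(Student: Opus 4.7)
The plan is to establish the theorem first for $\xi=q$ and then transfer it to every $\xi\in\mathbb C'\setminus\{q\}$ using the specialization machinery of \S\ref{ss:lattices}.

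For the transfer step, let $\xi\in\mathbb C'\setminus\{q\}$ and $\gb\lambda\in\cal P_\xi^+$. I choose a lift $\gb{\tilde\lambda}\in\cal P_\mathbb A^s$ with $\overline{\gb{\tilde\lambda}}=\gb\lambda$ (possible since $\mathbb C^\times\subseteq\mathbb A^\times$; if $\gb\lambda\in\cal P_\mathbb A^s$ from the ``Moreover'' hypothesis, take $\gb{\tilde\lambda}:=\gb\lambda$). Granting the $\xi=q$ case, there exists $\gb{\tilde\mu}\in\cal P_{q,I_\bullet}^+\cap\cal P_\mathbb A^s$ such that $V_q(\gb{\tilde\lambda})$ is an irreducible constituent of $W_q(\gb{\tilde\mu})$. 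Applying Proposition \ref{p:constspec} with its $\gb\lambda,V,\gb\mu$ taken to be our $\gb{\tilde\mu},W_q(\gb{\tilde\mu}),\gb{\tilde\lambda}$ yields $V_\xi(\gb\lambda)=V_\xi(\overline{\gb{\tilde\lambda}})$ as an irreducible constituent of $\overline{W_q(\gb{\tilde\mu})}$, which in turn is a quotient of $W_\xi(\overline{\gb{\tilde\mu}})$ by Theorem \ref{t:uqtlat}. Thus $\gb\mu:=\overline{\gb{\tilde\mu}}\in\cal P_{\xi,I_\bullet}^+$ is as required; the ``Moreover'' clause follows automatically since $\gb{\tilde\mu}\in\cal P_\mathbb A^s$.

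For the $\xi=q$ case, after passing to a splitting extension of $\mathbb C(q)$ if necessary, one may assume $\gb\lambda\in\cal P_q^+\cap\cal P_\mathbb A^s$ and write $\gb\lambda=\prod_{k=1}^m\gb\omega_{j_k,a_k}$ with $j_k\in I$ and $a_k\in\mathbb A^\times$. Theorem \ref{t:wtfund} realizes $W_q(\gb\lambda)$ as a suitably ordered tensor product of the fundamentals $V_q(\gb\omega_{j_k,a_k})$. Suppose we can produce, for each pair $(j,a)$ with $j\in I$ and $a\in\mathbb A^\times$, an element $\gb\nu_{j,a}\in\cal P_{q,I_\bullet}^+\cap\cal P_\mathbb A^s$ such that $V_q(\gb\omega_{j,a})$ is an irreducible constituent of $W_q(\gb\nu_{j,a})$. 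Setting $\gb\mu:=\prod_k\gb\nu_{j_k,a_k}$, Theorem \ref{t:wtfund} implies that $W_q(\gb\mu)$ and the tensor product $\otimes_k W_q(\gb\nu_{j_k,a_k})$ are built from the same multiset of fundamental modules; by Corollary \ref{c:constensor} they share the same Jordan-H\"older constituents. Proposition \ref{p:constensor} then places $V_q(\gb\lambda)$ among these constituents, reducing the theorem to the fundamental case.

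For a fundamental pair $(j,a)$, Theorem \ref{t:generator} applied in $\cal C_q$ furnishes indices $i_1,\ldots,i_r\in I_\bullet$ such that $V_q(\omega_j)$ is an irreducible constituent of $\otimes_s W_q(\omega_{i_s})$. One then chooses spectral parameters $b_1,\ldots,b_r\in\mathbb A^\times$ so that the loop tensor product $\otimes_s W_q(\gb\omega_{i_s,b_s})$ admits $V_q(\gb\omega_{j,a})$ as a $\uqt g$-composition factor, and takes $\gb\nu_{j,a}:=\prod_s\gb\omega_{i_s,b_s}$. The main technical obstacle lies precisely in selecting these $b_s$: a character-matching argument via Theorem \ref{t:lwmdim} shows that $V_q(\omega_j)$ occurs as a $\uq g$-constituent of the loop tensor product, but isolating a $\uqt g$-composition factor of highest weight exactly $\omega_j$ is not automatic (since for generic $b_s$ the loop tensor product is typically irreducible of higher highest weight, and $V_q(\omega_j)$ then only appears as a lower $\uq g$-summand inside a simple $\uqt g$-module of weight $>\omega_j$). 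For $\lie g$ of type $A$ this selection is transparent via Jimbo's evaluation (Theorem \ref{t:qev}), which pinpoints the shifts exhibiting $V_q(\omega_j,a)=V_q(\gb\omega_{j,a'})$ as a subquotient of $V_q(\omega_1,b_1)\otimes\cdots\otimes V_q(\omega_1,b_r)$; for the other types one proceeds by classical branching together with the structure theory of fundamental loop modules at generic $q$, adjusting parameters via the spectral-shift automorphism of Proposition \ref{p:ssp} so that the prescribed $a$ is attained.
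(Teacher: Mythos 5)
Your transfer step from $\xi=q$ to $\xi\in\mathbb C'\setminus\{q\}$ is exactly the paper's argument: view (or lift) $\gb\lambda$ inside $\cal P_\mathbb A^s$, apply the $\xi=q$ case, then specialize with Proposition \ref{p:constspec} and use Theorem \ref{t:uqtlat} to see that $\overline{W_q(\gb{\mu})}$ is a quotient of $W_\xi(\bgb\mu)$. Likewise, your reduction of the $\xi=q$ case to fundamental $\ell$-weights, via Theorem \ref{t:wtfund}, Proposition \ref{p:constensor} and Corollary \ref{c:constensor}, is sound for $\gb\lambda$ whose entries split (which is all that the transfer step needs, since elements of $\cal P_\mathbb A^s$ split by definition); your ``passing to a splitting extension of $\mathbb C(q)$'' for the general case is, however, not available inside the paper's framework, where the classification results and $\cal P_\mathbb A^s$ are tied to $\mathbb C(q)$ and $\mathbb A$.

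The genuine gap is the fundamental case itself, which you do not prove. The assertion that for each $j\in I$ and $a\in\mathbb C(q)^\times$ there exist nodes $i_1,\dots,i_r\in I_\bullet$ and parameters $b_1,\dots,b_r\in\mathbb A^\times$ such that $V_q(\gb\omega_{j,a})$ occurs as a $\uqt g$-composition factor of $V_q(\gb\omega_{i_1,b_1})\otimes\cdots\otimes V_q(\gb\omega_{i_r,b_r})$ is precisely the nontrivial input at this point: it is the quantum Dorey's rule established in \cite{cp:dorey} (see also \cite{em}), which is what the paper invokes before finishing with Corollaries \ref{c:cyclic} and \ref{c:constensor}. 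Your proposed substitute --- Theorem \ref{t:generator} for $\cal C_q$ together with ``classical branching and the structure theory of fundamental loop modules,'' adjusted by the automorphism of Proposition \ref{p:ssp} --- only controls the picture at the level of $\us_q g$-characters (Theorem \ref{t:lwmdim}); as you yourself concede, this cannot isolate a $\uqt g$-constituent with the prescribed Drinfeld polynomial, because for generic choices of the $b_s$ the tensor product is irreducible with strictly larger highest $\ell$-weight and $V_q(\omega_j)$ appears only as a lower $\us_q g$-weight summand inside a single simple $\uqt g$-module. Determining the admissible spectral shifts is the entire content of Dorey's rule and is not obtained by rescaling via $\varrho_a$; so, apart from type $A$ (where Jimbo evaluation does the job), the key step of your $\xi=q$ argument is missing, whereas the paper closes it by citing \cite{cp:dorey,em}.
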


\begin{proof}
For $\xi=q$, it follows from the results of \cite{cp:dorey,em} that $V_q(\gb\lambda)$ is a constituent of a tensor product of the form $V_q(\gb\omega_{i_1,a_1})\otimes\cdots V_q(\gb\omega_{i_m,a_m})$ with $i_j\in I_\bullet$ and $a_j\in\mathbb C^\times$ ($a_j\in\mathbb A^\times$ if $\gb\lambda\in\cal P_\mathbb A^s$). The theorem now follows in this case by Corollaries \ref{c:cyclic} and \ref{c:constensor}.

For $\xi\in\mathbb C'$, consider the module $V_q(\gb\lambda)$ and apply the theorem for the case $\xi=q$. Thus, let $\gb\mu\in\cal P_{\xi,I_\bullet}^+\cap\cal P_\mathbb A^s$ be such that $V_q(\gb\lambda)$ is a constituent of $W_q(\gb\mu)$. By Proposition \ref{p:constspec}, $V_\xi(\gb\lambda)$ is a constituent of $\overline{W_q(\gb\mu)}$.
\end{proof}

\subsection{Blocks}\label{ss:blocks}

Recall the notation introduced in \S\ref{ss:ec}. We shall need the following proposition which follows from the results of \cite{cmq,em}.

\begin{prop}\label{p:trivconst}
Let $\gb\mu\in\cal P_{q,I_\bullet}^+$ and $\gb\lambda=\gb\mu\gb\tau_{q,k,a}$ for some $k\in\{1,2,3\}$ and some $a\in\mathbb C(q)^\times$. If $V$ is an irreducible constituent of $W_q(\gb\lambda)$, then $V$ is an irreducible constituent of $W_q(\gb\mu)$.\hfill\qedsymbol
\end{prop}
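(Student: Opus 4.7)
The plan is to exploit the specific structure of $\gb\tau_{q,k,a}$ together with the tensor product tools of Section \ref{s:hlwtensor} to connect $W_q(\gb\lambda)$ with $W_q(\gb\mu)$ at the level of Jordan-H\"older constituents. Since $\gb\tau_{q,k,a}$ is by construction a product of fundamental $\ell$-weights $\gb\omega_{i,aq^r}$ with $i \in I_\bullet$, both $\gb\mu$ and $\gb\lambda$ lie in $\cal P_{q,I_\bullet}^+$. Applying Corollary \ref{c:cyclic} to the concatenation of the fundamental factors of $\gb\mu$ with those of $\gb\tau_{q,k,a}$, I would choose a permutation that places the combined list in $q$-resonant order while keeping the $\gb\mu$-block in front. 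Theorem \ref{t:hlwtensor} together with the dimension count of Theorem \ref{t:lwmdim} then identifies the resulting tensor product of fundamental modules with $W_q(\gb\lambda)$, exhibiting $W_q(\gb\lambda)$ as a quotient of $W_q(\gb\mu) \otimes V_q(\gb\tau_{q,k,a})$, where Corollary \ref{c:constensor} allows the two factors to be rearranged without changing the multiset of irreducible constituents.

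I would then apply Proposition \ref{p:constensor}: every irreducible constituent $V$ of $W_q(\gb\lambda)$ occurs as an irreducible constituent of some $V_q(\gb\nu) \otimes V_q(\gb\rho)$, where $V_q(\gb\nu)$ is a Jordan-H\"older factor of $W_q(\gb\mu)$ and $V_q(\gb\rho)$ is a Jordan-H\"older factor of $V_q(\gb\tau_{q,k,a})$. The problem thereby reduces to showing that each such $V$ is already a Jordan-H\"older factor of $W_q(\gb\mu)$. The crucial input is the cited work \cite{em,cmq}, which describes the Kirillov-Reshetikhin-type module $V_q(\gb\tau_{q,k,a})$ precisely: the spectral-parameter pattern encoded by the set $\cal T_k$ forces $\gamma_{_q}(\gb\tau_{q,k,a}) = 0$ in $\widetilde\Gamma_q$ (Proposition \ref{p:Gammarel}), and this in turn constrains the Jordan-H\"older factors of $V_q(\gb\tau_{q,k,a})$ and of their tensor products with modules in the block of $W_q(\gb\mu)$.

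The main obstacle is this last verification, namely that every constituent of $V_q(\gb\nu) \otimes V_q(\gb\rho)$ already appears as a constituent of $W_q(\gb\mu)$. I would establish this by combining the R-matrix analysis of \cite{em} with the q-character computations of \cite{cmq} specialized to $\gb\tau_{q,k,a}$: these jointly control both the highest $\ell$-weights appearing in the relevant tensor products and their compatibility with the decomposition of $W_q(\gb\mu)$ into fundamental factors supplied by Theorem \ref{t:wtfund}. A more self-contained route would proceed by induction on the $Q^+$-height of $\wt(\gb\tau_{q,k,a})$, using Theorem \ref{t:cone} to bound the possible $\ell$-weights and Theorem \ref{t:chconst} together with the explicit characters of the fundamental modules supplied by Theorem \ref{t:lwmdim} to track multiplicities.
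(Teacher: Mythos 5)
The paper itself supplies no argument here: Proposition \ref{p:trivconst} is imported wholesale from \cite{cmq,em}, so the only question is whether your sketch could be completed, and it cannot in its present form. First, your reduction substitutes the wrong module: after reordering via Corollary \ref{c:cyclic}, Theorem \ref{t:wtfund} and Corollary \ref{c:constensor}, the constituents of $W_q(\gb\lambda)$ coincide with those of $W_q(\gb\mu)\otimes T$, where $T=\bigotimes_{r\in\cal T_k}V_q(\gb\omega_{i,aq^r})$ is the (reducible) tensor product of fundamental modules, not with those of $W_q(\gb\mu)\otimes V_q(\gb\tau_{q,k,a})$; indeed $W_q(\gb\lambda)$ cannot even be a quotient of $W_q(\gb\mu)\otimes V_q(\gb\tau_{q,k,a})$, since $V_q(\gb\tau_{q,k,a})$ is a proper quotient of $T$ and Theorem \ref{t:lwmdim} forces a dimension mismatch. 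Second, the step you yourself flag as ``the main obstacle'' --- that every constituent of $V_q(\gb\nu)\otimes V_q(\gb\rho)$ already occurs in $W_q(\gb\mu)$ --- is not merely unproven, it is false: taking $\gb\nu=\gb\mu$, the head of such a product has a weight strictly greater than $\wt(\gb\mu)$, whereas all weights of $W_q(\gb\mu)$ are $\le\wt(\gb\mu)$. The same weight argument applied to the head $V_q(\gb\lambda)$ of $W_q(\gb\lambda)$ shows the conclusion cannot hold verbatim as printed (when $\gb\tau_{q,k,a}\ne 1$): the inclusion that \cite{cmq,em} actually establish, and the only thing the linkage argument for Proposition \ref{p:sameecsameb} uses, is the opposite one --- every irreducible constituent of $W_q(\gb\mu)$ is an irreducible constituent of $W_q(\gb\lambda)$, so in particular the two Weyl modules share a constituent. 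So you were trying to force an unprovable direction rather than noticing the roles of $\gb\lambda$ and $\gb\mu$ should be interchanged.

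For the correct statement your framework does work, but the one genuinely representation-theoretic input is missing from your sketch: the trivial module is a Jordan--H\"older constituent of $T$. This is exactly how the sets $\cal T_k$ are chosen in \cite{em} (via the $R$-matrix/duality analysis; cf. \cite{cms} for $\xi=1$), and it is strictly stronger than the observation that $\gamma_{_q}(\gb\tau_{q,k,a})=0$ in $\widetilde\Gamma_q$, which is a formal consequence of the presentation in Proposition \ref{p:Gammarel} and by itself gives no control over the constituents of any particular tensor product. Granting that fact, Proposition \ref{p:constensor} immediately gives that every constituent of $W_q(\gb\mu)$ (i.e.\ of $W_q(\gb\mu)$ tensored with the trivial constituent of $T$) occurs among the constituents of $W_q(\gb\mu)\otimes T$, hence of $W_q(\gb\lambda)$. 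A minor further point: Corollary \ref{c:cyclic} does not allow you to ``keep the $\gb\mu$-block in front'' --- the permutation is dictated by the spectral parameters --- but by Corollary \ref{c:constensor} this is harmless for counting constituents.
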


An object $V\in\wcal C_\xi$ is said to have elliptic character $\gamma\in\widetilde\Gamma_\xi$ if  $V_{\gb\mu}\ne 0$ implies $\gamma_{_\xi}(\gb\mu)=\gamma$. Denote by $\wcal C_\xi^{\gamma}$ the abelian subcategory  of $\wcal C_\xi$ consisting of representations with elliptic character $\gamma$. The main result of this section is the following theorem.

\begin{thm}\label{t:blocks}
The categories $\wcal C_\xi^\gamma,\gamma\in\widetilde\Gamma_\xi$, are the blocks of $\wcal C_\xi$.
\end{thm}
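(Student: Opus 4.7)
The plan is to establish the theorem by two opposite inclusions: that distinct elliptic characters give distinct blocks, and that modules sharing an elliptic character lie in a common block.

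For the first (easy) direction, I would show that every indecomposable object of $\wcal C_\xi$ has a well-defined elliptic character. The key observation is that, by Theorem \ref{t:cone}, every $\ell$-weight $\gb\nu$ occurring in $W_\xi(\gb\lambda)$ has the form $\gb\nu=\gb\lambda\prod_k\gb\alpha_{i_k,a_k}^{-1}$, so $\gb\nu\in\gb\lambda\cal Q_\xi$ and hence every irreducible constituent of $W_\xi(\gb\lambda)$ belongs to $\wcal C_\xi^{\gamma_{_\xi}(\gb\lambda)}$. A standard argument then shows $\operatorname{Ext}^1$ between simples with distinct elliptic characters vanishes, yielding the separation direction.

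For the nontrivial (linking) direction, given $V_\xi(\gb\lambda)$ and $V_\xi(\gb\mu)$ with $\gamma_{_\xi}(\gb\lambda)=\gamma_{_\xi}(\gb\mu)$, I would first apply Theorem \ref{t:lgenerator} to choose $\gb\lambda',\gb\mu'\in\cal P_{\xi,I_\bullet}^+\cap\cal P_\mathbb A^s$ (taking $\gb\lambda'=\gb\lambda,\gb\mu'=\gb\mu$ if these already lie in $\cal P_{\xi,I_\bullet}^+$) such that $V_\xi(\gb\lambda)$ is a constituent of $W_\xi(\gb\lambda')$ and $V_\xi(\gb\mu)$ is a constituent of $W_\xi(\gb\mu')$. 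The first direction of the proof guarantees $\gamma_{_\xi}(\gb\lambda')=\gamma_{_\xi}(\gb\lambda)=\gamma_{_\xi}(\gb\mu)=\gamma_{_\xi}(\gb\mu')$, so Lemma \ref{l:samebseq} supplies a sequence $\gb\lambda'=\gb\omega_1,\dots,\gb\omega_N=\gb\mu'$ in $\cal P_{\xi,I_\bullet}^+\cap\cal P_\mathbb A^s$ whose consecutive ratios are $\gb\tau_{\xi,k_j,a_j}^{\epsilon_j}$ with $a_j\in\mathbb A^\times$. It then suffices to show that, for each $j$, $W_\xi(\gb\omega_j)$ and $W_\xi(\gb\omega_{j+1})$ share an irreducible constituent, since this will link all the $V_\xi(\gb\omega_k)$'s in the same block.

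To establish this step, I would lift to the $q$-context. Write $\gb\omega_j=\overline{\gb\omega_j^\circ}$ for some $\gb\omega_j^\circ\in\cal P_\mathbb A^s\cap\cal P_{q,I_\bullet}^+$ chosen so that $\gb\omega_j^\circ/\gb\omega_{j+1}^\circ=\gb\tau_{q,k_j,a_j}^{\epsilon_j}$; the construction in Lemma \ref{l:samebseq} (with $a_j\in\mathbb A^\times$) shows this is possible. Without loss of generality assume $\epsilon_j=1$, so $\gb\omega_j^\circ=\gb\omega_{j+1}^\circ\gb\tau_{q,k_j,a_j}$. By Proposition \ref{p:trivconst}, every irreducible constituent $V_q(\gb\nu)$ of $W_q(\gb\omega_j^\circ)$ is also an irreducible constituent of $W_q(\gb\omega_{j+1}^\circ)$. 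Applying Proposition \ref{p:constspec} to both Weyl modules shows that $V_\xi(\overline{\gb\nu})$ is a constituent of both $\overline{W_q(\gb\omega_j^\circ)}$ and $\overline{W_q(\gb\omega_{j+1}^\circ)}$; but by Theorem \ref{t:uqtlat} these specializations are quotients of $W_\xi(\gb\omega_j)$ and $W_\xi(\gb\omega_{j+1})$, respectively. Hence $V_\xi(\overline{\gb\nu})$ is a common constituent, and the chain of Weyl modules $W_\xi(\gb\omega_1),\dots,W_\xi(\gb\omega_N)$ links $V_\xi(\gb\lambda)$ to $V_\xi(\gb\mu)$ in a single block.

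The main obstacle is precisely this lifting argument. In the generic case of \cite{cmq} one could use the isomorphism between Weyl modules and (reordered) tensor products of fundamental modules to directly exhibit common constituents; at a root of unity this tool is unavailable (cf.\ the discussion after Corollary \ref{c:wtfundr} and \cite[Remark 7.18]{vava:standard}). Proposition \ref{p:constspec} and Theorem \ref{t:lgenerator} are the key substitutes, and the care needed is in guaranteeing that the lifting to $q$ can be performed one step at a time along a chain provided by Lemma \ref{l:samebseq}, and in verifying that the $q$-elements $\gb\tau_{q,k,a}$ specialize to $\gb\tau_{\xi,k,a}$ so that Proposition \ref{p:trivconst} transfers directly.
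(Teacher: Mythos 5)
Your proposal is correct and takes essentially the same route as the paper: reduce to highest $\ell$-weights in $\cal P_{\xi,I_\bullet}^+$ via Theorem \ref{t:lgenerator}, produce a $\gb\tau$-chain with Lemma \ref{l:samebseq}, lift to the $q$-setting where Proposition \ref{p:trivconst} applies, and descend with Theorem \ref{t:uqtlat} and Proposition \ref{p:constspec}. The only (harmless) deviation is that you lift each consecutive pair separately and link the Weyl modules $W_\xi(\gb\omega_j)$ themselves, whereas the paper lifts the whole chain recursively starting from $\gb\lambda$ and uses the specializations $\overline{W_q(\gb\omega_j')}$ as the linking indecomposables.
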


This theorem was first proved in \cite{em} in the case that $\xi\in\mathbb C^\times$ satisfies $|\xi|\ne 1$ using analytic properties of the action of the $R$-matrix of \ust_\xi g. An $R$-matrix free approach was given  in \cite[\S8]{cmq} for the case that $\xi$ is not a root of unity. For $\xi=1$, the theorem was proved in \cite{cms}. We now give a proof that works for any $\xi\in\mathbb C'\backslash\{q\}$, thus completing the proof of Theorem \ref{t:blocks}.

For a brief review of the theory of blocks of an abelian category see \cite[\S1]{em}. The proof of Theorem \ref{t:blocks} is immediate from the following two propositions.

\begin{prop}\label{p:indechasec}
If $V\in\wcal C_\xi$ is indecomposable, then $V\in\wcal C_\xi^\gamma$ for some $\gamma\in\widetilde\Gamma_\xi$. In other words, $\wcal C_\xi = \opl_{\gamma\in\widetilde\Gamma_\xi}^{} \wcal C_\xi^\gamma$.
\end{prop}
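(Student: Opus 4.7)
The plan is to prove the stronger statement that, for every $V\in\wcal C_\xi$, the vector-space splitting by elliptic character
$$V=\bigoplus_{\gamma\in\widetilde\Gamma_\xi}V^{(\gamma)},\qquad V^{(\gamma)}:=\bigoplus_{\gb\mu\,:\,\gamma_{_\xi}(\gb\mu)=\gamma}V_{\gb\mu},$$
is in fact a decomposition of \ust_\xi g-modules.  Granting this, the indecomposability of $V$ forces all but one summand to vanish, whence $V\in\wcal C_\xi^\gamma$ for the surviving $\gamma$, and the displayed direct sum decomposition of $\wcal C_\xi$ follows.  The vector-space identity is already built into the assumption that $V$ is an $\ell$-weight module, so the entire content is the assertion that each $V^{(\gamma)}$ is stable under \ust_\xi g.

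Stability under \ust_\xi h is immediate, since every element of \ust_\xi h preserves each generalized $\ell$-weight space $V_{\gb\mu}$.  The heart of the argument is to show that the divided powers $(x_{i,r}^\pm)^{(m)}$ respect the decomposition as well.  To this end, I would establish the following key claim: for every $\ell$-weight vector $v\in V_{\gb\mu}$ and every $i\in I$, $r\in\mathbb Z$, $m\in\mathbb Z_{>0}$, the vector $(x_{i,r}^\pm)^{(m)}v$ is a sum of $\ell$-weight vectors whose $\ell$-weights have the form $\gb\mu\,\gb\beta$ with $\gb\beta\in\cal Q_\xi$.  Since $\gamma_{_\xi}(\gb\mu\,\gb\beta)=\gamma_{_\xi}(\gb\mu)$ whenever $\gb\beta\in\cal Q_\xi$, the claim gives $(x_{i,r}^\pm)^{(m)}V^{(\gamma)}\subseteq V^{(\gamma)}$, which combined with the Cartan observation completes the proof.

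The key claim is the Frenkel--Mukhin style description of the action of $x_{i,r}^\pm$ on $\ell$-weight vectors: namely, $x_{i,r}^+v\in\bigoplus_a V_{\gb\mu\gb\alpha_{i,a}}$ and $x_{i,r}^-v\in\bigoplus_a V_{\gb\mu\gb\alpha_{i,a}^{-1}}$, with $a$ ranging over a finite subset of $\mathbb C(\xi)^\times$ depending on $v$.  For $\xi$ of infinite order this is standard, and for $\xi=\zeta$ a root of unity it was established in \cite{fmroot}; the case $\xi=1$ reduces to the classical $\lie{sl}_2$ calculation applied to the loop generators.  Because each simple $\ell$-root $\gb\alpha_{i,a}$ lies in $\cal Q_\xi$, each application of $x_{i,r}^\pm$ preserves the decomposition, and it only remains to upgrade this to divided powers.

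Here one must be careful, because at a root of unity $(x_{i,r}^\pm)^{(m)}$ is not an $\mathbb C$-scalar multiple of $(x_{i,r}^\pm)^m$.  I would argue by specializing the identity $(x_{i,r}^\pm)^{(m)}=\frac{1}{[m]_i!}(x_{i,r}^\pm)^m$ from \uqtres g to \ust_\xi g: working inside the $\mathbb A$-form, iterating the Frenkel--Mukhin shift $m$ times shows that the image of $(x_{i,r}^\pm)^m$ lies in the sum of $\ell$-weight spaces indexed by $\gb\mu\prod_{k=1}^m\gb\alpha_{i,a_k}^{\mp 1}$, a property preserved under the specialization $\epsilon_\xi$ and unchanged upon dividing by the $\mathbb A$-factor $[m]_i!$ in the definition of the divided power.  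The main obstacle is precisely this technical verification that the Frenkel--Mukhin shift survives the passage to divided powers in the restricted integral form; once this is settled the proposition follows immediately by restricting to indecomposables.
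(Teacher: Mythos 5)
Your overall strategy --- proving that each subspace $V^{(\gamma)}=\opl_{\gb\mu:\gamma_{_\xi}(\gb\mu)=\gamma}^{}V_{\gb\mu}$ is an \ust_\xi g-submodule because the generators move $\ell$-weights by elements of $\cal Q_\xi$ --- is the mechanism behind the proof the paper has in mind; the paper itself omits the details and refers to \cite{cms,cmq} (see \cite[\S8.4]{cmq}). The Cartan step and the case $m=1$ (the Frenkel--Mukhin shift for $x_{i,r}^\pm$, available at roots of unity from \cite{fmroot}) are fine. The genuine gap is your treatment of the higher divided powers, which is exactly where the root-of-unity case differs from the generic one. The proposed bridge --- specialize the identity $(x_{i,r}^\pm)^{(m)}=\frac{1}{[m]_i!}(x_{i,r}^\pm)^m$ from \uqtres g and then ``divide by $[m]_i!$'' --- does not work, for two reasons. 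First, an arbitrary object $V\in\wcal C_\zeta$ is not presented as the specialization $\bar L$ of a \uqtres g-admissible lattice in a \uqt g-module, so there is nothing to specialize: the statement to be proved concerns the action of the abstract algebra \ust_\zeta g on $V$. Second, even on a module of the form $\bar L$ one has $\epsilon_\zeta([m]_i!)=0$ for $m\ge l$, so the action of $(x_{i,r}^\pm)^{(m)}$ on $\bar L$ is not determined by (and cannot be recovered by ``dividing'' from) the action of $(x_{i,r}^\pm)^m$, which typically acts by zero there; at a root of unity the divided powers are genuinely new operators, which is the whole point of the restricted form.

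What is needed instead is the shift property proved directly for the divided-power generators. One correct route is to work in the integral form and use the commutation relations between the series $\Psi_i^\pm(u)$ (equivalently the $\Lambda_{j,s}$, or the elements $\tilde h_{j,s}$ of Lemma \ref{l:hinres}) and $(x_{i,r}^\pm)^{(m)}$, whose structure constants lie in $\mathbb A$ and therefore specialize; this is the same kind of adaptation the paper carries out in the proof of Lemma \ref{l:bginv}. Alternatively, invoke the root-of-unity q-character theory of \cite{fmroot}, which is developed for the restricted algebra \ustres g itself, so the divided powers are already built in: restricting $V$ to \ust_\zeta{g_i} and applying their description of the $i$-th reduction shows that the sum of the spaces $V_{\gb\mu\gb\beta}$, with $\gb\beta$ ranging over the subgroup generated by the $\gb\alpha_{i,a}$, is \ust_\zeta{g_i}-stable, which is all your argument requires. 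With that replacement your proof is complete and agrees with the intended one.
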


The proof of Proposition \ref{p:indechasec} in the case $\xi=\zeta$ is analogous to that given in \cite{cms,cmq} and we omit the details (see \cite[\S8.4]{cmq}).

\begin{prop}\label{p:sameecsameb}
For every $\gamma\in\widetilde\Gamma_\xi$, the category $\wcal C_\xi^\gamma$ is an indecomposable abelian category.
\end{prop}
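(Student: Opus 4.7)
The plan is to show that any two simple objects $V_\xi(\gb\lambda), V_\xi(\gb\mu) \in \wcal C_\xi^\gamma$ lie in the same block of $\wcal C_\xi$; combined with Proposition \ref{p:indechasec}, this identifies $\wcal C_\xi^\gamma$ with a block of $\wcal C_\xi$ and hence shows it is indecomposable. The argument proceeds in three stages: reduce to dominant $\ell$-weights supported on $I_\bullet$, link them by a chain of $\gb\tau$-moves using Lemma \ref{l:samebseq}, and then transport each individual move across the specialization from the formal $q$-setting to $\xi$.

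For the first stage I would apply Theorem \ref{t:lgenerator} to pick $\gb\lambda', \gb\mu' \in \cal P_{\xi,I_\bullet}^+$ such that $V_\xi(\gb\lambda)$ is an irreducible constituent of $W_\xi(\gb\lambda')$ and $V_\xi(\gb\mu)$ is a constituent of $W_\xi(\gb\mu')$. Each Weyl module is highest-$\ell$-weight, hence indecomposable, so all of its constituents share a single block; in particular $V_\xi(\gb\lambda)$ and $V_\xi(\gb\lambda')$ lie in one block of $\wcal C_\xi$, and by Theorem \ref{t:cone} both have elliptic character $\gamma = \gamma_{_\xi}(\gb\lambda')$. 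The same reasoning applies to $V_\xi(\gb\mu')$, so matters reduce to showing that $V_\xi(\gb\omega), V_\xi(\gb\omega')$ share a block whenever $\gb\omega, \gb\omega' \in \cal P_{\xi,I_\bullet}^+$ satisfy $\gamma_{_\xi}(\gb\omega) = \gamma_{_\xi}(\gb\omega')$.

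For the second and third stages, Lemma \ref{l:samebseq} produces a chain $\gb\omega = \gb\omega_1, \ldots, \gb\omega_m = \gb\omega' \in \cal P_{\xi,I_\bullet}^+$ with $\gb\omega_j/\gb\omega_{j+1} = \gb\tau_{\xi,k_j,a_j}^{\epsilon_j}$, and an induction on $m$ reduces the problem to consecutive pairs. Swapping entries if necessary, I may assume $\gb\omega_{j+1} = \gb\omega_j\, \gb\tau_{\xi,k_j,a_j}$. To connect such a pair I would lift to $\cal P_q$: since $\mathbb C^\times \subset \mathbb A^\times$, choose $\wgb\omega_j \in \cal P_{q,I_\bullet}^+ \cap \cal P_\mathbb A^s$ specializing to $\gb\omega_j$, and set $\wgb\omega_{j+1} := \wgb\omega_j\, \gb\tau_{q,k_j,a_j}$; then $\wgb\omega_{j+1}$ specializes to $\gb\omega_{j+1}$. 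Proposition \ref{p:trivconst} yields that $V_q(\wgb\omega_{j+1})$ is an irreducible constituent of $W_q(\wgb\omega_j)$, and Proposition \ref{p:constspec} transports this across the specialization: $V_\xi(\gb\omega_{j+1})$ is an irreducible constituent of $\overline{W_q(\wgb\omega_j)}$. Finally, Theorem \ref{t:uqtlat} identifies $\overline{W_q(\wgb\omega_j)}$ with a nonzero quotient of $W_\xi(\gb\omega_j)$, hence as a highest-$\ell$-weight (and therefore indecomposable) module whose irreducible quotient is $V_\xi(\gb\omega_j)$. Consequently $V_\xi(\gb\omega_j)$ and $V_\xi(\gb\omega_{j+1})$ both appear as composition factors of a single indecomposable module, so they lie in a common block.

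The main obstacle I foresee is the lifting step: one has to arrange the lift to $\cal P_q$ so that the $I_\bullet$-support, the factorization $\wgb\omega_{j+1} = \wgb\omega_j\, \gb\tau_{q,k_j,a_j}$, and the condition $\wgb\omega_j \in \cal P_\mathbb A^s$ are all satisfied simultaneously, so that Proposition \ref{p:trivconst} (a statement at $q$) and Proposition \ref{p:constspec} (which demands hypotheses over $\mathbb A$) can be applied in tandem. Because all scalars involved already sit in $\mathbb C^\times \subset \mathbb A^\times$, the lift is essentially the identity on coefficients, but one must still carry out a case-by-case verification of $\overline{\gb\tau_{q,k_j,a_j}} = \gb\tau_{\xi,k_j,a_j}$ against the piecewise definition of $\gb\tau$ preceding Proposition \ref{p:Gammarel}, and in the $D_n$-with-$n$-even case additionally track the two generators of $I_\bullet$.
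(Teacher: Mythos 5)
Your proposal is correct and follows essentially the same route as the paper: reduce to $\ell$-weights in $\cal P_{\xi,I_\bullet}^+$ via Theorem \ref{t:lgenerator}, produce the $\gb\tau$-chain with Lemma \ref{l:samebseq}, lift to the formal-$q$ setting, and combine Proposition \ref{p:trivconst} with Proposition \ref{p:constspec} and the indecomposability of the specialized (highest-$\ell$-weight) Weyl-module quotients. The only deviation is organizational: you lift each consecutive pair separately, orienting it so that you always multiply by $\gb\tau_{q,k_j,a_j}$ and then invoke transitivity of the block relation, whereas the paper lifts the whole chain by the recursion $\gb\omega'_{j+1}=\gb\omega'_j(\gb\tau_{q,k_j,a_j})^{-\epsilon_j}$; your pairwise version is a harmless (in fact slightly cleaner) variant, since it keeps every lifted $\ell$-weight manifestly dominant.
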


We now prove Proposition \ref{p:sameecsameb} for $\xi\ne q$. It suffices to show that, given $\gb\lambda,\gb\mu\in\cal P^+$ such that $\gamma_{_\xi}(\gb\lambda)=\gamma_{_\xi}(\gb\mu)$, there exists a sequence of indecomposable modules $W_1,\dots, W_m$ satisfying:
\begin{enumerate}
\item $V_\xi(\gb\lambda)$ is a constituent of $W_1$ and $V_\xi(\gb\mu)$ is a constituent of $W_m$,
\item $W_j$ and $W_{j+1}$ have at least one irreducible constituent in common.
\end{enumerate}
By Theorem \ref{t:lgenerator} and Proposition \ref{p:constspec}, it suffices to prove this in the case $\gb\lambda,\gb\mu\in\cal P_{\xi,I_\bullet}^+$. Thus, let $\gb\omega_1,\dots,\gb\omega_m\in\cal P_\xi^+$ be a sequence as in Lemma \ref{l:samebseq}. If $\xi=q$, it follows from Proposition \ref{p:trivconst} that the sequence $W_j=W_q(\gb\omega_j)$ satisfies the desired properties. If $\xi\in\mathbb C^\times$, set $\gb\omega_1'=\gb\lambda$ and define $\gb\omega_j'\in\cal P_q^+, j>1$, recursively by
\begin{equation}
\gb\omega'_{j+1} = \gb\omega_j'\left(\gb\tau_{q,k_j,a_j}\right)^{-\epsilon_j}.
\end{equation}
Set $\gb\mu'=\gb\omega_m', W_j' = W_q(\gb\omega_j')$ and $W_j=\overline{W_j'}$. By Proposition \ref{p:trivconst}, $W_j'$ and $W'_{j+1}$ have common irreducible constituents and, hence, so do $W_j$ and $W_{j+1}$ by Proposition \ref{p:constspec}. Since $W_1$ is a quotient of $W_\xi(\gb\lambda)$ and $W_m$ is a quotient of $W_\xi(\gb\mu)$ we are done.

In the case $\xi=1$, the proof of Proposition \ref{p:sameecsameb} we gave above is an alternative one to that given in \cite{cms}.

\subsection{On the q-characters of fundamental modules}\label{ss:charfund}

We now verify that the main result of \cite[\S6]{cmq} holds in the roots of unity as well. Throughout this subsection we assume $\xi\ne 1$. The next lemma is well-known and can be easily checked using the formulas of Lemma \ref{l:sl2actev} (cf. \cite[Proposition 9.2]{lroot}).

\begin{prop}\label{p:w=v}
Let $\lie g=\lie{sl}_2$ and $\lambda\in P^+_l$. Then $V_\zeta(\lambda)\cong W_\zeta(\lambda)$.\hfill\qedsymbol
\end{prop}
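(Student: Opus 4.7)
The plan is to prove that the Weyl module $W_\zeta(\lambda)$ is already irreducible, from which the isomorphism follows immediately. Writing $I=\{i\}$ and $n=\lambda(h_i)$, the hypothesis $\lambda\in P_l^+$ reads $n<l$. By Theorem \ref{t:lwmdim}, $\ch(W_\zeta(\lambda))=\ch(V(\lambda))$, so $W_\zeta(\lambda)$ has dimension $n+1$ with one-dimensional weight spaces. Fix a highest-weight generator $v_0$ and set $v_k=(x_i^-)^{(k)}v_0$ for $0\le k\le n$; these vectors form a basis of $W_\zeta(\lambda)$ consisting of weight vectors of distinct weights.

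Let $W\subseteq W_\zeta(\lambda)$ be any nonzero $\us_\zeta g$-submodule. Since $W$ is stable under the Cartan part (which acts diagonally) and the weight spaces of $W_\zeta(\lambda)$ are one-dimensional, $W$ must contain some $v_k$. The core step is to show $x_i^+v_k=[n-k+1]_\zeta\, v_{k-1}$. For this I would invoke the standard Lusztig identity
$$x_i^+\,(x_i^-)^{(k)} \;=\; (x_i^-)^{(k)}\,x_i^+ \;+\; (x_i^-)^{(k-1)}\,\bik 1^{1-k}$$
valid in $\usres{g}$, apply it to $v_0$, and use $k_iv_0=\zeta^n v_0$ to evaluate $\bik 1^{1-k} v_0=[n-k+1]_\zeta\,v_0$. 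This coincides with the $r=0$ specialization of the formula in Lemma \ref{l:sl2actev}. Iterating $k$ times then yields
$$(x_i^+)^k v_k \;=\; \Bigl(\prod_{j=1}^{k}[n-k+j]_\zeta\Bigr)\,v_0.$$

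Each factor $[m]_\zeta$ appearing in this product satisfies $1\le m\le n<l$, and by assumption $l$ is the minimal positive integer with $[l]_\zeta=0$, so every factor is nonzero. Consequently $v_0\in W$, and since $v_0$ generates $W_\zeta(\lambda)$ we conclude $W=W_\zeta(\lambda)$. Thus $W_\zeta(\lambda)$ is irreducible, so $W_\zeta(\lambda)\cong V_\zeta(\lambda)$. There is no serious obstacle in this argument: the proof is entirely elementary, and the role of the hypothesis $\lambda\in P_l^+$ is precisely to ensure that the quantum integers appearing as structure constants in the $\lie{sl}_2$ ladder calculation survive specialization at $q=\zeta$ without vanishing.
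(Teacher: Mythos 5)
Your proof is correct and is essentially the argument the paper intends: the paper's proof is a one-line pointer to the ladder formulas of Lemma \ref{l:sl2actev} (whose $r=0$ case you rederive from the standard Lusztig commutation identity) together with the minimality of $l$ among positive integers with $[l]_\zeta=0$, which is exactly your computation. The only blemish is the citation of Theorem \ref{t:lwmdim}, which concerns the loop Weyl modules $W_\xi(\gb\lambda)$ with $\gb\lambda\in\cal P_\xi^+$ rather than the finite-type module $W_\zeta(\lambda)$; this is harmless, since the only facts you actually use (that the vectors $v_k$, $0\le k\le\lambda(h_i)$, span $W_\zeta(\lambda)$, so its weight spaces are at most one-dimensional) follow immediately from the defining relations and the triangular decomposition, and your ladder computation itself then forces all these $v_k$ to be nonzero.
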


\begin{prop}\label{p:lcsl2}
Let $\lie g=\lie{sl}_2, I=\{i\}, a\in \mathbb C(\xi)^\times,\lambda\in P^+$, and $r=\lambda(h_i)$. Then,
$$\ch_\ell(W_\xi(a,\lambda))=\gb\omega_{i,a,r}\sum_{k=0}^r \left(\prod_{j=1}^k \gb\omega_{i,a\xi^{r-2j},2}\right)^{-1} = \gb\omega_{i,a,r}\sum_{k=0}^r \left(\prod_{j=1}^k \gb\alpha_{i,a\xi^{r-2j+1}}\right)^{-1}.$$
\end{prop}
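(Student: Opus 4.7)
The plan is a direct calculation on the basis $\{v_k^\lambda:0\le k\le r\}$ of $W_\xi(\lambda,a)$ furnished by Lemma~\ref{l:sl2actev}. Since the weight space $W_\xi(\lambda,a)_{(r-2k)\omega_i}$ is one-dimensional, each $v_k^\lambda$ is automatically an $\ell$-weight vector; denote its $\ell$-weight by $\gb\mu^{(k)}$. Then $\chl(W_\xi(\lambda,a))=\sum_{k=0}^r\gb\mu^{(k)}$, so the task reduces to identifying each $\gb\mu^{(k)}$; in particular, $\gb\mu^{(0)}=\gb\omega_{i,a,r}$ since $v_0^\lambda$ is the highest-$\ell$-weight vector.

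To pin down $\gb\mu^{(k)}$, I would compute the eigenvalue of $\Psi_i^+(u)$ on $v_k^\lambda$. Combining the Drinfeld commutator $[x_{i,t}^+,x_{i,s}^-]=(\psi_{i,s+t}^+-\psi_{i,s+t}^-)/(\xi-\xi^{-1})$ with the explicit action formulas of Lemma~\ref{l:sl2actev} yields, for $m>0$,
\begin{equation*}
\psi_{i,m}^+v_k^\lambda=(\xi-\xi^{-1})\bigl\{(a\xi^{r-2k})^m[k+1]_\xi[r-k]_\xi-(a\xi^{r-2k+2})^m[r-k+1]_\xi[k]_\xi\bigr\}\,v_k^\lambda,
\end{equation*}
while $\psi_{i,0}^+v_k^\lambda=\xi^{r-2k}v_k^\lambda$. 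Summing the two resulting geometric series in $u$ and collecting over the common denominator $(1-a\xi^{r-2k}u)(1-a\xi^{r-2k+2}u)$ produces a closed-form rational function $F_k(u)$ with $\Psi_i^+(u)v_k^\lambda=F_k(u)v_k^\lambda$. Using $\Psi_i^+(u)=k_i\Lambda_i^+(u\xi^{-1})/\Lambda_i^+(u\xi)$, I would identify $F_k(u)$ with $\xi^{r-2k}\gb\mu^{(k)}(u\xi^{-1})/\gb\mu^{(k)}(u\xi)$ and read off $\gb\mu^{(k)}$.

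A cleaner route is induction on $k$: the ratio $F_k(u)/F_{k-1}(u)$ collapses to a $\xi$-shift quotient of a single quadratic factor of type $\gb\omega_{i,\cdot,2}(u)$, which pins down $\gb\mu^{(k)}/\gb\mu^{(k-1)}$ as the inverse of the corresponding $\gb\omega_{i,\cdot,2}$. Iterating from $\gb\mu^{(0)}=\gb\omega_{i,a,r}$ and summing over $k$ delivers the first form of the claimed q-character, while the equivalence with the second presentation follows by unwinding the definition of $\gb\alpha_{i,a}$ in the $\lie{sl}_2$ setting. The main technical obstacle is the geometric-series simplification yielding $F_k(u)$ in the expected factored form; this is controlled by standard $\xi$-integer identities such as $[k+1]_\xi[r-k]_\xi-[r-k+1]_\xi[k]_\xi=[r-2k]_\xi$, after which everything is bookkeeping.
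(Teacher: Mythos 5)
Your computational core is sound: with Lemma \ref{l:sl2actev} and the relation $[x_{i,t}^+,x_{i,s}^-]=(\psi^+_{i,s+t}-\psi^-_{i,s+t})/(\xi-\xi^{-1})$ one does obtain $\psi^+_{i,m}v_k^\lambda$ exactly as you wrote, the identity $[k+1]_\xi[r-k]_\xi-[r-k+1]_\xi[k]_\xi=[r-2k]_\xi$ is correct, and since the weight spaces are one-dimensional each $v_k^\lambda$ is indeed an $\ell$-weight vector. The genuine gap is the step ``identify $F_k(u)$ with $\xi^{r-2k}\gb\mu^{(k)}(u\xi^{-1})/\gb\mu^{(k)}(u\xi)$ and read off $\gb\mu^{(k)}$''. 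This read-off is legitimate only when $\xi$ is not a root of unity. When $\xi=\zeta$ is a primitive $l$-th root of unity --- the only case of the proposition not already contained in \cite{fr} --- the eigenvalues of $k_i$ and of all $\psi^\pm_{i,m}$ do \emph{not} determine the $\ell$-weight: if $\gb\mu'=\gb\mu\,\phi_l(\gb\nu)$, then $\gb\mu'_i(u\zeta^{\pm1})=\gb\mu_i(u\zeta^{\pm1})\gb\nu_i(u^l)$ and $\zeta_i^{\,l}=1$, so $\gb\mu$ and $\gb\mu'$ give the same eigenvalue of $k_i$ and the same eigenvalue series for $\Psi_i^\pm(u)$, while they differ on the $\Lambda_{i,ml}$ and on $\tqbinom{k_i}{l}$, which are part of the data defining an $\ell$-weight here (an $\ell$-weight is a character of the whole specialized Cartan subalgebra, generated by $k_i$, $\tqbinom{k_i}{l}$ and the $\Lambda_{i,m}$, cf.\ Lemma \ref{l:hinres}). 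Concretely, the recursion recovering the $\Lambda$-eigenvalues from the $\psi^+$-eigenvalues divides by $\xi^m-\xi^{-m}$, which vanishes whenever $l$ divides $m$, and the constraint $\gb\mu^{(k)}\le\gb\omega_{i,a,r}$ from Theorem \ref{t:cone} does not remove the ambiguity once $r$ is large compared with $l$ (products such as $\prod_{j=0}^{l-1}\gb\alpha_{i,a\zeta^j}$ are invisible to the $\psi$-data). So your induction on $k$ pins down $\gb\mu^{(k)}$ only for generic $\xi$, and the root-of-unity case is left unproved.

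This is precisely where the paper's proof differs: it quotes the generic case from \cite{fr} (see also \cite[Proposition 5.11]{cmq}) and deduces the root-of-unity case from Proposition \ref{p:lcharspec}, since $W_\zeta(\lambda,a)$ is the specialization of an admissible lattice in $W_q(\lambda,a)$ and hence $\chl(W_\zeta(\lambda,a))=\epsilon_\zeta(\chl(W_q(\lambda,a)))$. The cleanest repair of your argument is therefore to run your computation for generic $\xi$ (where it works and reproves the quoted result) and then invoke this specialization step; the alternative is to compute the eigenvalues of the $\Lambda_{i,m}$ (equivalently of the $\tilde h_{i,m}$) on $v_k^\lambda$ directly, which Lemma \ref{l:sl2actev} does not supply and which is exactly the nontrivial extra work at a root of unity. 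One further caution for your final bookkeeping: with the conventions of \eqref{e:qsfw} and Lemma \ref{l:sl2actev}, the $r=1$ case gives the lower $\ell$-weight $\gb\omega_{i,a\xi^{2}}^{-1}$, so when you match your telescoping factors against the two displayed products you must fix the spectral-parameter shifts in $\gb\omega_{i,\cdot,2}$ and in $\gb\alpha_{i,\cdot}$ consistently; it is easy to be off by $\xi^{\pm2}$ there, and your proposal does not yet commit to the shifts.
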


\begin{proof}
For $\xi$ not a root of unity this was proved in \cite{fr} (see also \cite[Proposition 5.11]{cmq}). The root of unity case then follows follows from Proposition \ref{p:lcharspec} (see also \cite{fmroot}).
\end{proof}

\begin{lem}\label{l:bginv}
Let $V\in\wcal C_\xi$, $\gb\mu\in\cal P_\xi$, and  suppose that there exist a nonzero $v\in V_\gb\mu$  and $j\in I$ such that $x_{j,s}^+ v =0$ for all $s\in\mathbb Z$. Then, $\gb\mu_j(u)$ is a polynomial of degree $\wt(\gb\mu)(h_j)$ and $(x_{j,0}^-)^{\wt(\gb\mu)(h_j)}v\in V_{T_j\gb\mu}\backslash\{0\}$. Also, if the $\xi_j$ factorization of $\mu_j$ is given by
$$\mu_j = \prod_{r=1}^{k}\gb\omega_{j,a_r,m_r},$$
then $\gb\mu(\gb\alpha_{j,a_r\xi^{m_r-1}})^{-1}\in\wtl(V)$ for all $1\le r\le k$. Furthermore, for all $s\in\mathbb Z$,  we have
$$x_{j,s}^-v\in \sum_{r=1}^k \sum_{p=0}^{m_r-1}V_{\gb\mu\gb\alpha_{j,a_r\xi^{m_r-1-2p}}^{-1}},$$
and
$$\dim(V_{\gb\mu\gb\alpha_{j,a_r\xi^{m_r-1}}^{-1}})\ge \#\{1\le s\le k:a_r=a_s\}.$$
\end{lem}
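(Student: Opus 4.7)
The plan is to reduce all five assertions to the analogous $\lie{sl}_2$-statements by passing to the subalgebra $\ust_\xi{g_j}\cong U_{\xi_j}(\tlie{sl}_2)$ and the submodule $W:=\ust_\xi{g_j}\cdot v\subseteq V$. The condition $x_{j,s}^+v=0$ for all $s\in\mathbb Z$, together with $v\in V_{\gb\mu}$ which forces $\ust_\xi{h_j}\subseteq\ust_\xi h$ to act on $v$ via the character prescribed by $\gb\mu_j$, makes $W$ a highest-$\ell$-weight $\ust_\xi{g_j}$-module of highest $\ell$-weight $\gb\mu_j$ (viewed inside the $\lie{sl}_2$-$\ell$-weight lattice). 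Applying Proposition \ref{p:drinpoly} to $W$ then ensures $\gb\mu_j$ is a polynomial, and the Drinfeld polynomial correspondence for $\lie{sl}_2$ pins down its degree as $\wt(\gb\mu)(h_j)$, which is the first assertion.

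For the second assertion, Corollary \ref{c:Twv} applied to $W$ gives that $W_{T_j\gb\mu_j}=W_{s_j\wt(\gb\mu_j)}$ is one-dimensional; the construction leading to \eqref{e:v_w} identifies it as the span of the appropriate version of $(x_{j,0}^-)^{\wt(\gb\mu)(h_j)}v$. To promote this $W$-level statement to an assertion about $V$-$\ell$-weights, I would compute the action of $\Lambda_{i,r}$ on this vector for each $i\ne j$, using $[h_{i,r},x_{j,s}^-]=-\tfrac{1}{r}[rc_{ij}]_i\, x_{j,r+s}^-$ applied inductively, and match the resulting generating series with
\[
(T_j\gb\mu)_i(u)=\gb\mu_i(u)\prod_{k=1}^{|c_{ji}|}\gb\mu_j(q^{d_j+|c_{ji}|+1-2k}u).
\]

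For the remaining three claims, I would exploit the $\lie{sl}_2$-evaluation decomposition. The definition of the $\xi_j$-factorization $\gb\mu_j=\prod_{r=1}^k\gb\omega_{j,a_r,m_r}$ places the quantum factors pairwise in general position, so Theorem \ref{t:qtev} yields $V_{\xi_j}(\gb\mu_j)\cong\otm_{r=1}^{k}V_{\xi_j}(\gb\omega_{j,a_r,m_r})$. Each tensor factor has its $\ell$-weight string given by Proposition \ref{p:lcsl2}; dropping $p$ steps in the $r$-th factor produces the $\ell$-weight $\gb\mu_j\prod_{t=0}^{p-1}\gb\alpha_{j,a_r\xi_j^{m_r-1-2t}}^{-1}$. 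Pulling these back through the surjection $W\twoheadrightarrow V_{\xi_j}(\gb\mu_j)$ and combining with the commutator argument of the previous paragraph identifies them with specific nonzero $\ell$-weight spaces of $V$, yielding the third statement; when several indices $s$ satisfy $a_s=a_r$, each such factor contributes an independent vector to the relevant $\ell$-weight space, delivering the dimension bound of the fifth. The fourth assertion then follows since $x_{j,s}^-v\in W_{\wt(\gb\mu)-\alpha_j}$, whose $\lie{sl}_2$-$\ell$-weight analysis constrains the allowed $V$-components to precisely the listed spaces.

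The main obstacle will be the second step: transferring the lowest-weight identification from $W$ to a sharp statement about $V$-$\ell$-weights and, in particular, simultaneously diagonalizing every $\Lambda_{i,r}$ on $(x_{j,0}^-)^{\wt(\gb\mu)(h_j)}v$ with the eigenvalues prescribed by $T_j\gb\mu$ (not merely by $T_j\gb\mu_j$). This is an inductive commutator computation, structurally routine but technically the most delicate part of the argument.
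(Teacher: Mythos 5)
Your outline for the first two assertions, and the overall architecture for the rest (pass to the subalgebra $\ust_\xi{g_j}$, decompose the resulting highest-$\ell$-weight $\lie{sl}_2$-module via Theorem \ref{t:qtev} — legitimate, since the quantum $\xi_j$-factors are pairwise in general position by the very definition of the $\xi_j$-factorization — and then control the components $i\ne j$ of the $\ell$-weights by commuting the Cartan loop elements past $x_{j,s}^-$), is essentially the argument of \cite[Proposition 6.5]{cmq}, which is exactly what the paper invokes for generic $\xi$. The genuine gap is in the step you yourself flag as the delicate one: at a root of unity you cannot run that computation with the relation $[h_{i,r},x_{j,s}^-]=-\tfrac1r[rc_{ij}]_i\,x_{j,r+s}^-$. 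In $\ust_\zeta g$ one has $h_{i,r}=[r]_{\zeta_i}\tilde h_{i,r}$ with $\tilde h_{i,r}=h_{i,r}/[r]_i$ the element that actually lies in (and, together with the $\Lambda_{i,r}$, generates) the restricted Cartan subalgebra by Lemma \ref{l:hinres}; thus $h_{i,r}$ vanishes whenever $l$ divides $r$, and the quantum integers in the coefficient can vanish as well. So the relation you propose to iterate carries no information about the eigenvalues of the true generators $\tilde h_{i,r}$ (equivalently $\Lambda_{i,r}$) on $x_{j,s}^-v$ at exactly the levels needed to identify $(T_j\gb\mu)_i$ and the shifts by $\gb\alpha_{j,b}^{-1}$ for $i\ne j$.

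The paper's proof consists precisely of the repair of this step: one observes that $\frac{[rc_{ij}]_{q_j}}{[r]_{q_i}}\in\mathbb A$, so that in $\uqtres g$ there is an identity $(q_j^r+q_j^{-r})\,[\tilde h_{i,r},(x_{j,s}^-)^{(k)}]=\frac{[rc_{ij}]_{q_j}}{[r]_{q_i}}\,[\tilde h_{j,r},(x_{j,s}^-)^{(k)}]$ (note the divided powers, which is what the restricted form is generated by), and that $\zeta_j^r+\zeta_j^{-r}\ne0$ for all $r$ because $l$ is odd. Specializing yields $[\tilde h_{i,r},(x_{j,s}^-)^{(k)}]=(\zeta_j^r+\zeta_j^{-r})^{-1}\{rc_{ij}\}_\zeta\,[\tilde h_{j,r},(x_{j,s}^-)^{(k)}]$, where $\{rc_{ij}\}_\zeta$ is the evaluation at $q=\zeta$ of the above ratio; this identity replaces equation (6.19) of \cite{cmq}, after which every remaining step of the generic argument goes through verbatim. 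Without this replacement — and without noting where the hypothesis that $l$ is odd and prime to the lacing number enters — your plan only reproduces the generic-$\xi$ case, whereas the root-of-unity extension is the actual content of the lemma in this paper.
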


\begin{proof}
The first two statements have already been proved. The remaining statements for $\xi$ of infinite order were proved in \cite[Proposition 6.5]{cmq}. In the root of unity case the proof can be carried out exactly as in the infinite order case after observing the following. Due to Lemma \ref{l:hinres} we can work with elements $\tilde h_{i,r}$ as it was done in \cite[Proposition 6.5]{cmq}. Moreover, observe as well that $\frac{[rc_{ij}]_{q_j}}{[r]_{q_i}}\in\mathbb A$ for all $i,j\in I$ and all nonzero integer $r$. Hence, we have the following identity in \uqtres g
$$(q_j^r+q_j^{-r})\ [\tilde h_{i,r},(x_{j,s}^-)^{(k)}] = \frac{[rc_{ij}]_{q_j}}{[r]_{q_i}}\ [\tilde h_{j,r},(x^-_{j,s})^{(k)}]$$
Finally, observe that $\zeta_j^r+\zeta_j^{-r}\ne 0$ for all $r\in\mathbb Z$ and set
$$\{rc_{ij}\}_\zeta = \epsilon_\zeta\left(\frac{[rc_{ij}]_{q_j}}{[r]_{q_i}}\right).$$
Hence, we get the following identity in \ustres g
\begin{equation}
[\tilde h_{i,r},(x_{j,s}^-)^{(k)}] = (\zeta_j^r+\zeta_j^{-r})^{-1}\{rc_{ij}\}_\zeta\ [\tilde h_{j,r},(x^-_{j,s})^{(k)}]
\end{equation}
which replaces equation (6.19) of \cite{cmq}. From here, all the steps of the proof can be performed exactly as in the case of $\xi$ generic.
\end{proof}

Given $\lambda\in P^+$, let $I(\lambda)=\{i\in I:\lambda(h_i)=0\}$ and let $\cal W(\lambda)$ be the subgroup of $\cal W$ generated by $\{s_i: i\in I(\lambda)\}$.
The proof of the following lemma can be found in \cite[\S1.10]{hum:coxeter}.

\begin{lem}Let $\lambda\in P^+$. Then,
\begin{enumerate}
\item $\cal W(\lambda)=\{w\in W: w\lambda=\lambda\}$.
\item Each left coset of $\cal W(\lambda)$ in $\cal W$ contains a unique element of minimal length.
\item Denote by $\cal W_\lambda$ the set of all left coset representatives of minimal length, suppose that $w\in \cal W_\lambda$, and that $w=s_jw'$ for some $w'\in\cal W$ with $\ell(w')=\ell(w)-1$. Then, $w'\in \cal W_\lambda$. \hfill\qedsymbol
\end{enumerate}
\end{lem}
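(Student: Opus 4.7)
The plan is to exploit the fact that $\lambda$ lies in the closure $\overline C$ of the fundamental Weyl chamber, on the face cut out by the walls indexed by $I(\lambda)$, and that $\overline C$ is a fundamental domain for the action of $\cal W$. For part (a), the inclusion $\cal W(\lambda)\subseteq\{w\in\cal W:w\lambda=\lambda\}$ is immediate, since $s_i\lambda=\lambda-\lambda(h_i)\alpha_i=\lambda$ whenever $i\in I(\lambda)$. For the reverse inclusion, I would proceed by induction on $\ell(w)$. If $\ell(w)>0$ and $w\lambda=\lambda$, pick $j\in I$ with $\ell(s_jw)=\ell(w)-1$; this is equivalent to $w^{-1}\alpha_j\in -R^+$. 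Using that $\lambda(h_\alpha)\ge 0$ for every $\alpha\in R^+$ (a standard property of dominant weights) together with
\[
\lambda(h_j)=(w\lambda)(h_j)=\lambda(w^{-1}h_j)=-\lambda(h_{-w^{-1}\alpha_j})\le 0,
\]
one sees $\lambda(h_j)=0$, so $j\in I(\lambda)$. Then $(s_jw)\lambda=s_j\lambda=\lambda$ with $\ell(s_jw)<\ell(w)$; by the induction hypothesis $s_jw\in\cal W(\lambda)$, and therefore $w=s_j(s_jw)\in\cal W(\lambda)$.

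For part (b), the existence of a minimal length representative in each coset is automatic because $\cal W$ is finite. For uniqueness, suppose $w_0$ and $w_1$ are both of minimal length in a coset and write $v:=w_0^{-1}w_1\in\cal W(\lambda)$. Taking a reduced expression $v=s_{i_p}\cdots s_{i_1}$ with all $i_k\in I(\lambda)$, I would argue that somewhere in the sequence of right multiplications $w_0\mapsto w_0 s_{i_1}\mapsto w_0 s_{i_1}s_{i_2}\mapsto\cdots$ the length must strictly decrease; otherwise $\ell(w_1)=\ell(w_0)+p>\ell(w_0)$, contradicting $\ell(w_1)=\ell(w_0)$. At the first step where length drops, the corresponding partial product lies in $w_0\cal W(\lambda)$ and has length strictly less than $\ell(w_0)$, contradicting minimality. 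Hence $v=e$ and $w_0=w_1$.

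For part (c), suppose $w\in\cal W_\lambda$, $w=s_jw'$ with $\ell(w')=\ell(w)-1$, but, toward a contradiction, $w'\notin\cal W_\lambda$. Then there exists $v\in\cal W(\lambda)$ with $\ell(w'v)<\ell(w')$, and we have $s_j(w'v)=wv\in w\cal W(\lambda)$ with
\[
\ell(s_j(w'v))\le\ell(w'v)+1<\ell(w')+1=\ell(w),
\]
contradicting the minimality of $w$ in its coset. The main obstacle is part (a): its induction step hinges on the classical fact that dominant weights pair non-negatively with every positive coroot. Once that is in place, parts (b) and (c) follow formally from the length function and the exchange condition, which is why the authors simply refer the reader to \cite[\S1.10]{hum:coxeter}, where these statements are treated in the generality of Coxeter groups.
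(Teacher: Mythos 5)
The paper offers no proof of this lemma at all---it simply refers to \cite[\S1.10]{hum:coxeter}---so your write-up is being measured against the standard theory of parabolic subgroups and minimal coset representatives. Parts (a) and (c) of your argument are correct: the induction in (a), using $\ell(s_jw)<\ell(w)\iff w^{-1}\alpha_j\in -R^+$ together with dominance of $\lambda$, is exactly the usual proof, and (c) is a clean formal consequence of minimality in the coset.

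The uniqueness step in (b), however, has a genuine gap. Writing $v=w_0^{-1}w_1\in\cal W(\lambda)$ and multiplying $w_0$ on the right along a word for $v$ in the generators $s_i$, $i\in I(\lambda)$, you correctly note that the length must drop at some step (otherwise the end product has length $\ell(w_0)+p$, contradicting $\ell(w_1)=\ell(w_0)$). But your next claim---that at the \emph{first} such drop the partial product has length strictly less than $\ell(w_0)$---is false in general: if the drop occurs at step $k\ge 2$, the preceding partial product has length $\ell(w_0)+(k-1)$, so after the drop the length is $\ell(w_0)+k-2\ge \ell(w_0)$, and no contradiction with the minimality of $w_0$ follows. (There is also an indexing slip: with $v=s_{i_p}\cdots s_{i_1}$, your partial products build $w_0v^{-1}$ rather than $w_1$, though this is easily repaired.) The standard way to close the gap is to prove that a minimal-length element $w_0$ of its coset satisfies $\ell(w_0v)=\ell(w_0)+\ell(v)$ for all $v\in\cal W(\lambda)$: minimality forces $w_0\alpha_i\in R^+$ for every $i\in I(\lambda)$; then, writing $v=v's_i$ with $\ell(v')=\ell(v)-1$ and $i\in I(\lambda)$, the root $v'\alpha_i$ is positive and supported on $\{\alpha_j:j\in I(\lambda)\}$, hence $w_0v'\alpha_i$ is a nonnegative combination of positive roots and therefore positive, giving $\ell(w_0v's_i)=\ell(w_0v')+1$ and the induction; uniqueness is then immediate. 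Alternatively, at the first length drop one can apply the strong exchange condition to the concatenation of a reduced word for $w_0$ with the letters already multiplied, and split into cases according to whether the deleted letter lies in the $w_0$-part (contradicting minimality) or in the $v$-part (shortening $v$ and inducting). You allude to the exchange condition at the end, but it is never actually applied, and the one-line length claim as written does not suffice.
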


The following theorem can now be proved exactly as in \cite[Theorem 6.1]{cmq}.

\begin{thm}\label{t:bginv}
Suppose $\lie g$ is of classical type. Let $i\in I$, $a\in\mathbb C(\xi)^\times$ and assume that $\gb\lambda\in\wt_\ell(V_\xi(\gb\omega_{i,a}))$ is such that $\wt(\gb\lambda)=\lambda\in P^+$. Then,
$$\dim(V_\xi(\gb\omega_{i,a})_\gb\lambda)=\dim(V_\xi(\gb\omega_{i,a})_{T_w\gb\lambda}) \qquad\text{and}\qquad T_w(\wt_\ell(V_\xi(\gb\omega_{i,a})_\lambda))=\wt_\ell(V_\xi(\gb\omega_{i,a})_{w\lambda})$$
for all $w\in\cal W_\lambda$. Suppose further that $\gb\lambda\ne \gb\omega_{i,a}$. Then, there exist
$\gb\mu\in\wt_\ell(V_\xi(\gb\omega_{i,a})), b,c\in\mathbb C(\xi)^\times$, and $j\in I$ such that $\gb\mu_j(u)=(1-bu)(1-cu)$, and
\begin{equation}\label{e:indom}
\gb\lambda=\gb\mu(\gb\alpha_{j,b})^{-1}.
\end{equation}
Moreover, if $c\ne b\xi^{-2}$, then $\gb\mu\ (\gb\alpha_{j,c})^{-1}\in\wtl(V_\xi(\gb\omega_{i,a}))$  and,  if $c=b$, then
$\dim(V_\xi(\gb\omega_{i,a})_{\gb\lambda})\ge 2$.\hfill\qedsymbol
\end{thm}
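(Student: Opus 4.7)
Since Theorem \ref{t:bginv} is modeled directly on \cite[Theorem 6.1]{cmq}, which handles the case where $\xi$ has infinite order, the strategy is to verify that each step of that argument transfers to the root-of-unity setting. The key ingredients -- the q-character formula of Proposition \ref{p:lcsl2} for $\lie{sl}_2$ evaluation modules, the refined Lemma \ref{l:bginv} whose root-of-unity variant has just been put in place using Lemma \ref{l:hinres}, the Weyl-invariance of ordinary characters (Proposition \ref{p:Winv}), and the combinatorics of minimal coset representatives captured by the lemma immediately preceding the theorem -- are all available uniformly for $\xi\in\mathbb C'$, so the proof reduces to a largely formal transcription of the generic-case argument, for which the restriction to classical type of $\lie g$ is still required.

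For the two $T_w$-invariance statements, I induct on $\ell(w)$ for $w\in\cal W_\lambda$, with trivial base case $w=1$. The inductive step factors $w=s_jw'$ with $\ell(w)=\ell(w')+1$; the preceding lemma forces $w'\in\cal W_\lambda$, so by the induction hypothesis the problem reduces to the case of a single reflection $s_j$ applied to $T_{w'}\gb\lambda$. I then restrict attention to the $\ust_\xi{g_j}$-submodule generated by $V_\xi(\gb\omega_{i,a})_{w'\lambda}$, decompose it into $\ust_\xi{g_j}$-highest-$\ell$-weight subquotients, and on each such subquotient compare $\ell$-weight spaces at weights $w'\lambda$ and $s_jw'\lambda$ via Proposition \ref{p:lcsl2}. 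Its explicit product formula makes transparent both the set equality $T_j(\wtl(V_\xi(\gb\omega_{i,a})_{w'\lambda}))=\wtl(V_\xi(\gb\omega_{i,a})_{s_jw'\lambda})$ and the matching of dimensions. For the structural statement, since $\gb\lambda\ne\gb\omega_{i,a}$, Theorem \ref{t:cone} yields a chain $\gb\omega_{i,a}=\gb\nu_0>\cdots>\gb\nu_m=\gb\lambda$ in $\wtl(V_\xi(\gb\omega_{i,a}))$ whose consecutive quotients are inverse simple $\ell$-roots; setting $\gb\mu:=\gb\nu_{m-1}$ gives $\gb\lambda=\gb\mu\,\gb\alpha_{j,b}^{-1}$ for some $j\in I$ and $b\in\mathbb C(\xi)^\times$. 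I then extract a highest-$\ell$-weight vector of the $\ust_\xi{g_j}$-submodule generated by $V_\xi(\gb\omega_{i,a})_{\gb\mu}$ and apply the first two statements of Lemma \ref{l:bginv} to force its $j$-component to be a polynomial which, combined with the step $\gb\mu\to\gb\lambda$, must factor as $(1-bu)(1-cu)$. The remaining two claims -- existence of $\gb\mu\,\gb\alpha_{j,c}^{-1}\in\wtl(V_\xi(\gb\omega_{i,a}))$ when $c\ne b\xi^{-2}$, and $\dim V_\xi(\gb\omega_{i,a})_{\gb\lambda}\ge 2$ when $c=b$ -- then follow directly from the last two statements of Lemma \ref{l:bginv}.

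The main obstacle, and the only genuine novelty over the generic case, has already been absorbed into Lemma \ref{l:bginv}: the commutator identity used in the corresponding step of \cite{cmq} develops a zero denominator in \uqtres g whenever $l$ divides $r$. The resolution is to pass to the divided generators $\tilde h_{j,r}$ of Lemma \ref{l:hinres} and to exploit that $\zeta_j^r+\zeta_j^{-r}\ne 0$ for every nonzero $r\in\mathbb Z$, a consequence of the standing hypothesis that $l$ is odd and coprime to the lacing number of $\lie g$. Once this substitution is in force, no further root-of-unity phenomenon intervenes and the argument of \cite[Theorem 6.1]{cmq} transcribes essentially verbatim.
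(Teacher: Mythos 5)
Your proposal matches the paper's treatment: the paper proves Theorem \ref{t:bginv} simply by observing that, once the root-of-unity version of Lemma \ref{l:bginv} is in place (via the divided elements $\tilde h_{i,r}$ of Lemma \ref{l:hinres} and the fact that $\zeta_j^r+\zeta_j^{-r}\ne 0$, replacing the problematic commutator identity of \cite{cmq}), the argument of \cite[Theorem 6.1]{cmq} carries over verbatim, which is exactly the reduction you carry out. Your added sketch of the induction on $\ell(w)$ with minimal coset representatives, the restriction to $\ust_\xi{g_j}$ together with Proposition \ref{p:lcsl2}, and the use of Lemma \ref{l:bginv} for the structural statements is consistent with that cited argument, so your proof is correct and follows essentially the same route as the paper.
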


Theorem \ref{t:bginv} provides an algorithm for computing a lower bound for $\chl(V_\xi(\gb\omega_{i,a}))$. For $\xi$ of infinite order, it was used together with the knowledge of $\ch(V_\xi(\gb\omega_{i,a}))$ (see \cite{cha:fer}) in \cite{cm:fund} to compute $\chl(V_\xi(\gb\omega_{i,a}))$. Notice that it follows from Proposition \ref{p:lcharspec} that $\epsilon_\xi(\chl(V_q(\gb\omega_{i,a})))$ is an upper bound for $\chl(V_\xi(\gb\omega_{i,a}))$.
We finish the paper by giving an example explaining how to use these facts and Corollary \ref{c:nodom} to identify values of $\xi$ for which
\begin{equation}\label{e:specir}
V_\xi(\gb\omega_{i,a})\cong \overline{V_q(\gb\omega_{i,a})}.
\end{equation}
It follows from Proposition \ref{p:lcharspec} that \eqref{e:specir} holds iff
\begin{equation}\label{e:ecf}
\chl(V_\xi(\gb\omega_{i,a}))=\epsilon_\xi(\chl(V_q(\gb\omega_{i,a}))).
\end{equation}

\begin{rem}
Observe that, if $\omega_i$ is minuscule, then \eqref{e:specir} holds for any value of $\xi$ (including $\xi=1$). In fact, given $a\in\mathbb C(\xi)$ and a minuscule $\omega_i$, we have
\begin{equation*}
\chl(V_\xi(\gb\omega_{i,a})) = \sum_{w\in\cal W_{\omega_i}} T_w(\gb\omega_{i,a}).
\end{equation*}
On the other hand, if $\omega_i$ is not minuscule and $\xi=1$, then \eqref{e:specir} is always false because $V(\gb\omega_{i,a})$ is irreducible as $\lie g$-module, while $\overline{V_q(\gb\omega_{i,a})}$ is not.
\end{rem}

\begin{ex}\label{ex:dn}
Assume from now on that $\xi=\zeta, a\in\mathbb C^\times$, $\lie g$ is of type $D_n$ and $i=2$. Let $\gb\mu_j\in\cal P_q$ be defined by
\begin{equation}
\gb\mu_j   =
\begin{cases}
(\gb\omega_{j-1,aq^{j+1}})^{-1} \gb\omega_{j-1,aq^{2n-j-3}}\gb\omega_{j,aq^{j}}(\gb\omega_{j,aq^{2n-j-2}})^{-1},& \text{ if } 1\leq j \leq n-2,\\
\gb\omega_{j,aq^{n-3}}(\gb\omega_{j,aq^{n+1}})^{-1}, & \text{ if } j=n-1,n.
\end{cases}
\end{equation}
Thus, according to \cite[(6.8)]{cmq},
\begin{equation}
\chl(V_q(\gb\omega_{2,a})) = \sum_{w\in\cal W_{\omega_2}} T_w(\gb\omega_{2,a}) + \sum_{j\ne n-2}\gb\mu_j +2\gb\mu_{n-2}.
\end{equation}
Since $\wt(T_w(\gb\omega_{2,a}))\notin P^+$, by Corollary \ref{c:nodom}, \eqref{e:specir} holds provided $\bgb\mu_j\notin\cal P^+$ for all $j=1,\dots, n$. If $1<j<n-1$, $\bgb\mu_j\in\cal P^+$ iff $l$ divides $2(n-j-2)$ and $2(n-j-1)$ which implies $l$ divides 2. Similarly, $\bgb\mu_{m-1},\bgb\mu_n\in\cal P^+$ iff $l$ divides 4. Since we are assuming $l$ is odd, we conclude $\bgb\mu_j\notin \cal P^+$ for all $j>1$. Finally, $\bgb\mu_1\in\cal P^+$ iff $l$ divides $2(n-2)$. Hence, since $l$ is odd, if $l$ is not a divisor of $n-2$, \eqref{e:specir} holds. We now compute the multiplicity of the space $V(\gb\omega_{2,a})_{\bgb\mu_j}$ assuming $l$ does not divide $n-2$. Notice that
\begin{equation}\label{e:multdnreg}
\bgb\mu_j = \bgb\mu_k \quad\text{iff}\quad 1\le j=k-1<n-2 \quad\text{and}\quad l\quad\text{divides}\quad n-2-j.
\end{equation}
Hence, $\bgb\mu_{n-3}\ne\bgb\mu_{n-2}$ for any value of $l$ and
\begin{equation}\label{e:multdn2}
\dim(V_\zeta(\gb\omega_{2,a})_{\bgb\mu_j}) =
\begin{cases}
2,& \quad\text{if } j=n-2 \text{ or if } j< n-2 \text{ and } l\text{ divides } n-2-j,\\
1,& \quad\text{otherwise.}
\end{cases}
\end{equation}

Let us now discuss the case when $l$ divides $n-2$. As pointed  out to us by Nakajima, it follows from the algorithm given in \cite{nak:qvtqchar} that
\begin{equation}\label{e:dnlcir}
\chl(V_\zeta(\gb\omega_{2,a})) = \sum_{w\in\cal W_{\omega_2}} T_w(\gb\omega_{2,a}) + \sum_{j\ne 1,n-2}\bgb\mu_j +2\bgb\mu_{n-2}.
\end{equation}
Moreover, $\dim(V_\zeta(\gb\omega_{2,a})_{\bgb\mu_j})$ for $j\ne 1$ is given by \eqref{e:multdn2}.
We now give an alternate proof of \eqref{e:dnlcir} using Theorem \ref{t:bginv} and the theory of specialization of modules. Let $\xi$ be any element of $\mathbb C'$ again. As in \cite[\S6.5]{cmq}, set
$$w_j = s_{j-1}\cdots s_1s_{j+1}\cdots s_{n-2} s_{n}s_{n-1} s_{n-2}\cdots s_1.$$
Observe that  $\cal W_{\omega_2} = \{w_j:j=1,\dots,n\}$ and $w_j\omega_2= \alpha_j$. One then computes (cf. \cite[(6.10)]{cmq})
\begin{equation}
T_{w_j}(\gb\omega_{2,a}) =
\begin{cases}
(\gb\omega_{j-1,a\xi^{j+1}})^{-1} \gb\omega_{j,a\xi^{j}} \gb\omega_{j,a\xi^{2n-4-j}} (\gb\omega_{j+1,a\xi^{2n-3-j}})^{-1},& \ \text{ if } j\leq n-2,\\
(\gb\omega_{n-2,a\xi^{n}})^{-1} \gb\omega_{j,a\xi^{n-1}}\gb\omega_{j,a\xi^{n-3}},&\ \text{ if } j=n-1,n.
\end{cases}
\end{equation}
Notice
\begin{equation}
\bgb\mu_j =
\begin{cases}
T_{w_j}(\gb\omega_{2,a})(\gb\alpha_{j,a\xi^{2n-4-j}})^{-1},& \text{ if } 1\leq j \leq n-2,\\
T_{w_{j-1}}(\gb\omega_{2,a})(\gb\alpha_{j-1,a\xi^{j-1}})^{-1},& \text{ if } 2\leq j \leq n-2,\\
T_{w_j}(\gb\omega_{2,a})(\gb\alpha_{i,a\xi^{n-3}})^{-1}, & \text{ if } j=n-2, i=n-1,n,\\
T_{w_j}(\gb\omega_{2,a})(\gb\alpha_{j,a\xi^{n-1}})^{-1}, & \text{ if } j=n-1,n.
\end{cases}
\end{equation}
It immediately follows from Theorem \ref{t:bginv} that $\bgb\mu_j\in\wtl(V_\xi(\gb\omega_{2,a}))$ for $j\ge n-2$ and, moreover, that $\dim(V_\xi(\gb\omega_{2,a})_{\bgb\mu_{n-2}})\ge 2$. Let $1<j\le n-2$ and observe that
\begin{equation}
\frac{\zeta^{2n-4-j}}{\zeta^j} = \zeta^{-2} \quad\Rightarrow\quad \frac{\zeta^{j-1}}{\zeta^{2n-3-j}}\ne \zeta^{-2}.
\end{equation}
Together with Theorem \ref{t:bginv}, this implies that, if $1<j\le n-2$, either $T_{w_j}(\gb\omega_{2,a})(\gb\alpha_{j,a\xi^{2n-4-j}})^{-1}\in\wtl(V_\zeta(\gb\omega_{2,a}))$ or $T_{w_{j-1}}(\gb\omega_{2,a})(\gb\alpha_{j-1,a\xi^{j-1}})^{-1}\in\wtl(V_\zeta(\gb\omega_{2,a}))$. Hence, $\bgb\mu_j\in\wtl(V_\zeta(\gb\omega_{2,a}))$ for all $j>1$. It remains to show that $\bgb\mu_1\notin\wtl(V_\zeta(\gb\omega_{2,a}))$ if $l$ divides $n-2$.

To do this, let $V=V_q(\gb\omega_{2,a})$, $v$ be a highest-$\ell$-weight vector of $V$, $L=\uqtres g v$, and $v_{w_j}$ be as in \eqref{e:v_w}. In particular, $v_{w_j}$ is an $\mathbb A$-basis element of $L$ and, hence, its image $\overline{v_{w_j}}\in\overline V$ is nonzero. Let also $W_j=\uqt{g_j}v_{w_j}$ and $\overline{W_j}=\ust_\zeta{g_j}\overline{v_{w_j}}$. Since $v_{w_j}$ is a highest $\ell$-weight vector for \uqt{g_j}, $W_j$ is a quotient of the Weyl module for this subalgebra with highest $\ell$-weight $(T_{w_j}(\gb\omega_{2,a}))_j$ (and similarly for $\overline{W_j}$). It follows from the above computations that $V_{\gb\mu_j}\subseteq W_j$ and that $V_0 = \sum_{j=1}^{n} (W_j)_0$. We claim that $W_1$ must be a Weyl module. In fact, it is easy to see that $V_{\gb\mu_1}\cap W_j\ne \{0\}$ iff $j=1$. If $W_1$ were not a Weyl module, then it would be irreducible and so would be $\overline{W_1}$. This would imply that $(\bgb\mu_1)_1$ (which is the the constant polynomial) would be an $\ell$-weight of the irreducible \ust_\zeta{{sl}_2}-module with highest $\ell$-weight $(1-a\zeta u)(1-a\zeta^{-1}u)$. This contradicts Proposition \ref{p:lcsl2}.

Since $v_{w_1}$ is an $\mathbb A$-basis element of $L$, it follows that $\overline{W_1}$ is also a Weyl module (alternatively, if $\overline{W_1}$ were not a Weyl module, $\overline V$ would not be highest-$\ell$-weight since $\overline V_{\bgb\mu_1}\cap \overline{W_j}=\{0\}$ if $j>1$ and we would have a contradiction again). Therefore, there exists $v'\in(\overline{W_1})_0$ which generates a trivial submodule of $\overline{W_1}$. It is easy to see that $v'$ also generates a trivial submodule of $\overline V$. This shows that $\bgb\mu_1$ is not an $\ell$-weight of the irreducible quotient of $\overline V$ which is isomorphic to $V_\zeta(\gb\omega_{2,a})$.
\end{ex}

\begin{rem}
The assumption that $l$ is odd is not essential above and one can easily deduce similar results for roots of unity of even order as well.  Notice that if $l$ divides $n-2$, then $W_{\zeta}(\gb\omega_{2,a})$ is reducible and, hence, not a tensor product of fundamental representations.
\end{rem}

We believe that the above line of reasoning can be used alongside the results of \cite{cm:fund} to obtain expressions for the q-characters of the fundamental representations of \ust_\zeta g in terms of the braid group action for any $\lie g$ of classical type.  In order to keep the length of present text within reasonable limits, we postpone further discussion in this direction to a forthcoming publication.

\vspace{5pt}
\noindent{\bf Acknowledgements:}  D.J. thanks FAPESP for the financial support for her visit to the State University of Campinas during which this paper was developed and the Mathematics Department of the State University of Campinas for hospitality.  The work of A.M. was partially supported by CNPq and FAPESP. D.J. also thanks Anna Lachowska for motivating discussions at the initial stage of this paper. We thank Hiraku Nakajima for several references, pointing out a mistake in \S\ref{ss:charfund} in the first draft of the paper, and very kind and helpful communication. We also thank the referee for constructive comments and, in particular, for a question that lead us to strengthen the results of Section \ref{s:hlwtensor}.

\bibliographystyle{amsplain}

\end{document}